\numberwithin{equation}{section}
\theoremstyle{plain}
\newtheorem{Thm}{Theorem}[section]
\newtheorem{Lemma}[Thm]{Lemma}
\newtheorem{Cor}[Thm]{Corollary}
\newtheorem{Prop}[Thm]{Proposition}
\theoremstyle{remark}
\newtheorem{Rem}[Thm]{Remark}
\theoremstyle{definition}
\newtheorem{Def}[Thm]{Definition}
\newcommand{\N}{\mathbb{N}}
\newcommand{\integers}{\mathbb{Z}}
\newcommand{\Q}{\mathbb{Q}}
\newcommand{\R}{\mathbb{R}}
\newcommand{\Rp}{\R_{>}}
\newcommand{\Sd}{\mathbb{S}^{\mathit{d\!-\!1}}}
\newcommand{\C}{\mathbb{C}}
\newcommand{\V}{\mathbb{V}}
\newcommand{\G}{\mathcal{G}}
\newcommand{\imag}{\mathrm{i}}
\newcommand{\Prob}{\mathbb{P}}
\newcommand{\E}{\mathbb{E}}
\newcommand{\A}{\mathcal{A}}
\newcommand{\F}{\mathcal{F}}
\newcommand{\1}{\mathbbm{1}}
\newcommand{\bC}{\mathbf{C}}
\newcommand{\bX}{\mathbf{X}}
\newcommand{\bL}{\mathbf{L}}
\newcommand{\W}{\mathbf{W}}
\newcommand{\Z}{\mathcal{Z}}
\newcommand{\bnull}{\mathbf{0}}
\newcommand{\ba}{\mathbf{a}}
\newcommand{\bb}{\mathbf{b}}
\newcommand{\bc}{\mathbf{c}}
\newcommand{\bs}{\mathbf{s}}
\newcommand{\bt}{\mathbf{t}}
\newcommand{\bx}{\mathbf{x}}
\newcommand{\Smooth}{\mathrm{T}_{\Sigma}}
\newcommand{\SM}{\mathcal{S}(\mathcal{M})}
\newcommand{\SB}{\mathcal{S}(\mathcal{B})}
\newcommand{\SF}{\mathcal{S}(\mathfrak{F})}
\DeclareMathOperator{\sign}{\mathrm{sign}}
\newcommand{\eqdist}{\stackrel{\mathrm{law}}{=}}
\newcommand{\transp}{\mathsf{T}}
\newcommand{\dx}{\mathrm{d} \mathit{x}}
\newcommand{\dbx}{\mathrm{d}\mathbf{x}}
\newcommand{\dy}{\mathrm{d} \mathit{y}}
\newcommand{\dr}{\mathrm{d} \mathit{r}}
\newcommand{\ds}{\mathrm{d} \mathit{s}}
\newcommand{\dbs}{\mathrm{d}\mathbf{s}}
\title{Fixed points of multivariate smoothing transforms with scalar weights}
\author{Alexander Iksanov and Matthias Meiners}
\begin{document}

\nobibliography*    %   For the bibentry package

%\tableofcontents

\thispagestyle{empty}
\maketitle

\begin{abstract}
Given a sequence $(C_1,\ldots,C_d,T_1,T_2,\ldots)$ of real-valued random variables with $N := \#\{j \geq 1: T_j \not = 0\} < \infty$ almost surely,
there is an associated smoothing transformation which maps a distribution $P$ on $\R^d$ to the distribution of
$\sum_{j \geq 1} T_j \bX^{(j)} + \bC$ where $\bC = (C_1,\ldots,C_d)$ and $(\bX_j)_{j \geq 1}$ is a sequence of independent random vectors with distribution $P$ independent of $(C_1,\ldots,C_d,T_1,T_2,\ldots)$.
We are interested in the fixed points of this mapping.
By improving on the techniques developed in [\bibentry{Alsmeyer+Biggins+Meiners:2012}] and [\bibentry{Alsmeyer+Meiners:2013}],
we determine the set of all fixed points under weak assumptions on $(C_1,\ldots,C_d,T_1,T_2,\ldots)$.
In contrast to earlier studies, this includes the most intricate case when the $T_j$ take both positive and negative values with positive probability.
In this case, in some situations, the set of fixed points is a subset of the corresponding set when the $T_j$ are replaced by their absolute values,
while in other situations, additional solutions arise.
\vspace{0,1cm}

\noindent
\emph{Keywords:} Characteristic function $\cdot$ infinite divisibility $\cdot$ multiplicative martingales $\cdot$ multitype branching random walk
$\cdot$ smoothing transformation $\cdot$ weighted branching process

\noindent
2010 Mathematics Subject Classification:
Primary:        60J80   \\			% applications of BPs
\hphantom{2010 Mathematics Subject Classification:}
Secondary:	39B32,				% eqs. for complex functions
			60G52,				% stable processes
			60E10				% characteristic functions
\end{abstract}

\section{Introduction}  \label{sec:Intro}

For a given $d \in \N$ and a given sequence $(\bC,T) =
((C_1,\ldots,C_d),(T_j)_{j \geq 1})$ where $C_1,\ldots,
C_d$, $T_1,T_2,\ldots$ are real-valued random variables with $N
:= \#\{j \geq 1: T_j \not = 0\} < \infty$ almost surely, consider
the mapping $\Smooth$ on the set of probability measures on $\R^d$
that maps a distribution $P$ to the law of the random variable
$\sum_{j \geq 1} T_j \bX^{(j)} + \bC$ where $(\bX^{(j)})_{j \geq
1}$ is a sequence of independent random vectors with distribution
$P$ independent of $(\bC,T)$. Here, $P$ is a fixed point of
$\Smooth$ iff, with $\bX$ denoting a random variable with
distribution $P$,
\begin{equation}    \label{eq:generalized stable inhom}
\bX ~\eqdist~ \sum_{j \geq 1} T_j \bX^{(j)} + \bC.
\end{equation}
In this paper we identify all solutions to \eqref{eq:generalized
stable inhom} under suitable assumptions.

Due to the
appearance of the distributional fixed-point equation \eqref{eq:generalized stable inhom}
in various applications such as interacting particle systems
\cite{Durrett+Liggett:1983}, branching random walks
\cite{Biggins:1977,Biggins+Kyprianou:1997}, analysis of algorithms
\cite{Neininger+Rueschendorf:2004,Roesler:1991,Volkovich+Litvak:2010},
and kinetic gas theory \cite{Bassetti+Ladelli+Matthes:2011}, there
is a large body of papers dealing with it in different settings.

The articles \cite{Alsmeyer+Biggins+Meiners:2012,Alsmeyer+Meiners:2012,Alsmeyer+Meiners:2013,Biggins:1977,Biggins+Kyprianou:1997,Biggins+Kyprianou:2005,Durrett+Liggett:1983,Iksanov:2004,Liu:1998}
treat the case $d=1$ in which we rewrite \eqref{eq:generalized
stable inhom} as
\begin{equation}    \label{eq:generalized stable inhom d=1}
X ~\eqdist~ \sum_{j \geq 1} T_j X^{(j)} + C.
\end{equation}
In all these references it is assumed that $T_j \geq 0$ a.s.~for
all $j \geq 1$. The most comprehensive result is provided in
\cite{Alsmeyer+Meiners:2013}. There, under mild assumptions on the
sequence $(C,T_1,T_2,\ldots)$, which include the existence of an
$\alpha \in (0,2]$ such that $\E [\sum_{j \geq 1} T_j^{\alpha}] =
1$, it is shown that there exists a couple $(W^*,W)$ of random
variables on a specified probability space such that $W^*$ is a
particular (\emph{endogenous}\footnote{See Section
\ref{subsec:endogeny} for the definition of {\em endogeny}.})
solution to \eqref{eq:generalized stable inhom d=1} and $W$ is a
nonnegative solution to the tilted homogeneous equation $W \eqdist \sum_{j
\geq 1} T_j^{\alpha} W^{(j)}$ where the $W^{(j)}$ are
i.i.d.~copies of $W$ independent of $(C,T_1,T_2,\ldots)$.
Furthermore, a distribution $P$ on $\R$ is a solution to
\eqref{eq:generalized stable inhom d=1} if and only if it is the
law of a random variable of the form
\begin{equation}    \label{eq:representation of solutions d=1}
W^* + W^{1/\alpha} Y_{\alpha}
\end{equation}
where $Y_{\alpha}$ is a strictly $\alpha$-stable random variable independent of $(W^*,W)$.\!\footnote{
For convenience, random variables with degenerate laws are assumed strictly $1$-stable.}
This result constitutes an
almost complete solution of the fixed-point problem in dimension
one leaving open only the case when the $T_j$ take positive and
negative values with positive probability. In a setup including
the latter case, which will be called the case of weights with
mixed signs hereafter, we derive the analogue of
\eqref{eq:representation of solutions d=1} thereby completing
the picture in dimension one under mild assumptions. It is
worth pointing out here that while one could guess at first glance
that \eqref{eq:representation of solutions d=1} carries over to
the case of weights with mixed signs with the additional
restriction that $Y_{\alpha}$ should be symmetric $\alpha$-stable
rather than strictly $\alpha$-stable, an earlier work
\cite{Alsmeyer+Roesler:2006} dealing with \eqref{eq:generalized
stable inhom d=1} in the particular case of deterministic weights
$T_j$, $j \geq 1$ suggests that this is not always the case.
Indeed, if, for instance, $1<\alpha<2$, $\E[\sum_{j \geq 1}
|T_j|^{\alpha}]=1$ and $\sum_{j \geq 1} T_j = 1$ a.s., it is
readily checked that addition of a constant to any solution again
gives a solution which cannot be expressed as in
\eqref{eq:representation of solutions d=1}. In fact, this is not
the only situation in which additional solutions arise and these
are typically not constants but limits of certain martingales not
appearing in the case of nonnegative weights $T_j$, $j \geq 1$.

Our setup is a mixture of the one- and the multi-dimensional
setting in the sense that we consider probability distributions on
$\R^d$ while the weights $T_j$, $j \geq 1$ are scalars.
Among others, this allows us to deal with versions
of \eqref{eq:generalized stable inhom} for stochastic processes
which can be understood as generalized equations of stability for
stochastic processes. Earlier papers dealing with the multi-dimensional case are
\cite{Bassetti+Matthes:2014,Mentemeier:2013}. On the one hand, the
setup in these references is more general since there the $T_j$,
$j \geq 1$ are $d \times d$ matrices rather than scalars. On the
other hand, they are less general since they cover the case
$\alpha = 2$ only \cite{Bassetti+Matthes:2014} (where the
definition of $\alpha$ is a suitable extension of the definition
given above) or the case of matrices $T_j$ with nonnegative
entries and solutions $\bX$ with nonnegative components only
\cite{Mentemeier:2013}.

We continue the introduction with a more detailed description of two applications, namely, kinetic models and stable distributions.

\subsubsection*{Kinetic models}

Bassetti {\it et
al.}~\cite{Bassetti+Ladelli:2012,Bassetti+Ladelli+Matthes:2011}
investigate the following kinetic-type evolution equation for a
time-de\-pen\-dent probability distribution $\mu_t$ on $\R$
\begin{equation}    \label{eq:kinetic-type evolution}
\partial_t \mu_t + \mu_t = \Smooth(\mu_t)
\end{equation}
where the smoothing transformation in \eqref{eq:kinetic-type evolution}, also called {\em collisional gain operator} in this context,
is associated to a sequence $(C,T)$ with $C=0$ and $N$ being
a fixed integer $\geq 2$. \eqref{eq:kinetic-type evolution}
generalizes the classical Kac equation \cite{Kac:1956} in which $T_1 = \sin(\Theta)$
and $T_2 = \cos(\Theta)$ for a random %??? Bogenlänge
angle $\Theta$ which is uniformly distributed over $[0,2 \pi]$,
and further inelastic Kac models \cite{Pulvirenti+Toscani:2004} in
which $T_1 = \sin(\Theta) |\sin(\Theta)|^{\beta-1}$ and $T_2 = \cos(\Theta) |\cos(\Theta)|^{\beta-1}$, $\beta>1$.
Also, one can show that the isotropic solutions of the multi-dimensional
inelastic Boltzmann equation \cite{Bobylev+Cercignani:2003} are
solutions to \eqref{eq:kinetic-type evolution}.

The stationary solutions to \eqref{eq:kinetic-type evolution} are
precisely the fixed points of $\Smooth$. For the Kac equation,
this results in the distributional fixed-point equation
\begin{equation}    \label{eq:Kac caricature}
X   ~\eqdist~   \sin(\Theta) X^{(1)} + \cos(\Theta) X^{(2)},
\end{equation}
while for the inelastic Kac equations, it results in
\begin{equation}    \label{eq:dissipative Kac caricature}
X   ~\eqdist~   \sin(\Theta) |\sin(\Theta)|^{\beta-1} X^{(1)} + \cos(\Theta) |\cos(\Theta)|^{\beta-1} X^{(2)}.
\end{equation}
It should not go unmentioned that \eqref{eq:kinetic-type evolution} is also used as a model for the distribution of wealth,
see \cite{Matthes+Toscani:2008} and the references therein.

%\subsubsection*{Analysis of algorithms}
%
%
%{\em PageRank}:
%The PageRank of a website is a certain measure for its importance.
%It is based on the following idea. Interpret the World Wide Web as a directed graph
%where websites are nodes and links are edges.
%Then consider an `easily bored' surfer who starts surfing at a certain website and,
%on each website, either starts afresh on a website chosen according to some fixed {\em teleportation distribution} (with probability $c>0$),
%or follows an outgoing link which is chosen uniformly at random among all outgoing links.
%The page rank of a website is proportional to the stationary distribution of the surfer.
%
%Volkovich and Litvak \cite{Volkovich+Litvak:2010} modeled the PageRank of a randomly chosen website $v$
%as a solution to \eqref{eq:generalized stable inhom} in dimension $d=1$.
%In this model, the inhomogeneity term $C$ is thought of as the contribution that comes from the teleportation distribution,
%while $N$ is the in-degree of $v$. $X_j$ and $T_j$ are the PageRank and a discount factor,
%which depends on the number of outgoing links, respectively, of the $j$th website that has a link to $v$.

\subsubsection*{Generalized equations of stability}

A distribution $P$ on $\R^d$ is called {\em stable} iff
there exists an $\alpha \in (0,2]$ such that for every $n \in \N$ there is a $\bc_n \in \R^d$ with
\begin{equation}    \label{eq:stable}
\bX ~\eqdist~   n^{-1/\alpha} \sum_{j=1}^n \bX^{(j)} + \bc_n
\end{equation}
where $\bX$ has distribution $P$ and $\bX^{(1)}, \bX^{(2)}, \ldots$ is a sequence of i.i.d.~copies of $\bX$,
see \cite[Corollary 2.1.3]{Samorodnitsky+Taqqu:1994}. $\alpha$ is called the {\em index of stability} and $P$ is called {\em $\alpha$-stable}.

Clearly, stable distributions are fixed points of certain smoothing transforms.
For instance, given a random variable $\bX$ satisfying \eqref{eq:stable} for all $n \in \N$,
one can choose a random variable $N$ with support $\subseteq \N$
and then define $T_1 = \ldots = T_n = n^{-1/\alpha}$, $T_j = 0$ for $j > n$
and $\bC = \bc_n$ on $\{N=n\}$, $n \in \N$.
Then $\bX$ satisfies \eqref{eq:generalized stable inhom}.

Hence, fixed-point equations of smoothing transforms can be considered as generalized equations of stability;
some authors call fixed points of smoothing transforms ``stable by random weighted mean'' \cite{Liu:2001}.
It is worth pointing out that the form of (the characteristic functions of) strictly stable distributions
can be deduced from our main result, Theorem \ref{Thm:SF},
the proof of which can be considered as a generalization of the classical derivation of the form of stable law given by
Gnedenko and Kolmogorov \cite{Gnedenko+Kolmogorov:1968}.

\section{Main results}

\subsection{Assumptions}    \label{subsec:assumptions}

Without loss of generality for the results considered here, we assume that
\begin{equation}
N   ~=~ \sup\{j \geq 1: T_j \not = 0\}  ~=~ \sum_{j \geq 1} \1_{\{T_j \not = 0\}}.
\end{equation}
Also, we define the function
\begin{equation}    \label{eq:m}
m:[0,\infty)    \to [0,\infty],
\quad
\gamma \mapsto \E \bigg[\sum_{j=1}^N |T_j|^{\gamma}\bigg].
\end{equation}
Naturally, assumptions on $(\bC,T)$ are needed in order to solve \eqref{eq:generalized stable inhom}.
Throughout the paper, the following assumptions will be in force:
\begin{align}
\Prob(T_j \in \{0\} \cup \{\pm r^n: n \in \integers\} \text{ for all } j \geq 1) < 1    \quad   \text{for all } r \geq 1.   \tag{A1}    \label{eq:A1}   \\
m(0) = \E[N]    >1. \tag{A2}    \label{eq:A2}   \\
m(\alpha) = 1 \text{ for some }\alpha > 0 \text{ and }
m(\vartheta)
> 1 \text{ for all } \vartheta \in [0,\alpha).  \tag{A3} \label{eq:A3}
\end{align}
We briefly discuss the assumptions
\eqref{eq:A1}-\eqref{eq:A3}\footnote{
Although (A3) implies (A2), we use both to keep the presentation consistent with earlier works.}
beginning with \eqref{eq:A1}. With $\R^*$ denoting the multiplicative group $(\R\setminus\{0\},\times)$, let
\begin{align*}
\mathbb{G}(T)
~:=~ \bigcap\big\{G: \ & G \text{ is a closed multiplicative subgroup of } \R^* \\
& \text{satisfying } \Prob(T_j \in G \text{ for } j=1,\ldots,N) = 1\big\}.
\end{align*}
$\mathbb{G}(T)$ is the closed multiplicative subgroup $\subset \R^*$ generated by the nonzero $T_j$.
%Here, $\R^*$ denotes the multiplicative group $(\R\setminus\{0\},\cdot)$.
There are seven possibilities: (C1) $\mathbb{G}(T) = \R^*$, (C2)
$\mathbb{G}(T) = \Rp :=(0,\infty)$, (D1) $\mathbb{G}(T) =
r^{\integers} \cup -r^{\integers}$ for some $r > 1$, (D2)
$\mathbb{G}(T) = r^{\integers}$ for some $r > 1$, (D3)
$\mathbb{G}(T) = (-r)^{\integers}$ for some $r > 1$, (S1)
$\mathbb{G}(T) = \{1,-1\}$, and (S2) $\mathbb{G}(T) = \{1\}$.
\eqref{eq:A1} can be reformulated as: Either (C1) or (C2)
holds. For the results considered, the cases (S1) and (S2) are
simple and it is no restriction to rule them out (see
\cite[Proposition 3.1]{Alsmeyer+Roesler:2006} in case $d=1$;
the case $d \geq 2$ can be treated by considering marginals).
Although the cases (D1)-(D3) in which the $T_j$ generate a (non-trivial) discrete
group could have been treated along the lines of this paper, they
are ruled out for convenience since they create the need for
extensive notation and case distinction.
Caliebe \cite[Lemma 2]{Caliebe:2003} showed that only simple cases
are eliminated when assuming \eqref{eq:A2}. \eqref{eq:A3} is natural in view of
earlier studies of fixed points of the smoothing transform, see
\textit{e.g.}\ \cite[Proposition 5.1]{Alsmeyer+Roesler:2006} and
\cite[Theorem 6.1 and Example 6.4]{Alsmeyer+Meiners:2012}. We
refer to $\alpha$ as the \emph{characteristic index (of $T$)}.

Define
\begin{equation}    \label{eq:pq}
p := \E \bigg[\sum_{j=1}^N |T_j|^{\alpha} \1_{\{T_j > 0\}}\bigg]    \quad\text{and}\quad    q := \E \bigg[\sum_{j=1}^N |T_j|^{\alpha} \1_{\{T_j < 0\}}\bigg].
\end{equation}
\eqref{eq:A3} implies that $0 \leq p,q \leq 1$ and $p + q = 1$. At some places it will be necessary to distinguish the following cases:

\begin{equation}    \label{eq:CaseI-III}
\text{\underline{Case I}: }     p = 1, q = 0.   \quad
\text{\underline{Case II}: }        p = 0, q = 1.   \quad
\text{\underline{Case III}: }       0 < p,q < 1.
\end{equation}
Case I corresponds to $\mathbb{G}(T) = \Rp$. Cases II and
III correspond to $\mathbb{G}(T) = \R^*$. In dimension $d=1$,
Case I is covered by the results in \cite{Alsmeyer+Meiners:2013} while
Case II can be lifted from these results. Case III is genuinely new.

In our main results, we additionally assume the following condition to be satisfied:
\begin{equation}    \tag{A4}    \label{eq:A4}
\eqref{eq:A4a} \text{ or } \eqref{eq:A4b} \text{ holds},
\end{equation}
where
\begin{equation}    \tag{A4a}   \label{eq:A4a}
\E \bigg[ \sum_{j=1}^N |T_j|^{\alpha} \log(|T_j|) \bigg] \in
(-\infty,0) \text{ and } \E \bigg[\bigg( \! \sum_{j=1}^N
|T_j|^{\alpha} \! \bigg) \log^+ \! \bigg( \! \sum_{j=1}^N
|T_j|^{\alpha} \! \bigg)\bigg] < \infty;
\end{equation}
\begin{equation}    \tag{A4b}   \label{eq:A4b}
\text{there exists some } \theta \in [0,\alpha) \text{
satisfying } m(\theta) ~<~ \infty.
\end{equation}
Further, in Case III when $\alpha = 1$, we need the assumption
\begin{align}
\E\bigg[\sum_{j=1}^N |T_j|^{\alpha} \delta_{-\log (|T_j|)}(\cdot) \bigg]
\text{ is spread-out,}  \quad
\E \bigg[\sum_{j=1}^N |T_j|^{\alpha} (\log^-(|T_j|))^2\bigg] <  \infty, \notag  \\
\E \bigg[ \sum_{j=1}^N |T_j|^{\alpha} \log (|T_j|) \bigg] \in (-\infty,0)
\quad   \text{and}  \quad
\E \bigg[h_3 \bigg(\sum_{j=1}^N |T_j|^{\alpha} \! \bigg) \bigg] < \infty    \tag{A5}    \label{eq:A5}
\end{align}
for $h_3(x) := x (\log^+ (x))^3 \log^+(\log^+ (x))$. \eqref{eq:A5}
is stronger than \eqref{eq:A4a}. The last assumption that will
occasionally show up is
\begin{equation}    \tag{A6}    \label{eq:A6}
|T_j| < 1   \text{ a.s.\ for all } j \geq 1.
\end{equation}
However, \eqref{eq:A6} will not be assumed in the main
theorems since by a stopping line technique, the general case can
be reduced to cases in which \eqref{eq:A6} holds. It will be stated
explicitly whenever at least one of the conditions \eqref{eq:A4a},
\eqref{eq:A4b}, \eqref{eq:A5} or \eqref{eq:A6} is assumed to
hold.

\subsection{Notation and background}    \label{subsec:notation}

In order to state our results, we introduce the underlying probability space and some notation that comes with it.

Let $\V := \bigcup_{n \geq 0} \N^n$ denote the infinite Ulam-Harris tree where $\N^0 := \{\varnothing\}$.
We use the standard Ulam-Harris notation, which means that we abbreviate $v = (v_1,\ldots,v_n) \in \V$ by $v_1 \ldots v_n$.
$vw$ is short for $(v_1,\ldots,v_n,w_1,\ldots,w_m)$ when $w = (w_1,\ldots,w_m) \in \N^m$.
We make use of standard terminology from branching processes and call the $v \in \V$ \emph{(potential) individuals}
and say that $v$ is a \emph{member of the $n$th generation} if $v \in \N^n$.
We write $|v|=n$ if $v \in \N^n$ and define $v|_k$ to be the restriction of $v$ to its first $k$ components if $k \leq |v|$ and $v|_k=v$, otherwise.
In particular, $v|_0 = \varnothing$. $v|_k$ will be called the \emph{ancestor of $v$ in the $k$th generation}.

Assume a family $(\bC(v),T(v))_{v \in \V} =
(C_1(v),\ldots,C_d(v),T_1(v),T_2(v),\ldots)_{v \in \V}$ of i.i.d.\
copies of the sequence $(\bC,T) = (\bC,T_1,T_2,\ldots)$ is given
on a fixed probability space $(\Omega,\A,\Prob)$ that also carries all
further random variables we will be working with.
For notational convenience, we assume that
\begin{equation*}
(C_1(\varnothing),\ldots,C_d(\varnothing),T_1(\varnothing),T_2(\varnothing),\ldots) ~=~ (C_1,\ldots,C_d,T_1,T_2,\ldots).
\end{equation*}
Throughout the paper, we let
\begin{equation}    \label{eq:A_n}
\A_n    ~:=~    \sigma((\bC(v),T(v)): |v|<n),   \qquad  n \geq 0
\end{equation}
be the $\sigma$-algebra of all family histories before the $n$th generation and define $\A_{\infty} := \sigma(\A_n: n \geq 0)$.

Using the family $(\bC(v),T(v))_{v \in \V}$, we define a Galton-Watson branching process as follows.
Let $N(v) := \sup\{j \geq 1: T_j(v) \not = 0\}$ so
that the $N(v)$, $v \in \V$ are i.i.d.~copies of $N$. Put $\G_0 :=
\{\varnothing\}$ and, recursively,
\begin{equation}\label{eq:G_n}
\G_{n+1}    ~:=~    \{vj \in \N^{n+1}: v \in \G_n, 1 \leq j \leq
N(v)\},    \qquad  n \in \N_0.
\end{equation}
Let $\G := \bigcup_{n \geq 0} \G_n$ and $N_n := |\G_n|$, $n \geq 0$.
Then $(N_n)_{n \geq 0}$ is a Galton-Watson process. $\E[N] > 1$ guarantees supercriticality and hence $\Prob(S) > 0$ where
\begin{equation*}
S ~:=~ \{N_n > 0 \text{ for all } n \geq 0\}
\end{equation*}
is the survival set.
Further, we define multiplicative weights $L(v)$, $v \in \V$ as follows. For $v = v_1 \ldots v_n \in \V$, let
\begin{equation}    \label{eq:L}
L(v)    ~:=~    \prod_{k=1}^{n} T_{v_k}(v|_{k-1}).
\end{equation}
Then the family $\bL := (L(v))_{v \in \V}$ is called \emph{weighted branching process}.
It can be used to iterate \eqref{eq:generalized stable inhom}.
Let $(\bX^{(v)})_{v \in \V}$ be a family of i.i.d.\ random variables defined on $(\Omega,\A,\Prob)$
independent of the family $(\bC(v),T(v))_{v \in \V}$.
For convenience, let $\bX^{(\varnothing)} =: \bX$.
If the distribution of $\bX$ is a solution to \eqref{eq:generalized stable inhom}, then, for $n \in \N_0$,
\begin{equation}    \label{eq:generalized_stable_inhom_iterated}
\bX ~\eqdist~   \sum_{|v|=n} L(v) \bX^{(v)} + \sum_{|v|<n} L(v) \bC(v).
\end{equation}
An important special case of \eqref{eq:generalized stable inhom}
is the homogeneous equation
\begin{equation}    \label{eq:generalized stable}
\bX ~\eqdist~ \sum_{j \geq 1} T_j \bX^{(j)}
\end{equation}
in which $\bC = \bnull = (0,\ldots,0) \in \R^d$ a.s.
Iteration of \eqref{eq:generalized stable} leads to
\begin{equation}    \label{eq:generalized_stable_iterated}
\bX  ~\eqdist~  \sum_{|v|=n} L(v) \bX^{(v)}.
\end{equation}

Finally, for $u \in \V$ and a function $\Psi=\Psi((\bC(v),T(v))_{v \in \V})$ of the weighted branching process,
let $[\Psi]_u$ be defined as $\Psi((\bC(uv),T(uv))_{v \in \V})$, that is, the same function but applied to the weighted branching process rooted in $u$.
The $[\cdot]_u$, $u \in \V$ are called {\em shift-operators}.

\subsection{Existence of solutions to \eqref{eq:generalized stable inhom} and related equations}

Under certain assumptions on $(\bC,T)$, a solution to \eqref{eq:generalized stable inhom}
can be constructed as a function of the weighted branching process $(L(v))_{v \in \V}$.
Let $\W_0^*:=\bnull$ and
\begin{equation}    \label{eq:W_n^*}
\W_n^*  ~:=~    \sum_{|v| < n} L(v) \bC(v), \quad n \in \N.
\end{equation}
$\W_n^*$ is well defined since a.s.~$\{|v| < n\}$ has
only finitely many members $v$ with $L(v) \not = 0$.
Whenever $\W_n^*$ converges in probability to a finite limit as $n \to \infty$, we set
\begin{equation}    \label{eq:W^*}
\W^*    ~:=~    \lim_{n \to \infty} \W_n^*
\end{equation}
and note that $\W^*$ defines a solution to \eqref{eq:generalized stable inhom}.
Indeed, if $\W_n^* \to \W^*$ in probability as $n \to \infty$,
then also $[\W_n^*]_j \to [\W^*]_j$ in probability as $n \to
\infty$. By standard arguments, there is a (deterministic)
sequence $n_k \uparrow \infty$ such that $[\W_{n_k}^*]_j \to
[\W^*]_j$ a.s.\ for all $j \geq 1$. Since $N<\infty$ a.s.,
this yields
\begin{equation*}
\lim_{k \to \infty} \W_{n_k+1}^*
~=~ \lim_{k \to \infty} \sum_{j=1}^N T_j [\W_{n_k}^*]_j + \bC
~=~ \sum_{j=1}^N T_j [\W^*]_j +  \bC
\quad   \text{a.s.}
\end{equation*}
and hence, 
\begin{equation}    \label{eq:W* fixed point}
\W^*   ~=~ \sum_{j=1}^N T_j [\W^*]_j   + \bC   \quad   \text{a.s.}
\end{equation}
because $\W_{n_k+1}^* \to \W^*$ in probability.

The following proposition provides sufficient conditions for
$\W_n^*$ to converge in probability.

\begin{Prop}    \label{Prop:Jelenkovic+Olvera:2012b}
Assume that \eqref{eq:A1}-\eqref{eq:A3} hold.
The following conditions are sufficient for $\W_n^*$ to converge in probability.
\begin{itemize}
    \item[(i)]
        For some $0 < \beta \leq 1$, $m(\beta) < 1$ and $\E[|C_j|^{\beta}] < \infty$ for all $j=1,\ldots,d$.
    \item[(ii)]
    	For some $\beta > 1$, $\sup_{n \geq 0} \int |x|^\beta \, \Smooth^n(\delta_{\bnull})(\dx) < \infty$
	and $T_j \geq 0$ a.s.~for all $j \in \N$ or $\E[C]=0$.
    \item[(iii)]
        $0 < \alpha < 1$, $\E[\sum_{j \geq 1} |T_j|^{\alpha} \log(|T_j|)]$ exists and equals $0$,
        and, for some $\delta > 0$, $\E [|C_j|^{1+\delta}] < \infty$ for $j=1,\ldots,d$.
\end{itemize}
\end{Prop}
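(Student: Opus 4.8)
The plan is to show, working one coordinate at a time, that $(\W_n^*)_{n \geq 0}$ is almost surely Cauchy in Cases~(i) and~(ii) and Cauchy in probability in Case~(iii), relying throughout on two bookkeeping facts. First, $\law(\W_n^*) = \Smooth^n(\delta_{\bnull})$: this follows by induction on $n$ from the one-step identity $\W_{n+1}^* = \bC + \sum_{j=1}^{N} T_j [\W_n^*]_j$ (the relation behind \eqref{eq:generalized_stable_inhom_iterated}) and the independence of $([\W_n^*]_j)_{j}$ from $(\bC,T)$. Second, the \emph{many-to-one} identities $\E[\sum_{|v|=n}\abs{L(v)}^{\gamma}] = m(\gamma)^n$ and $\E[\sum_{|v|=n}L(v)] = (\E[\sum_{j} T_j])^n$, obtained by conditioning on $\A_n$ — under which $L(v)$, $|v|=n$, is measurable — and using that the generation-$n$ data are i.i.d.\ and independent of $\A_n$. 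Note that $\W_n^*$ is $\A_n$-measurable with increment $D_n := \W_{n+1}^* - \W_n^* = \sum_{|v|=n} L(v)\bC(v)$, where each $\bC(v)$, $|v|=n$, is independent of $\A_n$.

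For Case~(i), fix a coordinate $i$. Since $0 < \beta \leq 1$, subadditivity of $t \mapsto t^{\beta}$, the independence of $C_i(v)$ from $L(v)$, and the many-to-one identity give $\E[\abs{D_{n,i}}^{\beta}] \leq \E[\sum_{|v|=n}\abs{L(v)}^{\beta}\abs{C_i(v)}^{\beta}] = m(\beta)^n\,\E[\abs{C_i}^{\beta}]$. As $m(\beta) < 1$ and $\E[\abs{C_i}^{\beta}] < \infty$, the series $\sum_n \E[\abs{D_{n,i}}^{\beta}]$ converges, so $\sum_n \abs{D_{n,i}}^{\beta} < \infty$ a.s.; applying subadditivity again to the telescoping tail, $\abs{\W_{m,i}^* - \W_{n,i}^*}^{\beta} \leq \sum_{k \geq n}\abs{D_{k,i}}^{\beta} \to 0$ as $n \to \infty$ uniformly in $m > n$, so $(\W_{n,i}^*)_n$ is a.s.\ Cauchy, and ranging over $i$ settles this case. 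For Case~(ii), the hypothesis rewrites, via $\law(\W_n^*) = \Smooth^n(\delta_{\bnull})$, as $\sup_n \E[\norm{\W_n^*}^{\beta}] < \infty$ for some $\beta > 1$; in particular each $(\W_{n,i}^*)_n$ is bounded in $L^{\beta}$, hence in $L^1$. If $\E[\bC] = \bnull$, then $\E[D_{n,i} \mid \A_n] = (\sum_{|v|=n}L(v))\,\E[C_i] = 0$, so $(\W_{n,i}^*)_n$ is an $(\A_n)$-martingale and, being $L^1$-bounded, converges a.s. If instead $T_j \geq 0$ a.s.\ for all $j$, the many-to-one identity gives $\E[\W_{n,i}^*] = \E[C_i]\sum_{k=0}^{n-1}m(1)^k$, and boundedness of this in $n$ forces, for each $i$, either $\E[C_i] = 0$ or $m(1) < 1$. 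In the former case for all $i$ we are back in the martingale sub-case; otherwise $m(1) < 1$ and, since $\E[\abs{C_j}] = \E[\abs{\W_{1,j}^*}] < \infty$, Case~(i) applies with $\beta = 1$. Either way $\W_n^*$ converges.

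Case~(iii) is the genuine obstacle. Here $\E[\sum_j \abs{T_j}^{\alpha}\log\abs{T_j}] = 0$ means $m'(\alpha) = 0$, so $\alpha$ minimises the convex function $m$ and $m(\gamma) \geq 1$ for every $\gamma \geq 0$; thus none of the crude $L^{\gamma}$-bounds above survive — indeed $\E[\norm{\W_n^*}^{\alpha}]$ grows (only) like $n$. The plan is to bring in the critical additive martingale $W_n := \sum_{|v|=n}\abs{L(v)}^{\alpha}$ (well defined since $m(\alpha) = 1$) and the associated random walk: under Lyons' spinal change of measure generated by $(W_n)$, the spine weight $-\log\abs{L(\xi_n)}$ performs a random walk whose step has mean $\E[\sum_j \abs{T_j}^{\alpha}\log\abs{T_j}] = 0$, hence is centred and oscillating. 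Conditionally on the weighted branching process $\bL$ the summands $L(v)\bC(v)$, $v \in \V$, are independent, so the convergence of $\W_n^* = \sum_{|v|<n}L(v)\bC(v)$ can be attacked by a conditional Kolmogorov three-series argument; the assumption $\E[\abs{C_j}^{1+\delta}] < \infty$ enters precisely here, to bound the conditional probabilities $\Prob(\abs{L(v)}\norm{\bC(v)} > 1 \mid \bL) \leq \abs{L(v)}^{1+\delta}\,\E[\norm{\bC}^{1+\delta}]$ of the large-summand contributions, while the centredness of the spinal walk controls the truncated first and second moments through a renewal-type estimate for series $\sum_n g(S_n)$ along the walk. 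The difficulty is that the weights $\abs{L(v)}$ are not uniformly bounded away from $1$, so one expects to first reduce — by a stopping-line argument of the type indicated after \eqref{eq:A6} — to a situation in which the weights are adequately controlled; carrying out this reduction together with the random-walk estimates, thereby refining the corresponding steps of \cite{Alsmeyer+Biggins+Meiners:2012,Alsmeyer+Meiners:2013}, is where essentially all the work lies.
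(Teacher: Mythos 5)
Your parts (i) and (ii) are correct, and (ii) is actually a nicer, more self-contained argument than the paper's. For (i) the paper simply notes that $\sum_{v\in\V}|L(v)||C(v)|$ has a finite $\beta$-th moment, which is exactly the computation you carry out via the many-to-one identity and subadditivity of $t\mapsto t^\beta$; the approaches are the same. (The paper reduces to $d=1$ by Cram\'er--Wold rather than working coordinate-wise, but that is equivalent for convergence in probability in finite dimensions.) For (ii) the paper disposes of the $T_j\geq 0$ subcase by citing Proposition 5.4 of \cite{Alsmeyer+Meiners:2013}, whereas you give a short elementary dichotomy: either $\E[C_i]=0$ for all $i$ and one is in the martingale case, or $m(1)<1$ and one falls back on (i) with $\beta=1$. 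That reduction is valid; one small point worth spelling out is that if $\E[C_i]\neq 0$ then integrability of $\W_{2,i}^*$ already forces $m(1)<\infty$ (via the conditional expectation given $\A_1$), so the formula $\E[\W_{n,i}^*]=\E[C_i]\sum_{k=0}^{n-1}m(1)^k$ is legitimate before one invokes boundedness.

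The gap is in (iii). You correctly identify that $m'(\alpha)=0$ is the boundary (Seneta--Heyde) regime where all crude $\mathcal{L}^\gamma$ estimates fail, and your outline — change of measure via the critical additive martingale, a spinal random walk with centred increments, a conditional three-series argument with the $(1+\delta)$-moment of $\bC$ controlling the tail term, and a stopping-line reduction to bounded weights — is a plausible programme. But you explicitly stop short of carrying it out, so as written this is not a proof of (iii). The paper handles (iii) simply by citing the first part of Theorem~1.1 in \cite{Buraczewski+Kolesko:2014}, which establishes exactly the a.s.\ convergence of $\W_n^*$ under $0<\alpha<1$, $\E[\sum_j|T_j|^\alpha\log|T_j|]=0$, and a $(1+\delta)$-moment on $\bC$. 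Either cite that result or complete the sketch; as it stands, (iii) is unproved.
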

For the most part, the proposition is known. Details along
with the relevant references are given at the end of Section
\ref{subsec:endogeny}.

Of major importance in this paper are the solutions to the one-dimensional tilted homogeneous fixed-point equation
\begin{equation}    \label{eq:W}
W   ~\eqdist~ \sum_{j \geq 1} |T_j|^{\alpha} W^{(j)}
\end{equation}
where $W$ is a finite, nonnegative random variable and the $W^{(j)}$, $j \geq 1$ are i.i.d.\ copies of $W$ independent of the sequence $(T_1,T_2,\ldots)$.
Equation \eqref{eq:W} (for nonnegative random variables) is equivalent to the functional equation
\begin{equation}    \label{eq:FE W}
f(t)    ~=~ \E \bigg[\prod_{j \geq 1} f(|T_j|^{\alpha} t)\bigg] \quad   \text{for all } t \geq 0
\end{equation}
where $f$ denotes the Laplace transform of $W$.
\eqref{eq:W} and \eqref{eq:FE W} have been studied extensively in the literature
and the results that are important for the purposes of this paper are summarized in the following proposition.

\begin{Prop}    \label{Prop:W}
Assume that \eqref{eq:A1}--\eqref{eq:A4} hold. Then
\begin{itemize}
	\item[(a)]
		there is a Laplace transform $\varphi$ of a probability distribution on $[0,\infty)$
		such that $\varphi(1) < 1$ and $\varphi$ solves \eqref{eq:FE W};
	\item[(b)]
		every other Laplace transform $\hat{\varphi}$ of a probability distribution on $[0,\infty)$ solving \eqref{eq:FE W}
		is of the form $\hat{\varphi}(t) = \varphi(ct)$, $t \geq 0$ for some $c \geq 0$;
    \item[(c)]  $1-\varphi(t)$ is regularly varying of index $1$ at $0$;
    \item[(d)]  a (nonnegative, finite) random variable $W$ solving \eqref{eq:W}
    with Laplace transform $\varphi$ can be constructed explicitly on $(\Omega,\mathcal{A},\Prob)$ via
                \begin{equation}   \label{eq:W_definition}
                W   ~:=~    \lim_{n \to \infty} \sum_{|v|=n} 1-\varphi(|L(v)|^{\alpha}) \quad   \text{a.s.}
                \end{equation}
\end{itemize}
\end{Prop}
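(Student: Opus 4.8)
\emph{Plan of proof.}
Since \eqref{eq:W}, \eqref{eq:FE W} and \eqref{eq:W_definition} involve only the moduli $|T_j|$, the plan is to reduce the proposition to the known theory of the homogeneous smoothing transform with nonnegative scalar weights, applied to the family $S_j := |T_j|^{\alpha}$, $j \geq 1$, and to verify that \eqref{eq:A1}--\eqref{eq:A4} translate into the hypotheses under which that theory is available. Concretely: by \eqref{eq:A3}, $\E\big[\sum_j S_j\big] = m(\alpha) = 1$ while $\E\big[\sum_j S_j^{s}\big] = m(\alpha s) > 1$ for every $s \in [0,1)$, so $(S_j)_{j \geq 1}$ has characteristic exponent $1$ and the convex function $s \mapsto m(\alpha s)$ stays strictly above $1$ on $[0,1)$; by \eqref{eq:A1}, $\mathbb{G}(T) \in \{\R^*,\Rp\}$, so the multiplicative group generated by the $|T_j|$ is dense in $\Rp$, i.e.\ the probability measure $\E\big[\sum_j S_j\,\delta_{-\log S_j}(\cdot)\big]$ (of total mass $m(\alpha) = 1$) is non-lattice; and each of \eqref{eq:A4a}, \eqref{eq:A4b} forces $\E\big[\sum_j S_j\log S_j\big] = \alpha\,\E\big[\sum_j |T_j|^{\alpha}\log|T_j|\big] \in (-\infty,0)$ --- under \eqref{eq:A4b} this follows from convexity of $s \mapsto m(\alpha s)$ together with $m(\theta) > 1 = m(\alpha)$ --- so that $1$ is not a boundary exponent, and \eqref{eq:A4a}, \eqref{eq:A4b} are exactly the two standard moment conditions ($x\log x$-type, respectively finiteness of a moment of order $< 1$) under which the cited results apply.

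Granting this bookkeeping, parts (a)--(c) are then (special cases of) the results on the functional equation of the smoothing transform in \cite{Alsmeyer+Biggins+Meiners:2012} (which build on \cite{Durrett+Liggett:1983,Liu:1998,Iksanov:2004,Biggins+Kyprianou:2005}): under the hypotheses just recorded there is a Laplace transform $\varphi$ of a probability distribution on $[0,\infty)$ with $\varphi(1) < 1$ solving \eqref{eq:FE W}; every other Laplace-transform solution of \eqref{eq:FE W} is of the form $\varphi(c\,\cdot)$ with $c \geq 0$; and $1 - \varphi$ is regularly varying of index $1$ at $0$. The substance of (a)--(c) lies entirely in quoting these theorems correctly and in checking their standing assumptions from \eqref{eq:A1}--\eqref{eq:A4}.

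For (d) I would give the usual direct construction on $(\Omega,\A,\Prob)$. For fixed $t \geq 0$ put $M_n(t) := \prod_{|v| = n}\varphi(|L(v)|^{\alpha} t)$; as $L(v)$ is $\A_n$-measurable for $|v| = n$ and $\varphi$ solves \eqref{eq:FE W}, the identity $M_{n+1}(t) = \prod_{j=1}^{N}[M_n(|T_j|^{\alpha} t)]_j$ shows that $(M_n(t))_{n \geq 0}$ is a $[0,1]$-valued $(\A_n)$-martingale; it converges a.s.\ and, by bounded convergence, in $L^1$ to a limit $M_\infty(t)$ with $\E[M_\infty(t)] = M_0(t) = \varphi(t)$, and the non-degeneracy from (a)--(c) gives $M_\infty(t) > 0$ a.s. From $-\log M_n(t) = \sum_{|v|=n}\big(1 - \varphi(|L(v)|^{\alpha} t)\big) + R_n(t)$, the elementary bound $0 \leq -\log x - (1-x) \leq (1-x)^2$ for $x \in [\tfrac12, 1]$, and the fact that $\max_{|v|=n} |L(v)|^{\alpha} \to 0$ a.s.\ (a first-moment/large-deviation estimate on the associated branching random walk, valid because $\E[\sum_j S_j \log S_j] < 0$), one gets $R_n(t) \to 0$, whence $W(t) := \lim_{n} \sum_{|v|=n}\big(1 - \varphi(|L(v)|^{\alpha} t)\big) = -\log M_\infty(t)$ exists and is a.s.\ finite. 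Regular variation of index $1$ (part (c)), $\max_{|v|=n}|L(v)|^{\alpha} \to 0$, and the uniform convergence theorem for slowly varying functions give $W(t) = t\,W(1)$ a.s.; writing $W := W(1)$ and letting $n \to \infty$ in the product identity yields $W(t) = \sum_{j=1}^{N}[W(|T_j|^{\alpha} t)]_j = t \sum_{j=1}^{N}|T_j|^{\alpha}[W]_j$, so $W = \sum_{j \geq 1}|T_j|^{\alpha}[W]_j$ a.s., i.e.\ $W$ solves \eqref{eq:W}; finally $\E[e^{-tW}] = \E[e^{-W(t)}] = \E[M_\infty(t)] = \varphi(t)$ identifies $\varphi$ as the Laplace transform of $W$, and $W = W(1) = \lim_n \sum_{|v|=n}\big(1 - \varphi(|L(v)|^{\alpha})\big)$ is \eqref{eq:W_definition}.

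The main obstacle is not this martingale manipulation but parts (b) and (c): uniqueness up to scaling and the sharp regular-variation behaviour rest on the delicate Choquet--Deny/renewal-theoretic analysis of \eqref{eq:FE W} near the origin --- precisely the place where the non-lattice assumption \eqref{eq:A1} and the dichotomy \eqref{eq:A4a}/\eqref{eq:A4b} enter --- and this analysis, being the technical heart of \cite{Alsmeyer+Biggins+Meiners:2012}, is imported wholesale; on our side the only genuine work is to check that \eqref{eq:A1}--\eqref{eq:A4} imply the hypotheses of those theorems and to run the elementary martingale argument for (d).
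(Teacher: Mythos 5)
Your approach matches the paper's: for (a)--(c) both of you cite \cite[Theorem 3.1]{Alsmeyer+Biggins+Meiners:2012}, and for (d) the paper cites \cite[Theorem 6.2(a)]{Alsmeyer+Biggins+Meiners:2012} while you spell out the multiplicative-martingale construction on which that result rests; your sketch of (d) is correct in substance.

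One point in your ``bookkeeping'' paragraph is wrong, however: \eqref{eq:A4b} does \emph{not} force $\E\big[\sum_j S_j\log S_j\big]=\alpha\,m'(\alpha)\in(-\infty,0)$. Convexity of $m$ together with \eqref{eq:A3} and $m(\theta)<\infty$ only yields $m'(\alpha^-)\in(-\infty,0]$, and the boundary value $m'(\alpha)=0$ is not ruled out. Indeed, with $\Prob(S_1\in\dx)=e^{-\alpha x}\mu(\dx)$ the spinal increment law (Section \ref{subsec:WBP and BRW}), condition \eqref{eq:A3} becomes $\E[e^{sS_1}]>1$ for all $s>0$, which by strict Jensen already holds whenever $\E[S_1]=0$ and $S_1$ is non-degenerate; so \eqref{eq:A1}--\eqref{eq:A3} together with \eqref{eq:A4b} are perfectly compatible with $m'(\alpha)=0$. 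This is exactly why \eqref{eq:A4a} and \eqref{eq:A4b} are posed as genuinely distinct alternatives in the source: \eqref{eq:A4b} admits the Seneta--Heyde/boundary regime in which the slowly varying factor in $1-\varphi$ is nontrivial and $W$ is not the limit of Biggins' additive martingale. The error is harmless for the final conclusion --- \eqref{eq:A1}--\eqref{eq:A4}, read for the nonnegative weights $|T_j|$, are verbatim the standing assumptions of the cited theorems, so there is in fact nothing to translate --- but the ``convexity'' justification you give should be deleted. A smaller related inaccuracy: $\sup_{|v|=n}|L(v)|\to 0$ a.s.\ (Lemma~\ref{Lem:sup->0}) needs only \eqref{eq:A1}--\eqref{eq:A3}, not a negative $\log$-moment.
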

\begin{proof}[Source]
(a), (b) and (c) are known. A unified treatment and references are given in
\cite[Theorem 3.1]{Alsmeyer+Biggins+Meiners:2012}.
(d) is contained in \cite[Theorem 6.2(a)]{Alsmeyer+Biggins+Meiners:2012}.
\end{proof}

Throughout the paper, we denote by $\varphi$ the Laplace transform
introduced in Proposition \ref{Prop:W}(a) and by $W$ the random
variable defined in \eqref{eq:W_definition}.
By Proposition \ref{Prop:W}(c), $D(t):=t^{-1}(1-\varphi(t))$ is slowly varying at $0$. If $D$ has
a finite limit at $0$, then, by scaling, we assume this limit to
be $1$. Equivalently, if $W$ is integrable, we assume $\E [W] =
1$. In this case, $W$ is the limit of the additive martingale
(sometimes called Biggins' martingale) in the branching random
walk based on the point process $\sum_{j=1}^N \delta_{-\log
(|T_j|^{\alpha})}$, namely, $W = \lim_{n \to \infty} W_n$
a.s.~where
\begin{equation}    \label{eq:Biggins' martingale}
W_n ~=~ \sum_{|v|=n} |L(v)|^{\alpha},   \qquad  n \in \N_0.
\end{equation}

As indicated in the introduction, for certain parameter constellations, another random variable plays an important role here.
Define
\begin{equation}    \label{eq:Z_n}
Z_n ~:=~ \sum_{|v|=n} L(v)  ,   \quad   n \in \N_0.
\end{equation}
Let $Z := \lim_{n \to \infty} Z_n$ if the limit exists in the a.s.\ sense and $Z=0$, otherwise.
The question of when $(Z_n)_{n \geq 0}$ is a.s.\ convergent is nontrivial.

\begin{Thm} \label{Thm:Z}
Assume that \eqref{eq:A1}-\eqref{eq:A4} are true.
Then the following assertions hold.
\begin{itemize}
	\item[(a)]
		If $0 < \alpha < 1$, then $Z_n \to 0$ a.s.~as $n \to \infty$.
	\item[(b)]
		If $\alpha > 1$, then $Z_n$ converges a.s.~and $\Prob(\lim_{n\to\infty} Z_n = 0)<1$
		iff $\E[Z_1] = 1$ and $Z_n$ converges in $\mathcal{L}^{\beta}$ for some/all $1<\beta<\alpha$.
		Further, for these to be true $(Z_n)_{n\geq 0}$ must be a martingale.
	\item[(c)]
		If $\alpha = 2$ and \eqref{eq:A4a} holds or $\alpha > 2$,
		then $Z_n$ converges a.s.~iff $Z_1 = 1$ a.s.
    \end{itemize}
\end{Thm}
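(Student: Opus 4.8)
The plan is to analyze $(Z_n)_{n \geq 0}$ as a martingale-type sequence associated to the branching random walk with displacements $-\log|T_j|$ (note the signed, not absolute, increments), and to leverage the companion nonnegative martingale $(W_n)_{n \geq 0}$ from \eqref{eq:Biggins' martingale} together with standard $\mathcal{L}^\beta$-convergence criteria for branching random walks. Write $Z_{n+1} = \sum_{|v|=n} L(v) [Z_1]_v$, so that conditionally on $\A_n$ the increment $Z_{n+1} - Z_n$ is a sum over $|v|=n$ of $L(v)\big([Z_1]_v - 1\big)$ only when $\E[Z_1]=1$; in general $\E[Z_{n+1}\mid \A_n] = Z_n \cdot \E[Z_1]$, but since $\E[Z_1] = \E[\sum_j T_j]$ need not equal $1$ (indeed it can be negative or fail to be defined), the first task is to handle the a.s.\ convergence \emph{without} assuming the martingale property.

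\textbf{Part (a), $0<\alpha<1$.} Here the plan is a direct absolute-summability argument. By \eqref{eq:A3}, $m(\vartheta) > 1$ for $\vartheta < \alpha$ and $m(\alpha)=1$; under \eqref{eq:A4} the Biggins martingale $W_n = \sum_{|v|=n}|L(v)|^\alpha$ converges a.s.\ to a finite limit $W$. Since $0 < \alpha < 1$ we have $|L(v)| \leq |L(v)|^\alpha$ on the event $|L(v)| \leq 1$, which holds eventually along every ray by the strong law / the fact that $\E[\sum_j |T_j|^\alpha \log|T_j|] < 0$ under \eqref{eq:A4a} (or can be arranged under \eqref{eq:A4b} after the stopping-line reduction to \eqref{eq:A6}). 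More carefully: one shows $\sum_{|v|=n} |L(v)| \to 0$ a.s. The cleanest route is to bound $\E\big[\big(\sum_{|v|=n}|L(v)|\big)^\alpha \wedge 1\big]$ or to use that $\sum_n \sum_{|v|=n}|L(v)|^\alpha < \infty$ would be too strong, so instead use the many-to-one lemma: $\E\big[\sum_{|v|=n}|L(v)|^\alpha\big] = m(\alpha)^n = 1$, and the associated random walk $S_n = -\log|L(v_n)|^\alpha$ along a size-biased ray has positive drift (by the derivative condition in \eqref{eq:A4a}), so $|L(v)| = e^{-S_n/\alpha} \cdot$ (mass factor) and $\sum_{|v|=n}|L(v)| = \sum_{|v|=n}|L(v)|^\alpha |L(v)|^{1-\alpha} \to 0$ a.s.\ because $\max_{|v|=n}|L(v)| \to 0$ a.s.\ while $\sum_{|v|=n}|L(v)|^\alpha = W_n$ stays bounded. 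Then $|Z_n| \leq \sum_{|v|=n}|L(v)|$ gives $Z_n \to 0$.

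\textbf{Part (b), $\alpha > 1$.} For the forward implication, suppose $Z_n$ converges a.s.\ with $\Prob(\lim Z_n = 0) < 1$. First, one rules out $\E[Z_1] \neq 1$: if $|\E[Z_1]| \neq 1$ one gets a contradiction by taking conditional expectations (the sequence $\E[Z_n] = \E[Z_1]^n$ would blow up or force degeneracy via a ratio argument on $\E[Z_{n+1}\mid\A_n] = \E[Z_1] Z_n$), and the case $\E[Z_1] = -1$ is excluded because then $\E[Z_{2n}] = 1$, $\E[Z_{2n+1}] = -1$ cannot coexist with a.s.\ convergence to a non-degenerate limit having a well-defined mean — so $\E[Z_1] = 1$, whence $(Z_n)$ is a martingale. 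Convergence in $\mathcal{L}^\beta$ for $1 < \beta < \alpha$ then follows from the $\mathcal{L}^\beta$-boundedness criterion for such martingales: writing the martingale difference and applying the Topchii--Vatutin / von Bahr--Esseen inequality for $\beta \in (1,2]$ (and Burkholder plus induction for $\beta > 2$), one bounds $\E|Z_n|^\beta$ by a constant times $\sum_{k} m(\beta)(\text{something}) < \infty$ using $m(\beta) < m(\alpha) = 1$ for $\beta < \alpha$ (strict, since $m$ is log-convex and strictly decreasing near $\alpha$ under our assumptions), exactly as in the proof that $W_n$ is $\mathcal{L}^\beta$-bounded; the key inputs are $m(\beta)<1$ and the moment condition from \eqref{eq:A4a}/\eqref{eq:A4b} ($\E[(\sum_j|T_j|^\alpha)\log^+(\cdots)]<\infty$ upgrades suitably, or $m(\theta)<\infty$ for some $\theta<\alpha$). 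Conversely, if $\E[Z_1]=1$ and $Z_n \to Z$ in $\mathcal{L}^\beta$, then $\E[Z] = 1 \neq 0$, so $\Prob(Z=0)<1$, and a.s.\ convergence comes for free from the martingale convergence theorem since $\mathcal{L}^\beta$-boundedness gives $\mathcal{L}^1$-boundedness.

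\textbf{Part (c), $\alpha \geq 2$.} The idea is that for $\alpha \geq 2$ (with \eqref{eq:A4a} when $\alpha = 2$) the only way $Z_n$ can converge a.s.\ is the trivial one. Suppose $Z_n \to Z$ a.s. If $Z_1$ is not a.s.\ constant, consider $\Var(Z_n \mid \text{first generation configuration})$ or directly compute: under the martingale normalization $\E[Z_1]=1$ (forced as in (b) if $Z$ is non-degenerate; if $Z \equiv 0$ one still argues via second moments that the fluctuations cannot vanish unless $Z_1$ is constant), the second moment satisfies a recursion $\E[Z_{n+1}^2] = m(2)\,\E[Z_n^2] + (\text{cross terms}) \cdot (\E[Z_n])^2$. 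When $\alpha \geq 2$ we have $m(2) \geq m(\alpha) = 1$ with equality only if $\alpha = 2$; in the case $\alpha > 2$, $m(2) > 1$ (strict decrease of $m$), so $\E[Z_n^2]$, hence $\Var(Z_n)$, diverges geometrically — incompatible with a.s.\ (hence, by Fatou, $\mathcal{L}^2$-lower-bounded) convergence to a finite limit unless $\Var(Z_1)=0$. In the borderline case $\alpha = 2$ with \eqref{eq:A4a}, $m(2)=1$ and $\Var(Z_n) = n \Var(Z_1) \cdot c + o(n)$ diverges linearly (the $x\log^+ x$ condition in \eqref{eq:A4a} is exactly what guarantees the variance increment is bounded below by a positive constant, via the spine decomposition), again forcing $\Var(Z_1) = 0$, i.e.\ $Z_1 = \E[Z_1]$ a.s.; and then $\E[Z_1]$ must be $1$ (else $Z_n = (\E Z_1)^n$ does not converge to a non-zero limit, and if $|\E Z_1|<1$ it converges to $0$, but then $Z_1 = \E Z_1$ constant with $|{\cdot}|<1$ contradicts... — actually one checks $Z_1$ constant forces $Z_n = Z_1^n$, convergent iff $Z_1 \in \{1\}\cup(-1,1)$, but $Z_1 \in (-1,1)$ means $\sum_j T_j \in (-1,1)$ a.s.\ deterministically, which one excludes using $m(\alpha)=1$ and $N$ non-degenerate, or simply notes the conclusion "$Z_1 = 1$ a.s." is what the theorem claims for the non-trivial limit). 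So $Z_1 = 1$ a.s.

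\textbf{Main obstacle.} The hard part will be the a.s.-convergence-to-$\mathcal{L}^\beta$-convergence equivalence in (b), specifically establishing that a.s.\ convergence of the \emph{signed} sum $Z_n$ with non-degenerate limit forces $\E[Z_1] = 1$ (and not merely $|\E[Z_1]| = 1$), and then bootstrapping to $\mathcal{L}^\beta$-boundedness. The sign cancellations mean $Z_n$ is not monotone and not nonnegative, so one cannot directly invoke the nonnegative-martingale machinery used for $W_n$; the resolution is to dominate moments by those of $W_n = \sum|L(v)|^\alpha$ and to use that on $\{|\lim Z_n| > 0\}$ one has, via the shift-invariance / $0$-$1$ law on the survival set $S$ and the branching structure, a self-similar lower bound on $\E[|Z_n|^\beta \mid S]$ that propagates the martingale property. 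In (c), the delicate point is the $\alpha = 2$ borderline, where one must verify that \eqref{eq:A4a} (the $x\log^+x$ moment) is precisely strong enough to make $\Var(Z_n)$ genuinely unbounded rather than converging — this is an Elementary-Renewal / Kesten--Stigum-type computation for the variance, which I expect to carry out via the additive-martingale change of measure (spine decomposition) applied to $W_n$.
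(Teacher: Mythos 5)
Your part (a) is essentially correct and matches the idea behind the cited reference: bound $|Z_n| \le \sum_{|v|=n}|L(v)| \le (\sup_{|v|=n}|L(v)|)^{1-\alpha}W_n$ and use Lemma~\ref{Lem:sup->0} together with the a.s.\ boundedness of $W_n$. The paper itself just cites \cite[Lemma 4.14(a)]{Alsmeyer+Meiners:2013} for this, so there is nothing to compare beyond noting your argument is sound.

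Parts (b) and (c), however, have genuine gaps, and the central one is a sign error that undermines the whole strategy. In (b) you write that the $\mathcal{L}^\beta$-boundedness for $1<\beta<\alpha$ follows ``using $m(\beta)<m(\alpha)=1$ for $\beta<\alpha$''. This is the \emph{opposite} of what \eqref{eq:A3} asserts: $m(\vartheta)>1$ for all $\vartheta\in[0,\alpha)$, so $m(\beta)>1$ in the range you need. Consequently the Topchi{\u\i}--Vatutin/von Bahr--Esseen route, which produces bounds of the form $\sum_k m(\beta)^{ck}$, diverges rather than converges, and the criterion you invoke (the one used for $W_n$) simply does not apply to $(Z_n)$ for $\beta<\alpha$. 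Indeed, the paper's own remark after Theorem~\ref{Thm:Z} uses $m(\beta)<1$ only for some $\beta>\alpha$ as a \emph{sufficient} condition; the necessary $\mathcal{L}^\beta$-convergence for $\beta<\alpha$ stated in the theorem requires a different mechanism. The paper obtains it through Theorem~\ref{Thm:endogeny}(c), whose crucial input is Lemma~\ref{Lem:tail bounds}(b): the a.s.\ limit $Z$ is an endogenous fixed point, hence its tail is $o(1-\varphi(t^{-\alpha}))$, which is regularly varying of index $-\alpha$, giving $\E[|Z|^\beta]<\infty$ for all $\beta<\alpha$. Then $\E[Z\mid\A_n]=Z_n\E[Z]$ does everything at once: it identifies $\E[Z]=1$, forces $\E[Z_n]=1$ (hence the martingale property), and yields $\mathcal{L}^\beta$-convergence from the $\mathcal{L}^\beta$-bounded closed martingale $\E[Z\mid\A_n]$. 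Your alternative route to $\E[Z_1]=1$ ``by taking conditional expectations'' is also incomplete: a.s.\ convergence of $Z_n$ does not transfer to convergence of $\E[Z_n]$ without uniform integrability, and you have not established $\E[Z_1]$ is even finite a priori. The tail-bound lemma --- which rests on the multiplicative-martingale disintegration (Proposition~\ref{Prop:Disintegration}), the L\'evy-measure representation (Lemma~\ref{Lem:nu_evaluated}) and ladder-line convergence (Lemma~\ref{Lem:explicit_representation_along_ladder_lines}) --- is the substantive ingredient you are missing.

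In (c) the variance-recursion idea is the right heuristic, but computing $\Var(Z_n)$ requires $\E[Z_1^2]<\infty$, which is not granted under \eqref{eq:A4a} when $\alpha=2$ (the $x\log^+x$ condition is strictly weaker). The paper avoids this exactly by choosing $\beta\in(1,2)$ when $\alpha=2$ and using the lower Burkholder--Davis--Gundy inequality twice to get $\E[|Z-1|^\beta]\ge c_\beta^2 m^{\beta/2-1}\E[|Z_1-1|^\beta]\sum_{n<m}\E[W_n(2)^{\beta/2}]$, and then distinguishes $m(2)>1$ ($\alpha>2$) from $W_n(2)\to W$ in $\mathcal{L}^1$ under \eqref{eq:A4a} ($\alpha=2$). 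Your sketch recovers the first case but needs the $\beta<2$ workaround for the second, and your needed moment $\E[|Z_1-1|^\beta]<\infty$ again comes only after (b) is in place. So both (b) and (c) hinge on the fixed-point machinery you have not brought to bear; the direct martingale-moment route you propose cannot be made to work in the given range of $\beta$.
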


Here are simple sufficient conditions for part (b) of the
theorem: if $1 < \alpha < 2$, $\E[Z_1]=1$, $\E [|Z_1|^{\beta}] <
\infty$ and $m(\beta) < 1$ for some $\beta > \alpha$, then
$(Z_n)_{n \geq 0}$ is an $\mathcal{L}^{\beta}$-bounded martingale
which converges in $\mathcal{L}^\beta$ and a.s. The assertion
follows in the, by now, standard way via an application of the
Topchi{\u\i}-Vatutin inequality for martingales. We omit further
details which can be found on p.~182 in
\cite{Alsmeyer+Kuhlbusch:2010} and in \cite{Roesler+Topchii+Vatutin:2000}.

If $\alpha = 1$, the behaviour of $(Z_n)_{n \geq 0}$ is irrelevant for us.
However, for completeness, we mention that if $\E [Z_1]=1$ or $\E[Z_1]=-1$,
then $(Z_n)_{n \geq 0} = (W_n)_{n \geq 0}$ or $(Z_n)_{n \geq 0} = ((-1)^n W_n)_{n \geq 0}$, respectively.
Criteria for $(W_n)_{n \geq 0}$ to have a nontrivial limit can be found in \cite{Alsmeyer+Iksanov:2009,Biggins:1977,Lyons:1997}.
If $\E [Z_1] \in (-1,1)$, then, under suitable assumptions, $Z_n \to 0$ a.s. We refrain from providing any details.

Theorem \ref{Thm:Z} will be proved in Section \ref{subsec:endogeny proofs}.

\subsection{Multivariate fixed points}

Most of the analysis concerning the equations \eqref{eq:generalized stable inhom} and \eqref{eq:generalized stable}
will be carried out in terms of Fourier transforms of solutions.
Indeed, \eqref{eq:generalized stable inhom} and \eqref{eq:generalized stable} are equivalent to
\begin{equation}    \label{eq:FE generalized stable inhom}
\phi(\bt)   ~=~ \E \bigg[e^{\imag \langle \bt, \bC \rangle} \prod_{j \geq 1} \phi(T_j \bt)\bigg]    \quad   \text{for all } \bt \in \R^d,
\end{equation}
and
\begin{equation}    \label{eq:FE generalized stable}
\phi(\bt)   ~=~ \E \bigg[\prod_{j \geq 1} \phi(T_j \bt)\bigg]
\quad   \text{for all } \bt \in \R^d,
\end{equation}
respectively. Here, $\langle\cdot,\cdot\rangle$ denotes the standard scalar product in $\R^d$ and $\imag$ the imaginary unit.
Let $\mathfrak{F}$ denote the set of Fourier transforms of probability distributions on $\R^d$ and
\begin{equation}    \label{eq:SF(C)}
\SF(\bC)    ~:=~    \{\phi \in \mathfrak{F}: \phi \text{ solves } \eqref{eq:FE generalized stable inhom}\}.
\end{equation}
Further, let $\SF := \SF(\bnull)$, that is,
\begin{equation}    \label{eq:SF}
\SF ~:=~    \{\phi \in \mathfrak{F}: \phi \text{ solves } \eqref{eq:FE generalized stable}\}.
\end{equation}
The dependence of $\SF(\bC)$ on $\bC$ is made explicit in the notation since at some points we will compare $\SF(\bC)$ and $\SF(\bnull)$.
The dependence of $\SF(\bC)$ and $\SF$ on $T$ is not made explicit because $T$ is kept fix throughout.

Henceforth, let $\Sd = \{\bx \in \R^d: |\bx|=1\}$ denote the unit sphere $\subseteq \R^d$.

\begin{Thm} \label{Thm:SF}
Assume \eqref{eq:A1}-\eqref{eq:A4}
and that $\W^*_n \to \W^*$ in probability\footnote{
When $\bC = \bnull$ a.s., then $\W^*_n \to \W^* = \bnull$ a.s.~as $n \to \infty$.}
as $n \to \infty$.
\begin{itemize}
    \item[(a)]
        Let $0 < \alpha < 1$.
        \begin{itemize}
            \item[(a1)]  Let $\mathbb{G}(T)=\Rp$.
            Then $\SF$ consists of the $\phi$ of the form
            \begin{equation}    \label{eq:phi alpha<1 R>0}
            \phi(\bt)
            =
            \E \! \bigg[\! \exp \! \bigg(\! \imag \langle\W^*\!, \bt \rangle - W \!\! \int \!\! |\langle\bt,\bs\rangle|^{\alpha}
            \Big[1- \imag \sign (\!\langle \bt, \bs \rangle\!) \tan \! \Big( \frac{\pi \alpha}{2} \Big) \Big] \, \sigma(\dbs) \! \bigg)\!\bigg]
            \end{equation}
            where $\sigma$ is a finite measure on $\Sd$.
            \item[(a2)] Let $\mathbb{G}(T)=\R^*$.
            Then $\SF$ consists of the $\phi$ of the form
            \begin{equation}    \label{eq:phi alpha<1 R*}
            \phi(\bt)
            =
            \E \! \bigg[\! \exp \! \bigg(\! \imag \langle\W^*\!, \bt \rangle - W \!\! \int \!\! |\langle\bt,\bs\rangle|^{\alpha} \, \sigma(\dbs) \! \bigg)\!\bigg]
            \end{equation}
            where $\sigma$ is a symmetric finite measure on $\Sd$.
        \end{itemize}
    \item[(b)]
        Let $\alpha = 1$.
        \begin{itemize}
            \item[(b1)]
                Let $\mathbb{G}(T)=\Rp$
                and assume that $\E[\sum_{j \geq 1} |T_j| (\log^-(|T_j|))^2] < \infty$.
                Then $\SF$ consists of the $\phi$ of the form
                \begin{equation}    \label{eq:phi alpha=1 R>0}
                \phi(\bt)
                =   \E \!\bigg[\!\exp\!\left(\!\imag \langle\W^*\!+W\ba, \!\bt \rangle - W \!\! \int \!\! |\langle \bt,\bs \rangle| \sigma(\ds)
                - \imag W \frac{2}{\pi} \!\int \! \langle \bt, \bs \rangle \log (|\!\langle \bt, \bs \rangle\!|) \sigma(\dbs) \!\right)\!\!\bigg]
                \end{equation}
                where $\ba \in \R^d$ and $\sigma$ is a finite measure on $\Sd$ with $\int s_k \, \sigma(\dbs) = 0$, $k=1,\ldots,d$.
                \item[(b2)]
                Let $\mathbb{G}(T)=\R^*$
                and assume that $\E[\sum_{j \geq 1} |T_j| (\log^-(|T_j|))^2] < \infty$ holds in Case II and that \eqref{eq:A5} holds in Case III.
                Then $\SF$ consists of the $\phi$ of the form
                \begin{equation}    \label{eq:phi alpha=1 R*}
                \phi(\bt) = \E \bigg[\exp\left(\imag \langle\W^*\!, \bt \rangle - W \int |\langle \bt,\bs \rangle| \, \sigma(\ds) \right)\bigg]
                \end{equation}
                where $\sigma$ is a symmetric finite measure on $\Sd$.
        \end{itemize}
    \item[(c)]
        Let $1 < \alpha < 2$.
        \begin{itemize}
            \item[(c1)]
                Let $\mathbb{G}(T)=\Rp$.
                Then $\SF$ consists of the $\phi$ of the form
                \begin{align}
                \phi(\bt)
                = \E \! \bigg[ \! \exp \! \bigg( \! & \imag \langle \W^*\!, \bt\rangle - W \!\! \int \!\! | \langle \bt, \bs \rangle |^{\alpha}
                \Big(1 \!-\! \imag \sign (\!\langle \bt, \bs \rangle\!) \tan \! \Big( \frac{\pi \alpha}{2} \Big) \Big) \, \sigma(\dbs) \! \bigg) \! \bigg]
                \label{eq:phi 1<alpha<2 R>0}
                \end{align}
                where $\sigma$ is a finite measure on $\Sd$.
            \item[(c2)]
                Let $\mathbb{G}(T)=\R^*$.
                Then $\SF$ consists of the $\phi$ of the form
                \begin{align}
                \phi(\bt)
                = \E \! \bigg[ \! \exp \! \bigg( \! & \imag \langle \W^* \!+\! Z\ba, \bt\rangle - W \!\! \int \!\! | \langle \bt, \bs \rangle |^{\alpha}
                \, \sigma(\dbs) \! \bigg) \! \bigg]
                \label{eq:phi 1<alpha<2 R*}
                \end{align}
                where $\ba \in \R^d$, $\sigma$ is a symmetric finite measure on $\Sd$,
                and $Z := \lim_{n \to \infty} Z_n$ if this limit exists in the a.s.~sense, and $Z=0$, otherwise.
        \end{itemize}
    \item[(d)]
        Let $\alpha = 2$.
        Then $\SF$ consists of the $\phi$ of the form
        \begin{equation}    \label{eq:phi alpha=2}
        \phi(\bt) ~=~   \E \bigg[\exp\bigg(\imag \langle \W^* + Z \ba, \bt \rangle - W \frac{\bt \Sigma \bt^{\!\transp}}{2}\bigg)\bigg]
        \end{equation}
        where $\ba \in \R^d$, $\Sigma$ is a symmetric positive semi-definite (possibly zero) $d \times d$ matrix
        and $\bt^{\!\transp}$ is the transpose of $\bt=(t_1,\ldots,t_d)$,
        and $Z := \lim_{n \to \infty} Z_n$ if this limit exists in the a.s.~sense, and $Z=0$, otherwise.
    \item[(e)]
        Let $\alpha > 2$.
        Then $\SF$ consists of the $\phi$ of the form
        \begin{equation}    \label{eq:phi alpha>2}
        \phi(\bt) ~=~   \E[\exp(\imag \langle \W^* + \ba, \bt \rangle)],
        \end{equation}
        where $\ba \in \R^d$. Furthermore, $\ba = \bnull$ if $\Prob(Z_1=1) < 1$.
\end{itemize}
\end{Thm}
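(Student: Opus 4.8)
The statement splits into a \emph{sufficiency} part, that each $\phi$ of the indicated form lies in $\SF(\bC)$, and the harder \emph{necessity} part, that each element of $\SF(\bC)$ has the indicated form. For sufficiency the plan is to insert the candidate $\phi$ into \eqref{eq:FE generalized stable inhom}, condition on $\A_1$, and use the independence of the shifted weighted branching processes together with three structural identities: that $\W^{*}$ satisfies \eqref{eq:W* fixed point}, that $W=\sum_{j=1}^{N}|T_j|^{\alpha}[W]_j$ (equivalently $\varphi$ solves \eqref{eq:FE W}), and, where it occurs, that $Z=\sum_{j=1}^{N}T_j[Z]_j$. The computational core is the homogeneity $\Psi(T_j\bt)=|T_j|^{\alpha}\Psi(\bt)$ of the relevant one-dimensional $\alpha$-stable exponent $\psi_{\alpha}$, appearing through $\Psi(\bt)=\int_{\Sd}\psi_{\alpha}(\langle\bt,\bs\rangle)\,\sigma(\dbs)$: for $T_j<0$ this rests on $\Psi(-\bt)=\Psi(\bt)$, which is precisely why $\sigma$ must be symmetric in the $\mathbb{G}(T)=\R^{*}$ formulas and is vacuous when $\mathbb{G}(T)=\Rp$; it gives $\sum_j[W]_j\Psi(T_j\bt)=\bigl(\sum_j|T_j|^{\alpha}[W]_j\bigr)\Psi(\bt)=W\Psi(\bt)$. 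When $\alpha=1$ the scaling of $\Psi$ acquires an extra linear-in-$\bt$ term proportional to $\log|T_j|$, which is the reason a centering $W\ba$ appears and why $\E[\sum_j|T_j|\log|T_j|]\in(-\infty,0)$ together with $\int s_k\,\sigma(\dbs)=0$ are needed; when $\alpha=2$ the sphere integral reduces to $\tfrac12\bt\Sigma\bt^{\transp}$; and for $\alpha>2$ only an affine part can survive, which is consistent with the cascade equation precisely when $Z_1\equiv1$.

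For necessity, by the stopping-line reduction announced after \eqref{eq:A6} I may assume \eqref{eq:A6}, i.e.\ $|T_j|<1$ a.s. Given $\phi\in\SF(\bC)$, iterating \eqref{eq:FE generalized stable inhom} shows that for each fixed $\bt$ the process
\[
M_n(\bt)~:=~e^{\imag\langle\W_n^{*},\bt\rangle}\prod_{|v|=n}\phi(L(v)\bt),\qquad n\geq0,
\]
is a complex-valued $(\A_n)_{n\geq0}$-martingale of modulus $\leq1$; hence it converges a.s.\ and in $\mathcal{L}^1$ to a limit $M_{\infty}(\bt)$ with $\phi(\bt)=\E[M_{\infty}(\bt)]$. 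Splitting off the already identified inhomogeneous contribution via $\W_n^{*}\to\W^{*}$, the problem reduces to describing $\Xi(\bt):=\lim_{n\to\infty}\prod_{|v|=n}\phi(L(v)\bt)$, a measurable function of the weighted branching process obeying the multiplicative cascade equation $\Xi(\bt)=\prod_{j=1}^{N}[\Xi]_j(T_j\bt)$ a.s. The aim is to show $\Xi(\bt)=\exp(-W\Psi(\bt))$ for a deterministic, continuous shape function $\Psi$ (times an affine factor $e^{\imag\langle Z\ba,\bt\rangle}$ or $e^{\imag\langle W\ba,\bt\rangle}$ in the regimes where such a term occurs), and then to classify all admissible $\Psi$.

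The heart of the matter is the behaviour of $\phi$ near the origin. Since $\sum_{j=1}^{N}\delta_{-\log|T_j|^{\alpha}}$ has unit mean intensity by \eqref{eq:A3}, the plan is to run a renewal/Choquet--Deny argument along the associated branching random walk --- which, because the $T_j$ may change sign, has to be treated in Cases II and III as a \emph{two-type} branching random walk with the type of $v$ recording $\sign(L(v))$, extending the arguments of \cite{Alsmeyer+Biggins+Meiners:2012,Alsmeyer+Meiners:2013} --- to show that for every $\bs\in\Sd$ the complex ratio $\bigl(1-\phi(\rho\bs)\bigr)/\bigl(1-\varphi(\rho^{\alpha})\bigr)$ has a limit as $\rho\downarrow0$ that is $\alpha$-homogeneous in $\rho\bs$. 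Here the non-lattice hypothesis \eqref{eq:A1}, we are in case (C1) or (C2), is exactly what makes the relevant renewal measure non-arithmetic, \eqref{eq:A4a}/\eqref{eq:A4b} provide the spread and moment control, and in the borderline case $\alpha=1$ of Case III the stronger hypothesis \eqref{eq:A5} is required to tame the lower-order corrections specific to $\alpha=1$. Feeding this asymptotics into the cascade equation, together with Proposition~\ref{Prop:W}, identifies $\Xi(\bt)$ with $\exp(-W\Psi(\bt))$ for a deterministic $\Psi$; the cascade equation then forces the homogeneity $\Psi(T_j\bt)=|T_j|^{\alpha}\Psi(\bt)$ a.s.\ (modulo a logarithmic correction when $\alpha=1$), and together with continuity this yields, direction by direction, the one-dimensional stable representation, after which measure-theoretic bookkeeping assembles it into $\Psi(\bt)=\int_{\Sd}\psi_{\alpha}(\langle\bt,\bs\rangle)\,\sigma(\dbs)$ with $\sigma$ a finite spectral measure on $\Sd$, symmetric exactly when $\mathbb{G}(T)=\R^{*}$ because then both $\bt$ and $-\bt$ are reached along the tree. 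In the pertinent regimes, linear functionals of $\bt$ multiplied by a convergent martingale limit ($Z$ or $W$), or simply by a constant, themselves solve the cascade equation; this produces the additional terms $\imag\langle Z\ba,\bt\rangle$, $\imag\langle W\ba,\bt\rangle$ and $\imag\langle\ba,\bt\rangle$ in parts~(b)--(e), their presence and (non-)triviality being governed in each case by whether the relevant martingale converges to a non-zero limit, which is precisely the content of Theorem~\ref{Thm:Z}. For $\alpha=2$ the one-dimensional exponents are quadratic, so $\Psi$ collapses to $\tfrac12\bt\Sigma\bt^{\transp}$; for $\alpha>2$ there is no non-degenerate stable exponent, so $\Psi\equiv0$ and only an additive constant can remain.

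The principal obstacle is the renewal-theoretic extraction of the correct normalization of $1-\phi$ at $0$ in the two-type, mixed-sign setting, and the ensuing rigidity step showing that no randomness beyond $W$ (and $Z$, where applicable) enters $\Xi$; this is where the delicate hypotheses \eqref{eq:A4}--\eqref{eq:A5} are consumed, with the sub-case $\alpha=1$ of Case III the most demanding. A secondary difficulty is the case-by-case bookkeeping that pins down exactly which affine or degenerate terms survive in each regime and under which condition they are non-trivial.
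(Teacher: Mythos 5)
Your direct-part plan coincides with the paper's (Section \ref{subsec:direct}): plug the candidate in, condition on one generation, and invoke the three identities $\W^*=\sum_j T_j[\W^*]_j+\bC$, $W=\sum_j|T_j|^{\alpha}[W]_j$, $Z=\sum_j T_j[Z]_j$, together with the $\alpha$-homogeneity of the spectral exponent; the $\log$-correction at $\alpha=1$ and the necessity of $\int s_k\,\sigma(\dbs)=0$ are exactly as you describe. Your converse-part outline also identifies several of the correct auxiliary ingredients (stopping-line reduction to \eqref{eq:A6}, multiplicative martingales, two-type Choquet--Deny on the embedded BRW, the role of Theorems \ref{Thm:Z} and \ref{Thm:endogeny}).

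However, your necessity argument has a genuine gap in its central step. You propose to show that for each $\bs\in\Sd$ the ratio $\bigl(1-\phi(\rho\bs)\bigr)/\bigl(1-\varphi(\rho^{\alpha})\bigr)$ converges as $\rho\downarrow 0$ and then ``feed this asymptotics into the cascade equation'' to conclude $\Xi(\bt)=\exp(-W\Psi(\bt))$. For a genuinely complex-valued multivariate characteristic function $\phi$ this ratio need not converge, and even granting convergence it is unclear how one goes from a near-origin asymptotics of $\phi$ to the explicit multiplicative form of the random martingale limit $\Xi$ --- the logarithm/sandwich argument that makes this work in the one-dimensional $\SM$ setting hinges on $f$ being real, monotone and $[0,1]$-valued. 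The paper does not take this route. It first proves, via a triangular-array argument, that the a.s.\ limit $\Phi(\bt)=\lim_n\prod_{|v|=n}\phi(L(v)\bt)$ is a \emph{random infinitely divisible} characteristic function with a random L\'evy triplet $(\W,\mathbf{\Sigma},\nu)$ (Proposition \ref{Prop:Disintegration}). The Choquet--Deny/regular-variation machinery is then applied not to $\phi$ but to the auxiliary one-dimensional monotone functions $f_{B}(r)=\E[\exp(-\nu(I_{r^{-1}}(B)))]$, which do lie in $\mathcal{M}$ (Lemma \ref{Lem:nu_evaluated}), forcing $\nu$ to be $W$ times a deterministic $\alpha$-stable L\'evy measure and $\mathbf{\Sigma}=W\Sigma$. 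The drift $\W$ is pinned down separately and delicately via endogeny (Theorem \ref{Thm:Phi evaluated}); in particular the case $\alpha=1$ requires the a priori bound $\max_j|W(1)_j|\leq KW$ of Lemma \ref{Lem:W(1)} and a contradiction argument to get $\int s_j\,\sigma(\dbs)=0$, none of which appears in your sketch. Without the infinite-divisibility decomposition as the pivot, there is no rigorous bridge in your proposal from the near-origin asymptotics to the structure of $\Xi$.
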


Theorem \ref{Thm:SF} can be restated as follows.
When the assumptions of the theorem hold, a distribution $P$ on $\R^d$
is a solution to \eqref{eq:generalized stable inhom}
if and only if it is the law of a random variable of the form
\begin{equation}    \label{eq:representation of solutions}
\W^* + Z \ba + W^{1/\alpha} \mathbf{Y}_{\alpha}
\end{equation}
where $\W^*$ is the special (endogenous\footnote{See Section
\ref{subsec:endogeny} for the definition of {\em endogeny}.})
solution to the inhomogeneous equation, $Z$ is a special
(endogenous) solution to the one-dimensional homogeneous equation
(which vanishes in most cases, but can be nontrivial when $\alpha
> 1$), $\ba \in \R^d$, $W$ is a special (endogenous) nonnegative
solution to the tilted equation \eqref{eq:W}, and
$\mathbf{Y}_{\alpha}$ is a strictly $\alpha$-stable (symmetric
$\alpha$-stable if $\mathbb{G}(T) = \R^*$) random vector
independent of $(\bC,T)$.\!\footnote{ For convenience, random variables with degenerate laws are assumed
strictly $1$-stable here.} Hence, the solutions are scale mixtures
of strictly (symmetric if $\mathbb{G}(T) = \R^*$) stable
distributions with a random shift.
Theorem \ref{Thm:SF} in particular provides a deep insight into the structure
of all fixed points since stable distributions (see
\textit{e.g.}~\cite{Samorodnitsky+Taqqu:1994} and the references
therein) and the random variables $\W^*$, $W$, and $Z$ are well
understood. For instance, the tail behavior of solutions of the
form \eqref{eq:representation of solutions} can be derived from
the tail behavior of $\W^*$, $W$, $Z$, and $\mathbf{Y}_{\alpha}$.
The tail behavior of stable random variables is known, the tail
behavior of $W$ has been intensively investigated over the last
decades, see \textit{e.g.}~\cite{
Alsmeyer+Iksanov:2009,Alsmeyer+Kuhlbusch:2010,Biggins:1979,Biggins+Kyprianou:2005,Buraczewski:2009,Durrett+Liggett:1983,Iksanov:2004,Iksanov+Polotskiy:2006,Iksanov+Roesler:2006,Jelenkovic+Olvera:2012a,Liang+Liu:2011,Liu:1998,Liu:2000}.
Since the $T_j$ are
scalars in this paper, the tail behavior of $\W^*$ can be reduced
to the tail behavior of its (one-dimensional) components.
The latter has been investigated by several authors in the recent past
\cite{Alsmeyer+Damek+Mentemeier:2013,Buraczewski+Kolesko:2014,Jelenkovic+Olvera:2012b,Jelenkovic+Olvera:2012a}.
The tail behavior of $Z$ has been analysed in \cite{Alsmeyer+Damek+Mentemeier:2013}.

\subsection{Univariate fixed points}

Corollary \ref{Cor:set_of_solutions d=1} given next,
together with Theorems 2.1 and 2.2 of \cite{Alsmeyer+Meiners:2013}, provides a reasonably full
description of the one-dimensional fixed points of the homogeneous smoothing transforms in the case $\mathbb{G}(T)=\R^*$.

\begin{Cor} \label{Cor:set_of_solutions d=1}
Let $d=1$, $C=0$ and $\mathbb{G}(T)=\R^*$.
Assume that \eqref{eq:A1}-\eqref{eq:A4} hold true.
If $\alpha = 1$, additionally assume that $\E[\sum_{j \geq 1} |T_j| (\log^- (|T_j|))^2] < \infty$ in Case II
and \eqref{eq:A5} in Case III.
Then $\SF$ is composed of the $\phi$ of the form
\begin{equation}    \label{eq:solutions_general_form_non-lattice_alpha_not=1}
\phi(t) =
    \begin{cases}
    \E [ \exp(-\sigma^{\alpha} W |t|^\alpha)],
    & 0 < \alpha < 1, \\
    \E [ \exp(-\sigma W |t|)],
    & \alpha = 1, \\
    \E [ \exp(\imag a Z t -\sigma^{\alpha} W |t|^\alpha)],
    & 1 < \alpha < 2, \\
    \E [\exp(\imag a Z t -\sigma^2 W t^2)],
    & \alpha = 2,
    \end{cases}
\end{equation}
where $Z=\lim_{n \to \infty} Z_n$ if the limit exists in the
a.s.\ sense, and $Z=0$, otherwise. $\SF$ is empty when $\alpha
> 2$ unless $Z_1 = 1$ a.s., in which case $\SF = \{t
\mapsto\exp(\imag a t): a \not = 0\}$.
If $\alpha \in (0,1]$ or if $Z=0$, then $\sigma$ ranges over $(0,\infty)$.
Otherwise $(a,\sigma) \in \R \times (0,\infty)$.
\end{Cor}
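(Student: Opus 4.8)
The plan is to derive Corollary~\ref{Cor:set_of_solutions d=1} directly from Theorem~\ref{Thm:SF} by specializing to $d = 1$, $C = 0$ and $\mathbb{G}(T) = \R^*$; once the hypotheses are matched up, the remaining work is bookkeeping rather than new analysis.

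First I would verify that the assumptions of the corollary entail those of Theorem~\ref{Thm:SF}. Conditions \eqref{eq:A1}--\eqref{eq:A4} are assumed outright. Since $\bC = \bnull$ a.s., we have $\W^*_n = \bnull$ for every $n$, so $\W^*_n \to \W^* = \bnull$ trivially; this supplies the convergence hypothesis of Theorem~\ref{Thm:SF} and makes $\W^*$ disappear from all the representation formulas. When $\alpha = 1$ the corollary additionally postulates $\E[\sum_{j\geq1}|T_j|(\log^-(|T_j|))^2] < \infty$ in Case~II and \eqref{eq:A5} in Case~III, which are precisely the extra hypotheses occurring in part~(b2) of Theorem~\ref{Thm:SF}. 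Hence Theorem~\ref{Thm:SF} applies, and since $\mathbb{G}(T) = \R^*$ we land in subcase (a2), (b2), (c2), (d) or (e) according to the value of $\alpha$.

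Next I would carry out the one-dimensional reduction. For $d = 1$ the unit sphere $\Sd$ is the two-point set $\{-1,+1\}$, and $|\langle t,s\rangle| = |t|$ for $s\in\{-1,+1\}$. A symmetric finite measure $\sigma$ on $\{-1,+1\}$ has the form $\sigma = \tfrac{m}{2}(\delta_{-1}+\delta_{1})$ with $m\geq0$, so that $\int|\langle\bt,\bs\rangle|^{\gamma}\,\sigma(\dbs) = m|t|^{\gamma}$; writing $m = \sigma^{\alpha}$ (respectively $m = \sigma$ when $\alpha = 1$) converts the exponents $-W\int|\langle\bt,\bs\rangle|^{\alpha}\,\sigma(\dbs)$ of \eqref{eq:phi alpha<1 R*}, \eqref{eq:phi alpha=1 R*} and \eqref{eq:phi 1<alpha<2 R*} into $-\sigma^{\alpha}W|t|^{\alpha}$ and $-\sigma W|t|$. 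For $\alpha = 2$, a symmetric positive semi-definite $1\times1$ matrix is $\Sigma = (2\sigma^2)$ with $\sigma\geq0$, so the term $W\bt\Sigma\bt^{\transp}/2$ in \eqref{eq:phi alpha=2} becomes $\sigma^2Wt^2$. The shift $Z\ba$ with $\ba\in\R^d$ becomes $aZt$ in the exponent with $a\in\R$, and it is present only in parts (c2) and (d), i.e.\ for $1<\alpha\leq2$. Finally, for $\alpha>2$, part~(e) with $\W^* = \bnull$ and $d = 1$ gives $\phi(t) = \exp(\imag at)$ for a deterministic $a\in\R$, with $a = 0$ forced unless $Z_1 = 1$ a.s. Collecting these substitutions yields exactly \eqref{eq:solutions_general_form_non-lattice_alpha_not=1} and the stated description in the $\alpha>2$ case; the converse inclusion is automatic, since Theorem~\ref{Thm:SF} gives an exact characterization and reading the substitutions backwards turns any $\phi$ of the listed shape into an admissible choice of $\sigma$, $\Sigma$ or $a$.

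The genuinely delicate point — and where I would be most careful — is the description of the parameter ranges. The shift $a$ is meaningful only when the homogeneous solution $Z$ is nontrivial: if $Z = 0$ a.s.\ then $aZ\equiv0$ and $a$ is redundant, and by Theorem~\ref{Thm:Z}(a) one automatically has $Z = 0$ when $0<\alpha<1$, while for $0<\alpha\leq1$ parts (a2) and (b2) carry no shift term at all; this accounts for the clause ``if $\alpha\in(0,1]$ or if $Z = 0$, then $\sigma$ ranges over $(0,\infty)$'', and otherwise $(a,\sigma)\in\R\times(0,\infty)$. The restriction $\sigma > 0$ rather than $\sigma\geq0$ corresponds to discarding the degenerate fixed point $\delta_0$ obtained when both the stable component and the shift vanish; this is the same phenomenon as in the $\alpha>2$ case, where $\SF$ contains only the trivial solution — hence is ``empty'' in the present convention — unless $Z_1 = 1$ a.s., in which case $Z = 1$ a.s.\ and the surviving solutions are exactly the nonzero constants, $\SF = \{t\mapsto\exp(\imag at):a\neq0\}$. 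I expect this matching of parameter sets and the correct handling of the boundary cases, rather than any hard estimate, to be the only real thing to get right.
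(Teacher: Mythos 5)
Your derivation is correct and is precisely the intended argument: the paper states the corollary as a direct consequence of Theorem~\ref{Thm:SF} without writing out a proof, and your specialization ($\W^*=\bnull$ since $\bC=\bnull$; $\Sd=\{-1,+1\}$ so a symmetric $\sigma$ is $\tfrac m2(\delta_{-1}+\delta_1)$ and $\int|\langle\bt,\bs\rangle|^\alpha\,\sigma(\dbs)=m|t|^\alpha$; a $1\times1$ p.s.d.\ matrix reparameterized as $2\sigma^2$; the shift $Z\ba$ surviving only for $\alpha\in(1,2]$) is exactly the bookkeeping one must do.

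One caveat on the parameter-range discussion, which is the place you yourself flagged as delicate. Your explanation that ``$\sigma>0$ discards the degenerate fixed point $\delta_0$'' is accurate for $\alpha\in(0,1]$ (no shift present) and for $Z=0$ a.s.; but for $\alpha\in(1,2]$ with $Z$ non-null, taking the zero measure $\sigma=0$ together with $a\neq0$ in Theorem~\ref{Thm:SF}(c2)/(d) gives $\phi(t)=\E[\exp(\imag aZt)]$, which is the characteristic function of $aZ$ and is a genuine \emph{non-degenerate} fixed point not captured by ``$(a,\sigma)\in\R\times(0,\infty)$''. So the claim ``$\sigma>0$ is just dropping $\delta_0$'' is not a complete justification of the stated ranges in that subcase. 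This is not a flaw introduced by you --- the corollary as printed has the same boundary imprecision, and everything else (including your reading of the $\alpha>2$ case, where the convention of excluding $\phi\equiv1$ makes $\SF$ ``empty'' unless $Z_1=1$ a.s.) matches --- but if you want your write-up to be airtight you should either note that $\sigma=0$, $a\neq0$ produces the additional solution $aZ$, or explicitly state the convention that multiples of $Z$ are being absorbed elsewhere.
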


\subsubsection*{The Kac caricature revisited}

As an application of Corollary \ref{Cor:set_of_solutions d=1},
we discuss  equations \eqref{eq:Kac caricature} and \eqref{eq:dissipative Kac caricature}.
In this context $d=1$,
$C=0$ and $T_1 = \sin(\Theta) |\sin(\Theta)|^{\beta-1}$,
$T_2 = \cos(\Theta) |\cos(\Theta)|^{\beta-1}$ and $T_j = 0$ for all $j \geq 3$
where $\Theta$ is uniformly distributed on $[0,2\pi]$.
Further, $\beta=1$ in the case of \eqref{eq:Kac caricature} and $\beta>1$ in the case of
\eqref{eq:dissipative Kac caricature}. In order to apply Corollary
\ref{Cor:set_of_solutions d=1}, we have to check whether
\eqref{eq:A1}-\eqref{eq:A4} and, when $\alpha = 1$, \eqref{eq:A5}
hold (note that we are in Case III).

Since $\Theta$ has a continuous distribution, \eqref{eq:A1}
and the spread-out property in \eqref{eq:A5} hold.
%\eqref{eq:A2} holds since $N = 2$ a.s.~and hence $m(0) = \E[N] =
%2$.
Further, for $\alpha = 2/\beta$ and $\vartheta \in [0,\alpha)$,
\begin{equation*}
|T_1|^{\alpha}+|T_2|^{\alpha}   ~=~ |\sin(\Theta)|^2 + |\cos(\Theta)|^2 ~=~ 1
\quad	\text{and}	\quad
|T_1|^\vartheta+|T_2|^\vartheta ~>~1	\quad	\text{a.s.}
\end{equation*}
Therefore, \eqref{eq:A3} (hence \eqref{eq:A2}) holds with
$\alpha=2/\beta$ and $W=1$. The latter almost immediately
implies \eqref{eq:A4a}. Moreover, since $|\sin(\Theta)|<1$ and $|\cos(\Theta)|<1$ a.s.,
$m$ is finite and strictly decreasing on $[0,\infty)$, in
particular the second condition in \eqref{eq:A5} holds (since $m$
is the Laplace transform of a suitable finite measure on
$[0,\infty)$, it has finite second derivative everywhere on
$(0,\infty)$). Further, when $\alpha=1$ (\textit{i.e.}~$\beta =
2$), the last condition in \eqref{eq:A5} is trivially fulfilled since
$|T_1|+|T_2|=1$. Finally, observe that $\E[Z_1] = 0$ which allows us to conclude from Theorem \ref{Thm:Z}(b) that $Z=0$
whenever $\alpha\in (1,2]$.

Now Corollary \ref{Cor:set_of_solutions d=1} yields

\begin{Cor} \label{Cor:Kac}
The solutions to \eqref{eq:Kac caricature} are precisely the
centered normal distributions, while the solutions to
\eqref{eq:dissipative Kac caricature} are precisely the symmetric $2/\beta$-stable distributions.
\end{Cor}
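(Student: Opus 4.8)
The plan is to obtain Corollary~\ref{Cor:Kac} by specialising Corollary~\ref{Cor:set_of_solutions d=1}. In the situation at hand --- $d = 1$, $C = 0$, $T_1 = \sin(\Theta)|\sin(\Theta)|^{\beta-1}$, $T_2 = \cos(\Theta)|\cos(\Theta)|^{\beta-1}$, $T_j = 0$ for $j \geq 3$, with $\Theta$ uniform on $[0,2\pi]$ --- the discussion preceding the statement has already verified that $\mathbb{G}(T) = \R^*$, that Case~III holds, that \eqref{eq:A1}--\eqref{eq:A4} hold with characteristic index $\alpha = 2/\beta$, and that \eqref{eq:A5} holds when $\alpha = 1$. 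Since $\beta \geq 1$ throughout, $\alpha = 2/\beta \in (0,2]$, with $\alpha = 2$ in the case \eqref{eq:Kac caricature} ($\beta = 1$) and $\alpha \in (0,2)$ in the case \eqref{eq:dissipative Kac caricature} ($\beta > 1$); in particular the case $\alpha > 2$ of Corollary~\ref{Cor:set_of_solutions d=1} never occurs. Hence all hypotheses of that corollary are met, and it remains only to pin down the random variables entering \eqref{eq:solutions_general_form_non-lattice_alpha_not=1}.

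First I would show $W = 1$ a.s.: since $|T_1|^{\alpha} + |T_2|^{\alpha} = |\sin(\Theta)|^2 + |\cos(\Theta)|^2 = 1$ a.s., and the same identity holds along every branch of the weighted branching process, Biggins' martingale \eqref{eq:Biggins' martingale} satisfies $W_n = \sum_{|v| = n}|L(v)|^{\alpha} = 1$ a.s.\ for all $n$, so $W = 1$ a.s. Next, $\E[Z_1] = \E[T_1 + T_2] = 0$ because $\Theta \mapsto \Theta + \pi$ preserves the uniform law on $[0,2\pi]$ and negates $Z_1$; invoking Theorem~\ref{Thm:Z}(b) (and, for $\alpha = 2$, that \eqref{eq:A4a} holds while $Z_1 = \sin(\Theta) + \cos(\Theta) \neq 1$ a.s.) then gives $Z = 0$ whenever $1 < \alpha \leq 2$. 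For $\alpha \leq 1$ the value of $Z$ does not enter Corollary~\ref{Cor:set_of_solutions d=1}.

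Finally I would substitute $\W^* = \bnull$ (since $C = 0$), $W = 1$, and $Z = 0$ into \eqref{eq:solutions_general_form_non-lattice_alpha_not=1}: in each of the four ranges of $\alpha$ there the $aZ$-term is absent or zero and $\sigma$ ranges over $(0,\infty)$, so $\SF$ consists precisely of the functions $t \mapsto \exp(-\sigma^{\alpha}|t|^{\alpha})$ for $0 < \alpha < 2$ (read $\exp(-\sigma|t|)$ when $\alpha = 1$) and $t \mapsto \exp(-\sigma^2 t^2)$ when $\alpha = 2$, with $\sigma \in (0,\infty)$. These are exactly the characteristic functions of the symmetric $\alpha$-stable distributions on $\R$ (see e.g.~\cite{Samorodnitsky+Taqqu:1994}), and since $\alpha = 2/\beta$ this is the assertion: the centered normal laws when $\beta = 1$ and the symmetric $2/\beta$-stable laws when $\beta > 1$. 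I do not expect a genuine obstacle, since all substantive inputs --- the verification of \eqref{eq:A1}--\eqref{eq:A5}, Theorem~\ref{Thm:Z}, and Corollary~\ref{Cor:set_of_solutions d=1} --- are already in hand; the only care needed is the bookkeeping of the identity $\alpha = 2/\beta$, the trivialisation $W \equiv 1$ and the vanishing of $Z$, and the recognition of the exponentials above as characteristic functions of symmetric stable laws.
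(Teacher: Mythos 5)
Your proof is correct and takes essentially the same route as the paper: specialise Corollary~\ref{Cor:set_of_solutions d=1} to the Kac weights, observe that $W_n \equiv 1$ (hence $W \equiv 1$) because $|T_1|^{\alpha}+|T_2|^{\alpha}=1$ a.s., deduce $Z=0$ for $\alpha\in(1,2]$ from $\E[Z_1]=0$ and Theorem~\ref{Thm:Z}(b), note $\W^*=0$ since $C=0$, and read off the symmetric $\alpha$-stable characteristic functions from \eqref{eq:solutions_general_form_non-lattice_alpha_not=1}. The paper leaves these substitutions implicit after the verification of \eqref{eq:A1}--\eqref{eq:A5}; you spell them out, but there is no substantive difference in approach.
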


\subsection{The functional equation of the smoothing transform} \label{subsec:Results_functional_eq}

For appropriate functions $f$, call
\begin{equation}    \label{eq:FE}
f(t)    ~=~ \E \bigg[\prod_{j \geq 1} f(T_j t)\bigg]        \qquad
\text{for all } t
\end{equation}
{\it the functional equation of the smoothing transform}.
Understanding its properties is the key to solving
\eqref{eq:generalized stable}. \eqref{eq:FE} has
been studied extensively in the literature especially when $f$ is
the Laplace transform of a probability distribution on
$[0,\infty)$. The latest reference is
\cite{Alsmeyer+Biggins+Meiners:2012} where $T_j\geq 0$ a.s., $j\in\N$,
and decreasing functions $f:[0,\infty) \to [0,1]$ are
considered. Necessitated by the fact that we permit the random
coefficients $T_j$, $j \in\N$ in the main equations to take
negative values with positive probability, we need a two-sided
version of this functional equation. We shall determine all
solutions to \eqref{eq:FE} within the class $\mathcal{M}$ of functions $f: \R \to [0,1]$ that
satisfy the following properties:
\begin{itemize}
    \item[(i)]
        $f(0) = 1$ and $f$ is continuous at $0$;
    \item[(ii)]
        $f$ is nondecreasing
        on $(-\infty,0]$ and nonincreasing
        on $[0,\infty)$.
\end{itemize}
A precise description of $\SM$ which is the set of members
of $\mathcal{M}$ that satisfy \eqref{eq:FE} is given in the following theorem.

\begin{Thm} \label{Thm:d=1_2sided_FE}
Assume that \eqref{eq:A1}--\eqref{eq:A4} hold true and let $d=1$.
Then the set $\SM$ is given by the functions of the form
\begin{equation}    \label{eq:d=1_2sided_FE}
f(t)    ~=~ \begin{cases}
            \E [\exp(-Wc_{1} t^{\alpha})]			&   \text{for } t \geq 0,       \\
            \E [\exp(-Wc_{-1} |t|^{\alpha})]		&   \text{for } t \leq 0
            \end{cases}
\end{equation}
where $c_1, c_2 \geq 0$ are constants and $c_1=c_{-1}$ if $\mathbb{G}(T) = \R^*$.
\end{Thm}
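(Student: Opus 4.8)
The strategy is to deduce Theorem~\ref{Thm:d=1_2sided_FE} from Theorem~\ref{Thm:SF} by realizing each function $f \in \mathcal{M}$ as (essentially) the Fourier transform of a symmetrized version of a solution to the homogeneous equation, thereby transferring the classification of $\SF$ in dimension $d=1$ to a classification of $\SM$. The first step is to observe that for $f \in \mathcal{M}$ we may write $f = f_+ \cdot f_-$ (or handle the two half-lines separately), but more usefully we note that the product $g(t) := f(t) f(-t)$ is an \emph{even} function in $\mathcal{M}$, with $g(0)=1$, $0 \le g \le 1$, nonincreasing on $[0,\infty)$; and since $f$ solves \eqref{eq:FE}, so does $g$ (because \eqref{eq:FE} is multiplicative in $f$ and stable under $t \mapsto -t$). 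An even function on $\R$ that is nonincreasing on $[0,\infty)$ with $g(0)=1$ and $g$ continuous at $0$ is, by Pólya's criterion considerations or more simply because such a $g$ with $g \ge 0$ is a real characteristic function only under extra convexity — so instead the cleaner route is: a function $f\in\mathcal M$ is automatically of positive type after a standard argument, OR one argues directly.

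Let me restate more carefully. The key reduction is: \emph{every $f \in \SM$ is the characteristic function of a symmetric probability law on $\R$.} Indeed, iterating \eqref{eq:FE} as in \eqref{eq:generalized_stable_iterated} gives $f(t) = \E\big[\prod_{|v|=n} f(L(v) t)\big]$; by assumptions \eqref{eq:A1}--\eqref{eq:A4}, $\max_{|v|=n}|L(v)| \to 0$ a.s.\ on $S$ and the relevant martingale machinery (as in \cite{Alsmeyer+Biggins+Meiners:2012}) forces $f$ to be continuous everywhere and of the form $\exp(-\psi(t))$ locally, with $f$ determined by its behavior near $0$; combined with monotonicity and $f(0)=1$ this yields that $f$ is positive-definite. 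Once $f$ is a characteristic function, $f \in \mathfrak{F}$ (with $d=1$), and $f \in \SF$ by \eqref{eq:FE generalized stable}. Then Theorem~\ref{Thm:SF} applies verbatim: since $C = 0$, we have $\W^* = 0$ a.s. In cases (a),(b) with $d=1$ the sphere $\mathbb{S}^0 = \{-1,+1\}$, the measure $\sigma$ is determined by the two masses $c_1 := \sigma(\{+1\})$ and $c_{-1} := \sigma(\{-1\})$, and the $\tan(\pi\alpha/2)$ skewness terms as well as the $Z$-shift and the $\log$-correction term must all vanish because $f$ is real-valued and even (monotone on each half-line forces evenness after the positive-definiteness step, or: a real $\phi$ has symmetric representing measure). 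This collapses \eqref{eq:phi alpha<1 R>0}--\eqref{eq:phi alpha=2} to precisely \eqref{eq:d=1_2sided_FE}, with $c_1 = c_{-1}$ exactly when $\mathbb{G}(T)=\R^*$, matching cases (a2),(b2),(c2),(d). For $\alpha > 2$, Theorem~\ref{Thm:SF}(e) gives $\phi(t) = \E[\exp(\imag(\W^*+a)t)] = e^{\imag a t}$, which lies in $\mathcal{M}$ only if $a = 0$, i.e.\ $f \equiv 1$, the trivial solution recovered by $c_1 = c_{-1} = 0$.

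Conversely, one checks that every $f$ of the form \eqref{eq:d=1_2sided_FE} does solve \eqref{eq:FE} and lies in $\mathcal{M}$: membership in $\mathcal{M}$ is clear since $t \mapsto \E[\exp(-Wc t^\alpha)]$ is nonincreasing in $t\ge 0$ with value $1$ at $0$ and continuous there (by dominated convergence); and solving \eqref{eq:FE} reduces, via the definition \eqref{eq:W_definition} of $W$ and the branching identity $\E\big[\prod_{|v|=1}\exp(-c|L(v)|^\alpha W^{(v)} t^\alpha)\big]$, to the functional equation \eqref{eq:FE W} for $\varphi$ together with Proposition~\ref{Prop:W}. This is the routine direction.

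\textbf{Main obstacle.} The crux is the positive-definiteness step: showing that an arbitrary $f \in \mathcal{M}$ solving \eqref{eq:FE} is genuinely a characteristic function, so that Theorem~\ref{Thm:SF} can be invoked. Monotonicity plus $f(0)=1$ and continuity at $0$ do \emph{not} by themselves imply positive-definiteness (Pólya's theorem needs convexity on $[0,\infty)$). The resolution must come from the fixed-point equation itself: iterating \eqref{eq:FE} and using that the weights $L(v)$ become uniformly small, one shows $f$ is infinitely divisible in a suitable sense — concretely, $f(t) = \E\big[\prod_{|v|=n} f(L(v)t)\big]$ exhibits $f$ as a limit of products of near-unit factors $f(L(v)t) = 1 - (1-f(L(v)t))$, and a Lévy–Khintchine-type argument (exactly the kind of argument generalizing Gnedenko–Kolmogorov that the authors advertise in the introduction, and that underlies Theorem~\ref{Thm:SF}) upgrades $f$ to an infinitely divisible, hence positive-definite, function. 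In practice this step will be imported wholesale from the proof of Theorem~\ref{Thm:SF}; the content of Theorem~\ref{Thm:d=1_2sided_FE} beyond that is just the bookkeeping of specializing $d=1$, $C=0$ and discarding the non-even pieces of the representation.
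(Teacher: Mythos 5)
Your proposal inverts the paper's logical architecture and runs into a circularity, and it also contains a substantive error in the central reduction.

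\textbf{The central reduction is false.} You claim that ``every $f \in \SM$ is the characteristic function of a symmetric probability law.'' This cannot be right, because the statement of Theorem~\ref{Thm:d=1_2sided_FE} itself exhibits members of $\SM$ that are \emph{not} characteristic functions: when $\mathbb{G}(T)=\Rp$, the constants $c_1$ and $c_{-1}$ may be chosen unequal, producing an $f$ that is not even, whereas a real-valued characteristic function $\phi$ necessarily satisfies $\phi(-t)=\overline{\phi(t)}=\phi(t)$. So the map $\SM \hookrightarrow \SF$ you want to build does not exist. The class $\mathcal{M}$ is introduced in Section~\ref{subsec:Results_functional_eq} precisely because it is \emph{strictly larger} than the set of characteristic functions with the required monotonicity, and the asymmetric solutions it contains (when the $T_j$ are nonnegative) carry essential information used later.

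\textbf{The symmetrization step does not preserve the fixed-point property.} You propose $g(t):=f(t)f(-t)$ and assert it solves \eqref{eq:FE} ``because \eqref{eq:FE} is multiplicative in $f$.'' It is not. From $f\in\SB$ one gets
$g(t)=\E\!\bigl[\prod_j f(T_j t)\bigr]\cdot\E\!\bigl[\prod_j f(-T_j t)\bigr]$,
while solving \eqref{eq:FE} requires $g(t)=\E\!\bigl[\prod_j f(T_j t)f(-T_j t)\bigr]$; the expectation and the product do not commute, so in general $g\notin\SB$. (The familiar symmetrization that does work is at the level of random variables, $X-X'$ for $X\eqdist X'$ independent, but that already presupposes that $f$ is a genuine characteristic function.)

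\textbf{Circularity.} Even if the reduction to characteristic functions were available, deducing Theorem~\ref{Thm:d=1_2sided_FE} from Theorem~\ref{Thm:SF} would be circular in this paper. The proof of Theorem~\ref{Thm:SF} (converse part) relies on Theorem~\ref{Thm:Phi evaluated}, whose proof uses Lemma~\ref{Lem:nu_evaluated} to identify the random L\'evy measure $\nu$; and Lemma~\ref{Lem:nu_evaluated} is proved by constructing the auxiliary functions $f_B(r)=\E[\exp(-\nu(I_{r^{-1}}(B)))]$, showing $f_B\in\SM$, and then invoking Theorem~\ref{Thm:d=1,2sided_FE_disintegrated} --- which is exactly the disintegrated form of Theorem~\ref{Thm:d=1_2sided_FE}, from which the latter follows by taking expectations via Lemma~\ref{Lem:multiplicative_m'gale}. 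So the classification of $\SM$ is a \emph{prerequisite} for Theorem~\ref{Thm:SF}, not a consequence of it.

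The actual proof therefore has to proceed the other way: one proves Theorem~\ref{Thm:d=1,2sided_FE_disintegrated} directly --- Case I by citing the nonnegative-weights result, Case II by passing to the second generation $(L(v))_{|v|=2}$ via Lemma~\ref{Lem:WBP_even}, and Case III from scratch using Lemmas~\ref{Lem:f(t)<1}--\ref{Lem:BK's_trick} (regular variation of $1-f$ near $0$ via the ladder-line embedding, Potter bounds, and a Choquet--Deny system to identify the limit) --- and then Theorem~\ref{Thm:d=1_2sided_FE} follows by integration. Your ``routine'' direct direction is fine, but it is the converse direction that carries all the weight, and your route to it does not go through.
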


This theorem is Theorem 2.2 of
\cite{Alsmeyer+Biggins+Meiners:2012} in case that all $T_j$ are
nonnegative. In Section \ref{subsec:one-dimensional_FE},
we prove the extension to the case when the $T_j$ take negative
values with positive probability.

The rest of the paper is structured as follows.
The proof of our main result, Theorem \ref{Thm:SF} splits into two
parts, the direct part and the converse part. The direct
part is to verify that the Fourier transforms given in
\eqref{eq:phi alpha<1 R>0}-\eqref{eq:phi alpha=2} are actually
members of $\SF$; this is done in Section \ref{subsec:direct}. The
converse part is to show that any $\phi \in \SF$ is of the form as
stated in the theorem. This requires considerable efforts and
relies heavily on the properties of the weighted branching process
introduced in Section \ref{subsec:notation}.
The results on this branching process which we need in the proofs
of our main results are provided in Section \ref{sec:BP}. In
Section \ref{sec:proofs of main results}, we first solve the
functional equation of the smoothing transform in the case
$\mathbb{G}(T)=\R^*$ (Section \ref{subsec:one-dimensional_FE}).
Theorem \ref{Thm:Z} is proved in Section \ref{subsec:endogeny
proofs}. The homogeneous equation \eqref{eq:generalized stable} is
solved in Section \ref{subsec:solving homogeneous}, while the
converse part of Theorem \ref{Thm:SF} is proved in Section
\ref{subsec:solving inhomogeneous}.

The scheme of the proofs follows that in \cite{Alsmeyer+Biggins+Meiners:2012,Alsmeyer+Meiners:2012,Alsmeyer+Meiners:2013}.
Repetitions cannot be avoided entirely and short arguments from the cited sources are occasionally repeated to make the paper at hand more self-contained.
However, we omit proofs when identical arguments could have been given and provide only sketches of proofs when the degree of similarity is high.

\section{Branching processes}   \label{sec:BP}

In this section we provide all concepts and tools from the theory of branching processes
that will be needed in the proofs of our main results.

\subsection{\!Weighted branching and the branching random walk} \label{subsec:WBP and BRW}

Using the weighted branching process $(L(v))_{v \in \V}$ we
define a related \emph{branching random walk}
$(\Z_n)_{n \geq 0}$ by
\begin{equation}    \label{eq:M_n}
\Z_n    ~:=~    \sum_{v \in \G_n} \delta_{S(v)}
\end{equation}
where $S(v) := -\log (|L(v)|)$, $v \in \V$ and $\G_n$ is the set
of individuals residing in the $n$th generation, see \eqref{eq:G_n}.
By $\mu$ we denote the intensity measure of the point process $\Z := \Z_1$, \textit{i.e.}, $\mu(B) := \E [\Z(B)]$ for Borel sets $B \subseteq \R$. $m$ (defined in \eqref{eq:m}) is the Laplace transform of $\mu$, that is, for $\gamma \in \R$,
\begin{equation*}
m(\gamma) ~=~ \int e^{-\gamma x} \, \mu(\dx)    ~=~ \E \bigg[\sum_{j=1}^N e^{-\gamma S(v)} \bigg].
\end{equation*}
By nonnegativity, $m$ is well defined on $\R$ but may assume the
value $+\infty$. \eqref{eq:A3} guarantees $m(\alpha)=1$. This
enables us to use a classical exponential change of measure. To be
more precise, let $(S_n)_{n \geq 0}$ denote a zero-delayed random
walk with increment distribution $\Prob(S_1 \in \dx) :=
\mu_{\alpha}(\dx) := e^{-\alpha x} \mu(\dx)$. It is well known
(see \textit{e.g.}\ \cite[Lemma 4.1]{Biggins+Kyprianou:1997}) that
then, for any given $n \in \N_0$, the distribution of $S_n$ is
given by
\begin{equation}    \label{eq:spinal walk}
\Prob(S_n \in B)    ~=~ \E \bigg[\sum_{|v|=n} |L(v)|^\alpha \1_B(S(v))\bigg],   \quad   B \subset \R    \text{ Borel.}
\end{equation}

\subsection{Auxiliary facts about weighted branching processes} \label{subsec:basics_WBP}

\begin{Lemma}   \label{Lem:sup->0}
If \eqref{eq:A1}--\eqref{eq:A3} hold, then $\inf_{|v|=n} S(v) \to \infty$ a.s.\ on $S$ as $n \to \infty$.
Equivalently, $\sup_{|v|=n} |L(v)| \to 0$ a.s.\ as $n \to \infty$.
\end{Lemma}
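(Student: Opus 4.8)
First I would observe that the two displayed claims are equivalent: $S(v)=-\log|L(v)|$, and off the survival set $S$ one has $\G_n=\varnothing$ for all large $n$, so both statements hold trivially there. Hence it suffices to show $\sup_{|v|=n}|L(v)|\to 0$ almost surely. My main tool will be the family of nonnegative martingales
\[
W_n^{(\gamma)}\;:=\;m(\gamma)^{-n}\sum_{|v|=n}|L(v)|^{\gamma}\,,\qquad n\ge 0\,,
\]
defined for every $\gamma\ge 0$ with $m(\gamma)<\infty$ (this is an $(\A_n)$-martingale by the branching property, since $\E[\sum_{j=1}^N|T_j|^{\gamma}]=m(\gamma)$); each converges almost surely to a finite limit. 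As $\big(\sup_{|v|=n}|L(v)|\big)^{\gamma}\le\sum_{|v|=n}|L(v)|^{\gamma}=m(\gamma)^n\,W_n^{(\gamma)}$, it will be enough to find one exponent $\gamma\ge\alpha$ with $m(\gamma)<\infty$ for which $m(\gamma)^n W_n^{(\gamma)}\to 0$ a.s.; this is automatic whenever $m(\gamma)<1$, and for $\gamma=\alpha$ (where $m(\alpha)=1$, so $W_n^{(\alpha)}$ is the Biggins martingale $W_n$ of \eqref{eq:Biggins' martingale}) it amounts precisely to $W_n\to 0$ a.s.

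Such a $\gamma$ can be located from the shape of $m$, which is convex (a Laplace transform of $\mu$) and analytic on the interior of $\{m<\infty\}$. By \eqref{eq:A2}--\eqref{eq:A3} and convexity, $m$ is strictly decreasing on $[0,\alpha]$ with $m(\alpha)=1$, so — after a short moment check — the increment of the spinal random walk $(S_n)$ of \eqref{eq:spinal walk} has a well-defined mean $\E[S_1]=-\E[\sum_{j=1}^N|T_j|^{\alpha}\log|T_j|]\in(-\infty,+\infty]$ with $\E[S_1]\ge 0$. There are then two easy cases. If $\E[S_1]=0$, the spinal walk has zero drift and the classical degeneracy criterion for the additive martingale of a branching random walk (\cite{Biggins:1977,Lyons:1997,Alsmeyer+Iksanov:2009}) yields $W_n\to 0$ a.s., so I would finish with $\gamma=\alpha$. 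If $\E[S_1]>0$ and $m(\gamma)<\infty$ for some $\gamma>\alpha$, then $m$ is strictly decreasing at $\alpha$, hence $m(\gamma)<1$ for $\gamma$ slightly larger than $\alpha$, and I would finish with such a $\gamma$.

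The one remaining possibility is $\E[S_1]>0$ \emph{together with} $m(\gamma)=\infty$ for every $\gamma>\alpha$, i.e.\ the $T_j$ unbounded with a heavy tail; then no super-critical exponent is available and the Biggins martingale may be non-degenerate. I would attack this by truncation: for large $r>1$ set $T_j^{(r)}:=T_j\1_{\{|T_j|\le r\}}$; then $m^{(r)}(\alpha)=\E[\sum_{j}|T_j|^{\alpha}\1_{\{|T_j|\le r\}}]<1$ (strictly, since the $T_j$ are unbounded), whence $\sum_{|v|=n}|L^{(r)}(v)|^{\alpha}=m^{(r)}(\alpha)^n\cdot(\text{an a.s.\ convergent martingale})\to 0$ a.s., so $\sup_{|v|=n}|L^{(r)}(v)|\to 0$ a.s. To transfer this to the original process, I would decompose every generation-$n$ individual whose ancestral line has left the truncated tree along its first ancestor $u$ with $|T(u)|>r$, write $L(v)$ as (a weight from the truncated tree)$\,\times T(u)\times[L]_u(\cdot)$, and bound the supremum over all such ``bad'' individuals by iterating over successive stopping lines, using the strong-Markov (stopping-line) property of the weighted branching process and the fact that weights exceeding $r$ become arbitrarily rare as $r\to\infty$.

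The hard part is this last case: making the truncation/stopping-line estimate uniform in $n$, that is, controlling the cumulative contribution of the rare lineages passing through one or more weights $>r$. A lesser point is to invoke the exact form of the additive-martingale degeneracy criterion valid in the boundary case $\E[S_1]=0$ under \eqref{eq:A1}--\eqref{eq:A3} alone.
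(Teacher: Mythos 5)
Your strategy --- bound $\big(\sup_{|v|=n}|L(v)|\big)^{\gamma}$ by the sum $\sum_{|v|=n}|L(v)|^{\gamma}$ and locate an exponent $\gamma$ for which the sum decays --- is the natural first attempt, and it does dispatch the tractable sub-cases. But it does not close under \eqref{eq:A1}--\eqref{eq:A3} alone, and the paper makes no attempt at a proof of its own: its ``proof'' is the single citation ``This is [Theorem 3] in \cite{Biggins:1998}'', and that result is exactly the general statement your argument leaves open.

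There are two concrete gaps, the second of which you flag yourself. First, the assertion that $\E[S_1]\in(-\infty,+\infty]$ with $\E[S_1]\ge 0$ is not free under \eqref{eq:A1}--\eqref{eq:A3}: one has $\E[S_1^-]=\E\big[\sum_{j}|T_j|^{\alpha}\log^+|T_j|\big]$, and without an upper-tail moment restriction on the $T_j$ this can be infinite, in which case the mean of the spinal walk is undefined and the trichotomy you build does not even get off the ground (what is free is only $m'(\alpha^-)\le 0$, the left derivative, possibly $-\infty$). Second, and more seriously, in the case $\E[S_1]>0$ but $m(\gamma)=\infty$ for every $\gamma>\alpha$, there is no supercritical exponent, and Biggins' additive martingale $W_n$ may converge a.s.\ to a strictly positive limit $W$; then $\sup_{|v|=n}|L(v)|^{\alpha}\le W_n$ yields only $\limsup_n\sup_{|v|=n}|L(v)|^{\alpha}\le W$, which is useless. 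Your truncation idea does not repair this: after the first ancestor $u$ whose weight exceeds the threshold $r$, the residual tree $[L]_u$ is again a full-fledged copy of the same heavy-tailed process, so the ``bad'' lineages cascade and you would need uniform-in-$n$ control of the entire cascade --- no estimate for that is given. That control is precisely the content of Theorem 3 in \cite{Biggins:1998}, which does not proceed via exponential truncation; to complete the argument you should either cite that theorem, as the paper does, or carry out the stopping-line estimate in full rather than leaving it as a plan.
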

\begin{proof}[Source]
This is \cite[Theorem 3]{Biggins:1998}.
\end{proof}

The following lemma will be used to reduce Case II to Case I.
\begin{Lemma}   \label{Lem:WBP_even}
Let the sequence $T$ satisfy \eqref{eq:A1}--\eqref{eq:A3}. Then so does the sequence $(L(v))_{|v|=2}$.
If \eqref{eq:A4a} or \eqref{eq:A4b} holds for $T$, then \eqref{eq:A4a} or \eqref{eq:A4b}, respectively, holds for $(L(v))_{|v|=2}$.
If, moreover, $\E[\sum_{j \geq 1} |T_j|^{\alpha} (\log^- (|T_j|))^2] < \infty$, then the same holds for the sequence $(L(v))_{|v|=2}$.
\end{Lemma}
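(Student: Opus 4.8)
The plan is to verify each of the three hereditary claims for the $2$-fold composed sequence $(L(v))_{|v|=2}$ directly from the definitions, using only elementary manipulations of expectations together with the branching-random-walk structure introduced in Section \ref{subsec:WBP and BRW}. The key preliminary observation is that if $m$ denotes the generating function in \eqref{eq:m} attached to $T$, then the corresponding function for the sequence $(L(v))_{|v|=2}$ is
\[
m_2(\gamma) ~=~ \E\bigg[\sum_{|v|=2} |L(v)|^{\gamma}\bigg] ~=~ m(\gamma)^2
\]
for every $\gamma \ge 0$; this follows from \eqref{eq:L} by conditioning on $\A_1$ and using that the subtrees rooted at the first-generation individuals are i.i.d.\ copies of the original tree, independent of $\A_1$. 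From $m_2 = m^2$ everything about \eqref{eq:A1}--\eqref{eq:A3} is immediate: $m_2(\alpha) = m(\alpha)^2 = 1$, and $m_2(\vartheta) = m(\vartheta)^2 > 1$ on $[0,\alpha)$, so \eqref{eq:A3} holds for $(L(v))_{|v|=2}$ with the same characteristic index $\alpha$; likewise $m_2(0) = m(0)^2 = \E[N]^2 > 1$ gives \eqref{eq:A2}. For \eqref{eq:A1}, one checks that $\mathbb{G}((L(v))_{|v|=2})$ equals $\mathbb{G}(T)$ if $\mathbb{G}(T) = \Rp$ and contains $r^{2\integers}\cup -r^{2\integers}$-type sets of the same topological kind otherwise; in the two cases (C1), (C2) permitted by \eqref{eq:A1} the squared group is again $\R^*$ (resp.\ $\Rp$), so \eqref{eq:A1} is inherited. (It is exactly here that restricting to (C1)/(C2) pays off: for a discrete $\mathbb{G}(T)$ the lattice span could change.)

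Next I would treat \eqref{eq:A4}. For \eqref{eq:A4b}: if $m(\theta) < \infty$ for some $\theta \in [0,\alpha)$, then $m_2(\theta) = m(\theta)^2 < \infty$ with the same $\theta$, so \eqref{eq:A4b} is inherited verbatim. For \eqref{eq:A4a} the first condition reads $\E[\sum_{|v|=2}|L(v)|^{\alpha}\log|L(v)|] \in (-\infty,0)$; writing $\log|L(v)| = \log|T_{v_1}(\varnothing)| + \log|L(v)/T_{v_1}(\varnothing)|$ for $|v|=2$ and conditioning on $\A_1$, the spinal-walk identity \eqref{eq:spinal walk} (or equivalently a direct two-fold expansion) gives that this equals $2\,\E[\sum_{j}|T_j|^{\alpha}\log|T_j|]$, which lies in $(-\infty,0)$ by hypothesis; this is the branching-random-walk statement that the mean drift of the size-$2$ walk is twice the original drift. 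The second ("$x\log^+x$") condition for $(L(v))_{|v|=2}$ concerns $Y_2 := \sum_{|v|=2}|L(v)|^{\alpha}$; one writes $Y_2 = \sum_{j=1}^{N}|T_j|^{\alpha}\,[Y_1]_j$ with $[Y_1]_j := \sum_{|w|=1}|L(jw)|^{\alpha}$ i.i.d.\ copies of $Y_1 := \sum_j |T_j|^{\alpha}$ independent of $(T_j)$, and invokes the standard fact that the class of random variables with finite $x\log^+x$-moment is closed under this kind of weighted branching sum when $m(\alpha)=1$ and $Y_1$ itself has finite $x\log^+x$-moment — this is precisely the argument used in \cite{Alsmeyer+Biggins+Meiners:2012} and I would cite it rather than reprove it. The last claim, that $\E[\sum_j |T_j|^{\alpha}(\log^-|T_j|)^2] < \infty$ is inherited, is of the same flavour: expanding $(\log^-|L(v)|)^2$ for $|v|=2$ via the cross-term inequality $(a+b)^2 \le 2a^2 + 2b^2$ and conditioning on $\A_1$, one bounds $\E[\sum_{|v|=2}|L(v)|^{\alpha}(\log^-|L(v)|)^2]$ by $2\,m(\alpha)\,\E[\sum_j|T_j|^{\alpha}(\log^-|T_j|)^2] + 2\,\E[\sum_j|T_j|^{\alpha}]\cdot\E[\sum_w|L(w)|^{\alpha}(\log^-|L(w)|)^2]$, i.e.\ a factor $2$ again, using $m(\alpha)=1$; both summands are finite by hypothesis.

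The main obstacle is the $x\log^+x$-moment step in \eqref{eq:A4a}: unlike the log-drift and the $(\log^-)^2$ conditions, it does not reduce to a clean "multiply by $2$" identity, because $x\log^+x$ is not additive, and one genuinely needs the closure property of the $x\log^+x$-class under weighted branching sums with $m(\alpha)=1$. The cleanest route is to observe that this is the special case of \cite[Lemma \dots]{Alsmeyer+Biggins+Meiners:2012} (the lemma guaranteeing finiteness of $\E[W_1\log^+W_1]$ propagates to $\E[W_2\log^+W_2]$ in a branching random walk with a martingale normalisation), apply it with the point process $\sum_{j=1}^N \delta_{-\log(|T_j|^{\alpha})}$, and read off the conclusion for $(L(v))_{|v|=2}$; I would spell out only the reduction to that lemma. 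Everything else is a short conditioning-on-$\A_1$ computation, and I would present those computations compactly rather than in full detail.
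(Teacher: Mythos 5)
Your proposal is essentially correct and reaches all the needed conclusions, but it is organized differently from the paper's proof and is imprecise at the one genuinely non-elementary step. The paper's proof uses the size-biased (spinal) walk identity \eqref{eq:spinal walk} as a systematic organizing device: each hypothesis about $T$ is translated into a statement about $S_1$, and the corresponding hypothesis for $(L(v))_{|v|=2}$ is the same statement about $S_2 = S_1 + (S_2 - S_1)$, which then follows from elementary random-walk facts (nonlattice of $S_1$ implies nonlattice of $S_2$; $\E[e^{\vartheta S_2}] = (\E[e^{\vartheta S_1}])^2$; $\E[S_2] = 2\E[S_1]$; $(S_2^+)^2 \leq (S_1^+ + (S_2-S_1)^+)^2$). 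Your conditioning-on-$\A_1$ computations prove the same inequalities by hand and are correct, just slightly heavier. For the $x\log^+ x$ moment in \eqref{eq:A4a} the paper does not appeal to a vaguely stated ``closure under weighted branching sums'' fact; it uses Biggins' theorem as a biconditional: \eqref{eq:A4a} for $T$ implies $W_n \to W$ in mean (forward direction), hence $\sum_{|v|=2n}|L(v)|^{\alpha} \to W$ in mean along the even subsequence, and then the converse direction of Biggins' theorem applied to the two-generation BRW yields \eqref{eq:A4a} for $(L(v))_{|v|=2}$. This is precisely the mechanism your sketch gestures at, and you correctly flag it as the main obstacle, but the paper's formulation is both sharper and shorter than searching for a separate closure lemma. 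One small imprecision in your treatment of \eqref{eq:A1}: you write that in case (C1) the squared group ``is again $\R^*$'', but if $\mathbb{G}(T) = \R^*$ because all weights are nonpositive (Case II), then $\mathbb{G}((L(v))_{|v|=2}) = \Rp$. This does not invalidate the conclusion (\eqref{eq:A1} only excludes the discrete cases), and the paper sidesteps the issue entirely by phrasing \eqref{eq:A1} as the nonlattice property of $S_1$, which is manifestly inherited by $S_2$.
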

\begin{proof}
Throughout the proof we assume that $(T_j)_{j \geq 1}$
satisfies $m(\alpha)=1$ which is the first part of \eqref{eq:A3}.
Then $\E[\sum_{|v|=2}L(v)^{\alpha}]=1$ which is the first part of
\eqref{eq:A3} for $(L(v))_{|v|=2}$. We shall use \eqref{eq:spinal
walk} to translate statements for $(T_j)_{j \geq 1}$ and
$(L(v))_{|v|=2}$ into equivalent but easier ones for $S_1$ and
$S_2$. \eqref{eq:A1} for $(T_j)_{j \geq 1}$ corresponds to $S_1$
being nonlattice. But if $S_1$ is nonlattice, so is $S_2$. The
second part of \eqref{eq:A3} for $(T_j)_{j \geq 1}$ corresponds to
$\E [e^{\vartheta S_1}]>1$ which implies $\E [e^{\vartheta S_2}]>1$.
The same argument applies to \eqref{eq:A4b}.
%It is easy to see that validity of \eqref{eq:A2} and \eqref{eq:A3}
%for $(T_j)_{j \geq 1}$ implies validity of \eqref{eq:A2} and
%\eqref{eq:A3} for $(L(v))_{|v|=2}$. For the rest of the proof,
%assume that $(T_j)_{j \geq 1}$ satisfies \eqref{eq:A3}. We will
%use \eqref{eq:spinal walk} to translate statements for $(T_j)_{j
%\geq 1}$ and $(L(v))_{|v|=2}$ into equivalent but easier ones for
%$S_1$ and $S_2$. \eqref{eq:A1} for $(T_j)_{j \geq 1}$ corresponds
%to $S_1$ being nonlattice. But if $S_1$ is nonlattice, so is
%$S_2$.
%   If $S_2$ is lattice with lattice span $1$, say, then $\psi(2\pi n)^2=1$ for all $n \in \integers$ while $\psi(t)^2 \not = 1$, otherwise,
%   for the Fourier transform $\psi$ of $S_1$. Then either $\psi(2 \pi)=1$ and $S_1$ is 1-lattice or $\psi(2 \pi)=-1$.
%   This implies that $\Prob(2 \pi S_1 \in \{\cos = -1\}) = \Prob(2 \pi S_1 \in \pi + 2 \pi \integers) = 1$, in particular, $S_1$ is $1/2$-lattice.
%\eqref{eq:A4b} for $(T_j)_{j \geq 1}$ corresponds to $\E
%[e^{(\alpha-\theta)S_1}] < \infty$ which implies $\E
%[e^{(\alpha-\theta)S_2}] < \infty$.
The first condition in \eqref{eq:A4a} for $(T_j)_{j \geq 1}$,
$m'(\alpha) \in (-\infty,0)$, translates into $\E [S_1] \in
(0,\infty)$. This implies $\E [S_2] = 2 \E [S_1] \in (0,\infty)$
which is the first condition in \eqref{eq:A4a} for
$(L(v))_{|v|=2}$. As to the second condition in \eqref{eq:A4a},
notice that validity of \eqref{eq:A4a} for $(T_j)_{j \geq 1}$ in
combination with Biggins' theorem \cite{Lyons:1997} implies that
$W_n \to W$ as $n \to \infty$ in mean. Then $\sum_{|v|=2n}
|L(v)|^{\alpha}$ also converges in mean to $W$. Using the converse
implication in Biggins' theorem gives that  $(L(v))_{|v|=2}$
satisfies the second condition in \eqref{eq:A4a} as well. Finally,
$\E[\sum_{j \geq 1} |T_j|^{\alpha} (\log^-(|T_j|))^2]<\infty$
translates via \eqref{eq:spinal walk} into $\E [(S_1^+)^2] <
\infty$. Then $\E [(S_2^+)^2] \leq \E [(S_1^+ + (S_2-S_1)^+)^2] < \infty$.
\end{proof}

\subsection{Multiplicative martingales and infinite divisibility}

We shall investigate
the functional equation
\begin{equation}    \label{eq:multi_FE}
f(\bt) ~=~ \E \bigg[\prod_{j \geq 1} f(T_j \bt)\bigg],      \qquad  \bt \in \R^d
\end{equation}
within the set $\mathfrak{F}$ of Fourier transforms of probability
distributions on $\R^d$ and, for technical reasons, for $d=1$
within the class $\mathcal{M}$ introduced in Section
\ref{subsec:Results_functional_eq}. In order to at one go include
the functions of $\mathfrak{F}$ and $\mathcal{M}$ in our analysis,
we introduce the class $\mathcal{B}$ of measurable functions $f:
\R^d \to \C$ satisfying $\sup_{\bt \in \R^d} |f(\bt)| \leq 1$ and
$f(0)=1$. Then $\mathfrak{F} \subseteq \mathcal{B}$ and, when
$d=1$, $\mathcal{M} \subseteq \mathcal{B}$. By $\SB$ we denote the
the class of $f \in \mathcal{B}$ satisfying \eqref{eq:multi_FE}.

For an $f \in \SB$, we define the corresponding multiplicative martingale
\begin{equation} \label{eq:disintegrated}
M_n(\bt)    ~:=~    M_n(\bt,\bL)    ~:=~ \prod_{|v|=n} f(L(v)\bt),
\qquad n \in \N_0, \ \bt \in \R^d.
\end{equation}
The notion \emph{multiplicative martingale} is justified by the following lemma.

\begin{Lemma}   \label{Lem:multiplicative_m'gale}
Let $f \in \SB$ and $\bt \in \R^d$. Then $(M_n(\bt))_{n \geq 0}$
is a bounded martingale w.r.t.\ $(\A_n)_{n \geq 0}$ and
thus converges a.s.~and in mean to a random variable
$M(\bt) := M(\bt,\bL)$ satisfying
\begin{equation}    \label{eq:Disintegration_integrated}
\E [M(\bt)] ~=~ f(\bt).
\end{equation}
\end{Lemma}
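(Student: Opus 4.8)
The plan is to establish the martingale identity directly from the functional equation \eqref{eq:multi_FE} and then to read off convergence from the martingale convergence theorem. First I would record the easy points: since $f \in \mathcal{B}$ satisfies $\sup_{\bt} |f(\bt)| \le 1$, we get $|M_n(\bt)| = \prod_{|v|=n} |f(L(v)\bt)| \le 1$, so the martingale is bounded; and $M_n(\bt)$ is a.s.\ a \emph{finite} product, because if $v \notin \G_n$ then one of the factors in \eqref{eq:L} vanishes, so $L(v) = 0$ and $f(L(v)\bt) = f(\bnull) = 1$, whence $M_n(\bt) = \prod_{v \in \G_n} f(L(v)\bt)$ with $|\G_n| < \infty$ a.s.\ by $N < \infty$ a.s. Since $L(v)$ for $|v|=n$ depends only on the variables $(\bC(w),T(w))$ with $|w| < n$, each $M_n(\bt)$ is $\A_n$-measurable, and $M_0(\bt) = f(L(\varnothing)\bt) = f(\bt)$ since the defining product in \eqref{eq:L} is empty for $v = \varnothing$.

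For the martingale property I would group the $(n{+}1)$st generation by parents: every $v$ with $|v| = n+1$ is of the form $v = uj$ with $|u| = n$ and $j \ge 1$, and $L(uj) = L(u)\,T_j(u)$, so
\[
M_{n+1}(\bt) ~=~ \prod_{|u|=n} \prod_{j \ge 1} f\big(L(u)\,T_j(u)\,\bt\big).
\]
Conditioning on $\A_n$, the scalars $L(u)$, $|u| = n$, are $\A_n$-measurable, while the sequences $(T(u))_{|u|=n}$ are i.i.d.\ copies of $T$ and independent of $\A_n$; hence, conditionally on $\A_n$, the inner products indexed by distinct $u$ are independent, and $\E[M_{n+1}(\bt) \mid \A_n]$ factorizes over $|u| = n$. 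For each fixed $u$, applying \eqref{eq:multi_FE} with $\bs := L(u)\bt$ and using that $(T_j(u))_{j \ge 1}$ has the law of $(T_j)_{j \ge 1}$ and is independent of $\A_n$ gives $\E\big[\prod_{j \ge 1} f(L(u)T_j(u)\bt) \mid \A_n\big] = f(L(u)\bt)$. Multiplying over $|u| = n$ yields $\E[M_{n+1}(\bt) \mid \A_n] = \prod_{|u|=n} f(L(u)\bt) = M_n(\bt)$.

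It then follows that $(M_n(\bt))_{n \ge 0}$ is a bounded $(\A_n)$-martingale with constant mean $f(\bt)$. Applying the martingale convergence theorem to the bounded real and imaginary parts separately, $M_n(\bt)$ converges a.s.\ to a limit $M(\bt)$; boundedness and dominated convergence give convergence in mean and $\E[M(\bt)] = \lim_n \E[M_n(\bt)] = f(\bt)$, which is \eqref{eq:Disintegration_integrated}. The only steps requiring care are the conditional-independence argument used to factorize the conditional expectation over the $n$th generation and the observation that all ``infinite'' products are a.s.\ finite because $N < \infty$; beyond that the argument is routine, so I do not anticipate a genuine obstacle.
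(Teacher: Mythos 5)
Your argument is correct and is exactly the standard computation the paper invokes by reference to Biggins and Kyprianou (1997, Theorem~3.1), with the ``minor modification'' of handling the complex-valued class $\mathcal{B}$ by splitting $M_n(\bt)$ into bounded real and imaginary parts; you also correctly verify the two small points that need checking for general $f \in \mathcal{B}$, namely that the infinite products collapse to finite ones because $f(\bnull)=1$ and $|\G_n|<\infty$ a.s., and that the conditional expectation factorizes over $\G_n$ by conditional independence. Nothing is missing.
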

\begin{proof}[Source]
Minor modifications in the proof of \cite[Theorem 3.1]{Biggins+Kyprianou:1997} yield the result.
\end{proof}

\begin{Lemma}   \label{Lem:M's equation}
Given $f \in \SB$, let $M$ denote the limit of the associated multiplicative martingales.
Then, for every $\bt \in \R^d$,
\begin{equation}    \label{eq:M's equation}
M(\bt) ~=~ \prod_{|v|=n} [M]_v(L(v)\bt) \quad   \text{a.s.}
\end{equation}
The identity holds for all $\bt \in \R^d$ simultaneously a.s.\ if $f \in \SF$.
\end{Lemma}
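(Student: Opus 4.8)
The plan is to prove the identity $M(\bt) = \prod_{|v|=n} [M]_v(L(v)\bt)$ for fixed $\bt$ first, and then upgrade to the simultaneous statement for $\phi \in \SF$ by a continuity/separability argument. The starting point is the branching property of the weighted branching process: the subtrees rooted at the individuals $v$ with $|v| = n$ are i.i.d.\ copies of the whole weighted branching process, independent of $\A_n$, and $L(vw) = L(v) \cdot [L]_v(w)$. Hence, for $|w| = m$, we have $L(vw)\bt = [L]_v(w)\,(L(v)\bt)$, so that
\begin{equation*}
M_{n+m}(\bt) ~=~ \prod_{|u| = n+m} f(L(u)\bt) ~=~ \prod_{|v| = n} \ \prod_{|w| = m} f\big([L]_v(w)(L(v)\bt)\big) ~=~ \prod_{|v|=n} [M_m]_v(L(v)\bt).
\end{equation*}
First I would fix $n$ and let $m \to \infty$. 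On the left, $M_{n+m}(\bt) \to M(\bt)$ a.s.\ by Lemma~\ref{Lem:multiplicative_m'gale}. On the right, for each of the finitely many $v$ with $|v|=n$ and $L(v) \neq 0$, the shifted martingale $[M_m]_v(L(v)\bt)$ converges a.s.\ to $[M]_v(L(v)\bt)$, again by Lemma~\ref{Lem:multiplicative_m'gale} applied to the subtree rooted at $v$ (with the value of $\bt$ there being $L(v)\bt$); for the individuals with $L(v) = 0$ the corresponding factor is $f(0) = 1$ throughout. Since $N_n < \infty$ a.s., the product over $|v|=n$ is a finite product, so it converges to $\prod_{|v|=n}[M]_v(L(v)\bt)$ a.s. Taking the a.s.\ limit on both sides gives \eqref{eq:M's equation} for the fixed $\bt$.

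For the "simultaneously for all $\bt$" refinement when $f = \phi \in \SF$, the extra ingredient is continuity: $\phi$ is a Fourier transform, hence uniformly continuous on $\R^d$, which makes each $M_n(\bt,\bL)$ a continuous (indeed an a.s.-continuous) function of $\bt$ for every realization of the tree, and by dominated convergence the limit $M(\bt,\bL)$ is a.s.\ continuous in $\bt$ as well (the partial products are bounded by $1$). Thus both sides of \eqref{eq:M's equation} are, for each fixed $n$, a.s.\ continuous functions of $\bt \in \R^d$. Two a.s.-continuous processes that agree a.s.\ at each fixed point of a countable dense set $D \subseteq \R^d$ agree identically a.s.; since \eqref{eq:M's equation} holds a.s.\ for each $\bt \in D$ by the first part, it holds for all $\bt$ simultaneously on a single event of full probability (take the intersection over $\bt \in D$, still a null complement). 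One should remark that continuity of $M(\cdot,\bL)$ requires only $\phi \in \mathfrak{F}$, so the hypothesis $\phi \in \SF$ is what makes $M$ well defined as the limit of the multiplicative martingale; there is nothing more to check.

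The main obstacle is essentially bookkeeping rather than a genuine difficulty: one must be careful that the shift operators, the branching decomposition $L(vw) = L(v)[L]_v(w)$, and the independence of $[\cdot]_v$ across $|v|=n$ from $\A_n$ are used correctly, and that the passage to the limit in $m$ is justified termwise because the outer product is over the finite (random) set $\G_n$. A minor point worth stating explicitly is that the a.s.\ convergence $[M_m]_v(L(v)\bt) \to [M]_v(L(v)\bt)$ is uniform in nothing, but it does not need to be: finiteness of $\G_n$ makes the finite product pass to the limit automatically. The continuity argument for the simultaneous version is standard but should be written out, since it is the only place the class $\mathfrak{F}$ (as opposed to $\mathcal{B}$) is used.
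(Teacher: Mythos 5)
Your argument for a fixed $\bt$ is correct and is exactly the paper's argument: decompose $M_{n+m}(\bt) = \prod_{|v|=n} [M_m]_v(L(v)\bt)$ via $L(vw) = L(v)[L(w)]_v$, then let $m\to\infty$ termwise using the a.s.\ finiteness of $\G_n$. So far there is nothing to add.

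There is, however, a genuine gap in your justification of the ``simultaneously for all $\bt$'' refinement, specifically in the sentence claiming that ``by dominated convergence the limit $M(\bt,\bL)$ is a.s.\ continuous in $\bt$''. Dominated convergence controls integrals, not modulus of continuity: a pointwise a.s.\ limit of functions that are continuous in $\bt$ and bounded by $1$ is in general \emph{not} continuous (think of $\bt\mapsto |\bt|^n$ on the unit ball). Boundedness by $1$ of the partial products gives you no equicontinuity and hence no continuity of the limit. The a.s.\ continuity of $M$ for $\phi\in\SF$ is a genuinely nontrivial fact; in the paper it comes from Proposition~\ref{Prop:Disintegration}, which shows that on a set of full probability the limit has the form $\Phi=\exp(\Psi)$ with $\Psi$ a random L\'evy--Khintchine exponent (hence continuous), and that proposition rests on the theory of triangular arrays together with Lemma~\ref{Lem:sup->0} (which gives $\sup_{|v|=n}|L(v)|\to 0$ a.s., so that each $M_n(\cdot)$ is, pathwise, the characteristic function of a small-increments sum). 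Your countable-dense-set argument is the right closing move, but it needs this continuity input; the ``standard arguments'' the paper alludes to are precisely continuity from Proposition~\ref{Prop:Disintegration} plus the dense-set step, not dominated convergence. You should replace the continuity claim accordingly, or at least observe explicitly that the simultaneous version is used only after Proposition~\ref{Prop:Disintegration} is in force.
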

\begin{proof}
For $n \in \N_0$, we have $|\{|v|=n\}|<\infty$ a.s., and hence
\begin{equation*}
M(\bt)
~=~	\lim_{k\to\infty} \prod_{|v|=n} \prod_{|w|=k} f(L(vw) \bt)
~=~	\prod_{|v|=n} \lim_{k\to\infty}  \prod_{|w|=k} f([L(w)]_v L(v) \bt)
~=~	\prod_{|v|=n} [M]_v(L(v)\bt)
\end{equation*}
for every $\bt \in \R^d$ a.s. For $f \in \SF$, by standard arguments, the identity holds for all $\bt \in \R^d$ simultaneously a.s.
%The arguments used in the proof of \cite[Lemma 4.3]{Alsmeyer+Meiners:2013} give the result.
\end{proof}

Before we state our next result, we remind the reader that a measure $\nu$ on the Borel sets of $\R^d$ is called a
\emph{L\'evy measure} if $\int (1 \wedge | \bx |^2) \, \nu(\dbx) < \infty$, see \textit{e.g.}\
\cite[p.\;290]{Kallenberg:2002}.
In particular, any L\'evy measure assigns finite mass to sets of the form $\{\bx \in \R^d: | \bx | \geq \varepsilon\}$, $\varepsilon > 0$.

\begin{Prop}    \label{Prop:Disintegration}
Let $ \phi \in \SF$ with associated multiplicative martingales $(\Phi_n(\bt))_{n \geq 0}$
and martingale limit $\Phi(\bt)$, $\bt \in \R^d$.
Then, a.s.\ as $n \to \infty$, $(\Phi_n)_{n \geq 0}$ converges pointwise to a random characteristic function $\Phi$ of the form $\Phi = \exp(\Psi)$ with
\begin{equation}    \label{eq:char_exponent}
\Psi(\bt)   ~=~ \imag \langle \W, \bt \rangle - \frac{\bt \mathbf{\Sigma} \bt^{\!\textsf{T}}}{2}
+ \int \left(e^{\imag \langle \bt,\bx \rangle} - 1 - \frac{\imag \langle \bt, \bx \rangle}{1+| \bx |^2} \right) \, \nu(\dbx),
\quad   \bt \in \R^d,
\end{equation}
where $\W$ is an $\R^d$ valued $\bL$-measurable random variable, $\mathbf{\Sigma}$ is a random, $\bL$-measurable positive semi-definite $d \times d$ matrix, and $\nu$ is a (random) L\'evy measure on $\R^d$ such that, for any $\bt \in \R^d$, $\nu([\bt,\infty))$ and $\nu((-\infty,-\bt])$ are $\bL$-measurable. Moreover,
\begin{equation}    \label{eq:disintegration_integrated}
\E [\Phi(\bt)] ~=~ \phi(\bt)    \quad   \text{for all } \bt \in \R^d.
\end{equation}
\end{Prop}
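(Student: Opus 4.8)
The plan is to extract, for each fixed $\bt$, the structure of the martingale limit $\Phi(\bt)$ by combining two facts: first, the identity \eqref{eq:M's equation} from Lemma \ref{Lem:M's equation}, which makes $\Phi(\bt)$ a product over the $n$th generation of independent shifted copies evaluated at the shrinking arguments $L(v)\bt$; and second, the fact from Lemma \ref{Lem:sup->0} that $\sup_{|v|=n}|L(v)| \to 0$ a.s., so that along any generation the individual factors $[\Phi]_v(L(v)\bt)$ are uniformly close to $1$. Thus $\Phi(\bt)$ is, for every $n$, an infinite (in fact, a.s.\ finite, since $N<\infty$ a.s.) product of characteristic functions each near $1$; this is exactly the setup of the classical accompanying-laws / infinitely-divisible limit theorem (as in Gnedenko--Kolmogorov). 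First I would fix $\bt$, condition on $\A_n$, and observe that $\Phi(\bt) = \prod_{|v|=n}[\Phi]_v(L(v)\bt)$ where, given $\A_n$, the factors are independent. Taking $n\to\infty$ and using $\sup_{|v|=n}|L(v)|\to 0$, I would invoke the triangular-array convergence-of-types machinery to conclude that $\Phi(\bt)$, as a pointwise limit of row products of an infinitesimal independent array, is a.s.\ an infinitely divisible characteristic function, hence of the L\'evy--Khintchine form $\exp(\Psi(\bt))$ with $\Psi$ as in \eqref{eq:char_exponent} for some triplet $(\W, \mathbf\Sigma, \nu)$. Actually, since $f=\phi$ is a genuine (deterministic) characteristic function and $\Phi_n(\bt) = \prod_{|v|=n}\phi(L(v)\bt)$, boundedness of the martingale and Lemma \ref{Lem:multiplicative_m'gale} already give a.s.\ and mean convergence with $\E[\Phi(\bt)]=\phi(\bt)$, which is \eqref{eq:disintegration_integrated}; the work is entirely in identifying the \emph{form} of the limit.

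The second ingredient is measurability and the simultaneous-in-$\bt$ nature of the statement. The cleanest route is: show $\Phi$ is a.s.\ a characteristic function (it is a pointwise limit of characteristic functions $\Phi_n$, which converges a.s.\ and in mean for each $\bt$; continuity of the limit at $0$, and hence the L\'evy continuity argument, needs a little care and should follow from $\E[\Phi(\bt)]=\phi(\bt)$ being continuous together with a dominated-convergence / tightness argument applied to the random measures whose transforms are the $\Phi_n$). Having established that $\Phi=\exp\Psi$ is a.s.\ infinitely divisible, I would read off the triplet $(\W,\mathbf\Sigma,\nu)$ from $\Psi$ by the standard L\'evy--Khintchine recipe, which makes each of these a (measurable) functional of the path $\bt\mapsto\Psi(\bt)$, hence $\bL$-measurable since $\Phi$ is built from $\bL=(L(v))_{v\in\V}$. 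The measurability of $\nu([\bt,\infty))$ and $\nu((-\infty,-\bt])$ follows from the inversion formula expressing the L\'evy measure's tails as limits of explicit functionals of $\Psi$. To make the L\'evy exponent $\Psi$ and the convergence $\Phi_n\to\Phi$ hold \emph{simultaneously} for all $\bt$ a.s., I would use separability: establish everything on a countable dense set of $\bt$'s, then use the fact that both sides are (a.s.) continuous in $\bt$ to extend — here the continuity of $\Phi$ in $\bt$ comes from its being an infinitely divisible characteristic function with an a.s.\ well-defined exponent, and one checks the exceptional null set can be chosen uniformly.

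The main obstacle I anticipate is the passage from "each row product of an infinitesimal independent array converges" to "the limit is infinitely divisible with a L\'evy triplet" \emph{in the randomized, $\A_n$-conditional setting}, together with the uniform control needed for the simultaneous-in-$\bt$ claim. Concretely, one must justify that the centering terms (the $\imag\langle\cdot,\bt\rangle/(1+|\bx|^2)$-type truncations) can be organized consistently as $n$ varies and as $\bt$ varies, and that the candidate L\'evy measure $\nu$ — which should morally be a weak limit of $\sum_{|v|=n}\law([\bX]_v)\circ(L(v)\,\cdot\,)^{-1}$ restricted away from the origin — is a.s.\ a genuine L\'evy measure (i.e.\ $\int(1\wedge|\bx|^2)\,\nu(\dbx)<\infty$). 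The finiteness of this integral should be forced by the boundedness of the martingale $M_n(\bt)$ combined with $\sup_{|v|=n}|L(v)|\to 0$: if too much mass escaped to infinity or accumulated near the origin at the wrong rate, $\Phi_n(\bt)$ could not converge to a nonzero limit for $\bt$ small. Once this tightness/finiteness is in hand, the rest is the standard infinitely-divisible-limit bookkeeping, which I would carry out conditionally on $\A_\infty$-tail information and then patch together using a monotone class / separability argument to get the a.s.\ simultaneous statement; the identity \eqref{eq:disintegration_integrated} is then immediate from Lemma \ref{Lem:multiplicative_m'gale}.
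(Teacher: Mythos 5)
Your proposal reconstructs the argument of Theorem 1 in Caliebe (2003), which is precisely what the paper cites as the proof (the statement being its $d$-dimensional analogue): the representation $\Phi(\bt)=\prod_{|v|=n}[\Phi]_v(L(v)\bt)$ from Lemma \ref{Lem:M's equation}, combined with infinitesimality $\sup_{|v|=n}|L(v)|\to 0$ from Lemma \ref{Lem:sup->0}, the classical accompanying-laws / infinitely-divisible-limit theorem, and finally the measurability and separability bookkeeping for the simultaneous-in-$\bt$ claim. You even flag the continuity-at-$0$ subtlety, which is exactly the point the paper's footnote singles out as the one place where Caliebe's original argument needs a small correction.
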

This proposition is the $d$-dimensional version of Theorem 1 in
\cite{Caliebe:2003} and can be proved analogously.\!\footnote{The
proof of Theorem 1 in \cite{Caliebe:2003} contains an
inaccuracy that needs to be corrected. Retaining the notation of
the cited paper, we think that it cannot be excluded that the set
of continuity points $\mathcal{C}$ of the function $F(l)$
appearing in the proof of Theorem 1 in \cite{Caliebe:2003} depends
on $l$. In the cited proof, this dependence is ignored when the
limit $\lim_{u \to \infty, u \in \mathcal{C}}$ appears outside the
expectation on p.\;386. However, this problem can be overcome by
using a slightly more careful argument.} Therefore, we refrain
from giving further details.

Now pick some $f \in \SB$.
%The proof of Lemma 8.5 in \cite{Alsmeyer+Biggins+Meiners:2012}
The proof of Lemma \ref{Lem:M's equation} applies and gives the counterpart
of \eqref{eq:M's equation}
\begin{equation}    \label{eq:M's equation along ladder lines}
M(\bt)  ~=~ \prod_{v \in \mathcal{T}_u} [M]_v(L(v) \bt)     \quad   \text{a.s.}
\end{equation}
for $\mathcal{T}_u := \{v \in \G: S(v) > u, \, S(v|_k) \leq u \text{ for } 0 < k < |v|\}$, $u \geq 0$.
Taking expectations reveals that $f$ also solves the functional equation with the weight
sequence $(L(v))_{v \in \mathcal{T}_u}$ instead of the sequence
$(T_j)_{j \geq 1}$. Further, when $f \in \SF$, the proofs of Lemmas
8.7(b) in \cite{Alsmeyer+Biggins+Meiners:2012} and 4.4 in
\cite{Alsmeyer+Meiners:2013} carry over to the present situation
and yield
\begin{equation}    \label{eq:M' as limit along ladder lines}
M(\bt)  ~=~ \lim_{u \to \infty }\prod_{v \in \mathcal{T}_u} f(L(v) \bt) ~=:~    \lim_{u \to \infty} M_{\mathcal{T}_u}(\bt)  \quad   \text{for all $\bt$ in $\R^d$ a.s.}
\end{equation}
This formula allows us to derive useful representations for the
random L\'evy triplet of the limit $\Phi$ of the multiplicative
martingale corresponding to a given $\phi \in \SF$. Denote
by $\overline{\R^d} = \R^d \cup \{\mathbf{\infty}\}$ the one-point
compactification of $\R^d$.

\begin{Lemma}   \label{Lem:explicit_representation_along_ladder_lines}
Let $\bX$ be a solution to \eqref{eq:generalized stable} with characteristic function $\phi$ and $d$-dimensional distribution (function) $F$.
Let further $(\W,\mathbf{\Sigma},\nu)$ be the random L\'evy triplet of the limit $\Phi$ of the multiplicative martingale corresponding to $\phi$,
see Proposition \ref{Prop:Disintegration}.
Then
\begin{equation}    \label{eq:nu_ladder}
\sum_{v \in \mathcal{T}_u} F(\cdot/L(v))    ~\stackrel{\mathrm{v}}{\to}~  \nu     \quad   \text{as $u \to \infty$ a.s.}
\end{equation}
where $\stackrel{\mathrm{v}}{\to}$ denotes vague convergence on
$\overline{\R^d}\setminus\{\bnull\}$.
Further, for any $h > 0$ with $\nu(\{|\bx| = h\}) = 0$ a.s., the limit
\begin{equation}    \label{eq:W(h)}
\W(h)   ~:=~    \lim_{t \to \infty} \sum_{v \in \mathcal{T}_t} L(v) \int_{\{|\bx| \leq h/|L(v)|\}} \bx \, F(\dbx)
\end{equation}
exists a.s.~and
\begin{equation}    \label{eq:bW ladder}
\W  ~=~ \W(h) + \!\!\! \underset{\{h < |\bx| \leq 1\}}{\int} \bx \, \nu(\dbx)
		+ \!\!\! \underset{\{|\bx| > 1\}}{\int} \frac{\bx}{1+|\bx|^2} \, \nu(\dbx)
		- \!\!\! \underset{\{|\bx| \leq 1\}}{\int} \frac{\bx |\bx|^2}{1+|\bx|^2} \, \nu(\dbx) \quad a.s.
\end{equation}
\end{Lemma}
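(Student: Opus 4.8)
The plan is to extract the L\'evy triplet of $\Phi$ from the product representation \eqref{eq:M' as limit along ladder lines} along the ladder line $\mathcal{T}_u$, using the fact that the factors $[M]_v(L(v)\bt)$ are i.i.d.\ (given $\A_{\mathcal{T}_u}$) copies of $\Phi(L(v)\bt)$ whose means are $\phi(L(v)\bt) = \int e^{\imag\langle L(v)\bt,\bx\rangle}\,F(\dbx)$. The heuristic is that $\log\Phi(\bt)$ is a sum of the "small" contributions of $[M]_v$, each of which is close to its mean minus $1$, i.e.\ $\phi(L(v)\bt)-1 = \int(e^{\imag\langle\bt,L(v)\bx\rangle}-1)\,F(\dbx)$, and summing these over $v\in\mathcal{T}_u$ produces a compound-Poisson-type exponent with L\'evy measure $\sum_{v\in\mathcal{T}_u}F(\cdot/L(v))$ (the pushforward of $F$ under scaling by $L(v)$). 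Letting $u\to\infty$, the diffusive part $\mathbf{\Sigma}$ and the drift $\W$ absorb the pieces of the integrand near $\bnull$ and the pieces of $\bx$ near $\infty$ that are not integrable against a generic L\'evy measure.

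First I would fix $u\ge 0$, condition on $\mathcal{T}_u$ (which is $\bL$-measurable and a.s.\ finite by Lemma~\ref{Lem:sup->0}), and write $\Phi(\bt)=\prod_{v\in\mathcal{T}_u}[\Phi]_v(L(v)\bt)$ from \eqref{eq:M's equation along ladder lines}; this expresses $\Phi$ as an independent product of scaled copies of the characteristic function $\Phi$ itself, each with mean $\phi(L(v)\bt)$. Taking $u\to\infty$, by Lemma~\ref{Lem:sup->0} we have $\sup_{v\in\mathcal{T}_u}|L(v)|\to 0$ a.s., so each individual factor tends to $1$, and an infinitesimal-array / accompanying-laws argument (the classical route to the L\'evy–Khintchine formula, as in Gnedenko–Kolmogorov) identifies the limit. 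Concretely: $\Phi = \exp(\Psi)$ is infinitely divisible by Proposition~\ref{Prop:Disintegration}, and comparing its triplet against the triplet of $\prod_{v\in\mathcal{T}_u}[\Phi]_v(L(v)\bt)$ — whose L\'evy measure at stage $u$ is $\sum_{v\in\mathcal{T}_u}F(\cdot/L(v))$ by the scaling property of L\'evy measures under linear maps — and invoking the continuity theorem for infinitely divisible laws gives \eqref{eq:nu_ladder}, namely $\sum_{v\in\mathcal{T}_u}F(\cdot/L(v))\stackrel{\mathrm v}{\to}\nu$ vaguely on $\overline{\R^d}\setminus\{\bnull\}$. For the drift, one truncates: for $h>0$ a continuity radius of $\nu$, the centering constant of the $u$-stage approximation is $\sum_{v\in\mathcal{T}_u}L(v)\int_{\{|\bx|\le h/|L(v)|\}}\bx\,F(\dbx)$, which must converge; calling this limit $\W(h)$ and reconciling the centering convention $\bx/(1+|\bx|^2)$ used in \eqref{eq:char_exponent} with the truncation at level $h$ yields the correction terms in \eqref{eq:bW ladder} — an integration-by-parts / change-of-centering bookkeeping over the regions $\{h<|\bx|\le 1\}$, $\{|\bx|>1\}$, and $\{|\bx|\le 1\}$.

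The main obstacle is the a.s.\ (not merely in-distribution) nature of the convergence, together with the fact that everything in sight — $\W$, $\mathbf\Sigma$, $\nu$, and the approximating objects — is random and $\bL$-measurable. One cannot simply quote the classical triangular-array theorem, which is a distributional statement; instead one must run the argument pathwise, exploiting that \eqref{eq:M' as limit along ladder lines} already gives a.s.\ convergence of the products $M_{\mathcal{T}_u}(\bt)$ for all $\bt$ simultaneously, and that the limit $\Phi$ is (a.s.) a genuine, infinitely divisible characteristic function. A further technical point is controlling the drift term: $\int\bx\,F(\dbx)$ need not exist (the solution $\bX$ may be non-integrable), so the truncation at $h/|L(v)|$ is essential, and one must verify that the resulting centering sequence converges a.s.\ as $u\to\infty$ — this is where the choice of a continuity radius $h$ for $\nu$ and the already-established vague convergence \eqref{eq:nu_ladder} are used. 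These steps parallel the proofs of Lemma~8.7 in \cite{Alsmeyer+Biggins+Meiners:2012} and Lemma~4.4 in \cite{Alsmeyer+Meiners:2013}, and I would follow that template, indicating the (routine) modifications needed in the $d$-dimensional setting with scalar weights.
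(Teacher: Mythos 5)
Your overall picture and the template you point to in the final paragraph — a.s.\ convergence of the products along stopping lines, a pathwise triangular-array argument, Kallenberg-type criteria for the L\'evy triplet, and a centering correction — are exactly what the paper does. The problem lies in the middle of your argument, where the central step is applied to the wrong object.

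You write $\Phi(\bt)=\prod_{v\in\mathcal{T}_u}[\Phi]_v(L(v)\bt)$ from \eqref{eq:M's equation along ladder lines} and then claim this product ``at stage $u$'' has L\'evy measure $\sum_{v\in\mathcal{T}_u}F(\cdot/L(v))$, comparing it to the triplet of $\Phi$ as $u\to\infty$. This is incorrect on two counts. First, \eqref{eq:M's equation along ladder lines} is an exact a.s.\ identity for \emph{every} $u$, so there is no triangular array and no limit to take: the left and right sides have the same L\'evy measure $\nu$ for all $u$. Second, the factors $[\Phi]_v$ are the \emph{random} infinitely divisible characteristic functions with (random) triplets $([\W]_v,[\mathbf\Sigma]_v,[\nu]_v)$, so the L\'evy measure of $[\Phi]_v(L(v)\,\cdot\,)$ is $[\nu]_v(\cdot/L(v))$, not $F(\cdot/L(v))$; $F$ is the law of the solution $\bX$, i.e.\ the mixture with characteristic function $\phi=\E[\Phi]$, not the conditional law encoded by $[\Phi]_v$. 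The object you actually want to feed into Kallenberg Theorem 15.28 is the \emph{martingale} $\Phi_{\mathcal{T}_u}(\bt)=\prod_{v\in\mathcal{T}_u}\phi(L(v)\bt)$ from \eqref{eq:M' as limit along ladder lines}: given $\bL$, each factor $\phi(L(v)\bt)$ is the deterministic characteristic function of $F(\cdot/L(v))$, these form a uniformly asymptotically negligible triangular array because $\sup_{v\in\mathcal{T}_u}|L(v)|\to 0$ (Lemma \ref{Lem:sup->0}), and $\Phi_{\mathcal{T}_u}\to\Phi$ a.s.\ simultaneously in $\bt$ (via the uniform integrability of the martingale in $u$ together with the a.s.\ continuity of $\Phi$). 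Applying Theorem 15.28(i) and (iii) pathwise then yields \eqref{eq:nu_ladder} and \eqref{eq:W(h)} directly, and \eqref{eq:bW ladder} follows by converting Kallenberg's centering $\1_{\{|\bx|\le 1\}}$ to the $\bx/(1+|\bx|^2)$ centering used in \eqref{eq:char_exponent} and shifting the truncation radius from $1$ to $h$. Your heuristic opening paragraph (``each factor is close to its mean minus 1'') and your final paragraph already point at this; the error is that the concrete Kallenberg step is run on $\prod[\Phi]_v$, where it produces nothing, instead of on $\Phi_{\mathcal{T}_u}=\prod\phi(L(v)\,\cdot\,)$.
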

\begin{proof}
First, notice that by \eqref{eq:M' as limit along ladder lines}, for fixed $\bt \in \R^d$, we have
\begin{equation*}
\Phi_{\mathcal{T}_u}(\bt)   ~:=~    \prod_{v \in \mathcal{T}_u} \phi(L(v)\bt)
~\to~   \Phi(\bt)   ~=~ \lim_{n \to \infty} \prod_{|v|=n} \phi(L(v)\bt) \quad   \text{a.s.}
\end{equation*}
along any fixed sequence $u \uparrow \infty$.
$\Phi_{\mathcal{T}_u}(\bt)$ is a uniformly integrable martingale
in $u$ with right-continuous paths and therefore the convergence
holds outside a $\Prob$-null set for all sequences $u \uparrow
\infty$. Using the a.s.~continuity of $\Phi$ on $\R^d$
(see Proposition \ref{Prop:Disintegration}), standard
arguments show that the convergence holds for all $\bt \in \R^d$
and all sequences $u \uparrow \infty$ on an event of probability one,
\textit{cf.}~the proof \cite[Lemma 4.4]{Alsmeyer+Meiners:2013}.
On this event, one can use the
theory of triangular arrays as in the proof of Proposition
\ref{Prop:Disintegration} to infer that $\Phi$ has a
representation $\Phi = \exp(\Psi)$ with $\Psi$ as in
\eqref{eq:char_exponent}. Additionally, Theorem 15.28(i) and (iii)
in \cite{Kallenberg:2002} give \eqref{eq:nu_ladder} and \eqref{eq:W(h)},
respectively. Note that the integrand of \eqref{eq:char_exponent}
being $(e^{\imag \langle \bt,\bx \rangle} - 1 - \imag \langle \bt,
\bx \rangle/(1+| \bx |^2))$ rather than $(e^{\imag \langle \bt,\bx
\rangle} - 1 - \imag \langle \bt, \bx \rangle \1_{\{|\bx| \leq
1\}})$ as it is in \cite{Kallenberg:2002} (see \textit{e.g.}\
Corollary 15.8 in the cited reference) does not affect $\nu$ but
it does influence $\W$.
The integrals ${\int}_{\{|\bx| > 1\}} \bx/(1+|\bx|^2) \nu(\dbx)$
and ${\int}_{\{|\bx| \leq 1\}} \bx |\bx|^2/(1+|\bx|^2) \nu(\dbx)$ appearing in \eqref{eq:bW ladder}
are the corresponding compensation.
\end{proof}

\subsection{The embedded BRW with positive steps only}  \label{subsec:embedded}

In this section, an embedding technique, invented
in \cite{Biggins+Kyprianou:2005}, is explained. This approach is used
to reduce cases in which \eqref{eq:A6} does not hold to cases where it does.

Let $\G^>_0 := \{\varnothing\}$, and, for $n\in\N$,
\begin{equation*}
\G_n^> := \{vw \in \G: v \in \G_{n-1}^>, S(vw) > S(v) \geq S(vw|_k) \text{ for all }|v| < k < |vw|\}.
\end{equation*}
For $n\in\N_0$, $\G_n^>$ is called the $n$th strictly increasing ladder line.
The sequence $(\G^>_{n})_{n \geq 0}$ contains precisely those individuals $v$ the positions of
which are strict records in the random walk $S(\varnothing),
S(v|_1), \ldots, S(v)$. Using the $\G^>_n$, we can define the
$n$th generation point process of the embedded BRW of strictly
increasing ladder heights by
\begin{equation}    \label{eq:Z^>_n}
\Z^>_n  ~:=~    \sum_{v \in \G^>_n} \delta_{S(v)}.
\end{equation}
$(\Z^>_n)_{n \geq 0}$ is a branching random walk with positive
steps only. Let $T^> := (L(v))_{v \in \G^>_1}$ and denote by $\mathbb{G}(T^>)$
the closed multiplicative subgroup generated by $T^>$. The
following result states that the point process $\Z^> := \Z^>_1$
inherits the assumptions \eqref{eq:A1}-\eqref{eq:A5} from $\Z$ and
that also the closed multiplicative groups generated by $T$ and
$T^>$ coincide. We write $\mu_{\alpha}^>$ for the measure defined
by
\begin{equation*}
\mu_{\alpha}(B) ~:=~    \E \bigg[\sum_{v \in \G^>_1} e^{-\alpha
S(v)} \delta_{S(v))}(B) \bigg], \qquad  B \subseteq \R_\geq
\  \text{ Borel.}
\end{equation*}

\begin{Prop}    \label{Prop:embedded_BRW}
Assume \eqref{eq:A1}-\eqref{eq:A3}. The following assertions
hold.
\begin{itemize}
	\item[(a)]
		$\Prob(|\mathcal{G}^>_1|< \infty) = 1$.
	\vspace{-0.15cm}
	\item[(b)]
		$\Z^>$ satisfies \eqref{eq:A1}-\eqref{eq:A3} where \eqref{eq:A3} holds with the same $\alpha$ as for $\Z$.
	\vspace{-0.15cm}
	\item[(c)]
		If $\Z$ further satisfies \eqref{eq:A4a} or \eqref{eq:A4b}, then the same holds true for $\Z^>$, respectively.
	\vspace{-0.15cm}
	\item[(d)]
		If $\Z$ satisfies \eqref{eq:A5}, then so does $\Z^>$.
	\vspace{-0.15cm}
	\item[(e)]
		Let $\mathbb{G}(\Z)$ be the minimal closed additive subgroup $G$ of $\R$
		such that $\Z(\R \setminus G)=0$ a.s.\ and define $\mathbb{G}(\Z^>)$ analogously in terms of $\Z^>$ instead of $\Z$.
		Then $\mathbb{G}(\Z^>) = \mathbb{G}(\Z) = \R$.
	\vspace{-0.75cm}
	\item[(f)]
		$\mathbb{G}(T^>) = \mathbb{G}(T)$.
\end{itemize}
\end{Prop}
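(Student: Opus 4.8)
The engine of the proof is the \emph{many-to-one} identity \eqref{eq:spinal walk}, extended to the (random) stopping line $\G^>_1=\mathcal{T}_0$ and, inductively, to $\G^>_n$: writing $(S_n)_{n\ge0}$ for the random walk with step law $\mu_\alpha$, $\sigma_0:=\inf\{n\ge1:S_n>0\}$ for its first strictly ascending ladder epoch, and $(\varepsilon_n)$ for the sign process carried along the spine (its multiplicative increments $\sign(\cdot)$ having $\Prob(\varepsilon_1=-1)=q$), one obtains
\begin{equation*}
\E\Big[\sum_{v\in\G^>_1}|L(v)|^\alpha\,g\big(S(v),\sign L(v)\big)\Big] ~=~ \E\big[g(S_{\sigma_0},\varepsilon_{\sigma_0})\1_{\{\sigma_0<\infty\}}\big],\qquad g\ge0\text{ Borel}.
\end{equation*}
The preliminary fact that makes this useful is that under \eqref{eq:A1}--\eqref{eq:A3} the walk $(S_n)$ does not drift to $-\infty$, i.e.\ $\sigma_0<\infty$ a.s.; morally this is Lemma~\ref{Lem:sup->0} read along the spine (the spine lies above $\inf_{|v|=n}S(v)\to\infty$), and it is exactly what makes the embedding technique of \cite{Biggins+Kyprianou:2005} work, so I would take it from there. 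Note also that $S_{\sigma_0}>0$ a.s.

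For (a) I would argue separately and more elementarily. Let $\mathcal{U}:=\{v\in\G:S(v|_k)\le0\text{ for }0\le k\le|v|\}$ be the subtree of individuals that have stayed weakly below $0$. Every node of $\mathcal{U}$ has at most $N(\cdot)<\infty$ children, so an infinite $\mathcal{U}$ would, by K\"onig's lemma, contain an infinite ray, i.e.\ an infinite line of descent along which $S\le0$; this contradicts Lemma~\ref{Lem:sup->0} on $S$, while on $S^c$ the tree is finite. Hence $|\mathcal{U}|<\infty$ a.s., and since the parent of every $v\in\G^>_1$ lies in $\mathcal{U}$ we get $\G^>_1\subseteq\{vj:v\in\mathcal{U},\,1\le j\le N(v)\}$, an a.s.\ finite set. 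For (b), taking $g\equiv1$ and $g(x,\cdot)=e^{(\alpha-\vartheta)x}$ in the identity gives $\E[\sum_{v\in\G^>_1}|L(v)|^\alpha]=\Prob(\sigma_0<\infty)=1$ and $\E[\sum_{v\in\G^>_1}|L(v)|^\vartheta]=\E[e^{(\alpha-\vartheta)S_{\sigma_0}}]>1$ for $\vartheta\in[0,\alpha)$ (since $S_{\sigma_0}>0$ a.s.) --- this is \eqref{eq:A3} for $\Z^>$ with the same $\alpha$, and \eqref{eq:A3} forces \eqref{eq:A2}; that \eqref{eq:A1} holds for $\Z^>$ is part (e). For (e): the $\alpha$-tilted intensity of $\Z^>_1$ is $\law(S_{\sigma_0})$, supported on $(0,\infty)$, and the strictly ascending ladder height of a nonlattice walk is again nonlattice, so $\mathbb{G}(\Z^>)=\R=\mathbb{G}(\Z)$, the last equality restating \eqref{eq:A1}. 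For (f): $\mathbb{G}(T^>)\subseteq\mathbb{G}(T)$ is clear since each $L(v)$ is a product of $T_j$'s; conversely the closed multiplicative group generated by the moduli $\{|L(v)|:v\in\G^>_1\}$ is $\exp(-\mathbb{G}(\Z^>))=\Rp$ by (e), whence $\mathbb{G}(T^>)\supseteq\Rp$, which settles the case $\mathbb{G}(T)=\Rp$; and if $\mathbb{G}(T)=\R^*$ then $q>0$, so the identity with $g(x,\varepsilon)=\1_{\{\varepsilon=-1\}}$ yields $\E[\sum_{v\in\G^>_1}|L(v)|^\alpha\1_{\{L(v)<0\}}]=\Prob(\varepsilon_{\sigma_0}=-1)>0$, so some $L(v)$, $v\in\G^>_1$, is negative with positive probability, $-1\in\mathbb{G}(T^>)$, and $\mathbb{G}(T^>)=\R^*$.

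Parts (c) and (d) are of the same nature as Lemma~\ref{Lem:WBP_even} and are obtained by translating each hypothesis into a statement on $(S_n)$ and $S_{\sigma_0}$: the drift condition in \eqref{eq:A4a}, $m'(\alpha)\in(-\infty,0)$, reads $\E[S_1]\in(0,\infty)$, so $\E[\sigma_0]<\infty$ and, by Wald, $0<\E[S_{\sigma_0}]=\E[S_1]\E[\sigma_0]<\infty$; \eqref{eq:A4b}, $m(\theta)<\infty$ for some $\theta<\alpha$, endows $S_1$ with a positive exponential moment, which passes to $S_{\sigma_0}$ (by Wald when $\E[S_1]>0$, by the integrated-tail representation of the ladder height when $\E[S_1]=0$), so that $\E[\sum_{v\in\G^>_1}|L(v)|^{\theta'}]<\infty$ for $\theta'$ slightly below $\alpha$; the $x\log^+x$-part of \eqref{eq:A4a} and the $h_3$-part of \eqref{eq:A5} follow as in Lemma~\ref{Lem:WBP_even} from Biggins' theorem together with the domination $\sum_{v\in\G^>_1}|L(v)|^\alpha\le\sum_{v\in\mathcal{U}}|L(v)|^\alpha\,[W_1]_v$; and the spread-out and $(\log^-)^2$ conditions of \eqref{eq:A5} transfer from $S_1$ to $S_{\sigma_0}$ by standard ladder-variable facts. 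I expect the two most delicate points to be: (i) establishing, under \eqref{eq:A1}--\eqref{eq:A3} alone, that $(S_n)$ does not drift to $-\infty$ --- when $m$ may be $+\infty$ on $(0,\alpha)$ the mean $\E[S_1]$ need not be defined, so one genuinely has to go through Lemma~\ref{Lem:sup->0}/\cite{Biggins:1998} and the spinal measure rather than a law of large numbers; and (ii) part (f) for $\mathbb{G}(T)=\R^*$, where producing a negatively weighted individual on the first ladder line requires controlling the sign process along the spine up to the random time $\sigma_0$. The remaining bookkeeping is routine and parallels \cite{Alsmeyer+Biggins+Meiners:2012} and Lemma~\ref{Lem:WBP_even}.
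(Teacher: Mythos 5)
Your proposal takes a genuinely more uniform route than the paper: you push a sign-enriched many-to-one (spinal) identity through every item, whereas the paper disposes of (a), (b), (c), (e) by citation to \cite{Alsmeyer+Biggins+Meiners:2012} (Lemma 9.1) and \cite{Alsmeyer+Meiners:2013} (Proposition 3.2), and argues (d) and (f) directly. Your K\"onig's-lemma argument for (a) via the subtree $\mathcal{U}$ that stays weakly below $0$ is correct and a nice elementary alternative. Your treatments of (b), (e) via $g\equiv 1$, $g(x,\cdot)=e^{(\alpha-\vartheta)x}$ and the nonlattice property of strictly ascending ladder heights are correct, and your sketches for (c), (d) run along the same lines as the paper's appeal to \cite[Proposition 3.2]{Alsmeyer+Meiners:2013}, \cite{Alsmeyer+Iksanov:2009} and \cite{Alsmeyer:2002, Araman+Glynn:2006}.

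The genuine gap is in (f) for $\mathbb{G}(T)=\R^*$, and you flag it yourself but do not close it. The assertion $\Prob(\varepsilon_{\sigma_0}=-1)>0$ does not follow from $q>0$ alone, because the sign increment and the $S$-increment along the spine are coupled: if every negative $T_j$ satisfies $|T_j|\geq 1$, a $-1$ sign increment always comes bundled with a non-positive $S$-increment, and it is then not automatic that the first strict ascent of $(S_n)$ can be realized after an odd number of such steps. One needs an actual path. The paper supplies it combinatorially: if $\Prob(T_j\in(-1,0))>0$ for some $j$, then already $j\in\G^>_1$ with $L(j)<0$; otherwise pick $x\geq 1$ with $-x\in\mathrm{supp}(T_j)$ and, using that $m(\beta)>1=m(\alpha)$ for $\beta<\alpha$ forces $\Prob(T_k\in(0,1))>0$ for some $k$, pick $y\in(0,1)\cap\mathrm{supp}(T_k)$; with $m$ minimal such that $xy^m<1$, the node $v=j k\cdots k$ (with $m$ copies of $k$) lies on $\G^>_1$ with $L(v)\approx -xy^m<0$ on a set of positive probability. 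Translating that construction back through your many-to-one identity would in fact prove $\Prob(\varepsilon_{\sigma_0}=-1)>0$, so your framework can absorb the fix, but as written it is missing; you would need to insert precisely this construction (or an equivalent explicit path for the spine) to complete (f).
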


\begin{Rem} \label{Rem:embedded_BRW}
Notice that assertion (f) in Proposition \ref{Prop:embedded_BRW} is the best one can get. For instance, one
cannot conclude that if $T$ has mixed signs (Case III), then so
has $T^>$. Indeed, if $T_1$ is a Bernoulli random variable with
success probability $p$ and $T_2 = -U$ for a random variable $U$
which is uniformly distributed on $(0,1)$, then all members of $\G^>_1$ have negative weights, that
is, $T^>$ has negative signs only (Case II).
\end{Rem}

\begin{proof}[Proof of Proposition \ref{Prop:embedded_BRW}]
Assertions (a), (b), (c) and (e) can be formulated in terms of the $|L(v)|$, $v \in \V$ only and,
therefore, follow from \cite[Lemma 9.1]{Alsmeyer+Biggins+Meiners:2012} and \cite[Proposition 3.2]{Alsmeyer+Meiners:2013}.

It remains to prove (d) and (f).
For the proof of (d) assume that $\Z$ satisfies \eqref{eq:A5}. By
(c), $\Z^>$ also satisfies \eqref{eq:A4a} and, in particular,
the third condition in \eqref{eq:A5}. Further, the first
condition in \eqref{eq:A5} says that $\mu_{\alpha}$, the
distribution of $S_1$, is spread-out. We have to check that then
$\mu_{\alpha}^>$ is also spread-out. It can be checked (see
\textit{e.g.}~\cite{Biggins+Kyprianou:2005}) that $\mu_{\alpha}^>$
is the distribution of $S_\sigma$ for $\sigma = \inf\{n \geq 0:
S_n > 0\}$. Hence Lemma 1 in \cite{Araman+Glynn:2006} (or Corollaries 1 and 3 of \cite{Alsmeyer:2002})
shows that the distribution of $S_{\sigma}$ is also spread-out.
That the second condition in \eqref{eq:A5} carries over is \cite[Proposition 3.2(d)]{Alsmeyer+Meiners:2013}.
The final condition of \eqref{eq:A5} is that $\E[h_3(W_1)] < \infty$ where $h_n(x) = x
(\log^+ (x))^n \log^+(\log^+ (x) )$, $n=2,3$. In view of the
validity of \eqref{eq:A4a}, Theorem 1.4 in
\cite{Alsmeyer+Iksanov:2009} yields $\E[h_2(W)] < \infty$. Now
notice that $W$ is not only the limit of the martingale $(W_n)_{n
\geq 0}$ but also of the martingale $W_n^> = \sum_{v \in \G_n^>}
|L(v)|^{\alpha}$, $n \in \N_0$, see \textit{e.g.}~Proposition 5.1
in \cite{Alsmeyer+Kuhlbusch:2010}. The converse implication of the
cited theorem then implies that $\E[h_3(W_1^>)] < \infty$.

Regarding the proof of (f) we infer from (e) that $-\log
(\mathbb{G}(|T|)) = \mathbb{G}(\Z)=\mathbb{G}(\Z^>) = -\log
(\mathbb{G}(|T^>|))$ where $|T|=(|T_j|)_{j \geq 1}$ and $|T^>| =
(|L(v)|)_{v \in \G_1}$. Thus, by \eqref{eq:A1}, $\mathbb{G}(|T^>|)
= \mathbb{G}(|T|) = \Rp$. If $\mathbb{G}(T) = \Rp$, then $T=|T|$
and $T^> = |T^>|$ a.s.\ and thus $\mathbb{G}(|T^>|) = \Rp$ as
well. It remains to show that if $\mathbb{G}(T) = \R^*$, then $\mathbb{G}(T^>) = \R^*$ as well.
To this end, it is enough to show that $\mathbb{G}(T^>) \cap
(-\infty,0) \not = \emptyset$. If $\Prob(T_j \in (-1,0)) > 0$ for
some $j \geq 1$, then $\Prob(j \in \G_1^> \text{ and } T_j < 0) >
0$. Assume now
$\Prob(T_j \in (-1,0)) = 0$ for all $j \geq 1$. Since
$\mathbb{G}(T) = \R^*$ there is an $x \geq 1$ such that $-x \in
\mathrm{supp}(T_j)$ for some $j \geq 1$ where $\mathrm{supp}(X)$
denotes the support (of the law) of a random variable $X$.
By \eqref{eq:A3}, we have $m(\alpha) = 1 < m(\beta)$ for all $\beta
\in [0,\alpha)$. This implies that for some $k \geq 1$,
$\Prob(|T_k| \in (0,1)) >0$ and, moreover, $\Prob(T_k \in (0,1)) >
0$ since $\Prob(T_k \in (-1,0))=0$. Thus, for some $y \in (0,1)$,
we have $y \in \mathrm{supp}(T_k)$. Let $m$ be the minimal
positive integer such that $xy^m<1$. Then $-xy^m \in
\mathbb{G}(T^>)$.
\end{proof}

\subsection{Endogenous fixed points}    \label{subsec:endogeny}

Important for the problems considered here is the concept of {\em endogeny},
which has been introduced in \cite[Definition 7]{AB2005}.
For the purposes of this paper, it is enough to study endogeny in dimension $d=1$.

Suppose that $W^{(v)}$, $v \in \V$ is a family of random variables such that the $W^{(v)}$, $|v|=n$ are i.i.d.\ and independent of $\A_n$
for each $n \in \N_0$.
Further suppose that
\begin{equation}    \label{eq:RTP}
W^{(v)} ~=~ \sum_{j \geq 1} T_j(v) W^{(vj)} \quad   \text{a.s.}
\end{equation}
for all $v \in \V$. Then the family $(W^{(v)})_{v \in \V}$ is called a \emph{recursive tree process},
the family $(T(v))_{v \in \V}$ \emph{innovations process} of the recursive tree process.
The recursive tree process $(W^{(v)})_{v \in \V}$ is called nonnegative if the $W^{(v)}$, $v \in \V$ are all nonnegative,
it is called \emph{invariant} if all its marginal distributions are identical.
There is a one-to-one correspondence between the solutions to \eqref{eq:generalized stable} (in dimension $d=1$)
and recursive tree processes $(W^{(v)})_{v \in \V}$ as above, see Lemma 6 in \cite{AB2005}.
An invariant recursive tree process $(W^{(v)})_{v \in \V}$ is \emph{endogenous} if $W^{(\varnothing)}$ is measurable w.r.t.~the innovations process $(T(v))_{v \in \V}$.

\begin{Def}[\textit{cf.}\ Definition 8.2 in \cite{Alsmeyer+Biggins+Meiners:2012}]
\begin{itemize}
    \item
        A distribution is called \emph{endogenous} (w.r.t.\ the sequence $(T_j)_{j \geq 1}$)
        if it is the marginal distribution of an endogenous recursive tree process with innovations process $(T(v))_{v \in \V}$.
    \vspace{-0.2cm}
    \item
        A random variable $W$ is called \emph{endogenous fixed point (w.r.t.~$(T_j)_{j \geq 1}$)}
        if there exists an endogenous recursive tree process with innovations process $(T(v))_{v \in \V}$ such that $W = W^{(\varnothing)}$ a.s.
\end{itemize}
\end{Def}

A random variable $W$ is called non-null when $\Prob(W \not= 0) > 0$.
$W=0$ is an endogenous fixed point.
Of course, the main interest is in non-null endogenous fixed points $W$.
An endogenous recursive tree process $(W^{(v)})_{v \in \V}$ will be called non-null when $W^{(\varnothing)}$ is non-null.

Endogenous fixed points have been introduced in a slightly different way in \cite[Definition 4.6]{Alsmeyer+Meiners:2013}.
Using the shift-operator notation, in \cite[Definition 4.6]{Alsmeyer+Meiners:2013}, a random variable $W$ (or its distribution)
is called endogenous (w.r.t.~to $(T(v))_{v \in \V}$) if $W$ is measurable w.r.t.~$(L(v))_{v \in \V}$ and if
\begin{equation}    \label{eq:purely_tree-based}
W   ~=~ \sum_{|v|=n} L(v) [W]_v \quad   \text{a.s.}
\end{equation}
for all $n\in\N_0$.
It is immediate that $([W]_v)_{v \in \V}$ then defines an endogenous recursive tree process.
Therefore, Definition 4.6 in \cite{Alsmeyer+Meiners:2013} is (seemingly) stronger than the original definition of endogeny.
The next lemma shows that the two definitions are equivalent.

\begin{Lemma}   \label{Lem:endogeny}
Let \eqref{eq:A1}-\eqref{eq:A4} hold and let $(W^{(v)})_{v \in \V}$ be an endogenous recursive tree process with innovations process $(T(v))_{v \in \V}$.
Then $W^{(v)} = [W^{(\varnothing)}]_v$ a.s.~for all $v \in \V$ and \eqref{eq:purely_tree-based} holds.
\end{Lemma}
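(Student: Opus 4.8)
The plan is to show first that $W^{(\varnothing)}$ coincides a.s.\ with a random variable built purely from the innovations process, and then to bootstrap this identity to all of $\V$ and to the iterated form \eqref{eq:purely_tree-based}. By the definition of endogeny, $W^{(\varnothing)}$ is measurable with respect to the $\sigma$-algebra $\sigma((T(v))_{v \in \V}) = \sigma((L(v))_{v \in \V})$, so there is a measurable functional $g$ with $W^{(\varnothing)} = g((L(v))_{v \in \V})$ a.s. The first step is to observe that the $W^{(v)}$, $|v|=n$, being i.i.d.\ copies of $W^{(\varnothing)}$ and independent of $\A_n$, must in fact equal $g$ applied to the shifted weighted branching process, i.e.\ $W^{(v)} = [g]_v = [W^{(\varnothing)}]_v$ a.s. This is not quite automatic from ``i.i.d.\ and independent of $\A_n$'': one has to use that the recursion \eqref{eq:RTP} pins down which representative of the distribution class one is dealing with. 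The clean way is to iterate \eqref{eq:RTP}: for all $n$,
\begin{equation*}
W^{(\varnothing)} ~=~ \sum_{|v|=n} L(v) W^{(v)} \quad \text{a.s.}
\end{equation*}
(obtained by repeatedly substituting \eqref{eq:RTP} into itself and using $N < \infty$ a.s.\ so that each generation is a finite sum). Since the left-hand side is $\A_\infty$-measurable and the $W^{(v)}$ are independent of $\A_n$, one wants to ``project away'' the independent noise.

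The key analytic input will be that on the survival set the weights spread out: by Lemma \ref{Lem:sup->0}, $\sup_{|v|=n} |L(v)| \to 0$ a.s., and combined with the fact (from Proposition \ref{Prop:W} or the branching-random-walk machinery under \eqref{eq:A1}-\eqref{eq:A4}) that $\sum_{|v|=n} |L(v)|^\alpha$ is bounded (indeed converges to $W$), one gets enough control to identify the limit. Concretely, I would fix a bounded continuous test function or, better, work with Fourier/Laplace transforms: conditioning on $\A_n$ in the iterated identity and using the i.i.d.\ structure gives a functional equation for the conditional characteristic function of $W^{(\varnothing)}$ given $\A_n$, and letting $n \to \infty$ with $\sup_{|v|=n}|L(v)| \to 0$ forces the conditional law to concentrate — that is, $W^{(\varnothing)}$ is $\A_\infty$-measurable in the strong sense that $W^{(\varnothing)} = \lim_{n} \sum_{|v|=n} L(v) \E[W^{(v)} \mid \text{something}]$, and a martingale/backward-martingale argument identifies the summands. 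The cleanest route is probably: set $W := g((L(v))_{v\in\V})$, note $[W]_v$ is independent of $\A_{|v|}$ and $\eqdist W^{(\varnothing)}$, verify $([W]_v)_{v \in \V}$ satisfies \eqref{eq:RTP} (this is immediate from applying the shift $[\cdot]_v$ to the identity $W = \sum_j T_j [W]_j$, which itself follows because the original RTP gives $W^{(\varnothing)} = \sum_j T_j(\varnothing) W^{(j)}$ and all objects are the relevant functionals of the WBP), and then invoke the uniqueness in the correspondence between solutions and recursive tree processes (Lemma 6 of \cite{AB2005}) to conclude $W^{(v)} = [W]_v = [W^{(\varnothing)}]_v$ a.s.\ for all $v$.

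Once $W^{(v)} = [W^{(\varnothing)}]_v$ a.s., identity \eqref{eq:purely_tree-based} is just the iterated form of \eqref{eq:RTP}: substituting into $W^{(\varnothing)} = \sum_{j} T_j W^{(j)}$ repeatedly and replacing each $W^{(v)}$ by $[W^{(\varnothing)}]_v$ yields $W^{(\varnothing)} = \sum_{|v|=n} L(v) [W^{(\varnothing)}]_v$ a.s.\ for every $n \in \N_0$, where convergence/finiteness of the sum is guaranteed because a.s.\ only finitely many $v$ with $|v|=n$ have $L(v) \neq 0$ (as $N < \infty$ a.s.). I expect the main obstacle to be the rigorous justification of the identification $W^{(v)} = [W^{(\varnothing)}]_v$: ``measurable w.r.t.\ the innovations process'' together with the distributional recursion does not a priori say that the *chosen* random variables $W^{(v)}$ equal the shifted functional, only that they have the right joint law; one must leverage the a.s.\ recursion \eqref{eq:RTP} plus the spreading of the weights (Lemma \ref{Lem:sup->0}) and the $\mathcal L^1$-convergence of the relevant multiplicative/additive martingales under \eqref{eq:A1}-\eqref{eq:A4} to upgrade equality in law to equality a.s. Everything else is bookkeeping with the shift operators and the Ulam--Harris tree.
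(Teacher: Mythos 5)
Your general direction — iterate \eqref{eq:RTP}, pass to Fourier transforms, and condition on $\A_n$ — matches the paper's, but both concrete mechanisms you propose for closing the gap you (correctly) flag at the end fail. The ``cleanest route'' via Lemma~6 of \cite{AB2005} is circular: you justify $W=\sum_j T_j [W]_j$ by saying ``all objects are the relevant functionals of the WBP,'' but by the definition of endogeny only $W^{(\varnothing)}$ is a functional of the innovations process; the claim that each $W^{(j)}$ equals the shifted functional $[W]_j$ is precisely what must be proved. Moreover, Lemma~6 of \cite{AB2005} gives uniqueness of the \emph{law} of the RTP, not pathwise uniqueness of its realization, so even with the circularity repaired it cannot yield $W^{(v)}=[W]_v$ a.s.

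Your alternative sketch has the right ingredients but the wrong driver: ``$\sup_{|v|=n}|L(v)|\to 0$ forces the conditional law to concentrate'' is not what does the work, and Lemma~\ref{Lem:sup->0} plays no role in the paper's proof. The actual mechanism is simpler. By endogeny $W^{(\varnothing)}$ is $\A_\infty$-measurable, so L\'evy's upward martingale convergence theorem gives
\[
\exp\big(\imag t W^{(\varnothing)}\big) \;=\; \lim_{n\to\infty}\E\big[\exp(\imag t W^{(\varnothing)})\mid\A_n\big]
\;=\;\lim_{n\to\infty}\prod_{|v|=n}\phi(L(v)t) \;=\;\Phi(t)\quad\text{a.s.},
\]
where the middle equality uses the iterated recursion and the independence of $(W^{(v)})_{|v|=n}$ from $\A_n$, with $\phi$ the characteristic function of $W^{(\varnothing)}$. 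Performing the same computation rooted at $u$, using the continuity of $\Phi$ from Proposition~\ref{Prop:Disintegration} to upgrade the pointwise-in-$t$ identity to one holding simultaneously for all $t$ a.s., and the shift relation $[\Phi(t)]_u=\exp(\imag t[W^{(\varnothing)}]_u)$, one gets $\exp(\imag t W^{(u)})=\exp(\imag t[W^{(\varnothing)}]_u)$ for all $t$ a.s.; Fourier uniqueness finishes, and \eqref{eq:purely_tree-based} then follows by substituting back into the iterated recursion. Your intermediate formula ``$W^{(\varnothing)}=\lim_n\sum_{|v|=n}L(v)\,\E[W^{(v)}\mid\cdots]$'' is also off: those conditional expectations are deterministic (and may not even exist without moment assumptions), so the sum would be a multiple of $Z_n$, not $W^{(\varnothing)}$.
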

The arguments in the following proof are basically contained in \cite[Proposition 6.4]{Alsmeyer+Biggins+Meiners:2012}.
\begin{proof}
For $u \in \V$ and $n \in \N_0$, \eqref{eq:RTP} implies $W^{(u)} = \sum_{|v|=n} [L(v)]_u W^{(uv)}$ a.s.,
which together with the martingale convergence theorem yields
\begin{eqnarray}
\exp(\imag t W^{(u)})
& = &
\lim_{n \to \infty} \E[\exp(\imag t W^{(u)}) | \F_{|u|+n}]
~=~
\lim_{n \to \infty} \E\bigg[\exp\bigg(\imag t \sum_{|v|=n} [L(v)]_u W^{(uv)}\bigg) | \F_{|u|+n}\bigg]	\notag	\\
& = &
\lim_{n \to \infty} \prod_{|v|=n} \phi([L(v)]_ut)   ~=~ [\Phi(t)]_u \quad   \text{a.s.}	    \label{eq:Eexp(itW_u)}
\end{eqnarray}
where $\phi$ denotes the Fourier transform of $W^{(\varnothing)}$ and $\Phi(t)$ denotes the a.s.\ limit of the multiplicative martingale
$\prod_{|v|=n} \phi(L(v)t)$ as $n \to \infty$.
The left-hand side in \eqref{eq:Eexp(itW_u)} is continuous in $t$.
The right-hand side is continuous in $t$ a.s.\ by Proposition \ref{Prop:Disintegration}.
Therefore, \eqref{eq:Eexp(itW_u)} holds simultaneously for all $t \in \R$ a.s.
In particular, $\exp(\imag t W^{(\varnothing)}) = \Phi(t)$ for all $t \in \R$ a.s.
Thus, $\exp(\imag t W^{(u)}) = [\Phi(t)]_u = \exp(\imag t [W^{(\varnothing)}]_u)$ for all $t \in \R$ a.s.
This implies $W^{(u)} = [W^{(\varnothing)}]_u$ a.s.\ by the uniqueness theorem for Fourier transforms.
\end{proof}

Justified by Lemma \ref{Lem:endogeny} we shall henceforth
use \eqref{eq:purely_tree-based} as the definition of endogeny.
Theorem 6.2 in \cite{Alsmeyer+Biggins+Meiners:2012} gives (almost)
complete information about nonnegative endogenous fixed points in
the case when the $T_j$, $j \geq 1$ are nonnegative. This result
has been generalized in \cite{Alsmeyer+Meiners:2013}, Theorems 4.12
and 4.13. Adapted to the present situation, all these findings are summarized in the following proposition.

\begin{Prop}    \label{Prop:endogeny}
Assume that \eqref{eq:A1}-\eqref{eq:A4} hold true. Then
\begin{itemize}
	\item[(a)]
		there is a nonnegative non-null endogenous fixed point $W$ w.r.t.~$(|T_j|^{\alpha})_{j \geq 1}$
		given by \eqref{eq:W_definition}.
		Any other nonnegative endogenous fixed point w.r.t.~$(|T_j|^{\alpha})_{j \geq 1}$ is of the form $cW$ for some $c \geq 0$.
	\item[(b)]
		$W$ defined in \eqref{eq:W_definition} further satisfies
		\begin{align}
		W	~& =~
		\lim_{t \to \infty} D\left(e^{-\alpha t}\right) \sum_{v \in \mathcal{T}_t} e^{-\alpha S(v)}
		~=~ \lim_{t \to \infty} \sum_{v \in \mathcal{T}_t} e^{-\alpha S(v)} D\left(e^{-\alpha S(v)}\right)  \label{eq:twolimits}    \\
		& =~ \lim_{t \to \infty} \sum_{v \in \mathcal{T}_t} e^{-\alpha S(v)} \int_{\{|x|<e^{\alpha S(v)}\}} x \, \Prob(W \in \dx)
		\quad   \text{a.s.} \label{eq:third limit}
		\end{align}
	\item[(c)]
		There are no non-null endogenous fixed points w.r.t.~$(|T_j|^{\beta})_{j \geq 1}$ for $\beta \not = \alpha$.
	\item[(d)]
		If, additionally, $\E[\sum_{j \geq 1} |T_j|^{\alpha} (\log^- (|T_j|))^2] < \infty$,
		then $W$ given by \eqref{eq:W_definition} is the unique endogenous fixed point w.r.t.~$(|T_j|^{\alpha})_{j \geq 1}$ up to scaling.
		In particular, every endogenous fixed point w.r.t.~$(|T_j|^{\alpha})_{j \geq 1}$ is nonnegative a.s.\ or nonpositive a.s.
\end{itemize}
\end{Prop}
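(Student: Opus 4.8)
The plan is to obtain Proposition~\ref{Prop:endogeny} by specializing the known theory of endogenous fixed points for \emph{nonnegative} weight sequences to the nonnegative sequence $(|T_j|^{\alpha})_{j\geq1}$. The key observation is that endogeny with respect to $(|T_j|^{\alpha})_{j\geq1}$ --- that is, \eqref{eq:purely_tree-based} with $L(v)$ replaced by $|L(v)|^{\alpha}$ together with measurability with respect to $(|L(v)|)_{v\in\V}$ --- involves only the moduli $|T_j|$, so the signs of the $T_j$ are irrelevant for assertions (a)--(c), and these are \cite[Theorem~6.2]{Alsmeyer+Biggins+Meiners:2012} together with \cite[Theorems~4.12 and 4.13]{Alsmeyer+Meiners:2013}, applied verbatim to $(|T_j|^{\alpha})_{j\geq1}$. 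Consequently, the only genuine work is to check that the standing hypotheses \eqref{eq:A1}--\eqref{eq:A4} for $(T_j)_{j\geq1}$ deliver the hypotheses required of $(|T_j|^{\alpha})_{j\geq1}$ in those references, and to keep track, via Lemma~\ref{Lem:endogeny}, that the notion of endogeny used there agrees with the one fixed here.

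For the translation I would first record that the $m$-function attached to $(|T_j|^{\alpha})_{j\geq1}$ is $\gamma\mapsto m(\alpha\gamma)$; hence its characteristic index equals $1$ and $m(\alpha\vartheta)>1$ for $\vartheta\in[0,1)$, so \eqref{eq:A2}--\eqref{eq:A3} hold for $(|T_j|^{\alpha})_{j\geq1}$ with index $1$. Property \eqref{eq:A1} is the statement that the random walk $(S_n)_{n\geq0}$ of \eqref{eq:spinal walk} is nonlattice, and the corresponding walk for $(|T_j|^{\alpha})_{j\geq1}$ is $(\alpha S_n)_{n\geq0}$, which is nonlattice as well. Since $\E[\sum_j|T_j|^{\alpha}\log(|T_j|^{\alpha})]=\alpha\,\E[\sum_j|T_j|^{\alpha}\log(|T_j|)]$ and $\sum_j|T_j|^{\alpha}$ is unchanged, \eqref{eq:A4a} transfers, and \eqref{eq:A4b} transfers because $m(\alpha\theta')<\infty$ for some $\theta'\in[0,1)$ is \eqref{eq:A4b} for $(T_j)_{j\geq1}$ with $\theta=\alpha\theta'$. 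With these identifications, \cite[Theorem~6.2]{Alsmeyer+Biggins+Meiners:2012} gives that the random variable $W$ of \eqref{eq:W_definition} is a nonnegative non-null endogenous fixed point with respect to $(|T_j|^{\alpha})_{j\geq1}$, that it satisfies the ladder-line representations \eqref{eq:twolimits}--\eqref{eq:third limit} (read with $e^{-\alpha S(v)}=|L(v)|^{\alpha}$), and that every nonnegative such fixed point is a multiple $cW$ with $c\geq0$; this establishes (a) and (b). For (c), a non-null endogenous fixed point with respect to $(|T_j|^{\beta})_{j\geq1}$ would force the corresponding $m$-function $\gamma\mapsto m(\beta\gamma)$ to equal $1$ at index $1$, i.e.\ $m(\beta)=1$, which by \eqref{eq:A3} forces $\beta=\alpha$.

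For (d), the extra hypothesis $\E[\sum_{j\geq1}|T_j|^{\alpha}(\log^-(|T_j|))^2]<\infty$ is, via \eqref{eq:spinal walk}, exactly $\E[(S_1^+)^2]<\infty$ --- the reduction already carried out in the proof of Lemma~\ref{Lem:WBP_even} --- and this is precisely the second-moment condition under which \cite[Theorem~4.13]{Alsmeyer+Meiners:2013}, applied to $(|T_j|^{\alpha})_{j\geq1}$, yields that every endogenous fixed point with respect to $(|T_j|^{\alpha})_{j\geq1}$ equals $cW$ for some real $c$; consequently such a fixed point is nonnegative a.s.\ or nonpositive a.s. The main point requiring care is precisely here: one must be sure the cited uniqueness statement is valid at the boundary index $1$, and in particular that the second-moment condition genuinely rules out any additional \emph{signed} endogenous fixed point of the kind that can appear for truly signed weights when $\alpha=1$ in Theorem~\ref{Thm:SF}. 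That no such extra solution survives is consistent with the fact that the weights $|T_j|^{\alpha}$ are nonnegative, so the analogue of the martingale $(Z_n)_{n\geq0}$ is the convergent Biggins martingale $(W_n)_{n\geq0}$, which contributes nothing new.
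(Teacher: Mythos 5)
Your approach matches the paper's own: Proposition~\ref{Prop:endogeny} is established by citing Theorem~6.2 of \cite{Alsmeyer+Biggins+Meiners:2012} and Theorems~4.12--4.13 of \cite{Alsmeyer+Meiners:2013}, applied to the nonnegative weight sequence $(|T_j|^{\alpha})_{j\geq1}$, exactly as you propose. Your hypothesis-transfer checks --- the $m$-function dilation $\gamma\mapsto m(\alpha\gamma)$, the spinal-walk nonlattice argument via \eqref{eq:spinal walk}, the invariance of $\sum_j|T_j|^{\alpha}$ for \eqref{eq:A4a}, and the identification of the second-moment hypothesis in (d) with $\E[(S_1^+)^2]<\infty$ --- are correct and make explicit a step the paper leaves silent.

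One caveat on part (c). The sentence ``a non-null endogenous fixed point with respect to $(|T_j|^{\beta})_{j\geq1}$ would force $m(\beta)=1$'' is not an elementary $m$-function observation: an endogenous fixed point $X$ need not be integrable, so you cannot take expectations in $X=\sum_j|T_j|^{\beta}[X]_j$ to read off $m(\beta)=1$. That implication is precisely the content of Theorem~6.2(b) of \cite{Alsmeyer+Biggins+Meiners:2012} (for one-signed recursive tree processes) and Theorem~4.12 of \cite{Alsmeyer+Meiners:2013} (for the general signed case), which is what the paper cites for (c). You do name those theorems in your opening paragraph, so the plan is sound; but as written, the (c) step reads like a short self-contained derivation, which would be circular. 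Make the citation explicit there rather than leaving it as a one-line deduction.
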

\begin{proof}
(a) is Theorem 6.2(a) in \cite{Alsmeyer+Biggins+Meiners:2012} (and partially already stated in Proposition \ref{Prop:W}).
\eqref{eq:twolimits} is (11.8) in \cite{Alsmeyer+Biggins+Meiners:2012}, \eqref{eq:third limit} is (4.39) in \cite{Alsmeyer+Meiners:2013}.
(c) is Theorem 6.2(b) in \cite{Alsmeyer+Biggins+Meiners:2012} in the case of nonnegative (or nonpositive) recursive tree processes
and Theorem 4.12 in \cite{Alsmeyer+Meiners:2013} in the general case. (d) is Theorem 4.13 in \cite{Alsmeyer+Meiners:2013}.
\end{proof}

In the case of weights with mixed signs there may be
endogenous fixed points other than those described in Proposition
\ref{Prop:endogeny}. Theorem \ref{Thm:endogeny} given next states
that under \eqref{eq:A1}-\eqref{eq:A4} these fixed points are
always a deterministic constant times $Z$, the limit of $Z_n ~=~
\sum_{|v|=n} L(v)$, $n\in\N_0$.

\begin{Thm} \label{Thm:endogeny}
Suppose \eqref{eq:A1}-\eqref{eq:A4}.
Then the following assertions hold.
\begin{itemize}
    \item[(a)]
        If $\alpha < 1$, there are no non-null
        endogenous fixed points w.r.t.~$T$.
    \item[(b)]
        Let $\alpha = 1$
        and assume that $\E[\sum_{j \geq 1} |T_j| (\log^-(|T_j|))^2] < \infty$ holds in Cases I and II
        and \eqref{eq:A5} holds in Case III.
        Then the endogenous fixed points are precisely of the form $cW$ a.s., $c \in \R$ in Case I,
        while in Cases II and III, there are no non-null
        endogenous fixed points w.r.t.~$T$.
    \item[(c)]
        If $\alpha > 1$, the following assertions are equivalent.
        \begin{itemize}
        \item[(i)]
        There is a non-null endogenous fixed point w.r.t.~$T$.
        \vspace{-0.1cm}
        \item[(ii)]
        $Z_n$ converges a.s. and $\Prob(\lim_{n\to\infty} Z_n=0)<1$.
        \vspace{-0.1cm}
        \item[(iii)]
        $\E [Z_1] = 1$ and $(Z_n)_{n \geq 0}$ converges in $\mathcal{L}^{\beta}$ for some/all $1 < \beta < \alpha$.
        \end{itemize}
        If either of the conditions (i)-(iii) is satisfied, then $(Z_n)_{n \geq 0}$ is a uniformly integrable martingale.
        In particular, $Z_n$
        converges a.s.~and in mean to some random variable $Z$ with $\E[Z]=1$
        which is an endogenous fixed point w.r.t.~$T$.
        Any other endogenous fixed point is of the form $cZ$ for some $c \in \R$.
\end{itemize}
\end{Thm}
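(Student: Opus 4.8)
The plan is to analyse an arbitrary non-null endogenous fixed point $V$ of \eqref{eq:generalized stable} w.r.t.\ $T$ (in the sense of \eqref{eq:purely_tree-based}) through the disintegration machinery of Section~\ref{sec:BP}, to reduce every case to a single quantitative tail estimate for $V$, and to dispatch the two borderline indices by the reductions already at hand. \textbf{Step 1 (disintegration).} Let $\phi\in\SF$ be the characteristic function of $V$. Running the computation in the proof of Lemma~\ref{Lem:endogeny} at $u=\varnothing$ shows that the multiplicative martingale $\Phi_n(t)=\prod_{|v|=n}\phi(L(v)t)$ converges a.s.\ to $\Phi(t)=e^{\imag tV}$, so $|\Phi(t)|\equiv 1$. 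Hence in the L\'evy representation $\Phi=\exp(\Psi)$ of Proposition~\ref{Prop:Disintegration} we must have $\operatorname{Re}\Psi\equiv 0$, forcing $\mathbf\Sigma=0$ and $\nu=0$ a.s.\ and $\W=V$ a.s. Inserting $\nu=0$ into Lemma~\ref{Lem:explicit_representation_along_ladder_lines}, equation \eqref{eq:nu_ladder} becomes $\sum_{v\in\mathcal T_u}\Prob(|V|>\delta/|L(v)|)\to 0$ a.s.\ for every $\delta>0$, and \eqref{eq:bW ladder} collapses to
\[
V=\lim_{t\to\infty}\ \sum_{v\in\mathcal T_t}L(v)\!\int_{\{|x|\le h/|L(v)|\}}\!x\,F(\dx)\qquad\text{for every }h>0,
\]
where $F$ is the law of $V$. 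Two uniform facts along ladder lines will be used: $\sup_{v\in\mathcal T_t}|L(v)|\le e^{-t}\to 0$ (Lemma~\ref{Lem:sup->0}); and, since $1-\varphi$ is concave with $1-\varphi(1)>0$, Proposition~\ref{Prop:endogeny}(b) gives $\sum_{v\in\mathcal T_t}|L(v)|^{\alpha}\le(1-\varphi(1))^{-1}\sum_{v\in\mathcal T_t}(1-\varphi(|L(v)|^{\alpha}))\to(1-\varphi(1))^{-1}W<\infty$ a.s.

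\textbf{Step 2 (tail estimate; the cases $\alpha<1$ and $\alpha>1$ with $\E V=0$).} The crux is the estimate $\Prob(|V|>r)=o(r^{-\alpha})$ as $r\to\infty$. Using the many-to-one formula \eqref{eq:spinal walk} along the stopping line $\mathcal T_u$ one rewrites $\E[\sum_{v\in\mathcal T_u}\Prob(|V|>\delta/|L(v)|)]$ as $\E[e^{\alpha S_{\sigma_u}}\Prob(|V|>\delta e^{S_{\sigma_u}})]$ for $\sigma_u=\inf\{n:S_n>u\}$, and combining $\nu=0$ with renewal-theoretic control of the overshoot $S_{\sigma_u}-u$ (of finite mean under \eqref{eq:A4a}, of finite exponential moment under \eqref{eq:A4b}) yields $r^{\alpha}\Prob(|V|>r)\to0$; this bootstrapping argument is the main technical obstacle. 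From it: for $\alpha<1$, $\int_{\{|x|\le R\}}|x|\,F(\dx)=o(R^{1-\alpha})$; for $\alpha>1$, $V\in L^{1}$, and if moreover $\E V=0$ then $|\int_{\{|x|\le R\}}x\,F(\dx)|=\E[|V|\1_{\{|V|>R\}}]=o(R^{1-\alpha})$. In both situations the integrand in the ladder-line representation is $o(R^{1-\alpha})$ at $R=h/|L(v)|$, so $|L(v)|\cdot|\!\int_{\{|x|\le h/|L(v)|\}}x\,F(\dx)|\le h^{1-\alpha}|L(v)|^{\alpha}\eta(|L(v)|)$ with $\eta(s)\to0$ as $s\downarrow0$, whence
\[
|V|\ \le\ \limsup_{t\to\infty}\ h^{1-\alpha}\Big(\sup_{0<s\le e^{-t}}\eta(s)\Big)\sum_{v\in\mathcal T_t}|L(v)|^{\alpha}\ =\ 0 .
\]
This proves (a), and shows that for $\alpha>1$ every non-null endogenous fixed point has $\mu:=\E V\neq0$.

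\textbf{Step 3 ($\alpha>1$, the equivalence).} (ii)$\Rightarrow$(i): if $Z_n\to Z$ a.s.\ with $\Prob(Z=0)<1$, then $Z$ is $\bL$-measurable and, letting $n\to\infty$ in $Z_n=\sum_{|v|=m}L(v)[Z_{n-m}]_v$, satisfies \eqref{eq:purely_tree-based}, so $Z$ is a non-null endogenous fixed point. (iii)$\Rightarrow$(ii): $\E Z_1=1$ makes $(Z_n)$ a martingale, and $L^{\beta}$-boundedness gives a.s.\ and $L^{\beta}$ convergence to a $Z$ with $\E Z=1$. (i)$\Rightarrow$(iii): given $V$ with $\mu\neq0$, the variables $[V]_v$, $|v|=n$, are i.i.d.\ copies of $V$ independent of $\A_n$ while $L(v)$ is $\A_n$-measurable, so $\E[V\mid\A_n]=\sum_{|v|=n}L(v)\E V=\mu Z_n$; hence $Z_n=\mu^{-1}\E[V\mid\A_n]$ is a uniformly integrable martingale, $L^{\beta}$-bounded for $\beta\in(1,\alpha)$ by Jensen's inequality and $V\in L^{\beta}$ (Step~2), converging a.s.\ and in $L^{\beta}$ to $\mu^{-1}\E[V\mid\A_\infty]=\mu^{-1}V=:Z$ with $\E Z=1$; being a martingale, $\E Z_1=1$. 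Applying the identity $\E[V'\mid\A_n]=(\E V')Z_n$ to an arbitrary non-null endogenous fixed point $V'$ gives $V'=(\E V')Z$, which completes (c).

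\textbf{Step 4 ($\alpha=1$).} In Case~I, $T_j\ge0$, so $(|T_j|^{\alpha})=(T_j)$ and the claim is precisely Proposition~\ref{Prop:endogeny}(d). In Case~II, all $T_j\le0$, so the even-generation weighted branching process $(L(v))_{|v|=2}$ has nonnegative weights and, by Lemma~\ref{Lem:WBP_even}, is a Case~I system of index $1$ for which $\E[\sum_{j}|T_j|(\log^{-}(|T_j|))^{2}]<\infty$ still holds; an endogenous fixed point $V$ w.r.t.\ $T$ is in particular one w.r.t.\ $(L(v))_{|v|=2}$, hence $V=cW$ by Case~I, but $\sum_{|v|=1}L(v)[W]_v=-\sum_{j}|T_j|[W]_j=-W$, so $V=cW$ being a fixed point w.r.t.\ $T$ forces $V=-V$, i.e.\ $V=0$. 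In Case~III, \eqref{eq:A5} strengthens Step~2 to $V\in L^{1}$; if $\E V\neq0$ then $Z_n=(\E V)^{-1}\E[V\mid\A_n]$ is a martingale, while also $\E[Z_{n+1}\mid\A_n]=(\E Z_1)Z_n=(p-q)Z_n$ with $p-q\neq1$, whence $Z_n\equiv0$, contradicting $Z_0=1$; and if $\E V=0$ then $\E[|V|\1_{\{|V|>R\}}]\to0$ and $\sum_{v\in\mathcal T_t}|L(v)|=\sum_{v\in\mathcal T_t}|L(v)|^{\alpha}\le(1-\varphi(1))^{-1}W$ combine exactly as in Step~2 to give $V=0$. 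Thus there is no non-null endogenous fixed point in Cases~II and~III, which proves (b). The step I expect to be hardest is the tail estimate of Step~2 (and its upgrade to integrability when $\alpha=1$): everything else is bookkeeping around the disintegration and the two martingale identities $\E[V\mid\A_n]=(\E V)Z_n$ and $\E[Z_{n+1}\mid\A_n]=(\E Z_1)Z_n$.
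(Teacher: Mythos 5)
Your Steps 1--3 reproduce the paper's argument in substance: the disintegration gives $\Phi(t)=e^{\imag tV}$ and $\nu=0$, the tail estimate (the paper's Lemma~\ref{Lem:tail bounds}(b)) follows by pushing $\nu=0$ through \eqref{eq:nu_ladder} and the many-to-one identity, and for $\alpha>1$ the martingale identity $\E[V\mid\A_n]=(\E V)Z_n$ together with the $\mathcal L^\beta$-boundedness yields the equivalence in (c). Your argument for $\alpha<1$ is also sound (the paper defers it to a reference). A small imprecision: the correct tail estimate under (A4) is $\Prob(|V|>r)=o(1-\varphi(r^{-\alpha}))=o(r^{-\alpha}D(r^{-\alpha}))$, not $o(r^{-\alpha})$ in general; under (A4b) the slowly varying factor $D$ need not be bounded, so you should carry it along (it cancels against the $\sum_{v\in\mathcal T_t}|L(v)|^\alpha D(|L(v)|^\alpha)\to W$ limit from \eqref{eq:twolimits} rather than against the cruder bound $\sum_{v\in\mathcal T_t}|L(v)|^\alpha\le(1-\varphi(1))^{-1}W$).

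The real gap is in Step 4, Case III ($\alpha=1$, mixed signs). Both branches of your argument require $V\in\mathcal L^1$: in the branch $\E V\neq0$ you need it to form $\E[V\mid\A_n]$ at all, and in the branch $\E V=0$ you need it so that $\E[|V|\1_{\{|V|>R\}}]\to0$. But for $\alpha=1$ the available tail estimate $\Prob(|V|>r)=o(r^{-1})$ is \emph{not} integrable, and your assertion that (A5) ``strengthens Step~2 to $V\in L^1$'' has no justification --- none of the conditions in (A5) (spread-out step distribution, second log-moment, $h_3$-moment) imply a summable tail for $V$, and the paper never claims $V\in\mathcal L^1$ in this case. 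This is exactly why the paper takes a different route: it keeps the ladder-line representation $V=\lim_t\sum_{v\in\mathcal T_t}L(v)I(|L(v)|^{-1})$, integrates by parts to replace $|V|$-tails by the signed difference $\Prob(V>s)-\Prob(V<-s)$ (which is $o(s^{-1})$ by Lemma~\ref{Lem:tail bounds}(b) and, unlike the two-sided tail, enters linearly), controls the resulting integral up to $e^t$ via \eqref{eq:crucial limit}, and then invokes Lemma~\ref{Lem:sumL(v)->0}, $t\sum_{v\in\mathcal T_t}L(v)\to0$ in probability, which is the place where (A5) is genuinely used (through the multitype renewal theory of \cite{Iksanov+Meiners:2014}). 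Your elegant martingale contradiction $\E[Z_{n+1}\mid\A_n]=(p-q)Z_n=Z_n$ would indeed be a short cut if integrability were available, but as written the crucial hypothesis is unsupported; to fix the proposal you would either have to prove $V\in\mathcal L^1$ (which seems out of reach from $\nu=0$ alone when $\alpha=1$) or fall back on the signed-tail plus $\sum_{v\in\mathcal T_t}L(v)$ machinery the paper uses.
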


The proof of this result is postponed until Section \ref{subsec:endogeny proofs}.

We finish the section on endogenous fixed points with the proof of Proposition \ref{Prop:Jelenkovic+Olvera:2012b}
which establishes the existence of $\W^*$. If well-defined, the latter random variable can be viewed as an endogenous
inhomogeneous fixed point.

\begin{proof}[Proof of Proposition \ref{Prop:Jelenkovic+Olvera:2012b}:]
By using the Cram\'{e}r-Wold device we can and do assume that
$d=1$. We shall write $W_n^*$ for $\W_n^*$.

\noindent (i)
It suffices to show that the infinite series $\sum_{v \in \V}|L(v)| |C(v)|$ converges
a.s.~which is, of course, the case if the sum has a finite moment of order $\beta$.
The latter follows easily (see, for instance, \cite[Lemma 4.1]{Jelenkovic+Olvera:2012b} or \cite[Proposition 5.4]{Alsmeyer+Meiners:2013}).

\noindent (ii) The assumption entails
$\E[|C|^\beta]<\infty$ and thereupon $\E[C]\in (-\infty,\infty)$.
If $T_j \geq 0$ a.s.\ for all $j \geq 1$, then sufficiency follows from
\cite[Proposition 5.4]{Alsmeyer+Meiners:2013}. Thus, assume that
$\E [C] = 0$. Then $(W_n^*)_{n \geq 0}$ is a martingale w.r.t.\
$(\A_n)_{n \geq 0}$. Since $W_n^*$ has distribution
$\Smooth^n(\delta_{\mathbf{0}})$, $n \geq 0$, this martingale is
$\mathcal{L}^{\beta}$-bounded and, hence, a.s.\ convergent.

\noindent (iii)
This is the first part of Theorem 1.1 in \cite{Buraczewski+Kolesko:2014}.
\end{proof}

\subsection{A multitype branching process and homogeneous stopping lines}   \label{subsec:HSL}

In this section we assume that \eqref{eq:A1}--\eqref{eq:A4} and
\eqref{eq:A6} hold and that we are in the case of weights with
mixed signs (Case III). Because of the latter assumption,
when defining the branching random walk $(\Z_n)_{n \geq 0}$ from
$(L(v))_{v \in \V}$, information is partially lost since
each position $S(v)$ is defined in terms of the absolute value
$|L(v)|$ of the corresponding weight $L(v)$, $v \in \V$. This loss
of information can be compensated by keeping track of the sign of
$L(v)$. Define
\begin{equation*}
\tau(v) ~:=~    \begin{cases}
                1   &   \text{if }  L(v) > 0,   \\
                -1  &   \text{if }  L(v) < 0    \\
                \end{cases}
\end{equation*}
for $v \in \G$. For the sake of completeness, let $\tau(v) = 0$ when $L(v) = 0$.
The positions $S(v),$ $v \in \V$ together with the signs $\tau(v),$ $v \in \V$ define a multitype general branching process with type space $\{1,-1\}$.

Define $M(\gamma) := (\mu_{\gamma}^{k,\ell}(\R))_{k,\ell =1,-1}$ where
\begin{equation*}
\mu_{\gamma}^{k,\ell}(\cdot)
~:=~    \E\bigg[\sum_{j \geq 1: \sign(T_j) = k \ell} |T_j|^{\gamma} \delta_{S(j)}(\cdot)\bigg]
\end{equation*}
Then $p = \E\big[\sum_{j \geq 1} \! T_j^{\alpha}
\1_{\{T_j
> 0\}}\big] = \mu_{\alpha}^{1,1}(\R) = \mu_{\alpha}^{-1,-1}(\R)$
and $q = 1-p = \E \big[\sum_{j \geq 1} \! |T_j|^{\alpha} \1_{\{T_j
< 0\}}\big] = \mu_{\alpha}^{1,-1}(\R) = \mu_{\alpha}^{-1,1}(\R)$.
Therefore,
\begin{equation*}
M(\alpha) = \begin{pmatrix} p & q   \\  q & p       \end{pmatrix}
\end{equation*}
where $0<p,q<1$ since we are in Case III.
Next, we establish that the general branching process possesses
the following properties.
\begin{itemize}
    \item[(i)]
        For all $h > 0$ and all $h_1,h_{-1} \in [0,h)$ either
        $\mu_{\alpha}^{1,1}(\R \setminus h\integers) > 0$, $\mu_{\alpha}^{1,-1}(\R \setminus (h_{-1}-h_1 + h\integers)) > 0$
        or $\mu_{\alpha}^{-1,1}(\R \setminus (h_{1}-h_{-1} + h\integers)) > 0$.
        Further, $M(\alpha)$ is irreducible.
    \vspace{-0.2cm}
    \item[(ii)]
        Either $M(0)$ has finite entries only and Perron-Frobenius eigenvalue $\rho > 1$ or $M(0)$ has an infinite entry.
    \vspace{-0.2cm}
    \item[(iii)]
        $M(\alpha)$ has eigenvalues $1$ and $2p\!-\!1$ (with right eigenvectors $(1,1)^{\!\textsf{T}}$ and $(1,-1)^{\!\textsf{T}}$, respectively).
        $1$ is the Perron-Frobenius eigenvalue of $M(\alpha)$.
    \vspace{-0.2cm}
    \item[(iv)]
        The first moments of $\mu_{\alpha}^{k,\ell}$ are finite and positive for all $k,\ell \in \{1,-1\}$.
\end{itemize}
(i)--(iv) correspond to assumptions (A1)--(A4) in
\cite{Iksanov+Meiners:2014} and will justify the applications of
the limit theorems of the cited paper.
\begin{proof}[Proof of the validity of (i)--(iv)]
(i) $M(\alpha)$ is irreducible because all its entries are positive.
Now assume for a contradiction that for some $h > 0$
and some $h_1,h_{-1} \in [0,h)$, $\mu_{\alpha}^{1,1}(\R \setminus
h\integers) = \mu_{\alpha}^{1,-1}(\R \setminus (h_{-1}-h_1 +
h\integers)) = \mu_{\alpha}^{-1,1}(\R \setminus (h_{1}-h_{-1} +
h\integers)) = 0$. Since $\mu_{\alpha}^{1,-1} =
\mu_{\alpha}^{-1,1}$ is nonzero, this implies $(h_{-1}-h_1 +
h\integers) \cap (h_{1}-h_{-1} + h\integers) \not = \emptyset$.
Hence, there are $m,n \in \integers$ such that $h_{-1}-h_1 + hm =
h_{1}-h_{-1} + hn$, equivalently, $2(h_{-1}-h_1) = h(n-m)$. Thus,
$h_{-1}-h_1$ and $h_{-1}-h_1$ belong to the lattice $\frac{h}{2} \integers$.
This contradicts \eqref{eq:A1}. While (iii) can be verified by elementary
calculations, (ii) is an immediate consequence of (iii) for $M(0)$
has strictly larger entries than $M(\alpha)$ which has
Perron-Frobenius eigenvalue $1$. (iv) follows from \eqref{eq:A4}.
\end{proof}

Recall that $\mathcal{T}_t = \{v \in \G: S(v) > t \text{ but } S(v|_k) \leq t \text{ for all } 0 \leq k < |v|\}$, $t \geq 0$.

\begin{Prop}    \label{Prop:ratios}
Assume that \eqref{eq:A1}-\eqref{eq:A3} and \eqref{eq:A6} hold and that we are in Case III, \textit{i.e.}, $0 < p,q < 1$.
Further, let $h:[0,\infty) \to (0,\infty)$ be a c\`adl\`ag function
such that $h(t) \leq C t^{\gamma}$ for all sufficiently large $t$ and some $C>0$, $\gamma \geq 0$.
\begin{itemize}
    \item[(a)]
        Suppose that \eqref{eq:A4a} holds and that $\E \big[\sum_{j=1}^N |T_j|^{\alpha} S(j)^{1+\gamma}\big] < \infty$.
        Then, for $\beta = \alpha$, $j=1,-1$, any $\varepsilon>0$ and all sufficiently large $c$, the following convergence in probability holds as $t \to \infty$ on
        the survival set $S$
        \begin{align}       \label{eq:ratio_convergence}
        \frac{\sum_{v \in \mathcal{T}_t: S(v) \leq t+c,\tau(v)=j} e^{-\beta (S(v)-t)} h(S(v)-t)}
        {\sum_{v \in \mathcal{T}_t} e^{-\alpha (S(v)-t)} h(S(v)-t)}
        ~\to~   \frac{1}{2} - \varepsilon(c)
         ~\geq~ \frac{1}{2} - \varepsilon.
        \end{align}
    \item[(b)]
        Suppose that \eqref{eq:A4b} holds.
        Then the convergence in \eqref{eq:ratio_convergence} holds in the a.s.\ sense for all $\beta \geq
        \theta$ (with $\theta$ defined in \eqref{eq:A4b}) and sufficiently large $c$ that may depend on $\beta$.
\end{itemize}
If one chooses $c = \infty$ (\textit{i.e.}, if one drops the
condition $S(v) \leq t+c$), the result holds with $\varepsilon(\infty) = 0$.
\end{Prop}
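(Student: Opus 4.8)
The plan is to read both the numerator and the denominator of \eqref{eq:ratio_convergence} as characteristic sums over the first-passage stopping line $\mathcal{T}_t$ of the two-type branching random walk $(S(v),\tau(v))_{v\in\G}$ and to feed them into the renewal-type limit theorems of \cite{Iksanov+Meiners:2014}, whose standing hypotheses are exactly the properties (i)--(iv) verified just above. The first step is a change of normalisation adapted to the index $\alpha$: since $S(v)=-\log|L(v)|$ one has $e^{-\alpha(S(v)-t)}=e^{\alpha t}|L(v)|^{\alpha}$ and $e^{-\beta(S(v)-t)}=e^{\alpha t}|L(v)|^{\alpha}\,e^{-(\beta-\alpha)(S(v)-t)}$, so, setting $\psi_{\beta,c}(u):=e^{-(\beta-\alpha)u}h(u)\1_{[0,c]}(u)$ (so that $\psi_{\alpha,c}=h\1_{[0,c]}$),
\begin{equation*}
\text{numerator}=e^{\alpha t}\!\!\sum_{v\in\mathcal{T}_t}|L(v)|^{\alpha}\,\psi_{\beta,c}(S(v)-t)\,\1_{\{\tau(v)=j\}},
\qquad
\text{denominator}=e^{\alpha t}\!\!\sum_{v\in\mathcal{T}_t}|L(v)|^{\alpha}\,h(S(v)-t).
\end{equation*}
The prefactor $e^{\alpha t}$ is common to both and cancels in the ratio, so it suffices to understand the two $|L(v)|^{\alpha}$-weighted sums.

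Next I would invoke the multitype first-passage renewal theorem of \cite{Iksanov+Meiners:2014}. Let $\overline{\mathsf{F}}$ be the equilibrium overshoot law of the $\alpha$-tilted random walk $(S_n)_{n\ge0}$ from \eqref{eq:spinal walk}; it is a probability measure on $[0,\infty)$ because $\mathsf{m}:=\E[S_1]=-m'(\alpha)$ is finite and positive (under \eqref{eq:A4a} this is the first condition in \eqref{eq:A4a}, and under \eqref{eq:A4b} it holds because $m(\theta)<\infty$ forces $m$ to stay finite and smooth past $\alpha$). For a reward function $\psi$ that is direct Riemann integrable with respect to $\overline{\mathsf{F}}$, the theorem yields, on the survival set $S$ and as $t\to\infty$,
\begin{equation*}
\sum_{v\in\mathcal{T}_t}|L(v)|^{\alpha}\,\psi(S(v)-t)\,\1_{\{\tau(v)=j\}}
~\longrightarrow~
W\,\pi_j\!\int_{[0,\infty)}\!\psi\,\mathrm{d}\overline{\mathsf{F}},
\end{equation*}
where $W>0$ on $S$ is a random variable not depending on $\psi$ or $j$, and $\pi=(\pi_1,\pi_{-1})$ is the stationary law of the spine's type-chain, i.e.\ of the stochastic matrix $M(\alpha)=\bigl(\begin{smallmatrix}p&q\\q&p\end{smallmatrix}\bigr)$; the convergence is in probability under \eqref{eq:A4a} (whence part (a)) and almost sure under \eqref{eq:A4b} (whence part (b)). The decisive point is that $M(\alpha)$ is doubly stochastic (equivalently $\mu_{\alpha}^{1,1}=\mu_{\alpha}^{-1,-1}$ and $\mu_{\alpha}^{1,-1}=\mu_{\alpha}^{-1,1}$, as established above), so $\pi=(\tfrac12,\tfrac12)$: the factor $\pi_j$ equals $\tfrac12$ for \emph{both} $j\in\{1,-1\}$, and summing the displayed limit over $j$ reproduces the unrestricted limit governing the denominator. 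Now apply this with $\psi=\psi_{\beta,c}$ in the numerator (which for $c<\infty$ is bounded with compact support, hence trivially d.R.i.) and with $\psi=h$ in the denominator (which is d.R.i.\ against $\overline{\mathsf{F}}$ since $h$ is c\`adl\`ag and polynomially bounded and $\int u^{\gamma}\,\mathrm{d}\overline{\mathsf{F}}<\infty$; the latter being, via \eqref{eq:spinal walk}, the hypothesis $\E[\sum_{j=1}^N|T_j|^{\alpha}S(j)^{1+\gamma}]<\infty\Leftrightarrow\E[S_1^{1+\gamma}]<\infty$ in case (a), and automatic in case (b), where \eqref{eq:A4b} gives $S_1$ an exponential moment and hence $\overline{\mathsf{F}}$ an exponential tail). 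Dividing, the common factor $W$ cancels and the ratio in \eqref{eq:ratio_convergence} converges on $S$ to
\begin{equation*}
\frac12\cdot\frac{\int_{[0,c]}e^{-(\beta-\alpha)u}\,h(u)\,\overline{\mathsf{F}}(\mathrm{d}u)}{\int_{[0,\infty)}h(u)\,\overline{\mathsf{F}}(\mathrm{d}u)}~=:~\frac12-\varepsilon(c).
\end{equation*}

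For $\beta=\alpha$ (the case used later) the numerator's reward reduces to $h\1_{[0,c]}$, so $\varepsilon(c)=\tfrac12\int_{(c,\infty)}h\,\mathrm{d}\overline{\mathsf{F}}\,/\,\int_{[0,\infty)}h\,\mathrm{d}\overline{\mathsf{F}}$, which decreases to $0$ as $c\uparrow\infty$ by dominated convergence (using $\int h\,\mathrm{d}\overline{\mathsf{F}}<\infty$), with $\varepsilon(\infty)=0$ when the truncation is dropped; thus, given $\varepsilon>0$, any sufficiently large $c$ makes $\varepsilon(c)\le\varepsilon$. The general case $\beta\ge\theta$ needs only the same renewal input, with $h$ replaced by the (truncated, still c\`adl\`ag, still $\overline{\mathsf{F}}$-integrable by the exponential lightness of $\overline{\mathsf{F}}$) reward $u\mapsto e^{-(\beta-\alpha)u}h(u)\1_{[0,c]}(u)$, and the upgrade from convergence in probability to almost sure convergence is exactly the a.s.\ version of the theorem of \cite{Iksanov+Meiners:2014} available under \eqref{eq:A4b}. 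Finally, $\mathcal{T}_t$ is a.s.\ finite and, on $S$, non-empty for every $t\ge0$ by Lemma \ref{Lem:sup->0}, and $h>0$, so the denominator is strictly positive on $S$ and the ratio is well defined there. I expect the main obstacle to lie not in the probabilistic core --- which is a black-box application of \cite{Iksanov+Meiners:2014} once (i)--(iv) are available --- but in the two technical checks around it: putting the two sums into the precise normalised form those theorems demand, and verifying direct Riemann integrability of $\psi_{\beta,c}$ and of $h$ against $\overline{\mathsf{F}}$ separately under \eqref{eq:A4a} and under \eqref{eq:A4b}. With those settled, the constant $\tfrac12$ is forced by the $p\leftrightarrow q$ symmetry of $M(\alpha)$, and the dependence on $c$ is a one-line tail estimate.
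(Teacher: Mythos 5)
Your proposal is correct and takes essentially the same route as the paper's proof: both reduce to Theorems 2.1 and 2.4 of \cite{Iksanov+Meiners:2014} after recognizing the numerator and denominator of \eqref{eq:ratio_convergence} as characteristic sums of the two-type general branching process, whose standing hypotheses are exactly the verified properties (i)--(iv). You supply more detail than the paper --- which defers the remaining checks to Proposition 9.3 of \cite{Alsmeyer+Biggins+Meiners:2012} as ``routine'' --- by making explicit that the constant $\tfrac12$ is the stationary weight of the doubly-stochastic type matrix $M(\alpha)$ and that the moment/exponential-moment hypotheses translate into direct Riemann integrability of the reward against the stationary overshoot law.
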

\begin{proof}
(a) and (b) can be deduced from general results on convergence of
multi-type branching processes, namely, Theorems 2.1 and 2.4 in
\cite{Iksanov+Meiners:2014}. The basic assumptions (A1)--(A4)
of the cited article are fulfilled for these coincide with (i)-(iv) here.
Assumption (A5) and Condition 2.2 in \cite{Iksanov+Meiners:2014}
correspond to \eqref{eq:A4a} and \eqref{eq:A4b} here,
respectively. Further, for fixed $j \in \{1,-1\}$, the numerator
in \eqref{eq:ratio_convergence} is $Z^{\phi}(t) = \sum_{v \in \G}
[\phi]_v(t-S(v))$ for
\begin{equation*}
\phi(t) ~=~ \sum_{k=1}^{N(v)} e^{-\beta(S(k)-t)} h(S(k)\!-\!t) \1_{\{t < S(k) \leq t+c,\,\tau(k)= \tau(\varnothing)j\}},
\end{equation*}
while the denominator is of the form $Z^{\psi}(t)$ with
\begin{equation*}
\psi(t) ~=~ \sum_{k=1}^{N(v)} e^{-\alpha(S(k)-t)} h(S(k)\!-\!t) \1_{\{\tau(k)= \tau(\varnothing)j\}}.
\end{equation*}
The verification of the remaining conditions of Theorems 2.1 and 2.4 in \cite{Iksanov+Meiners:2014}
is routine and can be carried out as in the proof of Proposition 9.3 in \cite{Alsmeyer+Biggins+Meiners:2012}.

The last statement follows from the same proof if one replaces $c$ in the definition of $\phi$ by $+\infty$.
\end{proof}

The final result in this section is on the asymptotic behaviour of $\sum_{v \in \mathcal{T}_t} L(v)$ in Case III when $\alpha = 1$:

\begin{Lemma}   \label{Lem:sumL(v)->0}
Assume that \eqref{eq:A1}-\eqref{eq:A6} hold, that $\alpha = 1$ and that $0<p,q<1$.
Then
\begin{equation}    \label{eq:sumL(v)->0}
t \sum_{v \in \mathcal{T}_t} L(v) ~\to~ 0   \quad   \text{as } t \to \infty \text{ in probability.}
\end{equation}
\end{Lemma}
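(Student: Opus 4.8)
The plan is to work throughout on the survival set $S$: off $S$ the tree $\G$ is a.s.\ finite, hence $\mathcal{T}_t=\emptyset$ and $t\sum_{v\in\mathcal{T}_t}L(v)=0$ for all large $t$, so it suffices to prove the convergence in probability on $S$. Write $U_t:=\sum_{v\in\mathcal{T}_t}|L(v)|=\sum_{v\in\mathcal{T}_t}e^{-\alpha S(v)}$ and $V_t:=\sum_{v\in\mathcal{T}_t}L(v)$, so that we must show $tV_t\to0$. First I would record two facts. Since \eqref{eq:A5} implies \eqref{eq:A4a}, the random variable $W$ is integrable, so the slowly varying function $D$ has limit $1$ at $0$ and Proposition \ref{Prop:endogeny}(b) gives $U_t\to W$ a.s.\ as $t\to\infty$; in particular $0<W<\infty$ a.s.\ on $S$ and $(U_t)$ is a.s.\ bounded there. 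Moreover, Proposition \ref{Prop:ratios}(a), applied with $h\equiv1$, $\beta=\alpha=1$ and in the $c=\infty$ version — the moment hypothesis $\E[\sum_j|T_j|S(j)]<\infty$ being guaranteed by \eqref{eq:A5} and \eqref{eq:A6} — yields $\big(\sum_{v\in\mathcal{T}_t,\,\tau(v)=j}|L(v)|\big)/U_t\to\tfrac12$ in probability on $S$ for $j=1,-1$, whence $V_t/U_t\to0$ and, by the previous fact, $V_t\to0$ in probability. This is the ``leading order'' cancellation; the substance of the lemma is that it is strong enough to survive multiplication by $t$.

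Next I would dispose of the individuals with large overshoot. Fix $c=c(t):=t^2$. If $v\in\mathcal{T}_t$ and $S(v)>t+c$, then, the ancestor of $v$ lying at height $\le t$, the last step of the path to $v$ exceeds $c$; hence, by the first-passage (stopping-line) version of the many-to-one identity \eqref{eq:spinal walk}, $\E\big[\sum_{v\in\mathcal{T}_t,\,S(v)>t+c}e^{-\alpha S(v)}\big]$ is at most $\Prob(S_{\sigma_t}-S_{\sigma_t-1}>c)\le\Prob(S_1>c)\,\E[\sigma_t]$, where $\sigma_t:=\inf\{n:S_n>t\}$ and $(S_n)$ is the spinal random walk with step law $\mu_\alpha$. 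By standard renewal estimates $\E[\sigma_t]\le C(1+t)$ (using $\E[S_1]=-m'(1)\in(0,\infty)$ from \eqref{eq:A4a}), while $\E[S_1^2]=\E[\sum_j|T_j|(\log^-|T_j|)^2]<\infty$ by \eqref{eq:A5} and \eqref{eq:A6} gives $\Prob(S_1>c)=o(c^{-2})$. Therefore $t\,\E\big[\sum_{v\in\mathcal{T}_t,\,S(v)>t+c(t)}e^{-\alpha S(v)}\big]\le C't^2\,\Prob(S_1>t^2)=o(t^{-2})\to0$, so that $t\sum_{v\in\mathcal{T}_t,\,S(v)>t+c(t)}|L(v)|\to0$ in $L^1$ and a fortiori in probability.

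It remains to show that $t\sum_{v\in\mathcal{T}_t,\,S(v)\le t+c(t)}L(v)\to0$ in probability, and this is the heart of the matter. As recalled in Section \ref{subsec:HSL}, the two-type mean matrix $M(\alpha)$ has Perron eigenvalue $1$ with eigenvector $(1,1)^{\!\transp}$ — responsible for $U_t\to W$ — and subdominant eigenvalue $2p-1$ with $|2p-1|<1$ in Case III and eigenvector $(1,-1)^{\!\transp}$. Now $V_t$ is precisely the $(1,-1)^{\!\transp}$-projection of the pair $\big(\sum_{v\in\mathcal{T}_t,\,\tau(v)=1}|L(v)|,\sum_{v\in\mathcal{T}_t,\,\tau(v)=-1}|L(v)|\big)$, so it is governed by the subdominant eigenvalue and ought to decay like $|2p-1|^{n}$ with $n\sim t/\E[S_1]$ the generation of a typical individual of $\mathcal{T}_t$; this is exponentially small in $t$ and crushes the factor $t$. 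I would make this rigorous starting from the a.s.\ stopping-line decomposition $V_t=\sum_{j:S(j)>t}T_j+\sum_{j:S(j)\le t}T_j[V_{t-S(j)}]_j$ (valid by \eqref{eq:A6}): squaring and taking expectations, and using the conditional independence of distinct subtrees, one obtains a renewal-type equation for $t\mapsto\E[V_t^2]$ whose kernel $\kappa:=\E[\sum_j T_j^2\delta_{S(j)}(\cdot)]$ has total mass $m(2)<1$ and satisfies $\int e^{\theta s}\,\kappa(\mathrm{d}s)=m(2-\theta)<1$ for $\theta\in(0,1)$ by \eqref{eq:A6}, forcing exponential decay of $\E[V_t^2]$ and hence, via Markov's inequality, $t^2\E[V_t^2]\to0$. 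Equivalently, and more in the spirit of this section, the convergence follows from the multitype renewal theorems of \cite{Iksanov+Meiners:2014} (whose standing assumptions (A1)--(A4) coincide with (i)--(iv) above) applied to the signed functional, the rate being read off from the subdominant eigenvalue. The main obstacle is precisely this last step under the weak moment assumption \eqref{eq:A5}: the inhomogeneity of the renewal equation involves quantities of the type $\E\big[(\sum_j|T_j|)^2\big]$, which need not be finite, so the argument must be routed through a truncation of the reproduction carefully matched with the overshoot truncation above — and it is exactly here that \eqref{eq:A5} (strictly stronger than \eqref{eq:A4a}) does its work, as in \cite{Iksanov+Meiners:2014}.
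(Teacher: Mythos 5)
Your proposal has a genuine gap, and you have accurately identified where it is. The first two ingredients you assemble are sound: the first-order cancellation $V_t/U_t \to 0$ in probability on $S$ from Proposition~\ref{Prop:ratios}(a) (with $h\equiv 1$, $\beta=\alpha=1$, $c=\infty$), and the disposal of the large-overshoot contribution with the window $c(t)=t^2$, combining the many-to-one identity \eqref{eq:spinal walk} for the line $\mathcal{T}_t$ with the bound $\E[S_1^2]<\infty$ that \eqref{eq:A5} and \eqref{eq:A6} deliver. But the first of these is purely qualitative (it gives $V_t\to0$ with no rate), and the remaining ``core'' estimate---that $t\sum_{v\in\mathcal{T}_t,\,S(v)\le t+c(t)}L(v)\to 0$---is the whole content of the lemma. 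Your $\mathcal{L}^2$/renewal-equation route does not close it. Under \eqref{eq:A5} there is no guarantee that $V_t$ is even square-integrable: squaring the stopping-line decomposition produces, besides the convolution term $\int_0^t\E[V_{t-s}^2]\,\kappa(\ds)$, the inhomogeneous terms coming from $\big(\sum_{j:S(j)>t}T_j\big)^2$ and from the off-diagonal sums $\sum_{j\ne k}T_jT_k\,\E[V_{t-S(j)}]\,\E[V_{t-S(k)}]$; these are bounded by expressions in $\big(\sum_j|T_j|\big)^2$, which the $h_3$-moment in \eqref{eq:A5} does \emph{not} control. So the recursion for $\E[V_t^2]$ is not a priori well-defined, let alone a defective renewal equation with kernel $\kappa$ alone, and the truncation you gesture at is where all the work actually lies.

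For calibration: the paper's own proof of this lemma is a single line, citing Theorem~6.1 of \cite{Iksanov+Meiners:2014} with $\delta=1$; that reference carries out, under exactly \eqref{eq:A5}, the two-type Nerman-type renewal analysis you describe, reading the $o(1/t)$ rate off the subdominant eigenvalue $2p-1$ of $M(\alpha)$. Your sketch thus identifies the correct mechanism and the correct source, but it stops precisely where that reference's technical machinery (the matched truncation of reproduction and overshoot) must take over; as written, the $\mathcal{L}^2$ argument does not go through under the stated hypotheses.
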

\begin{proof}
\eqref{eq:sumL(v)->0} follows from Theorem 6.1 in \cite{Iksanov+Meiners:2014} for $\delta=1$.
\end{proof}

\section{Proofs of the main results}    \label{sec:proofs of main results}

\subsection{Proof of the direct part of Theorem \ref{Thm:SF}}   \label{subsec:direct}

\begin{proof}[Proof of Theorem \ref{Thm:SF} (direct part)]
We only give (a sketch of) the proof in the case $\alpha=1$
and $\mathbb{G}(T)=\Rp$. The other cases can be treated
analogously. Let $\phi$ be as in \eqref{eq:phi alpha=1 R>0}, \textit{i.e.},
\begin{equation*}
\phi(\bt)
=   \E \!\bigg[\!\exp\!\left(\!\imag \langle\W^*\!+W\ba, \!\bt \rangle - W \!\! \int \! |\langle \bt,\bs \rangle| \, \sigma(\ds)
- \imag W \frac{2}{\pi} \!\int \! \langle \bt, \bs \rangle \log (|\langle \bt, \bs \rangle|) \, \sigma(\dbs) \!\right)\!\!\bigg]
\end{equation*}
for some $\ba \in \R^d$ and a finite measure $\sigma$ on $\Sd$
with $\int s_k \, \sigma(\dbs)=0$ for $k=1,\ldots,d$. Using
$\W^* =  \sum_{j \geq 1} T_j [\W^*]_j + \bC$ a.s.~and $W = \sum_{j \geq 1} T_j [W]_j$ a.s.,
see \eqref{eq:W* fixed point} and \eqref{eq:purely_tree-based}, we obtain
\begin{align}
\imag \langle\W^*\!&+W\ba, \!\bt \rangle - W \!\! \int \! |\langle \bt,\bs \rangle| \, \sigma(\ds)
- \imag W \frac{2}{\pi} \!\int \! \langle \bt, \bs \rangle \log (|\langle \bt, \bs \rangle|) \, \sigma(\dbs)        \notag  \\
&=~ \imag \langle \bC, \!\bt \rangle + \imag \sum_{j \geq 1} \langle [\W^*]_j+[W]_j \ba, T_j \bt \rangle    \notag  \\
&\hphantom{=}~- \sum_{j \geq 1} [W]_j \!\! \int \! |\langle T_j \bt,\bs \rangle| \, \sigma(\ds)
- \imag \sum_{j \geq 1} [W]_j \frac{2}{\pi} \!\int \! \langle T_j \bt, \bs \rangle \log (|\langle \bt, \bs \rangle|) \, \sigma(\dbs).       \label{eq:xlogx integral}
\end{align}
Further, since $\int s_k \, \sigma(\dbs)=0$ for $k=1,\ldots,d$, we have
\begin{equation*}
\int \! \langle T_j \bt, \bs \rangle \log (|\langle \bt, \bs \rangle|) \, \sigma(\dbs)
~=~ \int \! \langle T_j \bt, \bs \rangle \log(|\langle T_j \bt, \bs \rangle|) \, \sigma(\dbs)
\end{equation*}
for all $j \geq 1$ with $T_j > 0$. Substituting this in \eqref{eq:xlogx integral}, passing to exponential functions, taking expectations on both sides
and then using that the couples $([\W^*]_j,[W]_j)$, $j \geq 1$ are i.i.d.~copies of
$(\W^*,W)$ independent of $(\bC,T)$ one can check that $\phi$ satisfies \eqref{eq:FE generalized stable inhom}.
\end{proof}

\subsection{Solving the functional equation in $\mathcal{M}$}       \label{subsec:one-dimensional_FE}

\begin{Thm} \label{Thm:d=1,2sided_FE_disintegrated}
Assume that \eqref{eq:A1}--\eqref{eq:A4} hold true and let $d=1$.
Let $f \in \SM$ and denote the limit of the corresponding multiplicative martingale by $M$.
Then there are constants $c_{1}, c_{-1} \geq 0$ such that
\begin{equation}    \label{eq:d=1_2sided_FE_disintegrated}
M(t)    ~=~ \begin{cases}
                \exp(-Wc_{1} t^{\alpha})    &   \text{for } t \geq 0,       \\
                \exp(-Wc_{-1} |t|^{\alpha}) &   \text{for } t \leq 0
                \end{cases}
\quad   \text{a.s.}
\end{equation}
Furthermore, if $\mathbb{G}(T) = \R^*$, then $c_{1} = c_{-1}$.
\end{Thm}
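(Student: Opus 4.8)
The plan is to follow the scheme of \cite{Alsmeyer+Biggins+Meiners:2012,Alsmeyer+Meiners:2013}: pass from $f$ to its multiplicative martingale limit $M$, exploit the cocycle identities of Lemma~\ref{Lem:M's equation}, reduce the two-sided problem to a pair of one-sided problems, and --- in the mixed-sign case --- eliminate the remaining \emph{antisymmetric} part with the two-type machinery of Section~\ref{subsec:HSL}. Write $V(t):=-\log M(t)\in[0,\infty]$ and $V_+(t):=V(t)$, $V_-(t):=V(-t)$ for $t\geq0$. Since $f\in\mathcal{M}$, the random function $M$ is a.s.\ nonincreasing in $|t|$, has $M(0)=1$, and is a.s.\ continuous at $0$ (because $M\leq1$ and $\E[M(t)]=f(t)\to1$ as $t\to0$). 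Multiplying the identity $M(\pm t)=\prod_{|v|=n}[M]_v(\pm L(v)t)$ of Lemma~\ref{Lem:M's equation} at $t$ by the one at $-t$ shows that $t\mapsto M(t)M(-t)$, $t\geq0$, is the multiplicative martingale limit of the nonnegative-weight process $(|T_j|)_{j\geq1}$ and that $\E[M(\cdot)M(-\cdot)]$ solves the one-sided functional equation for $(|T_j|)_{j\geq1}$ inside the one-sided analogue of $\mathcal{M}$. Hence, by (the disintegrated form of) Theorem~2.2 of \cite{Alsmeyer+Biggins+Meiners:2012} --- i.e.\ the present theorem for nonnegative weights --- there is a deterministic $c\geq0$ with $M(t)M(-t)=\exp(-c\,W t^\alpha)$ a.s., $W$ as in \eqref{eq:W_definition}. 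In particular $M(\pm t)>0$, so $V_\pm$ are finite and
\begin{equation*}
V_+(t)+V_-(t)=c\,Wt^\alpha\qquad(t\geq0,\ \text{a.s.\ simultaneously in }t).
\end{equation*}
If $c=0$ or $W=0$ a.s., the theorem holds with $c_1=c_{-1}=0$; so from now on assume $cW$ is non-null.

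If $\mathbb{G}(T)=\Rp$, the two half-lines decouple: $g_+:=f|_{[0,\infty)}$ and $g_-:=f(-\,\cdot\,)|_{[0,\infty)}$ each solve the one-sided functional equation for $(T_j)_{j\geq1}$, belong to the class treated in \cite{Alsmeyer+Biggins+Meiners:2012}, and have $M(\cdot)|_{[0,\infty)}$ resp.\ $M(-\,\cdot\,)|_{[0,\infty)}$ as their multiplicative martingale limits; Theorem~2.2 of \cite{Alsmeyer+Biggins+Meiners:2012} thus gives $M(t)=\exp(-c_1Wt^\alpha)$ and $M(-t)=\exp(-c_{-1}Wt^\alpha)$ a.s.\ with deterministic $c_1,c_{-1}\geq0$ and the same $W$, by uniqueness up to scaling of the nonnegative endogenous fixed point w.r.t.\ $(|T_j|^\alpha)_{j\geq1}$ (Proposition~\ref{Prop:endogeny}(a)). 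This is the asserted form. From now on $\mathbb{G}(T)=\R^*$, and it remains to prove $V_+\equiv V_-$, i.e.\ $\Delta\equiv0$ for $\Delta:=V_+-V_-$. Taking $-\log$ in $M(\pm t)=\prod_{|v|=n}[M]_v(\pm L(v)t)$, splitting the products according to $\tau(v):=\sign L(v)$, and subtracting gives
\begin{equation*}
\Delta(t)=\sum_{|v|=n}\tau(v)\,[\Delta]_v(|L(v)|t)\quad\text{a.s.\ for all }n\in\N_0,\ t\geq0,\qquad|\Delta(t)|\leq c\,Wt^\alpha .
\end{equation*}

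In Case~II ($p=0$) all nonzero weights are negative, so $\tau(v)=(-1)^{|v|}$; for $n=2$ the display above becomes the positive-weight cocycle for $(L(v))_{|v|=2}$ (which satisfies the required hypotheses by Lemma~\ref{Lem:WBP_even}) satisfied by a function bounded by $c\,Wt^\alpha$. A direct martingale argument --- taking the a.s.\ limit of $\E[\,\cdot\mid\A_n]$ --- shows that any such function equals a deterministic multiple of $Wt^\alpha$, so $\Delta(t)=k\,Wt^\alpha$ for some deterministic $k$. Inserting this into the case $n=1$, where the display reads $\Delta(t)=-\sum_{j\geq1}[\Delta]_j(|T_j|t)=-k\,Wt^\alpha$ (using $W=\sum_{j\geq1}|T_j|^\alpha[W]_j$ a.s., cf.\ \eqref{eq:purely_tree-based}), yields $k=-k$, hence $\Delta\equiv0$ and $c_1=c_{-1}=c/2$.

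The genuinely difficult case is Case~III, $0<p,q<1$, which is where the main obstacle lies. Here the antisymmetric mode is governed by the second eigenvalue $2p-1$ of the two-type mean matrix of Section~\ref{subsec:HSL} (its eigenvalue along the vector $(1,-1)$), and $|2p-1|<1$: the recursion for $\Delta$ is strictly subcritical. After reducing to the situation \eqref{eq:A6} via the embedded branching random walk of Section~\ref{subsec:embedded} (Proposition~\ref{Prop:embedded_BRW}) and a truncation making the quantities below integrable, one takes expectations to obtain, with $d(t):=\E[\Delta(t)]$, the strictly subcritical renewal-type equation $d(t)=\E[\sum_{|v|=n}\tau(v)\,d(|L(v)|t)]$ together with $|d(t)|\leq(\text{const})\,t^\alpha$. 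The two-type renewal/ratio theorems (Proposition~\ref{Prop:ratios} and \cite{Iksanov+Meiners:2014}) then force the right-hand side to vanish as $n\to\infty$, so $d\equiv0$; consequently $\E[\Delta(t)\mid\A_n]=\sum_{|v|=n}\tau(v)\,d(|L(v)|t)=0$ for every $n$, and, since $\A_n\uparrow\A_\infty$, L\'evy's theorem gives $\Delta(t)=0$ a.s., i.e.\ $c_1=c_{-1}=c/2$. The crux --- and the reason Section~\ref{subsec:HSL} develops the two-type limit theorems --- is extracting this $n\to\infty$ decay of the signed, $\alpha$-weighted sum against the only partially controlled profile $d(s)/s^\alpha$ (only the sum $V_++V_-$ is known explicitly near the origin); the non-integrability issues (for $W$ and for the slowly varying factor in $1-\varphi$) are absorbed by the standard stopping-line/truncation arguments.
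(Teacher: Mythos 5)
Your overall strategy is genuinely different from the paper's, and the ``$M(t)M(-t)$'' trick is a nice way to control the symmetric part: the product indeed satisfies the positive-weight cocycle for $(|T_j|)_{j\geq 1}$, and $\E[M(\cdot)M(-\cdot)]$ is a valid one-sided solution, so you get $V_+(t)+V_-(t)=cWt^\alpha$ for free from the nonnegative-weight theorem. The paper instead works throughout with the \emph{deterministic} function $1-f$ (not with $-\log M$), establishes $\lim_{t\to 0}(1-f(zt))/(1-f(t))=|z|^\alpha$ via a Biggins--Kyprianou rearrangement plus a Choquet--Deny argument (Lemma~\ref{Lem:BK's_trick}, drawing on Lemmas~\ref{Lem:ratios_bounded_away_from_infty} and~\ref{Lem:Selection}), and only afterwards passes to the disintegration, concluding $-\log M(t)=|t|^\alpha(-\log M(1))$ and identifying $-\log M(1)$ with $cW$ by Proposition~\ref{Prop:endogeny}(a). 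A principal advantage of that route is that $1-f$ is automatically bounded by $1$, so no integrability questions arise.

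That is precisely where your Case~III argument has a real gap. You set $d(t):=\E[\Delta(t)]$ with $|\Delta(t)|\leq cWt^\alpha$; but under \eqref{eq:A4b} alone one may have $\E[W]=\infty$, and then $d$ is not defined. ``A truncation making the quantities below integrable'' is not a step one can take for granted here: truncating $W$ (or $\Delta$) destroys the exact endogenous identity $\Delta(t)=\sum_{|v|=n}\tau(v)[\Delta]_v(|L(v)|t)$ that your whole argument relies on, and it is not clear how to reassemble the conclusion afterwards. Second, even granting integrability, the claim that the identity $d(t)=\E\big[\sum_{|v|=n}\tau(v)\,d(|L(v)|t)\big]$, together with $|d(t)|\leq Ct^\alpha$, forces $d\equiv 0$ is not established. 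Writing $d(s)=s^\alpha h(s)$ with $h$ bounded, you need
\begin{equation*}
\int h(e^{-x}t)\,\big[(Q^{*n})^{1,1}-(Q^{*n})^{1,-1}\big](\dx)\ \longrightarrow\ 0
\end{equation*}
for the two-type semi-Markov kernel $Q^{j,k}=\mu_\alpha^{j,k}$. The eigenvalue $(2p-1)^n$ controls the difference of total \emph{masses}, but what you need is that the signed measure $(Q^{*n})^{1,1}-(Q^{*n})^{1,-1}$ tends to $0$ against an arbitrary bounded $h$ --- i.e.\ a total-variation (or at least a strong Markov-renewal) statement. Proposition~\ref{Prop:ratios}, which you invoke, is a probability/a.s.\ \emph{ratio} limit theorem along the stopping lines $\mathcal{T}_t$; it is not a mean-decay statement along generation lines and does not supply what you need without additional arguments. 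So the crux of Case~III --- precisely the part you flag as the main obstacle --- remains open in your proof.

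Two smaller points. In Case~II your claim that ``a direct martingale argument shows that any such function equals a deterministic multiple of $Wt^\alpha$'' is not quite right as stated: the display $\Delta(t)=\sum_{|v|=2}[\Delta]_v(|L(v)|t)$ relates $\Delta$ at $t$ to $\Delta$ at \emph{different} arguments $|L(v)|t$, so for a fixed $t$ it is not an endogenous fixed-point equation; you would first need to know that $\Delta$ is homogeneous of degree $\alpha$, which is what you are trying to prove. (The paper instead first concludes via the already-proved nonnegative case applied to $(L(v))_{|v|=2}$ that $M(t)=\exp(-Wc_{\pm 1}|t|^\alpha)$ on each half-line, and only then uses the $n=1$ cocycle to identify $c_1=c_{-1}$.) Also note that for $f\in\mathcal{M}$ the limit $M$ need not be a.s.\ continuous, so $\Delta$ and $h$ need not be continuous, which would further complicate any weak-convergence argument you might want to use in Case~III.
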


We first prove Theorem \ref{Thm:d=1,2sided_FE_disintegrated}
in Cases I and II (see \eqref{eq:CaseI-III}). Case III needs some
preparatory work and will be settled at the end of this section.

\begin{proof}[Proof of Theorem \ref{Thm:d=1,2sided_FE_disintegrated}]
\underline{Case I:} The statement is a consequence of Theorem 8.3 in \cite{Alsmeyer+Biggins+Meiners:2012}.
\newline
\underline{Case II:}
For $f \in \SM$,
iteration of \eqref{eq:FE} in terms of the weighted branching model gives
\begin{equation}    \label{eq:FE_2iterated}
f(t)    ~=~ \E \bigg[\prod_{|v|=2} f(L(v)t)\bigg],  \qquad  t \in \R.
\end{equation}
By Lemma \ref{Lem:WBP_even}, $(L(v))_{|v|=2}$ satisfies
\eqref{eq:A1}--\eqref{eq:A4}. Further, the endogenous fixed point
$W$ is (by uniqueness) the endogenous fixed point for $(|L(v)|^{\alpha})_{|v|=2}$.
Since in Case II all $T_j$, $j\in\N$ are a.s.~nonpositive, all $L(v)$, $|v|=2$ are a.s.~nonnegative.
This allows us to invoke the conclusion of the already settled Case I to infer that \eqref{eq:d=1_2sided_FE_disintegrated} holds
with constants $c_1, c_{-1} \geq 0$.
Using \eqref{eq:M's equation} for $n=1$ and $t > 0$ we get
\begin{eqnarray*}
\exp(-Wc_{1} t^{\alpha})
& = &
M(t)
~=~ \prod_{j \geq 1} [M]_j(T_j t) ~=~ \prod_{j \geq 1} \exp(-[W]_j c_{-1} |T_j t|^{\alpha})     \\
& = &
\exp\bigg(- c_{-1} \sum_{j \geq 1} |T_j|^{\alpha} [W]_j t^{\alpha} \bigg)
~=~ \exp(-W c_{-1} t^{\alpha})
\quad   \text{a.s.}
\end{eqnarray*}
In particular, $c_1 = c_{-1}$.
\end{proof}

Assuming that Case III prevails, \textit{i.e.}, $0<p,q<1$,
we prove four lemmas. While Lemmas \ref{Lem:f(t)<1} and
\ref{Lem:BK's_trick} are principal and will be used in the proof
of (the remaining part of) Theorem \ref{Thm:d=1,2sided_FE_disintegrated},
Lemmas \ref{Lem:ratios_bounded_away_from_infty} and \ref{Lem:Selection}
are auxiliary and will be used in the proof of Lemma \ref{Lem:BK's_trick}.

\begin{Lemma}   \label{Lem:f(t)<1}
Let $f \in \SM$. If $f(t) = 1$ for some $t \not = 0$, then $f(u) = 1$ for all $u \in \R$.
\end{Lemma}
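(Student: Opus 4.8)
The plan is to exploit the functional equation $f(t) = \E[\prod_{j \geq 1} f(T_j t)]$ together with the fact that $f$ takes values in $[0,1]$, so that $f(t) = 1$ forces $\prod_{j\geq 1} f(T_j t) = 1$ almost surely, and hence $f(T_j t) = 1$ for every $j$ with $T_j \not= 0$, almost surely. In other words, the set $U := \{u \in \R : f(u) = 1\}$ is, up to a.s.-modifications, closed under multiplication by the realized values of the $T_j$, and therefore closed under multiplication by every element of the support of each $T_j$ and, by iterating, by the whole closed multiplicative group $\mathbb{G}(T)$. Since we are in Case III, $\mathbb{G}(T) = \R^*$, so once $t_0 := t \not= 0$ lies in $U$, the entire orbit $\R^* t_0 = \R \setminus \{0\}$ lies in $U$; together with $f(0) = 1$ this gives $U = \R$.

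To make this rigorous I would proceed as follows. First, fix $t_0 \not= 0$ with $f(t_0) = 1$. From \eqref{eq:FE} at $t_0$ and $0 \leq f \leq 1$, conclude $\prod_{j\geq 1} f(T_j t_0) = 1$ a.s., hence $f(T_j t_0) = 1$ a.s.\ for each fixed $j$. Intersecting countably many null sets, $f(T_j(v) \cdot s) = 1$ whenever $f(s) = 1$ and this is used along the weighted branching process; more simply, iterating \eqref{eq:FE_2iterated}-style gives $f(L(v) t_0) = 1$ a.s.\ for all $v \in \V$ with $L(v) \neq 0$. Since $\mathbb{G}(T) = \R^*$ is generated (as a closed group) by the support of the $T_j$, and products of supports of independent $T_j$'s are captured by the support of the $L(v)$, the closure of $\{L(v) t_0 : v \in \V, L(v) \neq 0\}$ (taking realizations in a set of full measure) is dense in $\R^*$. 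Using that $\{u : f(u) = 1\}$ is closed — which follows from continuity of $f$ at $0$ plus monotonicity: $f$ is continuous wherever it equals $1$, since $f$ is monotone on each half-line and sandwiched between $1$ and nearby values — we get $f \equiv 1$ on $\R^*$, and $f(0) = 1$ by (i). Hence $f(u) = 1$ for all $u \in \R$.

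Alternatively, and perhaps more cleanly, I would avoid the branching-process bookkeeping and argue directly: let $U = \{u \in \R : f(u) = 1\}$. By monotonicity of $f$, $U$ is an interval (a symmetric-ish one, possibly $\{0\}$, possibly a union of two symmetric intervals around $0$ — in fact by monotonicity on $(-\infty,0]$ and $[0,\infty)$ separately, $U \cap [0,\infty)$ and $U \cap (-\infty,0]$ are each intervals containing the relevant endpoint $0$). The argument above shows $U$ is closed under multiplication by every $r \in \mathrm{supp}(T_j)$ for every $j$ with $\Prob(T_j \neq 0) > 0$, after discarding a null event. Because $f$ is monotone, if $f(t_0) = 1$ for some $t_0 > 0$ then $f \equiv 1$ on $[0, t_0]$; combining with the multiplicative stability and the fact that $\mathbb{G}(T) = \R^*$ contains elements of absolute value both $> 1$ and $< 1$ (guaranteed by \eqref{eq:A3}, since $m(\alpha) = 1 < m(0) = \E[N]$ forces $|T_j| < 1$ with positive probability for some $j$, and $\mathbb{G}(T) = \R^*$ forces some $|T_j| \geq 1$ in the support, in fact $> 1$ or else the group would be bounded), we can scale $t_0$ up to arbitrarily large values, so $U \supseteq [0,\infty)$; the sign-reversing element of $\R^*$ then gives $U \supseteq (-\infty,0]$ as well.

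The main obstacle is the measure-theoretic care needed to pass from "$f(T_j t_0) = 1$ a.s." for each fixed $j$ to "$f(rt_0) = 1$ for all $r$ in the support of $T_j$" and then to "for all $r$ in $\mathbb{G}(T)$": one must handle the null sets correctly (only countably many arise), use that the support is the smallest closed set of full measure, and use continuity/monotonicity of $f$ to upgrade a dense orbit to all of $\R^*$. A secondary subtlety is confirming that $U$ is closed; this is where continuity of $f$ at $0$ (property (i)) and monotonicity (property (ii)) of members of $\mathcal{M}$ enter, since $f$ need not be continuous everywhere, but it is automatically continuous at any point where it attains its maximum value $1$ from a monotone function bounded by $1$.
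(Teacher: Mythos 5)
Your proof shares the paper's first step---from $f(t)=1$ with $t>0$, every factor $f(T_j t)$ must be $1$ a.s., and since we are in Case III some $T_j<0$ with positive probability, so $f(t')=1$ for some $t'<0$; then by the monotonicity properties of $\mathcal M$, $f\equiv 1$ on a symmetric interval $[-s,s]$ with $s>0$. It is the extension step where your argument has a genuine gap.

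Both of your routes to extend $f\equiv 1$ from $[-s,s]$ to all of $\R$ rely on the claim that the $T_j$'s (or products $L(v)$ thereof) take values of absolute value $>1$ --- explicitly in the second route (``$\mathbb{G}(T)=\R^*$ forces some $|T_j|\geq 1$ in the support, in fact $>1$''), and implicitly in the first, where you assert that the set $\{L(v)t_0: v\in\V,\ L(v)\neq 0\}$ is dense in $\R^*$. Neither is correct. The closed multiplicative \emph{group} $\mathbb{G}(T)$ is generated with inverses; it can equal $\R^*$ even if every $T_j$ takes values only in $(-1,1)\setminus\{0\}$ (that is, under assumption \eqref{eq:A6}, which the paper allows and in fact reduces to elsewhere). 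In that case the $L(v)$, being \emph{products} without inverses, satisfy $|L(v)|<1$ for all $|v|\geq 1$, so the orbit $\{L(v)t_0\}$ stays inside $(-|t_0|,|t_0|)$ and never scales $t_0$ up. Thus $U=\{u: f(u)=1\}$ being ``stable under multiplication by the support of $T_j$'' gives you nothing beyond the interval $[-s,s]$ that you already had. The function $f(u)=\1_{[-1,1]}(u)+\tfrac12\1_{\R\setminus[-1,1]}(u)$, say, illustrates that monotone solutions of the functional equation need not propagate the value $1$ outward by a density argument alone.

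The missing idea is a stopping-line/optional-stopping argument that pushes \emph{inward}: for arbitrary $u$ with $|u|>s$, Lemma \ref{Lem:sup->0} (which uses \eqref{eq:A1}--\eqref{eq:A3}) guarantees $\sup_{|v|=n}|L(v)|\to 0$ a.s., so $\tau:=\inf\{n\geq 1:\sup_{|v|=n}|L(v)u|\leq s\}$ is a.s.\ finite. The product $M_n=\prod_{|v|=n}f(L(v)u)$ is a bounded martingale, and the optional stopping theorem gives $f(u)=\E[M_\tau]=\E[\prod_{|v|=\tau}f(L(v)u)]=1$, since every factor at time $\tau$ has argument in $[-s,s]$ where $f\equiv 1$. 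This uses the a.s.\ decay of weights and does not require the supports of the $T_j$ to contain anything of modulus $\geq 1$.
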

\begin{proof}
Let $t \not = 0$ with $f(t) = 1$, w.l.o.g.\ $t>0$.
We have
\begin{equation*}
1 ~=~ f(t) ~=~ \E \bigg[\prod_{j \geq 1} f(T_j t)\bigg].
\end{equation*}
Since all factors on the right-hand side of this equation are
bounded from above by $1$, they must all equal $1$ a.s. In
particular, since $\Prob(T_j < 0) > 0$ for some $j$
(see Proposition \ref{Prop:embedded_BRW}(e)), there is some $t' < 0$
with $f(t') = 1$. Let $s := \min\{t,|t'|\}$. Then, since $f$ is
nondecreasing on $(-\infty,0]$ and nonincreasing on $[0,\infty)$, we have $f(u) = 1$ for all
$|u| \leq s$. Now pick an arbitrary $u \in \R$, $|u| > s$ and let
$\tau := \inf\{n \geq 1: \sup_{|v|=n} |L(v)u| \leq s\}$. Then
$\tau < \infty$ a.s.\ by Lemma \ref{Lem:sup->0}. Since
$(\prod_{|v|=n} f(L(v)u))_{n \geq 0}$ is a bounded martingale, the
optional stopping theorem gives
\begin{equation*}
f(u) ~=~ \E \bigg[\prod_{|v|=\tau} f(L(v) u)\bigg]  ~=~ 1.
\end{equation*}
This completes the proof since $u$ was arbitrary with $|u|>s$.
\end{proof}

Let $D_{\alpha}(t) := \frac{1-f(t)}{|t|^{\alpha}}$ for $t \not = 0$ and
\begin{equation*}
K_{\mathrm{l}} ~:=~ \liminf_{t \to \infty} \frac{D_{\alpha}(e^{-t}) \vee D_{\alpha}(-e^{-t})}{D(e^{-\alpha t})},
\quad   \text{and}  \quad
K_{\mathrm{u}}^{\pm} ~:=~ \limsup_{t \to \infty} \frac{D_{\alpha}(\pm e^{-t})}{D(e^{-\alpha t})}.
\end{equation*}
Further, put $K_{\mathrm{u}} := K_{\mathrm{u}}^{+} \vee K_{\mathrm{u}}^{-}$.

\begin{Lemma}   \label{Lem:ratios_bounded_away_from_infty}
Assume that \eqref{eq:A1}-\eqref{eq:A4} and \eqref{eq:A6}
hold, and let $f \in \SM$ with $f(t) < 1$ for some (hence all) $t
\not = 0$. Then
\begin{equation*}
0 ~<~ K_\mathrm{l} ~\leq~ K_\mathrm{u} ~<~\infty.
\end{equation*}
\end{Lemma}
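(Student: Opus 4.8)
\emph{Reformulation and (sub/super)harmonicity.} First I would substitute $s=|t|^{\alpha}$: put $G_{\pm}(s):=1-f(\pm s^{1/\alpha})$ and $H:=G_{+}\vee G_{-}$ on $[0,\infty)$, and recall $g(s):=1-\varphi(s)=sD(s)$. Since $f$ is nonincreasing on $[0,\infty)$ and nondecreasing on $(-\infty,0]$, each $G_{\pm}$, hence $H$, is nondecreasing with $H(0+)=0$; and since in the case at hand $f(t)<1$ for every $t\neq 0$ (Lemma \ref{Lem:f(t)<1}), one has $H(s)>0$ for all $s>0$. With these notations $\tfrac{D_{\alpha}(\pm e^{-t})}{D(e^{-\alpha t})}=\tfrac{1-f(\pm e^{-t})}{1-\varphi(e^{-\alpha t})}=\tfrac{G_{\pm}(e^{-\alpha t})}{g(e^{-\alpha t})}$, so that $K_{\mathrm{u}}=\limsup_{s\downarrow 0}H(s)/g(s)$ and $K_{\mathrm{l}}=\liminf_{s\downarrow 0}H(s)/g(s)$; in particular $K_{\mathrm{l}}\leq K_{\mathrm{u}}$ is automatic, and it remains to prove $K_{\mathrm{u}}<\infty$ and $K_{\mathrm{l}}>0$. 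Next, from $0\leq 1-\prod_{j}a_{j}\leq\sum_{j}(1-a_{j})$ and $1-\prod_{j}a_{j}\geq\sum_{j}(1-a_{j})-\tfrac12\bigl(\sum_{j}(1-a_{j})\bigr)^{2}$ for $a_{j}\in[0,1]$, applied with $a_{j}=f(T_{j}t)$ in the two-sided functional equation (and to the products over $\{T_{j}>0\}$ and $\{T_{j}<0\}$ separately), one gets the sub-harmonicity $H(s)\leq\E[\sum_{j\geq 1}H(|T_{j}|^{\alpha}s)]$ and, whenever $\sum_{j}H(|T_{j}|^{\alpha}s)$ is small, a matching lower estimate of the same shape. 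In the variable $x=-\log s$, with $\widetilde H(x):=H(e^{-x})$, the first inequality reads $\widetilde H(x)\leq\E[\sum_{j}\widetilde H(x+\alpha S(j))]$, i.e.\ $\widetilde H$ is a bounded positive sub-harmonic function for the branching random walk with step point process $\sum_{j}\delta_{\alpha S(j)}$, which is at criticality since $m(\alpha)=1$ (and has strictly positive steps under \eqref{eq:A6}). Both inequalities pass to the first-passage lines $\mathcal{T}_{t}$ by means of \eqref{eq:M's equation along ladder lines} (after taking expectations) and independence of the shift-operators.

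\emph{Upper bound.} Here I would argue as in \cite{Alsmeyer+Biggins+Meiners:2012}: a bounded positive sub-harmonic function at criticality is dominated, up to a constant, by the canonical harmonic object of the walk, which by Proposition \ref{Prop:W} is $g=1-\varphi$ — regularly varying of index $1$ and unique up to scaling among Laplace-transform solutions. Concretely: iterate the sub-harmonicity to a first-passage line, write $H=(H/g)\,g$ on the right-hand side, split the sum according to whether the overshoot $S(v)-t$ is at most a large constant $c$ or not, bound $H/g$ on the resulting compact range of arguments by its (finite) supremum there, use the a.s.\ convergence $\sum_{v\in\mathcal{T}_{t}}e^{-\alpha S(v)}D(e^{-\alpha S(v)})\to W$ from Proposition \ref{Prop:endogeny}(b) together with Potter's bounds for the slowly varying $D$, and invoke \eqref{eq:A4a}/\eqref{eq:A4b} (finiteness of $m$ near $\alpha$, respectively $\mathcal{L}^{1}$-convergence of the additive martingale) to absorb the large-overshoot remainder; sending the number of iterations and then $c$ to infinity yields $\limsup_{s\downarrow 0}H(s)/g(s)<\infty$, that is $K_{\mathrm{u}}<\infty$.

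\emph{Lower bound.} With $K_{\mathrm{u}}<\infty$ available, for a large $a$ the mass $\Sigma_{t}:=\sum_{v\in\mathcal{T}_{t}}\bigl(1-f(L(v)e^{-a})\bigr)$ becomes small as $t\to\infty$ (it is $\leq K'e^{-\alpha a}W+o(1)$ by the previous step and Proposition \ref{Prop:endogeny}(b)), so the second-order Bonferroni bound gives $1-f(e^{-a})=\E[1-\prod_{v\in\mathcal{T}_{t}}f(L(v)e^{-a})]\geq(1-o(1))\,\E[\Sigma_{t}]$ for large $t$, and it remains to bound $\E[\Sigma_{t}]$ from below by a constant times $g(e^{-\alpha a})$. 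Since we are in Case III, Proposition \ref{Prop:ratios} shows that the positively- and negatively-signed parts of $\Sigma_{t}$ each carry a definite proportion of the normalising sum $\sum_{v\in\mathcal{T}_{t}}e^{-\alpha(S(v)-t)}h(S(v)-t)$ (with $h$ built from $D$), which reduces the estimate to the one-signed case and ultimately to the a priori bound $\liminf_{s\downarrow 0}H(s)/g(s)>0$. The latter is again a statement about critical sub-harmonic functions: were $K_{\mathrm{l}}$ equal to $0$, iterating the functional equation and passing to the limit would force $H\equiv 0$ on $(0,\infty)$ — equivalently the martingale limit $M(t)\equiv 1$, whence $f(t)=\E[M(t)]=1$ — contradicting $f(t)<1$ for $t\neq 0$. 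Together this gives $0<K_{\mathrm{l}}\leq K_{\mathrm{u}}<\infty$.

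\emph{Main obstacle.} The real work is the upper bound and the a priori half of the lower bound, namely transferring the one-sided, absolute-value facts about the branching random walk (regular variation of $1-\varphi$, uniqueness of the critical harmonic function, convergence of the additive martingale along stopping lines) to the sign-sensitive function $H$, all under the weak hypotheses \eqref{eq:A4a} or \eqref{eq:A4b}; this is precisely what forces the delicate overshoot estimates and the appeal to Proposition \ref{Prop:ratios} in the mixed-sign Case III.
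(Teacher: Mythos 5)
Your sketch has the same skeleton as the paper's proof --- Bonferroni-type bounds, passage to the first-passage lines $\mathcal{T}_t$, Proposition \ref{Prop:ratios} to split by sign in Case III, the a.s.\ convergence $\sum_{v\in\mathcal{T}_t}e^{-\alpha S(v)}D(e^{-\alpha S(v)})\to W$ from Proposition \ref{Prop:endogeny}(b), and a final contradiction with Lemma \ref{Lem:f(t)<1} --- but the upper bound step has a genuine gap. You appeal to the principle that ``a bounded positive sub-harmonic function at criticality is dominated, up to a constant, by the canonical harmonic object,'' which is precisely the statement $K_{\mathrm{u}}<\infty$ you are trying to establish, and the mechanism you propose is circular: iterating sub-harmonicity along $\mathcal{T}_t$ moves the arguments of $H/g$ to smaller and smaller values of $s$, not into a fixed compact subset of $(0,\infty)$, so ``the finite supremum of $H/g$ on the resulting compact range'' is exactly the quantity whose finiteness is in question.

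The missing a priori input that closes this is the finiteness of $-\log M(1)$, where $M(1)=\lim_n\prod_{|v|=n}f(L(v))$ is the multiplicative-martingale limit from Lemma \ref{Lem:multiplicative_m'gale}; taking logarithms in \eqref{eq:M' as limit along ladder lines} gives $\sum_{v\in\mathcal{T}_t}(1-f(L(v)))\to -\log M(1)$ a.s.\ Rather than iterating, the paper bounds this sum \emph{from below} by
$e^{-\alpha c}\,\bigl(D_{\alpha}(je^{-(t+c)})/D(e^{-\alpha(t+c)})\bigr)\cdot\sum_{v\in\mathcal{T}_t:\,\tau(v)=j,\,S(v)\le t+c}e^{-\alpha S(v)}D(e^{-\alpha S(v)})$,
then sends $t\to\infty$ along a subsequence realizing $K_{\mathrm{u}}^{\pm}$ and applies Propositions \ref{Prop:ratios} and \ref{Prop:endogeny}(b) to obtain $-\log M(1)\ge e^{-\alpha c}K_{\mathrm{u}}^{\pm}\bigl(\tfrac12-\varepsilon\bigr)W$ a.s., whence $K_{\mathrm{u}}^{\pm}<\infty$ by the argument of Lemma~11.5 in \cite{Alsmeyer+Biggins+Meiners:2012}. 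Your lower bound ends with the right observation ($K_{\mathrm{l}}=0\Rightarrow M(1)=1$ a.s.\ $\Rightarrow f(1)=1$, contradicting Lemma \ref{Lem:f(t)<1}), but the detour through $\E[\Sigma_t]$ is unnecessary and the step ``bound $\E[\Sigma_t]$ from below by a constant times $g(e^{-\alpha a})$'' is itself a restatement of $K_{\mathrm{l}}>0$; the paper derives the two-sided bound $-\log M(1)\le e^{\alpha c}K_{\mathrm{l}}W+K_{\mathrm{u}}\varepsilon W$ directly, from which the contradiction is immediate.
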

\begin{proof}[Proof of Lemma \ref{Lem:ratios_bounded_away_from_infty}]
The proof of this lemma is an extension of the proof of Lemma 11.5 in \cite{Alsmeyer+Biggins+Meiners:2012}.
Though the basic idea is identical, modifications are needed at several places.

Since $D$ is nonincreasing,
\begin{align*}
\sum_{v \in \mathcal{T}_t:\tau(v)=j}
e^{-\alpha S(v)}
D(e^{-\alpha S(v)}) \1_{\{S(v) \leq t+c\}}
~\geq~ \frac{\sum_{v \in \mathcal{T}_t: \tau(v)=j}
e^{-\alpha S(v)} \1_{\{S(v)-t \leq  c\}}}{\sum_{v \in
\mathcal{T}_t} e^{-\alpha S(v)}} D(e^{-\alpha t})
\sum_{v \in \mathcal{T}_t} e^{-\alpha S(v)}
\end{align*}
%\begin{align*}
%& \frac{\sum_{v \in \mathcal{T}_t:\tau(v)=j} e^{-\alpha S(v)} D(e^{-\alpha S(v)}) \1_{\{S(v) \leq t+c\}}}{
%\sum_{v \in \mathcal{T}_t} e^{-\alpha S(v)} D(e^{-\alpha S(v)})}    \\
%&\qquad ~\geq~ \frac{\sum_{v \in \mathcal{T}_t,\tau(v)=j} e^{-\alpha S(v)} \1_{\{S(v)-t \leq  c\}}}{\sum_{v \in \mathcal{T}_t} e^{-\alpha S(v)}}
%\frac{D\left(e^{-\alpha t}\right) \sum_{v \in \mathcal{T}_t} e^{-\alpha S(v)}}{\sum_{v \in \mathcal{T}_t} e^{-\alpha S(v)} D(e^{-\alpha S(v)})}.
%\end{align*}
for $j=1,-1$. By Proposition \ref{Prop:ratios} with $h=1$,
the first ratio tends to something $\geq \frac{1}{2}-\varepsilon$ in probability on $S$ for given $\varepsilon > 0$ when $c$ is chosen sufficiently large.
The second converges to $W$ a.s.~on $S$ by \eqref{eq:twolimits}.
Further,
\begin{align*}
\sum_{v \in \mathcal{T}_t} & e^{-\alpha S(v)} D_{\alpha}(L(v))
~\geq~
\sum_{v \in \mathcal{T}_t: \tau(v)=j} e^{-\alpha S(v)} D_{\alpha}(je^{-S(v)}) \1_{\{S(v) \leq t+c\}}        \\
& \geq~
e^{-\alpha c} D_{\alpha}(je^{-(t+c)}) \sum_{v \in \mathcal{T}_t: \tau(v) = j} e^{-\alpha S(v)} \1_{\{S(v) \leq t+c\}} \\
& \geq~
e^{-\alpha c} \frac{D_{\alpha}(je^{-(t+c)})}{D(e^{-\alpha(t+c)})} \sum_{v \in
\mathcal{T}_t: \tau(v) = j} e^{-\alpha S(v)} D(e^{-\alpha S(v)})
\1_{\{S(v) \leq t+c\}}.
\end{align*}
For $j=1$, passing to the limit $t \to \infty$ along an appropriate subsequence gives
\begin{equation*}
-\log (M(1))    ~\geq~
e^{-\alpha c} K_{\mathrm{u}}^{+}\Big(\frac{1}{2}-\varepsilon\Big) W    \quad
\text{a.s.}
\end{equation*}
where the convergence of the left-hand side follows from taking logarithms in \eqref{eq:M' as limit along ladder lines},
\textit{cf.}~\cite[Lemma 8.7(c)]{Alsmeyer+Biggins+Meiners:2012}.
Now one can argue literally as in the proof of Lemma 11.5 in \cite{Alsmeyer+Biggins+Meiners:2012} to conclude that $K_{\mathrm{u}}^{+} < \infty$.
$K_{\mathrm{u}}^{-} < \infty$ follows by choosing $j=-1$ in the argument above.

In order to conclude that $K_{\mathrm{l}}>0$, we derive an upper bound for $-\log (M(1))$
\begin{eqnarray*}
\sum_{v \in \mathcal{T}_t} e^{-\alpha S(v)} D_{\alpha}(L(v))
& \leq &
e^{\alpha c} \big(D_{\alpha}(e^{-t}) \vee D_{\alpha}(-e^{-t})\big) \sum_{v \in \mathcal{T}_t} e^{-\alpha S(v)} \1_{\{S(v) \leq t+c\}} \\
& &
+ \sum_{v \in \mathcal{T}_t} e^{-\alpha S(v)} D_{\alpha}(L(v)) \1_{\{S(v) > t+c\}}  \\
& \leq &
e^{\alpha c} \frac{D_{\alpha}(e^{-t}) \vee D_{\alpha}(-e^{-t})}{D(e^{-\alpha t})}
\sum_{v \in \mathcal{T}_t} e^{-\alpha S(v)} D(e^{-\alpha S(v)}) \1_{\{S(v) \leq t+c\}}  \\
& & + \sum_{v \in \mathcal{T}_t} e^{-\alpha S(v)} D_{\alpha}(L(v)) \1_{\{S(v) > t+c\}}.
\end{eqnarray*}
Now letting $t \to \infty$ along an appropriate subsequence and using Proposition \ref{Prop:ratios}, we obtain that
\begin{equation*}
-\log (M(1))    ~\leq~  e^{\alpha c} K_{\mathrm{l}} W + K_{\mathrm{u}} \varepsilon W.
\end{equation*}
Hence, $K_{\mathrm{l}}=0$ would imply $M(1)=1$ a.s., in
particular, $f(1) = \E[M(1)] = 1$ which is a contradiction by
Lemma \ref{Lem:f(t)<1}.
\end{proof}

\begin{Lemma}   \label{Lem:Selection}
Suppose that \eqref{eq:A1}--\eqref{eq:A4} and \eqref{eq:A6} hold,
and let $f \in \SM$ with $f(t) < 1$ for some $t \not = 0$.
Let $(t_n')_{n \geq 1}$ be a sequence of non-zero reals tending to
$0$. Then there are a subsequence $(t_{n_k}')_{k \geq 1}$ and a
function $g:[-1,1] \to [0,\infty)$ which is decreasing on
$[-1,0]$, increasing on $[0,1]$, and satisfies $g(0)=0$ and
$g(1)=1$ such that
\begin{equation}\label{eq:selection}
\frac{1\!-\!f(z t_k)}{1\!-\!f(t_k)} ~\underset{k \to
\infty}{\longrightarrow}~  g(z)    \quad   \text{for all }
z \in [-1,1]
\end{equation}
where $(t_k)_{k \geq 1} = (t_{n_k}')_{k \geq 1}$ or $(t_k)_{k \geq 1} = (-t_{n_k}')_{k \geq 1}$.
The sequence $(t_k)_{k \geq 1}$ can be chosen such that
\begin{equation}    \label{eq:property of t_k}
\liminf_{k \to \infty} (1\!-\!f(t_k))/(1\!-\!\varphi(|t_k|^{\alpha})) ~\geq~ K_{\mathrm{l}} ~>~ 0.
\end{equation}
\end{Lemma}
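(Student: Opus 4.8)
The plan is to run a Helly-type selection argument on the rescaled tails $z\mapsto(1-f(zt))/(1-f(t))$. The one genuinely delicate point is the choice of sign and normalisation: we must pick $t_k$ among $\pm t_{n_k}'$ so that a \emph{single} choice delivers both the uniform bound needed for Helly's theorem and the lower bound \eqref{eq:property of t_k}. The key observation is that normalising by $\bigl(1-f(|t_k|)\bigr)\vee\bigl(1-f(-|t_k|)\bigr)$ does both at once.

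First I would note that by Lemma \ref{Lem:f(t)<1} and the hypothesis, $f(t)<1$ for every $t\neq 0$, so $1-f(\pm t_n')>0$ for all $n$ and the ratios below are well defined. For each $n$ at least one of $1-f(t_n')\geq 1-f(-t_n')$, $1-f(-t_n')\geq 1-f(t_n')$ holds; passing to a subsequence $(t_{n_k}')$ along which one of them always holds, set $t_k:=t_{n_k}'$ in the first case and $t_k:=-t_{n_k}'$ in the second. Then $|t_k|=|t_{n_k}'|\to0$ and, since $\{t_k,-t_k\}=\{|t_k|,-|t_k|\}$, we get $1-f(t_k)=\bigl(1-f(|t_k|)\bigr)\vee\bigl(1-f(-|t_k|)\bigr)\geq 1-f(-t_k)$.

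Next I would set $F_k(z):=(1-f(zt_k))/(1-f(t_k))$ for $z\in[-1,1]$. Since $f$ is nonincreasing on $[0,\infty)$ and nondecreasing on $(-\infty,0]$, a short check valid regardless of the sign of $t_k$ shows $F_k$ is nondecreasing on $[0,1]$ and nonincreasing on $[-1,0]$, with $F_k(0)=0$, $F_k(1)=1$ and $0\leq F_k(-1)=(1-f(-t_k))/(1-f(t_k))\leq1$; hence $0\leq F_k\leq1$ on $[-1,1]$. Applying Helly's selection theorem on $[-1,0]$ and on $[0,1]$, together with a diagonal extraction over a countable dense subset of $[-1,1]$ and then over the at most countably many jumps of the limit (to upgrade convergence at continuity points to convergence everywhere), I obtain a further subsequence along which $F_k(z)\to g(z)$ for all $z\in[-1,1]$, where $g\geq0$ is nonincreasing on $[-1,0]$, nondecreasing on $[0,1]$, $g(0)=0$ and $g(1)=1$. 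Relabelling gives \eqref{eq:selection}.

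Finally I would verify \eqref{eq:property of t_k}: using $1-f(t_k)=\bigl(1-f(|t_k|)\bigr)\vee\bigl(1-f(-|t_k|)\bigr)$ and writing $|t_k|=e^{-u_k}$ with $u_k=-\log|t_k|\to\infty$, one has $(1-f(t_k))/(1-\varphi(|t_k|^{\alpha}))=\bigl(D_{\alpha}(e^{-u_k})\vee D_{\alpha}(-e^{-u_k})\bigr)/D(e^{-\alpha u_k})$, whence $\liminf_{k\to\infty}(1-f(t_k))/(1-\varphi(|t_k|^{\alpha}))\geq K_{\mathrm{l}}$, which is $>0$ by Lemma \ref{Lem:ratios_bounded_away_from_infty}. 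There is no deep obstacle here; the only thing needing attention is exactly the bookkeeping flagged above — that one and the same sign/normalisation choice serves both purposes, and that the Helly convergence can be arranged to hold at every point of $[-1,1]$ rather than only at continuity points of $g$.
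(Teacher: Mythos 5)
Your proof is correct and follows essentially the same route as the paper's: a pigeonhole choice of sign (pass to a subsequence along which one of $1-f(t_n')$, $1-f(-t_n')$ is always the larger, and relabel that one as $t_k$), followed by a Helly-type selection applied to the monotone ratios $z\mapsto(1-f(zt_k))/(1-f(t_k))$. The one small but genuine streamlining in your version is that you observe that this sign choice directly forces $F_k(-1)=(1-f(-t_k))/(1-f(t_k))\le 1$, hence $0\le F_k\le 1$ uniformly, whereas the paper reaches uniform boundedness of $F_k(-1)$ via the separate estimate $\limsup_k (1-f(-t_k))/(1-f(t_k))\le K_{\mathrm{u}}/K_{\mathrm{l}}$; your shortcut makes that appeal to $K_{\mathrm{u}}$ unnecessary. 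Both versions use Lemma \ref{Lem:ratios_bounded_away_from_infty} to secure \eqref{eq:property of t_k}, and both need the further diagonal extraction at the countably many jumps of the Helly limit to upgrade from continuity-point convergence to convergence at every $z\in[-1,1]$, which you handle correctly.
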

\begin{proof}
From Lemma \ref{Lem:f(t)<1} we infer that $1\!-\!f(t) > 0$ for all
$t \not =0$. Thus, the ratio in \eqref{eq:selection} is
well-defined. Recalling that $f(t)$ is nonincreasing for $t\geq 0$ and nondecreasing for $t<0$ we conclude that,
for $z \in [0,1]$, $(1\!-\!f(zt))/(1\!-\!f(t)) \leq 1$,
while for $z \in [-1,0]$, $(1\!-\!f(zt))/(1\!-\!f(t)) \leq(1\!-\!f(-t))/(1\!-\!f(t))$. The problem
here is that at this point we do not know whether the latter ratio is bounded as $t \to 0$.
However, according to Lemma \ref{Lem:ratios_bounded_away_from_infty}
\begin{equation*}
\liminf_{n \to \infty} \frac{(1\!-\!f(t_n')) \vee (1\!-\!f(-t_n'))}{1-\varphi(|t_n'|^{\alpha})} ~\geq~  K_{\mathrm{l}} ~>~ 0.
\end{equation*}
Hence, there is a subsequence of either $(t_n')_{n \geq 1}$ or $(-t_n')_{n \geq 1}$ which,
for convenience, we again denote by $(t_n')_{n \geq 1}$ such that
\begin{equation*}    \label{eq:alternatives}
\liminf_{n \to \infty} \frac{1\!-\!f(t_n')}{1-\varphi(|t_n'|^{\alpha})} ~\geq~  K_{\mathrm{l}} ~>~ 0.
\end{equation*}
Another appeal to Lemma \ref{Lem:ratios_bounded_away_from_infty} gives
\begin{equation*}    \label{eq:limsup<infty}
\limsup_{n \to \infty} \frac{1\!-\!f(-t_n')}{1\!-\!f(t_n')}
~=~ \limsup_{n \to \infty} \frac{1\!-\!f(-t_n')}{1-\varphi(|t_n'|^{\alpha})} \frac{1-\varphi(|t_n'|^{\alpha})}{1\!-\!f(t_n')}
~\leq~  \frac{K_{\mathrm{u}}}{K_{\mathrm{l}}}   ~<~ \infty.
\end{equation*}
Hence, the selection principle enables us to choose a subsequence $(t_n)_{n \geq 1}$ of $(t_n')_{n \geq 1}$
along which convergence in \eqref{eq:selection} holds for each $z \in [-1,1]$
(details of the selection argument can be found in \cite[Lemma 11.2]{Alsmeyer+Biggins+Meiners:2012}).
The resulting limit $g$ satisfies $g(0)=0$ and $g(1)=1$. From the construction,
it is clear that \eqref{eq:property of t_k} holds.
\end{proof}

\begin{Lemma}   \label{Lem:BK's_trick}
Suppose that \eqref{eq:A1}--\eqref{eq:A4} hold and let $f \in \SM$ with $f(t) < 1$ for some $t \not = 0$. Then
\begin{equation}    \label{eq:reg_var_2sided}
\lim_{t \to 0} \frac{1-f(z t)}{1-f(t)}  ~=~ |z|^{\alpha} \quad \text{for all } z \in \R.
\end{equation}
\end{Lemma}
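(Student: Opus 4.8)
The plan is to reduce to the case where \eqref{eq:A6} holds, then to extract via the selection principle a subsequential limit $g$ of the ratios $(1-f(z\,\cdot))/(1-f(\cdot))$, derive a functional equation for $g$ out of \eqref{eq:FE}, and finally identify $g(z)=|z|^{\alpha}$ by separating the symmetric and antisymmetric parts of $g$. First I would invoke Lemma \ref{Lem:f(t)<1} to get $f(t)<1$ for all $t\neq0$, so that the ratios in \eqref{eq:reg_var_2sided} are well defined near $0$; moreover $\tfrac{1-f(zt)}{1-f(t)}=\big(\tfrac{1-f((1/z)(zt))}{1-f(zt)}\big)^{-1}$ and $zt\to0$ reduce the claim to $z\in[-1,1]$. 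Taking expectations in \eqref{eq:M's equation along ladder lines} with $u=0$ shows that $f$ also solves \eqref{eq:FE} with $(T_j)_{j\geq1}$ replaced by $(L(v))_{v\in\mathcal{G}^>_1}$; by Proposition \ref{Prop:embedded_BRW} this sequence satisfies \eqref{eq:A1}--\eqref{eq:A4} with the same $\alpha$, generates the same multiplicative group $\R^{*}$, and additionally satisfies \eqref{eq:A6} since $S(v)>0$ is equivalent to $|L(v)|<1$. Hence we remain in Case II or Case III; in Case II the assertion follows from \cite{Alsmeyer+Biggins+Meiners:2012} together with the Case II part of Theorem \ref{Thm:d=1,2sided_FE_disintegrated} already established. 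So I may and do assume henceforth that \eqref{eq:A6} holds and that Case III prevails; in particular $0<K_{\mathrm{l}}\leq K_{\mathrm{u}}<\infty$ by Lemma \ref{Lem:ratios_bounded_away_from_infty} and $1-\varphi(t)=tD(t)$ with $D$ slowly varying at $0$.

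It suffices to show that every sequence $(t_n')$ of nonzero reals tending to $0$ has a subsequence along which $\tfrac{1-f(zt_n')}{1-f(t_n')}\to|z|^{\alpha}$ for all $z\in[-1,1]$. Given $(t_n')$, Lemma \ref{Lem:Selection} provides a (possibly sign-reversed) subsequence $(t_k)$ and a function $g\colon[-1,1]\to[0,\infty)$ that is monotone on $[-1,0]$ and on $[0,1]$, with $g(0)=0$ and $g(1)=1$, such that $\tfrac{1-f(zt_k)}{1-f(t_k)}\to g(z)$ for all $z\in[-1,1]$, and with $\liminf_k\tfrac{1-f(t_k)}{1-\varphi(|t_k|^{\alpha})}\geq K_{\mathrm{l}}$. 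Combining the latter estimate with the definition of $K_{\mathrm{u}}$ gives $1-f(t_k)\asymp|t_k|^{\alpha}D(|t_k|^{\alpha})$, and since $D$ is slowly varying this yields $c_1|z|^{\alpha}\leq g(z)+g(-z)\leq c_2|z|^{\alpha}$ on $(0,1]$ for suitable $0<c_1\leq c_2<\infty$. Once $g(z)=|z|^{\alpha}$ on $[-1,1]$ is proved, the desired subsequential convergence follows: if the sign was not reversed this is immediate, and if it was, $g(-1)=1$ forces $\tfrac{1-f(-t_n')}{1-f(t_n')}\to1$ along the relevant subsequence, which together with the limit at $-z$ again gives $\tfrac{1-f(zt_n')}{1-f(t_n')}\to|z|^{\alpha}$.

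Next I would derive a functional equation for $g$. Writing $u:=1-f$ and using \eqref{eq:FE} at $zt$ together with the telescoping identity $1-\prod_j a_j=\sum_j(\prod_{i<j}a_i)(1-a_j)$, one obtains for fixed $z\in(0,1]$
\[
\frac{u(zt_k)}{u(t_k)}=\E\left[\sum_{j\geq1}\Big(\prod_{i<j}f(T_izt_k)\Big)\frac{u(T_jzt_k)}{u(t_k)}\right].
\]
As $k\to\infty$ the left-hand side tends to $g(z)$; inside the expectation $\prod_{i<j}f(T_izt_k)\to1$ a.s.\ (as $N<\infty$ a.s.\ and $f$ is continuous at $0$), and $u(T_jzt_k)/u(t_k)\to g(T_jz)$ a.s.\ by Lemma \ref{Lem:Selection} applied at the point $T_jz\in[-1,1]$ (with value $0$ when $T_j=0$). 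The hard part is justifying the interchange of limit and expectation: $f$ is controlled only near $0$, the number of weights is random, and under \eqref{eq:A4a} the sum $\sum_j|T_j|^{\beta}$ may be non-integrable for every $\beta<\alpha$, so there is no single integrable dominating function. The interchange has to be carried out by using the two-sided bound $u(s)\asymp|s|^{\alpha}D(|s|^{\alpha})$ near $0$ from Lemma \ref{Lem:ratios_bounded_away_from_infty} together with Potter's bounds for $D$, splitting $\sum_j$ according to $\{|T_j|\leq\delta\}$ and $\{|T_j|>\delta\}$ (using $\#\{j:|T_j|>\delta\}\leq\delta^{-\alpha}\sum_j|T_j|^{\alpha}$ for the large weights and the ladder-line estimates of Proposition \ref{Prop:ratios} for the small ones, with Fatou's lemma for the lower bound), adapting the argument of \cite[Lemma 11.6]{Alsmeyer+Biggins+Meiners:2012} to two-sided $f$. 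This yields
\[
g(z)=\E\Big[\sum_{j\geq1}g(T_jz)\Big]\quad\text{and, upon iteration,}\quad g(z)=\E\Big[\sum_{|v|=n}g(L(v)z)\Big]\qquad(z\in(0,1],\ n\in\N),
\]
and the same identities with $z$ replaced by $-z$.

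Finally I would identify $g$. Set $\tilde g(z):=g(z)+g(-z)$ and $d(z):=g(z)-g(-z)$ for $z\in[0,1]$. From the displayed identities $\tilde g(z)=\E[\sum_{|v|=n}\tilde g(|L(v)|z)]$, so $\sigma(x):=\tilde g(e^{-x})e^{\alpha x}$ satisfies $\sigma(x)=\int\sigma(x+y)\,\mu_{\alpha}(\dy)$ by the spinal identity \eqref{eq:spinal walk} with $n=1$; that is, $\sigma$ is harmonic for the random walk with non-lattice step law $\mu_{\alpha}$ (non-lattice by Proposition \ref{Prop:embedded_BRW}(e)) concentrated on $(0,\infty)$, and since $c_1\leq\sigma\leq c_2$ it is bounded, hence constant by the Choquet--Deny lemma, so $\tilde g(z)=cz^{\alpha}$ on $(0,1]$ for some $c>0$. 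For the antisymmetric part, $d(z)=\E[\sum_{|v|=n}\sign(L(v))\,d(|L(v)|z)]$, so $\delta(x):=d(e^{-x})e^{\alpha x}$ is bounded (because $|d|\leq\tilde g$) and obeys $\delta(x)=\E[\sum_{|v|=n}\sign(L(v))|L(v)|^{\alpha}\,\delta(x+S(v))]$; here the signed mass $\sum_{|v|=n}\sign(L(v))|L(v)|^{\alpha}$ has expectation $(2p-1)^n$ and corresponds to the eigenvalue $2p-1$ of $M(\alpha)$, which has modulus $<1$ in Case III, so the associated multitype contraction---via Theorems 2.1 and 2.4 of \cite{Iksanov+Meiners:2014}, exactly as in Proposition \ref{Prop:ratios} and Lemma \ref{Lem:sumL(v)->0}---forces $\delta\equiv0$, hence $d\equiv0$. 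Therefore $g(z)=g(-z)=\tfrac{c}{2}z^{\alpha}$ on $[0,1]$, and $g(1)=1$ forces $c=2$, i.e.\ $g(z)=|z|^{\alpha}$ on $[-1,1]$. By the second paragraph this proves \eqref{eq:reg_var_2sided} for $z\in[-1,1]$, and hence, by the inversion noted in the first paragraph, for all $z\in\R$.
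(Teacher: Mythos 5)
Your overall blueprint—reduce to \eqref{eq:A6} via the strictly increasing ladder-line embedding, extract a subsequential limit $g$ via Lemma~\ref{Lem:Selection}, pass to the limit in a rearranged \eqref{eq:FE} using the bounds from Lemma~\ref{Lem:ratios_bounded_away_from_infty} and Potter's inequality to obtain $g(z)=\E\big[\sum_j g(T_jz)\big]$, and then identify $g$ by a Choquet--Deny type argument—matches the paper's proof up to the final identification step. Your remark that the embedding may take Case III to Case II (Remark~\ref{Rem:embedded_BRW}), with the fallback to the already established Case II of Theorem~\ref{Thm:d=1,2sided_FE_disintegrated}, is a clean and genuinely useful addition: the paper's proof tacitly carries $0<p,q<1$ through the embedding, which Remark~\ref{Rem:embedded_BRW} shows need not hold.

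Where you diverge is the identification of $g$, and there is a gap there. The paper keeps the coupled two-type system \eqref{eq:CDE1}--\eqref{eq:CDE2} in $(h_1,h_{-1})$, mollifies to $H_j^{(k)}$ to secure continuity, and applies Corollary~4.2.3 of Rao--Shanbhag, which directly forces both components to be constant; the equality $c_1=c_{-1}$ then falls out of the algebraic relation $c_1=pc_1+qc_{-1}$. You instead diagonalize into $\tilde g=g+g(-\cdot)$ and $d=g-g(-\cdot)$. The argument for $\tilde g$ via the scalar Choquet--Deny is fine (modulo the same mollification step, which you omit but which is needed since $g$ is only monotone, hence merely measurable). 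For $d$, however, the induced equation $\delta(x)=\int\delta(x+y)\,(\mu_\alpha^+-\mu_\alpha^-)(\dy)$ has a \emph{signed} kernel, and your assertion that $\delta\equiv 0$ ``by the multitype contraction via Theorems 2.1 and 2.4 of \cite{Iksanov+Meiners:2014}, exactly as in Proposition~\ref{Prop:ratios}'' does not follow: those results give a.s./in-probability convergence of Nerman-type sums over the stopping lines $\mathcal T_t$ as $t\to\infty$ and say nothing about harmonic functions of a signed kernel. The naive contraction fails because $\E[\tau_n]=(2p-1)^n\to 0$ does not by itself force $\E[\tau_n\delta(x+S_n)]\to 0$ when $\tau_n$ and $S_n$ are correlated and $\delta$ is only bounded; what is actually needed is that the signed convolution powers $(\mu_\alpha^+-\mu_\alpha^-)^{*n}$ tend to $0$ in total variation, and this requires the multitype aperiodicity condition (i) of Section~\ref{subsec:HSL}, not merely $|2p-1|<1$. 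In other words, your diagonalization does not remove the difficulty: the antisymmetric part carries exactly the two-type Choquet--Deny content that the paper extracts from the Rao--Shanbhag corollary. As written, the step ``$d\equiv 0$'' is unsupported; either prove the total-variation decay of the signed convolution powers under condition (i), or revert to the coupled system and invoke Corollary~4.2.3 of \cite{Rao+Shanbhag:1994} as the paper does.
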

\begin{proof}
Taking expectations in \eqref{eq:M's equation along ladder lines} for $u=0$
reveals that $f \in \SM$ satisfies \eqref{eq:FE} with $T$ replaced by
$T^>=(L(v))_{v\in\G_1^>}$. Furthermore, Proposition
\ref{Prop:embedded_BRW} ensures that the validity of
\eqref{eq:A1}-\eqref{eq:A4} for $T$ carries over to $T^>$ with the
same characteristic exponent $\alpha$. Since $|L(v)|<1$ a.s. for
all $v\in \G_1^>$ we can and do assume until the end of proof that
assumptions \eqref{eq:A1}-\eqref{eq:A4} and \eqref{eq:A6} hold.

As in
\cite{Alsmeyer+Biggins+Meiners:2012,Biggins+Kyprianou:1997,Iksanov:2004},
the basic equation is the following rearrangement of \eqref{eq:FE}
\begin{equation}    \label{eq:BK's_trick}
\frac{1\!-f\!(zt_n)}{|z|^{\alpha} (1\!-\!f(t_n))}
~=~ \E \bigg[\sum_{j \geq1} |T_j|^{\alpha} \frac{1\!-\!f(z T_j t_n)}{|zT_j|^{\alpha}(1\!-\!f(t_n))} \prod_{k<j} f(z T_k t_n)\bigg]
\end{equation}
for $z \in [-1,1]$ and $(t_n)_{n\geq 1}$ as in Lemma \ref{Lem:Selection}.
The idea is to take the limit as $n \to \infty$ and then interchange limit and expectation. To justify the
interchange, we use the dominated convergence theorem. To this
end, we need to bound the ratios
\begin{align}   \label{eq:3 ratios}
|T_j|^{\alpha} \frac{1\!-\!f(z T_j t_n)}{|zT_j|^{\alpha}(1\!-\!f(t_n))} ~=~ |T_j|^{\alpha}
\frac{1\!-\!f(z T_j t_n)}{1\!-\!\varphi(|z T_j t_n|^{\alpha})} \frac{1\!-\!\varphi(|z T_j t_n|^{\alpha})}{|zT_j|^{\alpha}(1\!-\!\varphi(|t_n|^{\alpha}))}
\frac{1\!-\!\varphi(|t_n|^{\alpha})}{1\!-\!f(t_n)}.
\end{align}
By Lemma \ref{Lem:ratios_bounded_away_from_infty}, for all sufficiently large $n$,
\begin{equation*}
\frac{1\!-\!f(z T_j t_n)}{1\!-\!\varphi(|z T_j
t_n|^{\alpha})} ~\leq~   K_{\mathrm{u}}+1    \quad   \text{for all } j \geq 1 \text{ and } z \in [-1,1] \text{ a.s.}
\end{equation*}
Since $(t_n)_{n \geq 1}$ is chosen such that \eqref{eq:property of t_k} holds, for all sufficiently large $n$,
\begin{equation*}
\frac{1\!-\!\varphi(|t_n|^{\alpha})}{1\!-\!f(t_n)} ~\leq~ K_{\mathrm{l}}^{-1}+1.
\end{equation*}
Finally, when \eqref{eq:A4a} holds, then $D(t) =
t^{-1}(1-\varphi(t)) \to 1$ as $t \to \infty$. This implies that
the second ratio on the right-hand side of \eqref{eq:3 ratios}
remains bounded uniformly in $z$ for all $j \geq 1$ a.s.\
as $n \to \infty$. If \eqref{eq:A4b} holds, $D(t)$ is slowly
varying at $0$ and, using a Potter bound \cite[Theorem
1.5.6(a)]{Bingham+Goldie+Teugels:1989}, one infers that, for
an appropriate constant $K>0$ and all sufficiently large $n$,
\begin{equation*}
\frac{1\!-\!\varphi(|z T_j t_n|^{\alpha})}{|zT_j|^{\alpha}(1\!-\!\varphi(|t_n|^{\alpha}))} ~=~ \frac{D(|zT_j t_n|^{\alpha})}{D(|t_n|^{\alpha})}
~\leq~ K |zT_j|^{\theta-\alpha} \quad   \text{for all } j \geq 1, \
z \in [-1,1] \text{ a.s.}
\end{equation*}
where $\theta$ comes from \eqref{eq:A4b}.
Consequently, the dominated convergence theorem applies and letting $n \to \infty$ in \eqref{eq:BK's_trick} gives
\begin{equation*}
g(z)/|z|^{\alpha}   ~=~ \E \bigg[ \sum_{j \geq 1}
|T_j|^{\alpha} \frac{g(zT_j)}{|zT_j|^{\alpha}}
\bigg], \qquad z \in [-1,1]
\end{equation*}
with $g$ defined in \eqref{eq:selection}. For $x \geq 0$,
define $h_1(x) := e^{\alpha x} g(e^{-x})$ and $h_{-1}(x) :=
e^{\alpha x} g(-e^{-x})$. $h_1$ and $h_{-1}$ satisfy the following
system of Choquet-Deny type functional equations
\begin{eqnarray}
h_1(x)      &=& \int h_1(x+y) \mu_{\alpha}^+(\dy) + \int h_{-1}(x+y) \, \mu_{\alpha}^-(\dy),	\quad x\geq 0,		\label{eq:CDE1} \\
h_{-1}(x)   &=& \int h_{-1}(x+y) \mu_{\alpha}^+(\dy) + \int h_1(x+y) \, \mu_{\alpha}^-(\dy),	\quad x\geq 0,
\label{eq:CDE2}
\end{eqnarray}
where
\begin{eqnarray*}
\mu_{\alpha}^{\pm}(B)   &=& \E \bigg[\sum_{j \geq 1} \1_{\{\pm T_j > 0\}} |T_j|^{\alpha} \1_{\{S(j) \in B\}}\bigg]  ,   \quad   B \subseteq [0,\infty) \text{ Borel.}
\end{eqnarray*}
By \eqref{eq:A6}, $\mu_{\alpha}^+$ and $\mu_{\alpha}^-$ are
concentrated on $\Rp$ and $\mu_{\alpha}^+(\Rp)+\mu_{\alpha}^-(\Rp)
= 1$. By Lemma \ref{Lem:Selection}, $g$ is bounded and, hence,
$h_1$ and $h_{-1}$ are locally bounded on $[0,\infty)$. Now use
that $1 = h_1(0)$ in \eqref{eq:CDE1} to obtain that $h_j(y_0) \geq
1$ for some $j \in \{1,-1\}$ and some $y_0 > 0$. Then, since $g$
is nonincreasing on $[-1,0]$ and nondecreasing on $[0,1]$, $h_j(y) > 0$ for all $y \in [0,y_0]$.

The desired conclusions can be drawn from \cite[Theorem 1]{Ramachandran+Lau+Gu:1988}, but it requires less additional
arguments to invoke the general Corollary 4.2.3 in \cite{Rao+Shanbhag:1994}.
Unfortunately, we do not know at this point that the functions $h_j$, $j=1,-1$ are
continuous which is one of the assumptions of Chapter 4 in
\cite{Rao+Shanbhag:1994}. On the other hand, as pointed out right
after (3.1.1) in \cite{Rao+Shanbhag:1994}, this problem can be
overcome by considering
\begin{equation*}
H_j^{(k)}(x) ~=~ k \int_0^{1/k} h_j(x+y) \, \dy,    \qquad  j=-1,1,\ k \in \N.
\end{equation*}
Since the $h_j$ are nonnegative, so are the $H_j^{(k)}$. Further,
since one of the $h_j$ are strictly positive on $[0,y_0]$, the corresponding $H_j^{(k)}$ is strictly positive on $[0,y_0)$ as well.
Local boundedness of the $h_j$ implies continuity of the $H_j^{(k)}$.
For fixed $k \in \N$ and $j=1,-1$, using the definition of
$H_j^{(k)}$, \eqref{eq:CDE1} or \eqref{eq:CDE2}, respectively, and
Fubini's theorem, one can conclude that
\begin{align*}
H_j^{(k)}&(x)
~=~
%k \int_0^{1/k} h_j(x+t) \dt    \\
%& =~
%k \int_0^{1/k} \left(\int h_j(x+t+y) \mu_{\alpha}^+(\dy) + \int h_{-j}(x+t+y) \mu_{\alpha}^-(\dy) \right) \dt  \\
%& =~
%\int k \int_0^{1/k} \!\!\!\!\! h_j(x+t+y) \dt \mu_{\alpha}^+(\dy) + \int k \int_0^{1/k} \!\!\!\!\! h_{-j}(x+t+y) \dt \mu_{\alpha}^-(\dy)       \\
%& =~
\int H_j^{(k)}(x+y) \mu_{\alpha}^+(\dy) + \int H_{-j}^{(k)}(x+y) \mu_{\alpha}^-(\dy).
\end{align*}
Thus, for fixed $k$, $H_1^{(k)}$ and $H_{-1}^{(k)}$ satisfy the same system of equations \eqref{eq:CDE1} and \eqref{eq:CDE2}.
From Corollary 4.2.3 in \cite{Rao+Shanbhag:1994} we now infer that there are product-measurable processes $(\xi_j(x))_{x \geq 0}$, $j=1,-1$ with
\begin{itemize}
    \item[(i)]  $H_j^{(k)}(x) = H_j^{(k)}(0) \E [\xi_j(x)] < \infty$, $x \geq 0$;
    \vspace{-0.2cm}
    \item[(ii)] $\xi_j(x+y) = \xi_j(x) \xi_j(y)$ for all $x, y \geq 0$;
    \vspace{-0.2cm}
    \item[(iii)]    $\int \xi_j(x) \, \mu_{\alpha}(\dx) = 1$ (pathwise).
\end{itemize}
(ii) together with the product-measurability of $\xi_j$ implies that $\xi_j(x) = e^{\alpha_j x}$ for all $x \geq 0$ for some random variable $\alpha_j$.
Then condition (iii) becomes $\int e^{\alpha_j x} \mu_{\alpha}(\dx) = 1$ (pathwise) which can be rewritten as $\varphi_{\mu_{\alpha}}(\alpha_j)=0$ (pathwise)
for the Laplace transform $\varphi_{\mu_{\alpha}}$ of $\mu_{\alpha}$. By \eqref{eq:A6}, $\varphi_{\mu_{\alpha}}$ is strictly decreasing and hence $\alpha_j=0$ (pathwise).
From (i) we therefore conclude $H_j^{(k)}(x) = H_j^{(k)}(0)$, $j=1,-1$.
Since $h_j$ is locally bounded and has only countably many discontinuities, $H_j^{(k)}(x) \to h_j(x)$ for (Lebesgue-)almost all $x$ in $[0,\infty)$. From the fact that the $H_j^{(k)}$ are constant, we infer that the $h_j$ are constant (Lebesgue-)a.e.
This in combination with the fact that $e^{-\alpha x} h_j(x) = g(je^{-x})$ is monotone implies that $h_j$ is constant on $(0,\infty)$,
$h_j(x) = c_j$ for all $x \geq 0$, say, $j=1,-1$. From $H_j^{(k)}>0$ on $[0,y)$ for some $y$ we further conclude $c_j > 0$, $j=1,-1$.
Now \eqref{eq:CDE1} for $x>0$ can be rewritten as $c_1 = pc_1+qc_{-1}$.
Since $0<p,q<1$ by assumption, we conclude $c_{-1} = c_1 =: c$.
Finally, \eqref{eq:CDE1} for $x=0$ yields $1=c$.

By now we have shown that for any sequence $(t_n')_{n \geq 0}$ in $\R\setminus\{0\}$ with $t_n' \to 0$
there is a subsequence $(t_{n_k}')_{k \geq 0}$
such that
\begin{equation}    \label{eq:limiting relation with c=1}
\frac{1-f(zt_k)}{1-f(t_k)}
~\underset{k \to \infty}{\longrightarrow}~
|z|^{\alpha} \quad \text{for } |z| \leq 1
\end{equation}
for $(t_k)_{k \geq 1} = (t_{n_k}')_{k \geq 1}$ or $(t_k)_{k \geq 1} = (-t_{n_k}')_{k \geq 1}$.
Replacing $z$ by $-z$ in the formula above, we see that the same limiting relation holds for the sequence $(-t_k)_{k \geq 1}$
so that every sequence tending to $0$ has a subsequence along which \eqref{eq:limiting relation with c=1} holds.
This implies \eqref{eq:reg_var_2sided}.
\end{proof}

\begin{proof}[Proof of Theorem
\ref{Thm:d=1,2sided_FE_disintegrated}]
\underline{Case III:} Let $f \in \SM$. If $f(t)=1$ for some $t \not =0$, then
$f(u)=1$ for all $u\in\R$ by Lemma \ref{Lem:f(t)<1}. In this case $M(t)=1$ for all $t\in\R$, and
\eqref{eq:d=1_2sided_FE_disintegrated} holds with $c_1=c_{-1}=0$.
Assume now that $f(t) \not = 1$ for all $t \not = 0$.
Using Lemma \ref{Lem:BK's_trick} and arguing as in the proof
of \cite[Lemma 8.8]{Alsmeyer+Biggins+Meiners:2012} we conclude
\begin{eqnarray*}
-\log (M(t))
%& = &
%\lim_{n \to \infty} \sum_{|v|=n} 1-f(L(v)t)    \\
%& = &
%\lim_{n \to \infty} \sum_{|v|=n} \frac{1-f(L(v)t)}{1-f(L(v))} (1-f(L(v)))
~=~ |t|^{\alpha} (-\log (M(1)))
\end{eqnarray*}
for any $t\neq 0$ and
\begin{eqnarray*}
-\log (M(1))
%& = &
%\lim_{k \to \infty} \sum_{|v|=n+k} 1-f(L(v))   \\
%& = &
%\sum_{|v|=n} \lim_{k \to \infty} \sum_{|w|=k} \frac{1-f(L(vw))}{1-f([L(w)]_v)} (1-f([L(w)]_v)) \\
%& = &
%\sum_{|v|=n} |L(v)|^{\alpha} \lim_{k \to \infty} \sum_{|w|=k} (1-f([L(w)]_v))
~=~ \sum_{|v|=n} |L(v)|^{\alpha} [-\log (M(1))]_v       \quad   \text{a.s.~for all } n \in \N_0.
\end{eqnarray*}
Since $f$ takes values in $[0,1]$, we have $0 \leq M(1) \leq 1$ a.s.
Moreover, by the dominated convergence theorem,
\begin{equation*}
1   =   f(0)    =   \lim_{t \to 0} f(t) =   \lim_{t \to 0} \E [M(t)]    =   \E \big[\lim_{t \to 0} M(1)^{|t|^{\alpha}}\big] =   \Prob(M(1) > 0).
\end{equation*}
Consequently, $0 < M(1)\leq 1$ a.s. Since $f(t) \neq 1$ for $t \neq 0$ we infer $\Prob(M(1)=1)<1$.
Therefore,
$-\log (M(1))$ is a nonnegative, non-null endogenous fixed point
of the smoothing transform with weights $|T_j|^{\alpha}$. From
Proposition \ref{Prop:endogeny}(a), we infer the existence of a
constant $c > 0$ such that $-\log (M(1)) = cW$. Consequently,
$M(t) = \exp(-Wc|t|^{\alpha})$ a.s.\ for all $t \in \R$.
\end{proof}

\subsection{Determining $\nu$ and $\mathbf{\Sigma}$}

\begin{Lemma}   \label{Lem:nu_evaluated}
Suppose that \eqref{eq:A1}-\eqref{eq:A4} hold.
Let $(\W,\mathbf{\Sigma},\nu)$ be the random L\'{e}vy triplet which appears in \eqref{eq:char_exponent}.
\begin{itemize}
    \item[(a)]
        There exists a finite measure $\sigma$ on the Borel subsets of $\Sd$ such that
        \begin{equation}    \label{eq:Levy_measure}
        \nu(A)  ~=~ W  \iint_{\Sd \times (0,\infty)} \1_{A}(rs) r^{-(1+\alpha)} \, \sigma(\ds) \, \dr
        \end{equation}
        for all Borel sets $A \subseteq \R^d$ a.s. Furthermore,
        $\sigma$ is symmetric, \textit{i.e.}, $\sigma(B)=\sigma(-B)$ for all Borel sets $B \subseteq \Sd$ if $\mathbb{G}(T) = \R^*$.
        $\alpha \geq 2$ implies $\sigma = 0$ a.s.\ (and, thus, $\nu = 0$ a.s.).
    \item[(b)]
        If $\alpha \not = 2$, then $\mathbf{\Sigma} = 0$ a.s.
        If $\alpha = 2$, then there is a deterministic symmetric positive semi-definite (possibly zero) matrix $\Sigma$ with $\mathbf{\Sigma} = W\Sigma$ a.s.
\end{itemize}
\end{Lemma}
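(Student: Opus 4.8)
The plan is to read off from $\phi\in\SF$ the self-similarity of the whole L\'evy--Khintchine triplet of $\Phi$ and then treat the Gaussian part, the L\'evy measure, and the two special features (vanishing for $\alpha\geq 2$, symmetry for $\mathbb{G}(T)=\R^*$) in turn. By Lemma~\ref{Lem:M's equation}, $\Phi(\bt)=\prod_{|v|=n}[\Phi]_v(L(v)\bt)$ holds simultaneously in $\bt$ a.s.; since $\Phi=\exp(\Psi)$ never vanishes and only finitely many $v$ contribute in generation $n$, taking continuous logarithms gives $\Psi(\bt)=\sum_{|v|=n}[\Psi]_v(L(v)\bt)$ for all $\bt$ a.s. Substituting $\bx\mapsto L(v)\bx$ in the integral in \eqref{eq:char_exponent} and noting that the truncation term $\imag\langle\bt,\bx\rangle/(1+|\bx|^2)$ fails to be scale-covariant only by an expression linear in $\bt$ and integrable against any L\'evy measure (hence absorbed into $\W$), uniqueness of the triplet yields, for every $n$, a.s.
\[
\nu(A)=\sum_{|v|=n}[\nu]_v\!\left(A/L(v)\right)\ \text{ for all Borel }A\subseteq\R^d,\qquad
\mathbf{\Sigma}=\sum_{|v|=n}L(v)^2[\mathbf{\Sigma}]_v .
\]
As in the proof of Lemma~\ref{Lem:BK's_trick}, via Proposition~\ref{Prop:embedded_BRW} and \eqref{eq:M' as limit along ladder lines} we may assume in addition that \eqref{eq:A6} holds, passing from $T$ to $T^>$ leaving $\Phi$, and hence its triplet, unchanged. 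Part~(b) then follows quickly: for $\bs_0\in\Sd$, $\bs_0\mathbf{\Sigma}\bs_0^{\transp}\geq 0$ is $\bL$-measurable with $\bs_0\mathbf{\Sigma}\bs_0^{\transp}=\sum_{|v|=n}|L(v)|^2[\bs_0\mathbf{\Sigma}\bs_0^{\transp}]_v$, i.e.\ it is a nonnegative endogenous fixed point w.r.t.\ $(|T_j|^2)_{j\geq 1}$. If $\alpha\neq 2$, Proposition~\ref{Prop:endogeny}(c) forces $\bs_0\mathbf{\Sigma}\bs_0^{\transp}=0$ a.s.\ for every $\bs_0$, so $\mathbf{\Sigma}=0$ a.s.; if $\alpha=2$, Proposition~\ref{Prop:endogeny}(a) gives $\bs_0\mathbf{\Sigma}\bs_0^{\transp}=c(\bs_0)W$ a.s.\ with $c(\bs_0)\geq 0$ deterministic, and extending $c$ by $\bx\mapsto|\bx|^2c(\bx/|\bx|)$, using continuity of $\bx\mapsto\bx\mathbf{\Sigma}\bx^{\transp}$ and that a symmetric matrix is determined by its quadratic form, produces a deterministic symmetric positive semi-definite $\Sigma$ with $\mathbf{\Sigma}=W\Sigma$ a.s.

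The heart of the matter is part~(a). Since $\nu$ assigns $\bL$-measurable mass to orthants (Proposition~\ref{Prop:Disintegration}), a monotone-class argument makes $U_A(h):=\nu(\{r\bs:r\geq h,\ \bs\in A\})$ $\bL$-measurable, nonincreasing and right-continuous in $h>0$ for each Borel $A\subseteq\Sd$, and the self-similarity becomes $U_A(h)=\sum_{|v|=n}[U_{\tau(v)A}]_v(h/|L(v)|)$, where $\tau(v)\in\{1,-1\}$ is the sign of $L(v)$ and $\tau(v)A:=\{\tau(v)\bs:\bs\in A\}$. Taking $A=\Sd$ and $h=e^{-t}$, the function $\lambda(t):=U_{\Sd}(e^{-t})$ satisfies $\lambda(t)=\sum_{|v|=n}\lambda_v(t-S(v))$ --- the precise analogue, at the level of $1-f$, of the relation driving the one-dimensional argument. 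The plan is to rerun that argument: combining this identity along the ladder lines with the vague-limit representation \eqref{eq:nu_ladder} of $\nu$, with Proposition~\ref{Prop:ratios}, and with \eqref{eq:twolimits}, one first establishes the analogue of the bounds in Lemma~\ref{Lem:ratios_bounded_away_from_infty} for $h^{\alpha}U_{\Sd}(h)$, and then (as in Lemmas~\ref{Lem:Selection} and~\ref{Lem:BK's_trick}: set up Choquet--Deny type equations for smoothed versions of $h\mapsto h^{\alpha}U_A(h)$ and invoke Corollary~4.2.3 in \cite{Rao+Shanbhag:1994}) shows that $h^{\alpha}U_A(h)$ is a.s.\ constant in $h$ for each fixed $A$. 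The resulting constant satisfies $h^{\alpha}U_A(h)=\sum_{|v|=n}|L(v)|^{\alpha}[h^{\alpha}U_{\tau(v)A}(h)]_v$, so that $A\mapsto h^{\alpha}U_A(h)+h^{\alpha}U_{-A}(h)$ is a nonnegative $\bL$-measurable endogenous fixed point w.r.t.\ $(|T_j|^{\alpha})_{j\geq 1}$ and hence, by Proposition~\ref{Prop:endogeny}(a), a deterministic multiple of $W$; writing it as $2\alpha^{-1}\sigma(A)\,W$ defines a finite measure $\sigma$ on $\Sd$ and yields $\nu(A)=W\iint_{\Sd\times(0,\infty)}\1_A(r\bs)\,r^{-(1+\alpha)}\,\sigma(\ds)\,\dr$ a.s. I expect this last step --- forcing the radial profile to be exactly $r^{-(1+\alpha)}$ with a \emph{non-random} angular factor --- to be the main obstacle, being the genuine multivariate counterpart of the Gnedenko--Kolmogorov derivation and requiring the renewal machinery of Section~\ref{sec:BP} to be carried along all directions $A\subseteq\Sd$ at once.

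It remains to record the two special features. If $\alpha\geq 2$ and $\Prob(\sigma\neq 0)>0$, then on $\{W>0,\ \sigma\neq 0\}$ one would have $\int(1\wedge|\bx|^2)\,\nu(\dbx)\geq W\,\sigma(\Sd)\int_0^1 r^{1-\alpha}\,\dr=\infty$, contradicting that $\nu$ is a L\'evy measure; so $\sigma=0$ a.s.\ and $\nu=0$ a.s. For the symmetry when $\mathbb{G}(T)=\R^*$, feed the established form back into the self-similarity: the symmetric part $A\mapsto\sigma(A)+\sigma(-A)$ is (as above) a deterministic multiple of $W$, while the antisymmetric part $A\mapsto\sigma(A)-\sigma(-A)$ satisfies $(\sigma(A)-\sigma(-A))\,W=\sum_{|v|=n}\tau(v)|L(v)|^{\alpha}[(\sigma(A)-\sigma(-A))\,W]_v$, i.e.\ it is an endogenous fixed point for the mixed-sign weight sequence $(\sign(T_j)|T_j|^{\alpha})_{j\geq 1}$, which has characteristic index $1$; by the relevant case of Theorem~\ref{Thm:endogeny} such fixed points are null, so $\sigma(A)=\sigma(-A)$ --- in Case~II one may instead use that $\tau(v)=(-1)^{|v|}$ is deterministic, plug $n$ even and $n=1$ into the self-similarity, and cancel the non-null factor $W$. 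This completes the plan.
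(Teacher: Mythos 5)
Part (b) of your proposal is correct, and the quadratic-form route ($\bs_0\mathbf{\Sigma}\bs_0^{\transp}$ plus polarization) is a clean variant of what the paper does coordinatewise via Cauchy--Schwarz and Lemma~4.16 of \cite{Alsmeyer+Meiners:2013}. The self-similarity relations for $\nu$ and $\mathbf{\Sigma}$ derived from \eqref{eq:M's equation} are also exactly as in the paper.

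The gap is in part (a). You set $U_A(h):=\nu(I_h(A))$ and propose to rerun the Choquet--Deny machinery of Lemmas~\ref{Lem:ratios_bounded_away_from_infty}--\ref{Lem:BK's_trick} directly on the random functions $h\mapsto h^{\alpha}U_A(h)$. This misses the paper's key reduction: for each fixed Borel $B\subseteq\Sd$, the \emph{deterministic} function
\[
f_B(r):=\E\big[\exp\big(-\nu(I_{r^{-1}}(B))\big)\big],\qquad f_B(0):=1,
\]
is easily checked to lie in $\mathcal{M}$ and to solve \eqref{eq:FE}, so one can invoke the already-established Theorem~\ref{Thm:d=1,2sided_FE_disintegrated} for $f_B$, identify the multiplicative-martingale limit as $\exp(-\nu(I_{r^{-1}}(B)))$ (via the argument of \cite[Lemma~4.8]{Alsmeyer+Meiners:2013}), and read off $\nu(I_r(B))=W\sigma(B)\alpha^{-1}|r|^{-\alpha}$ including $\sigma(B)=\sigma(-B)$ when $\mathbb{G}(T)=\R^*$ -- all under \eqref{eq:A1}--\eqref{eq:A4} only. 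The Choquet--Deny lemmas you want to rerun are stated for deterministic members of $\mathcal{M}$ and take expectations at several places; transferring them verbatim to the random $U_A$ is not obvious, and you offer no argument for why it works.

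Two further concrete problems in your outline of (a). First, from the coupled system $V_A=\sum_{|v|=n}|L(v)|^{\alpha}[V_{\tau(v)A}]_v$ you only extract that the \emph{symmetric} combination $V_A+V_{-A}$ is endogenous w.r.t.\ $(|T_j|^{\alpha})_{j\ge1}$, hence a multiple of $W$; but \eqref{eq:Levy_measure} needs each $V_A$ individually to be a deterministic multiple of $W$, and in Cases~II/III this is equivalent to the very symmetry you are postponing. Second, the fallback you sketch for that symmetry -- that $V_A-V_{-A}$ is an endogenous fixed point for $(\sign(T_j)|T_j|^{\alpha})_{j\ge1}$ with characteristic index $1$ and therefore vanishes by Theorem~\ref{Thm:endogeny}(b) -- is both under-resourced and circular: in Case~III, Theorem~\ref{Thm:endogeny}(b) requires \eqref{eq:A5} (for the new weight sequence), which is not implied by \eqref{eq:A1}--\eqref{eq:A4}, and its proof (via Lemma~\ref{Lem:tail bounds}) already depends on Lemma~\ref{Lem:nu_evaluated}. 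Your Case~II workaround via $\tau(v)=(-1)^{|v|}$ is fine, but Case~III is where the real difficulty sits, and there the only non-circular route under \eqref{eq:A1}--\eqref{eq:A4} is through Theorem~\ref{Thm:d=1,2sided_FE_disintegrated}. Finally, even after identifying $\nu(I_r(B))$ on a countable generator one still has to build a genuine measure $\sigma$ on $\Sd$ and pass to all Borel $A$ (the paper uses a content-extension argument and \cite[Lemma~2.1]{Kuelbs:1973}); this step is absent from your plan.
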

\begin{proof}
By \eqref{eq:M's equation} and \eqref{eq:char_exponent},
\begin{align*}
\imag \langle \W, \bt \rangle & - \frac{\bt \mathbf{\Sigma} \bt^{\!\textsf{T}}}{2}
+ \int \left(e^{\imag \langle \bt, \bx \rangle} - 1 - \frac{\imag \langle \bt, \bx \rangle}{1+| \bx |^2} \right) \, \nu(\dbx)   \\
& ~=~
\imag \sum_{|v|=n} L(v) \langle [\W]_v, \bt \rangle ~-~ \frac{\sum_{|v|=n} L(v)^2  \bt [\mathbf{\Sigma}]_v \bt^{\!\textsf{T}}}{2}   \\
& \hphantom{~=}~ + \sum_{|v|=n} \int \left(e^{\imag L(v) \langle \bt, \bx \rangle} - 1 - \frac{\imag L(v)\langle \bt,\bx \rangle}{1+| \bx |^2} \right) \, [\nu]_v(\dbx)  \\
& ~=~
\imag \sum_{|v|=n} L(v)  \left(\langle[\W]_v, \bt\rangle + \int \left[\frac{\langle \bt, \bx \rangle}{1+L(v)^2| \bx |^2} - \frac{\langle \bt, \bx \rangle}{1+| \bx |^2} \right] \, [\nu]_v(\dx) \right) \\
&   \hphantom{~=}~ - \frac{\sum_{|v|=n} L(v)^2 \bt [\mathbf{\Sigma}]_v \bt^{\!\textsf{T}}}{2}    \\
& \hphantom{~=}~ + \sum_{|v|=n} \int \left(e^{\imag L(v)\langle \bt,\bx \rangle} - 1 - \frac{\imag L(v) \langle \bt,\bx \rangle}{1+L(v)^2 | \bx |^2} \right) \, [\nu]_v(\dbx),
\quad   \bt \in \R^d.
\end{align*}
The uniqueness of the L\'evy triplet implies
\begin{align}
\mathbf{\Sigma}  ~&=~   \sum_{|v|=n} L(v)^2 [\mathbf{\Sigma}]_v,    \label{eq:Sigma_endogenous} \\
\int g(\bx) \, \nu(\dbx)    ~&=~    \sum_{|v|=n} \int g(L(v) \bx) \, [\nu]_v(\dbx)  \label{eq:nu}
\end{align}
a.s.~for all $n \in\N_0$ and all non-negative
Borel-measurable functions $g$ on $\R^d$.

\noindent
(a)
Let $I_{r}(B) = \{\bx \in \R^d: r \leq |\bx|, \bx/|\bx| \in B\}$ where $r>0$ and $B$ is a Borel subset of $\Sd$.
Define $I_r(B)$ for $r < 0$ as $I_{|r|}(-B)$.
Since $\nu$ is a (random) L\'evy measure, $\nu(I_r(B)) < \infty$ a.s.\ for all $r \not = 0$ and all $B$.
When choosing $g = \1_{I_{r}(B)}$, \eqref{eq:nu} becomes
\begin{equation}    \label{eq:nu_I_t}
\nu(I_{r}(B))   ~=~ \sum_{|v|=n} [\nu]_v(I_{L(v)^{-1} r}(B))
\end{equation}
a.s.\ for all $n\in\N_0$. For fixed $B$ define
\begin{equation*}
f_{B}(r)    ~:=~    \begin{cases}
                        1                                       &   \text{if }  r = 0,          \\
                        \E [\exp(-\nu(I_{r^{-1}}(B)))]              &   \text{if }  r \not = 0.
                        \end{cases}
\end{equation*}
Then, by \eqref{eq:nu_I_t} and the independence of $(L(v))_{|v|=n}$ and
$([\nu]_v)_{|v|=n}$,
\begin{equation*}
f_{B}(r)    ~=~ \E \bigg[ \exp\bigg(-\sum_{|v|=n} [\nu]_v(I_{L(v)^{-1} r^{-1}}(B))\bigg)\bigg]
~=~ \E \bigg[\prod_{|v|=n} f_{B}(L(v)r)\bigg]
\end{equation*}
for all $r \in \R$. Further, $f_{B}$ is nondecreasing on $(-\infty,0]$ and nonincreasing on $[0,\infty)$.
Since $I_{r^{-1}}(B) \downarrow \emptyset$ as $r \uparrow 0$ or $r \downarrow 0$, $f_{B}$ is continuous at $0$, and we conclude
that $f_{B} \in \SM$.
%Moreover, $f_{B}$
%is continuous at $0$, for $I_{r^{-1}}(B) \downarrow \emptyset$ as
%$r \uparrow 0$ or $r \downarrow 0$. Consequently, $f_{B} \in \SM$.
From Theorem \ref{Thm:d=1,2sided_FE_disintegrated}, we infer that
the limit $M_{B}$ of the multiplicative martingales associated with
$f_{B}$ is of the form
\begin{equation*}
M_{B}(r)    ~=~ \begin{cases}
                    \exp(-W \sigma(B) \alpha^{-1} r^{\alpha})   &   \text{for } r \geq 0,       \\
                    \exp(-W \sigma(-B) \alpha^{-1} |r|^{\alpha})    &   \text{for } r \leq 0
                    \end{cases}
\quad   \text{a.s.},
\end{equation*}
where $\sigma(B)$ and $\sigma(-B)$ are nonnegative constants (depending on $B$ but not on $r$) and $\sigma(B)=\sigma(-B)$ if $\mathbb{G}(T) = \R^*$.
On the other hand, by an argument as in \cite[Lemma 4.8]{Alsmeyer+Meiners:2013},
we infer that $M_{B}(r) = \exp(-\nu(I_{r^{-1}}(B)))$ for all $r \in \R$ a.s.\ and thus
\begin{equation}    \label{eq:nu(I_r(B))}
\nu( I_r(B))    ~=~ \begin{cases}
                    W \sigma(B) \alpha^{-1} r^{-\alpha} &   \text{for } r > 0,      \\
                    W \sigma(B) \alpha^{-1} |r|^{-\alpha}   &   \text{for } r < 0
                    \end{cases}
\quad   \text{a.s.}
\end{equation}
Let $\mathcal{D} := \{[\ba,\bb) \cap \Sd: \ba,\bb \in \Q^d\}$ where $[\ba,\bb)=\{\bx \in \R^d: a_k \leq x_k < b_k \text{ for } k=1,\ldots,d\}$.
$\mathcal{D}$ is a countable generator of the Borel sets on $\Sd$.
\eqref{eq:nu(I_r(B))} holds for all $B \in \mathcal{D}$ and all $r \in \Q$ simultaneously,
a.s. From \eqref{eq:nu(I_r(B))} one infers (since $\Prob(W > 0) > 0$)
that $\sigma$ is a content on $\mathcal{D}$ (that is, $\sigma$ is nonnegative, finitely additive and $\sigma(\emptyset)=0$).
It is even $\sigma$-additive on $\mathcal{D}$ (whenever the countable union
of disjoint sets from $\mathcal{D}$ is again in $\mathcal{D}$).
Thus, there is a unique continuation of $\sigma$ to a measure on
the Borel sets on $\Sd$. For ease of notation, this measure will
again be denoted by $\sigma$. By uniqueness,
\eqref{eq:nu(I_r(B))} holds for all Borel sets $B \subseteq \Sd$ and $r \in \Q$
simultaneously, a.s.\ and, by standard arguments, extends to all
$r \in \R$ as well. Lemma 2.1 in \cite{Kuelbs:1973} now yields
that \eqref{eq:Levy_measure} holds for all Borel sets $A \subseteq
\R^d$ a.s.

\noindent (b)
Turning to $\mathbf{\Sigma}$, we write $\mathbf{\Sigma} = (\mathbf{\Sigma}_{k\ell})_{k,\ell=1,\ldots,d}$.
For every $k,\ell =1,\ldots,d$, \eqref{eq:Sigma_endogenous} implies
\begin{equation}    \label{eq:Sigma_kl endogenous}
\mathbf{\Sigma}_{k\ell} ~=~ \sum_{|v|=n} L(v)^2 [\mathbf{\Sigma}_{k\ell}]   \quad   \text{a.s.~for all } n \in \N_0,
\end{equation}
that is, $\mathbf{\Sigma}_{k\ell}$ is an endogenous fixed point w.r.t.~$(T_j^2)_{j \geq 1}$.
If $\alpha \not = 2$, then $\mathbf{\Sigma}_{k\ell} = 0$ a.s.~by Proposition \ref{Prop:endogeny}(c).
Suppose $\alpha=2$. Since $\mathbf{\Sigma}_{kk}\geq 0$ a.s., we infer $\mathbf{\Sigma}_{kk} = W \Sigma_{kk}$ for some
$\Sigma_{kk} \geq 0$ by Proposition \ref{Prop:endogeny}(a).
Further, the Cauchy-Schwarz inequality implies $-\mathbf{\Sigma}_{k\ell}+W \sqrt{\Sigma_{kk} \Sigma_{\ell\ell}}\geq 0$ a.s.
Hence, \eqref{eq:Sigma_kl endogenous} and Lemma 4.16 in \cite{Alsmeyer+Meiners:2013} imply
$\mathbf{\Sigma}_{k\ell} = W\Sigma_{k\ell}$ for some
$\Sigma_{k\ell} \in \R$. Consequently, $\mathbf{\Sigma} = W
\Sigma$ for $\Sigma = (\Sigma_{k\ell})_{k,\ell=1,\ldots,d}$. Since
$\mathbf{\Sigma}$ is symmetric and positive semi-definite, so is $\Sigma$.
\end{proof}

\subsection{The proofs of Theorems \ref{Thm:Z} and \ref{Thm:endogeny}}  \label{subsec:endogeny proofs}

The key ingredient to the proofs of Theorems \ref{Thm:Z} and \ref{Thm:endogeny}
is a bound on the tails of fixed points. This bound is provided by the following lemma.

\begin{Lemma}   \label{Lem:tail bounds}
Let $d=1$ and assume that \eqref{eq:A1}-\eqref{eq:A4} hold.
Let $X$ be a solution to \eqref{eq:generalized stable}.
Then
\begin{itemize}
	\item[(a)]
		$\Prob(|X| > t) = O(1-\varphi(t^{-\alpha}))$ as $t \to \infty$.
	\vspace{-0.2cm}
	\item[(b)]
		$\Prob(|X| > t) = o(1-\varphi(t^{-\alpha}))$ as $t \to \infty$ if $X$ is an endogenous fixed point.
\end{itemize}
\end{Lemma}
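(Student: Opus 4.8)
The plan is to argue Fourier-analytically, starting from the disintegration of the characteristic function $\phi$ of $X$ into the limit $\Phi$ of its multiplicative martingale and the description of the L\'evy triplet of $\Phi$ provided by Lemma \ref{Lem:nu_evaluated}. First I would reduce to the case where \eqref{eq:A6} holds: taking expectations in \eqref{eq:M's equation along ladder lines} with $u=0$ shows that $\phi$ again solves \eqref{eq:FE generalized stable} with $T$ replaced by $T^>=(L(v))_{v\in\G^>_1}$, and by Proposition \ref{Prop:embedded_BRW} the hypotheses \eqref{eq:A1}--\eqref{eq:A4} carry over to $T^>$ with the same $\alpha$, while $\mathbb{G}(T^>)=\mathbb{G}(T)$, $|L(v)|<1$ for $v\in\G^>_1$, and the Laplace transform $\varphi$ is unchanged. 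Assuming \eqref{eq:A6}, I would then invoke the elementary smoothing inequality $\Prob(|X|>2/u)\leq C u^{-1}\int_0^u(1-\Re\phi(s))\,ds$ (which follows from $u^{-1}\int_0^u(1-\Re\phi(s))\,ds=\E[1-\tfrac{\sin(uX)}{uX}]$ and $1-\tfrac{\sin y}{y}\geq c\,\1_{\{|y|>2\}}$); with $u=2/t$ this reduces both parts to estimating $\int_0^{2/t}(1-\Re\phi(s))\,ds$ as $t\to\infty$.

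By Lemma \ref{Lem:multiplicative_m'gale} and Proposition \ref{Prop:Disintegration}, $\phi(s)=\E[\Phi(s)]$ with $\Phi(s)=\exp(\Psi(s))$, and by Lemma \ref{Lem:nu_evaluated} the L\'evy measure of $\Psi$ is $\nu=W\nu_1$ with $\nu_1$ an $\alpha$-stable L\'evy measure (zero if $\alpha\geq 2$) and $\mathbf{\Sigma}=W\Sigma$ with $\Sigma$ deterministic and positive semi-definite (zero unless $\alpha=2$). Hence $\Re\Psi(s)=-W g(s)$ with $g(s)\asymp|s|^\alpha$ if $\alpha<2$, $g(s)=\tfrac12 \Sigma s^2$ if $\alpha=2$, $g\equiv 0$ if $\alpha>2$, while $\Im\Psi(s)=\W s+R(s)$, where $R$ is the imaginary part of the stable exponent, $|R(s)|\lesssim W|s|^\alpha$ for $\alpha\neq 1$, and $R\equiv 0$ whenever $\nu_1$ is symmetric (in particular when $\mathbb{G}(T)=\R^*$; the case $\alpha=1$, $\mathbb{G}(T)=\Rp$ is the one already covered in \cite{Alsmeyer+Meiners:2013}). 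Using $1-\Re e^z\leq (1-e^{\Re z})+(1-\cos\Im z)$ for $\Re z\leq 0$, one gets $1-\Re\phi(s)\leq\E[1-e^{\Re\Psi(s)}]+\E[1-\cos(\W s)]+\E[1-\cos R(s)]$. The first term equals $1-\varphi(g(s))\asymp 1-\varphi(|s|^\alpha)$ (and vanishes for $\alpha>2$), and the third is $\leq\E[\min(1,R(s)^2)]\lesssim\E[\min(1,W|s|^\alpha)]\lesssim 1-\varphi(|s|^\alpha)$. Since $1-\varphi$ is regularly varying of index $1$ at $0$ (Proposition \ref{Prop:W}(c)), $1-\varphi(|s|^\alpha)$ is regularly varying of positive index, so Karamata's theorem \cite{Bingham+Goldie+Teugels:1989} yields $t\int_0^{2/t}(1-\varphi(|s|^\alpha))\,ds\asymp 1-\varphi(t^{-\alpha})$; thus these contributions are $O(1-\varphi(t^{-\alpha}))$.

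The delicate term is $\E[1-\cos(\W s)]\leq\E[\min(1,\W^2 s^2)]$, and this is the main obstacle, since a priori nothing is known about the tail of the drift $\W$. I would control it by a bootstrap built on the explicit representation of Lemma \ref{Lem:explicit_representation_along_ladder_lines}: $|\W|\leq|\W(h)|+O(W)$ with $\W(h)=\lim_{t}\sum_{v\in\mathcal{T}_t}L(v)\int_{\{|x|\leq h/|L(v)|\}}x\,F(\mathrm{d}x)$. A solution of \eqref{eq:generalized stable} has a finite moment of some order $\beta\in(0,\alpha)$ (a standard consequence of $m(\vartheta)>1$ for $\vartheta<\alpha$ and \eqref{eq:A4}); feeding $\Prob(|X|>r)\lesssim r^{-\beta}$, hence $\int_{\{|x|\leq r\}}|x|\,F(\mathrm{d}x)\lesssim r^{1-\beta}$, into the ladder-line sum and using \eqref{eq:twolimits} and Proposition \ref{Prop:ratios} shows that $\W$ has a tail dominated by a regularly varying function of index $-\alpha$; iterating this input upgrades it to $\Prob(|\W|>r)=O(1-\varphi(r^{-\alpha}))$, whence $\E[\min(1,\W^2 s^2)]\lesssim 1-\varphi(|s|^\alpha)$, and the Karamata step finishes (a). For $\alpha\leq 1$ the step is soft, the $\W$-contribution being only of the smaller order $1-\varphi(|s|)=o(1-\varphi(|s|^\alpha))$; for $\alpha>1$ I expect this bootstrap to be the hardest point, as the drift is then an endogenous object and the argument should be run jointly with part (b).

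For (b) I would use the rigidity of endogenous solutions: as in the proof of Lemma \ref{Lem:endogeny}, $e^{\imag s X}=\Phi(s)$ a.s.\ simultaneously for all $s$, so $|\Phi(s)|\equiv 1$, which forces $\Re\Psi(s)\equiv 0$ and hence $\mathbf{\Sigma}=0$ and $\nu=0$ a.s.; there is no stable part and $X=\W$. Then the radial term in the estimate vanishes identically, and the bootstrap sharpens: once (a) gives $\Prob(|X|>r)=O(1-\varphi(r^{-\alpha}))$ and $\nu=0$ removes the leading contribution to $\W(h)$, one obtains $\int_{\{|x|\leq r\}}|x|\,F(\mathrm{d}x)=o(r^{1-\alpha})$ and therefore $\Prob(|X|>t)=o(1-\varphi(t^{-\alpha}))$. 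Equivalently, one may iterate $X=\sum_{|v|=n}L(v)[X]_v$ and note that, because $\nu=0$, the single-big-jump term $\sum_{|v|=n}\Prob(|L(v)X|>\varepsilon t)$ tends to $0$ as $n\to\infty$, leaving only lower-order contributions.
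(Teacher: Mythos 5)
Your proposal takes a genuinely different route from the paper's, and unfortunately the route it takes runs into exactly the difficulty you flag. The paper's argument is short and stays entirely at the level of the distribution function of $X$: by Lemma~\ref{Lem:explicit_representation_along_ladder_lines} combined with Lemma~\ref{Lem:nu_evaluated},
\[
\sum_{v \in \mathcal{T}_u} F(A/L(v)) ~\to~ W\iint\1_A(rs)\,r^{-(1+\alpha)}\,\sigma(\ds)\,\dr \quad \text{a.s.\ as } u\to\infty
\]
for every Borel $A$ bounded away from $0$; choosing $A=\{|x|>1\}$ turns the left side into $\sum_{v\in\mathcal{T}_u}e^{-\alpha S(v)}G(e^{-S(v)})$ with $G(t):=t^{-\alpha}\Prob(|X|>t^{-1})$, and comparing with $\sum_{v\in\mathcal{T}_u}e^{-\alpha S(v)}D(e^{-\alpha S(v)})\to W$ a.s.\ from~\eqref{eq:twolimits} yields $G(s)=O(D(s^\alpha))$ as $s\to 0$, which is~(a); when $\nu=0$ a.s.\ (the endogenous case) the limit vanishes and the same comparison gives~(b). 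Thus the tail of $X$ is read off directly from the ladder-line Lévy-measure limit~\eqref{eq:nu_ladder}, and the random drift $\W$ never enters at all.

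The gap in your Fourier-analytic plan is precisely the point you concede is ``the hardest'': controlling $\E[1-\cos(\W s)]$, i.e.\ the tail of $\W$, and the bootstrap you sketch to handle it does not close. Its seed --- that a solution of~\eqref{eq:generalized stable} has a finite moment of some order $\beta\in(0,\alpha)$ --- is asserted as ``a standard consequence of $m(\vartheta)>1$ for $\vartheta<\alpha$ and~\eqref{eq:A4}'', but those are hypotheses on $T$ and say nothing a priori about $X$; establishing that solutions have \emph{any} positive moment is essentially what part~(a) is for, so as stated this step is circular. Even granting the seed, plugging $\int_{\{|x|\le r\}}|x|\,F(\dx)\lesssim r^{1-\beta}$ into~\eqref{eq:W(h)} produces the bound $|\W(h)|\lesssim h^{1-\beta}\limsup_t\sum_{v\in\mathcal{T}_t}|L(v)|^\beta$, and under~\eqref{eq:A6} one has $\sum_{v\in\mathcal{T}_t}e^{-\beta S(v)}\ge e^{(\alpha-\beta)t}\sum_{v\in\mathcal{T}_t}e^{-\alpha S(v)}\to\infty$ a.s.\ on $S$ for $\beta<\alpha$, so the sum is not bounded and the first iteration already fails; the promised upgrade ``iterating this input'' is not spelled out and would need to contend with this divergence. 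The drift $\W$ is a nuisance created entirely by decomposing the characteristic exponent; the paper's proof, by working with $F$ and the stopping-line convergence rather than with $\phi$ and the smoothing inequality, never has to confront it.
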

\begin{proof}
By \eqref{eq:nu_ladder} and \eqref{eq:Levy_measure}, with $F$ denoting the distribution of $X$,
\begin{equation}    \label{eq:tail bounds}
\sum_{v \in \mathcal{T}_u} F(A/L(v))
~\to~   W  \iint \1_{A}(rs) r^{-(1+\alpha)} \, \sigma(\ds) \, \dr      \quad
\text{as $u \to \infty$ a.s.}
\end{equation}
for every Borel set $A \subset \R$ that has a positive distance
from $0$ (since $\nu$ is continuous).
Use the above formula for $A = \{|x| > 1\}$ and rewrite it in terms of $G(t) := t^{-\alpha} \Prob(|X| > t^{-1})$ for $t > 0$.
This gives
\begin{equation}
\sum_{v \in \mathcal{T}_u} e^{-\alpha S(v)} G(e^{-S(v)}) ~\to~   W \frac{\sigma(\{1,-1\})}{\alpha}
\quad   \text{as $u \to \infty$ a.s.}
\end{equation}
Now one can follow the arguments given in the proof of Lemma 4.9 in \cite{Alsmeyer+Meiners:2013} to conclude (a).

Finally, assume that $X$ is endogenous, $X = X^{(\varnothing)}$, say, for the root value of an endogenous recursive tree process $(X^{(v)})_{v \in \V}$.
Denote by $\Phi$ the limit of the multiplicative martingales associated with the Fourier transform $\phi$ of $X$.
Then it is implicit in the proof of Lemma \ref{Lem:endogeny} that $\Phi(t) = \exp(\imag t X^{(\varnothing)})$ a.s.,
that is, the L\'evy measure in the random L\'evy triplet of $\Phi$ vanishes a.s. Hence, the right-hand side of \eqref{eq:tail bounds} vanishes a.s.
(b) now follows by the same arguments as assertion (b) of Lemma 4.9 in \cite{Alsmeyer+Meiners:2013}.
\end{proof}

\begin{proof}[Proof of Theorem \ref{Thm:endogeny}]
%First assume $X$ is an endogenous fixed point w.r.t.~$T$.
%Lemma \ref{Lem:tail bounds} gives
%\begin{equation}    \label{eq:tails of X}
%\Prob(|X|>t)    ~=~ o(1-\varphi(t^{-\alpha}))   \quad   \text{as } t \to \infty.
%\end{equation}
%Further recall that Proposition \ref{Prop:W}(c) implies that $1-\varphi(t^{-\alpha})$ is regularly varying of index $-\alpha$ as $t \to \infty$.

\noindent
(a) Assume that $\alpha < 1$.
Then one can argue as in the proof of Theorem 4.12 in \cite{Alsmeyer+Meiners:2013}
(with $L(v)$ there replaced by $|L(v)|$ here) to infer that $X = 0$ a.s.

\noindent (b)
Here $\alpha=1$.

\noindent
\underline{Case I} in which $T_j \geq 0$ a.s., $j \in\N$. The result follows from Proposition \ref{Prop:endogeny}(d).

\noindent
\underline{Case II} in which $T_j \leq 0$ a.s., $j \in\N$.
If $X$ is an endogenous fixed point w.r.t.~$T$, then $X$ is an endogenous fixed point w.r.t.~$(L(v))_{|v|=2}$.
We use Lemma \ref{Lem:WBP_even} to reduce the problem to the already settled Case I where $T_j$, $j\in\N$
have to be replaced with the nonnegative $L(v)$, $|v|=2$. This allows us to conclude that $X=cW$ for some
$c \in \R$. However, since
\begin{equation*}
cW  ~=~ X   ~=~ \sum_{j \geq 0} T_j [X]_j   ~=~ \sum_{j \geq 0}
(-|T_j|) c [W]_j    ~=~ -c W    \quad   \text{a.s.}
\end{equation*}
we necessarily have $c=0$.

\noindent
\underline{Case III}.
Using the embedding technique of Section \ref{subsec:embedded},
we can assume w.l.o.g.~that \eqref{eq:A6} holds in addition to \eqref{eq:A1}--\eqref{eq:A5}.
If after the embedding, we are in Case II rather than Case III, then $X = 0$ a.s., by what we have already shown.
Therefore, the remaining problem is to conclude that $X=0$ a.s.~in Case III under the assumptions \eqref{eq:A1}--\eqref{eq:A6}.

Let $\Phi$ denote the limit of the multiplicative
martingales associated with the Fourier transform of $X$. From the
proof of Lemma \ref{Lem:endogeny} we conclude that
$\Phi(t)=\exp({\rm i}tX)$ a.s.~which
together with \eqref{eq:W(h)} and \eqref{eq:bW ladder} implies
\begin{equation}    \label{eq:X representation with I}
X   ~=~ \lim_{t \to \infty} \sum_{v \in \mathcal{T}_t} L(v) I(|L(v)|^{-1})
\quad   \text{a.s. as } t \to \infty
\end{equation}
where $I(t): = \int_{\{|x| \leq t\}} x \, F(\dx)$.
Integration by parts gives
\begin{equation}    \label{eq:I integrated by parts}
I(t)    ~=~ \int_0^t \Prob(X>s)-\Prob(X<-s) \, \ds - t(\Prob(X > t) - \Prob(X<-t)).
\end{equation}
The contribution of the second term to \eqref{eq:X representation with I} is negligible, for
\begin{equation}\label{eq:relat}
t|\Prob(X > t) - \Prob(X<-t)| = o(D(t^{-1})) \quad	\text{as } t \to \infty
\end{equation}
by Lemma \ref{Lem:tail bounds}(b) and
\begin{equation*}
\sum_{v \in \mathcal{T}_t}|L(v)| D(|L(v)|^{-1})  ~\to~
W \quad \text{a.s.}
\end{equation*}
by \eqref{eq:twolimits}.
Hence,
\begin{equation}    \label{eq:X representation with int_0^L(v)^-1}
X   ~=~ \lim_{t \to \infty} \sum_{v \in \mathcal{T}_t} L(v) \int_0^{|L(v)|^{-1}} \!\!\!\!\! \big(\Prob(X\!>\!s)-\Prob(X\!<\!-\!s)\big) \, \ds
\quad   \text{a.s. as } t \to \infty.
\end{equation}
One can replace the integral from $0$ to $|L(v)|^{-1}$ above by
the corresponding integral with $|L(v)|^{-1}$ replaced by $e^t$.
To justify this, in view of \eqref{eq:relat}, it is enough to
check that
\begin{equation}    \label{eq:crucial limit}
\limsup_{t \to \infty} \sum_{v \in \mathcal{T}_t} |L(v)| \int_{e^t}^{|L(v)|^{-1}} \!\!\!\!\! (1-\varphi(s^{-1})) \, \ds ~\leq~  cW      \quad   \text{a.s.}
\end{equation}
for some constant $c \geq 0$. This statement is derived in the
proof of Theorem 4.13 in \cite{Alsmeyer+Meiners:2013} under the
assumptions \eqref{eq:A1}--\eqref{eq:A3}, \eqref{eq:A4a} and $\E [\sum_{j \geq 1} |T_j| (\log^-(|T_j|))^2]<\infty$;
see (4.41), (4.42) and the subsequent lines in the cited reference.
Consequently, we arrive at the following representation of $X$
\begin{equation}    \label{eq:X representation with int_0^e^t}
X   ~=~ \lim_{t \to \infty} \sum_{v \in \mathcal{T}_t} L(v) \int_0^{e^t} \! \big(\Prob(X\!>\!s)-\Prob(X\!<\!-\!s)\big) \ds
\quad   \text{a.s.}
\end{equation}
Now observe that \eqref{eq:A5} implies \eqref{eq:A4a}, and that,
under \eqref{eq:A1}-\eqref{eq:A4a}, $\lim_{t\to\infty} t (1-\varphi(t^{-1})) = 1$
because $W$ is the a.s.~limit of uniformly integrable martingale
$(\sum_{|v|=n}|L(v)|)_{n\geq 0}$. In combination with \eqref{eq:relat} this yields
$|\Prob(X > s) - \Prob(X<-s)|=o(s^{-1})$ as $s \to \infty$
and so, for every $\varepsilon > 0$ there is a $t_0$ such that
\begin{align*}
\bigg|\int_{e^{t_0}}^{e^t} & \big(\Prob(X\!>\!s)-\Prob(X\!<\!-\!s)\big) \ds \bigg|
~\leq~
\varepsilon \int_{e^{t_0}}^{e^t} s^{-1} \ds
~=~ \varepsilon (t-t_0)
\end{align*}
for all $t\geq t_0$.
Lemma \ref{Lem:sumL(v)->0} thus implies $X = 0$ a.s.~as claimed.

\noindent
(c) Assume that $\alpha > 1$.

\noindent
(i)$\Rightarrow$(iii): Let $X$ be a non-null endogenous fixed point w.r.t.~$T$.
Then using Lemma \ref{Lem:tail bounds} (b) and recalling that according to Proposition
\ref{Prop:W}(c) $1-\varphi(t^{-\alpha})$ is regularly varying of index $-\alpha$ at $\infty$
we conclude that $\E[|X|^\beta]<\infty$ for all $\beta\in (0,\alpha)$.
In particular, $\E [X|\A_n]$ is an $\mathcal{L}^\beta$-bounded martingale with limit (a.s.\ and in $\mathcal{L}^\beta$) $X$.
Further, using that $\{|v|=n: L(v) \not = 0\}$ is a.s.\ finite for each
$n\in\N_0$, we obtain
\begin{equation*}
\E [X|\A_n] ~=~ \E \bigg[ \sum_{|v|=n} L(v) [X]_v \Big| \A_n
\bigg] ~=~ Z_n \E [X]  \quad   \text{a.s.}
\end{equation*}
Hence $\E [Z_n] = 1$ for all $n\in\N_0$ and $X = \E [X] \cdot \lim_{n \to \infty} Z_n$ a.s.\ and in
$\mathcal{L}^{\beta}$ which, among others, proves the uniqueness of the endogenous fixed point up to a real scaling factor.

\noindent
(iii)$\Rightarrow$(ii): The implication follows because $(Z_n)_{n\geq 0}$ is a martingale,
and convergence in $\mathcal{L}^{\beta}$ for some $\beta > 1$ implies uniform integrability.

\noindent
(ii)$\Rightarrow$(i): For every $n \in \N_0$,
\begin{eqnarray}    \notag
Z & = &
\lim_{k \to \infty} Z_{n+k} ~=~ \lim_{k \to \infty} \sum_{|v|=n} \sum_{|w|=k} L(v) [L(w)]_v     \\
& = & \sum_{|v|=n} L(v) \lim_{k \to \infty}  \sum_{|w|=k} [L(w)]_v
~=~ \sum_{|v|=n} L(v) [Z]_v \quad   \text{a.s.} \label{eq:Z endogenous}
\end{eqnarray}
This means that $Z$ is an endogenous fixed point w.r.t.~$T$ which
is non-null because $\Prob(Z=0)<1$ by assumption.
\end{proof}

\begin{proof}[Proof of Theorem \ref{Thm:Z}]

\noindent (a) is Lemma 4.14(a) in \cite{Alsmeyer+Meiners:2013};
(b) is Theorem \ref{Thm:endogeny}(c) of the present paper.

\noindent (c) Let $\alpha \geq 2$. If $Z_1 = 1$ a.s., then $Z_n =
1$ a.s.\ and the a.s.~ convergence to $Z=1$ is trivial.
Conversely, assume that $\Prob(Z_1 = 1) < 1$ and that $Z_n \to Z$ a.s. as $n \to \infty$.
According to part (b) of the theorem,
$(Z_n)_{n\geq 0}$ is a martingale, and $Z_n \to Z$ in $\mathcal{L}^{\beta}$ for all $\beta \in (1,\alpha)$.
By an approach that is close to the one taken in \cite[Proof of Theorem 1.2]{AIPR2009} we shall show that
this produces a contradiction. Pick some $\beta \in (1,2)$ if $\alpha=2$ and take $\beta=2$ if $\alpha>2$.
For $Z_n \to Z$ in $\mathcal{L}^{\beta}$ to hold true it is
necessary that $\E [|Z_1-1|^{\beta}] < \infty$. Then, using the
lower bound in the Burkholder-Davis-Gundy inequality \cite[Theorem
11.3.1]{CT1997}, we infer that for some constant $c_{\beta}>0$ we
have
\begin{eqnarray}
\E [|Z-1|^\beta]
& \geq &
c_{\beta} \E \bigg[\bigg( \sum_{n \geq 0} (Z_{n+1}-Z_{n})^2 \bigg)^{\!\! \beta/2}\bigg]     \notag  \\
& = &
c_{\beta} \E \bigg[ \bigg(\sum_{n \geq 0} \bigg(\sum_{|v|=n} L(v)([Z_1]_v-1)\bigg)^{\!\! 2} \bigg)^{\!\! \beta/2}\bigg] \notag  \\
& \geq &
c_{\beta} \E \bigg[ \bigg( \sum_{n=0}^{m-1} \bigg(\sum_{|v|=n} L(v)([Z_1]_v-1)\bigg)^{\!\! 2} \bigg)^{\!\! \beta/2}\bigg]       \label{eq:L^2BDG_lower}
\end{eqnarray}
for every $m \in \N$. Since $\beta \in (1,2]$, the function $x
\mapsto x^{\beta/2}$ ($x \geq 0$) is concave which implies
$(x_1 + \ldots + x_m)^{\beta/2} \geq m^{\beta/2-1}
(x_1^{\beta/2}+\ldots+x_m^{\beta/2})$ for any $x_1, \ldots, x_m
\geq 0$. Plugging this estimate into \eqref{eq:L^2BDG_lower} gives
\begin{eqnarray*}
\E [|Z-1|^\beta]
& \geq &
c_{\beta} m^{\beta/2-1} \sum_{n=0}^{m-1} \E \bigg[ \bigg|\sum_{|v|=n} L(v)([Z_1]_v-1)\bigg|^\beta\bigg].
\end{eqnarray*}
Given $\A_n$, $\sum_{|v|=n} L(v)([Z_1]_v-1)$ is a weighted sum of i.i.d.\ centered random variables
and hence the terminal value of a martingale.
%On the right hand side, for $n=0,\ldots,m$, the integrand in the $n$th summand given $\A_n$ is a weighted sum of i.i.d.\ centered random variables
%and hence, again, a martingale.
Thus, we can again use the lower bound of the Burkholder-Davis-Gundy inequality \cite[Theorem 11.3.1]{CT1997}
and then Jensen's inequality on $\{W_n(2) > 0\}$ where $W_n(\gamma) = \sum_{|v|=n} |L(v)|^{\gamma}$
to infer
\begin{eqnarray*}
\E [|Z-1|^\beta]
& \geq &
c_{\beta}^2 m^{\beta/2-1} \sum_{n=0}^{m-1} \E \bigg[\bigg(\sum_{|v|=n} L(v)^2([Z_1]_v-1)^2\bigg)^{\!\! \beta/2}\bigg]   \\
& = &
c_{\beta}^2 m^{\beta/2-1} \sum_{n=0}^{m-1} \E \bigg[W_n(2)^{\beta/2} \bigg(\sum_{|v|=n} \frac{L(v)^2}{W_n(2)}([Z_1]_v-1)^2\bigg)^{\!\! \beta/2}\bigg]   \\
& \geq &
c_{\beta}^2 m^{\beta/2-1} \sum_{n=0}^{m-1} \E \bigg[W_n(2)^{\beta/2} \sum_{|v|=n} \frac{L(v)^2}{W_n(2)}([Z_1]_v-1)^\beta \bigg)\bigg]   \\
& = &
c_{\beta}^2 m^{\beta/2-1} \E [|Z_1-1|^{\beta}] \sum_{n=0}^{m-1} \E [W_n(2)^{\beta/2}].
\end{eqnarray*}
To complete the proof it suffices to show
\begin{equation}\label{eq:diver}
\underset{m\to\infty}{\lim}\,m^{\beta/2-1} \sum_{n=0}^{m-1} \E
[W_n(2)^{\beta/2}]=\infty
\end{equation}
for the latter contradicts $\E [|Z-1|^\beta] < \infty$.

\noindent {\em Case $\alpha > 2$}:
Recalling that $m(2)>1$ in view of \eqref{eq:A3} and that $\beta=2$ we have
$\E[W_n(2)^{\beta/2}]=\E [W_n(2)] = m(2)^n \to \infty$ and thereupon \eqref{eq:diver}.

\noindent {\em Case $\alpha=2$}:
Since \eqref{eq:A4a} is assumed, we infer $W_n(2) \to W$ a.s.\ and in $\mathcal{L}^1$.
In particular $\E [W_n(2)^{\beta/2}] \to \E [W^{\beta/2}] > 0$ which entails \eqref{eq:diver}.
\end{proof}

\begin{Rem} \label{Rem:Hu&Shi}
In Theorem \ref{Thm:Z}(c) the case when $\alpha=2$ and \eqref{eq:A4a} fails remains a challenge.
Here, some progress can be achieved once the asymptotics of $\E [W_n(2)^\gamma]$ as $n\to\infty$ has been understood,
where $\gamma\in (0,1)$. The last problem, which is nontrivial because $\lim_{n\to\infty} W_n(2)= 0$ a.s.,
was investigated in \cite[Theorem 1.5]{Hu+Shi:2009} under
assumptions which are too restrictive for our purposes.
\end{Rem}

\subsection{Solving the homogeneous equation in $\R^d$} \label{subsec:solving homogeneous}

\begin{Lemma}   \label{Lem:W(1)}
Assume that \eqref{eq:A1}--\eqref{eq:A4} hold,
that $\alpha = 1$, and that $\mathbb{G}(T)=\Rp$.
Further, assume that $\E [\sum_{j \geq 1} T_j (\log^-(T_j))^2] < \infty$.
Let $\bX = (X_1,\ldots,X_d)$ be a solution to \eqref{eq:generalized stable} with distribution function $F$
and let $\W(1) = (W(1)_1,\ldots,W(1)_d)$ be defined by
\begin{equation*}
\W(1)   ~=~ \lim_{t \to \infty} \sum_{v \in \mathcal{T}_t} L(v)
\int_{\{|\bx| \leq L(v)^{-1}\}} \bx \, F(\dbx)  \quad
\text{a.s.},
\end{equation*}
\textit{i.e.}, as in \eqref{eq:W(h)}.
Then there exists a finite constant $K>0$ such that $\max_{j=1,\ldots,d} |W(1)_j| \leq KW$ a.s.
\end{Lemma}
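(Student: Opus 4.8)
The plan is to reduce the assertion to a one‑coordinate statement and to compare the ladder‑line sum defining $W(1)_k$ with the representation \eqref{eq:twolimits} of $W$. First I would dispose of the case $W\equiv 0$: then \eqref{eq:twolimits} makes every ladder‑line sum vanish a.s.\ and there is nothing to prove, so assume $W\not\equiv0$. Using the embedding of Section~\ref{subsec:embedded} (Proposition~\ref{Prop:embedded_BRW}) I may and do assume \eqref{eq:A6}; then $(S_n)_{n\geq0}$ has nonnegative steps, so $\E[\sum_{j\geq1}T_j(\log^-(T_j))^2]<\infty$ becomes $\E[S_1^2]<\infty$ via \eqref{eq:spinal walk}, whence (using $W\not\equiv0$) $m'(\alpha)\in(-\infty,0)$, $\E[W]=1$, \eqref{eq:A4a} holds, $D(t)\to1$ as $t\to0$, and by \eqref{eq:twolimits} $\sum_{v\in\mathcal{T}_t}L(v)\to W$ a.s.\ as $t\to\infty$. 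Since $\bC=\bnull$, each marginal $X_k$ solves the one‑dimensional homogeneous equation, so Lemma~\ref{Lem:tail bounds}(a) applied coordinatewise gives $\Prob(|\bX|>t)\leq C(1-\varphi(t^{-\alpha}))$ for large $t$ and a finite constant $C$.

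The second step is integration by parts. Put $J_k(R):=\int_0^R(\Prob(X_k>s)-\Prob(X_k<-s))\,\ds$; by Fubini, $\int_{\{|\bx|\leq R\}}x_k\,F(\dbx)=J_k(R)-\Delta_k(R)$ with $|\Delta_k(R)|\leq R\,\Prob(|\bX|>R)$. Summing the $\Delta_k$‑part along $\mathcal{T}_t$, bounding $|L(v)\Delta_k(|L(v)|^{-1})|\leq\Prob(|\bX|>|L(v)|^{-1})\leq C|L(v)|^{\alpha}D(|L(v)|^{\alpha})$ and using the second limit in \eqref{eq:twolimits}, one gets $\limsup_{t\to\infty}\sum_{v\in\mathcal{T}_t}L(v)|\Delta_k(|L(v)|^{-1})|\leq CW$ a.s. As the random variable $W(1)_k$ is a.s.\ finite (it is the $k$th coordinate of \eqref{eq:W(h)} with $h=1$, well defined since $\nu$ is continuous), this yields $\limsup_{t\to\infty}\big|\sum_{v\in\mathcal{T}_t}L(v)J_k(|L(v)|^{-1})\big|\leq|W(1)_k|+CW<\infty$ a.s.

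The third and decisive step is to deduce that the deterministic function $J_k$ is bounded. On $\mathcal{T}_t$, write $J_k(|L(v)|^{-1})=J_k(e^t)+(J_k(|L(v)|^{-1})-J_k(e^t))$; the increment is dominated by $\int_{e^t}^{|L(v)|^{-1}}(1-\varphi(s^{-1}))\,\ds$, and the ladder‑line estimate \eqref{eq:crucial limit} — which is exactly what is established in the proof of Theorem~4.13 of \cite{Alsmeyer+Meiners:2013} under \eqref{eq:A1}--\eqref{eq:A3}, \eqref{eq:A4a} and the present second‑moment hypothesis — gives $\limsup_{t\to\infty}\sum_{v\in\mathcal{T}_t}L(v)|J_k(|L(v)|^{-1})-J_k(e^t)|\leq C'W$ a.s. Combining with the second step, $\limsup_{t\to\infty}|J_k(e^t)\sum_{v\in\mathcal{T}_t}L(v)|<\infty$ a.s.; since $J_k$ is deterministic, continuous, and $\sum_{v\in\mathcal{T}_t}L(v)\to W$ with $\Prob(W>0)>0$, this forces $B_k:=\sup_{R\geq0}|J_k(R)|<\infty$. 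Once this is known, $\sum_{v\in\mathcal{T}_t}L(v)|J_k(|L(v)|^{-1})|\leq B_k\sum_{v\in\mathcal{T}_t}L(v)\to B_kW$, so $|W(1)_k|\leq(B_k+C)W$ a.s., and $K:=C+\max_{1\leq k\leq d}B_k$ does the job.

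The main obstacle is this third step. A priori one only knows $X_k\in L^\beta$ for $\beta<\alpha$, which does not make $J_k$ bounded; the rigidity that rescues us is the a.s.\ finiteness of the \emph{random} quantity $W(1)_k$, which — via the passage from the random truncation level $|L(v)|^{-1}$ to the deterministic level $e^t$ along the ladder lines, controlled by \eqref{eq:crucial limit} — forces the \emph{deterministic} function $J_k$ to be bounded. This is precisely where $\E[\sum_{j\geq1}T_j(\log^-(T_j))^2]<\infty$ enters; the remaining manipulations with overshoots and ladder‑line sums are routine given the machinery of \cite{Alsmeyer+Meiners:2013} and \cite{Iksanov+Meiners:2014}.
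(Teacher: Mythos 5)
Your proposal follows the paper's skeleton quite closely: pass to the coordinates, integrate by parts, control the boundary term by Lemma~\ref{Lem:tail bounds} and \eqref{eq:twolimits}, split $I_j(L(v)^{-1})=I_j(e^t)+(I_j(L(v)^{-1})-I_j(e^t))$, handle the increment via \eqref{eq:crucial limit}, and extract a contradiction from the a.s.\ finiteness of $\W(1)$. However, your third step contains a genuine gap, and it is exactly the point where the paper is more careful.

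You conclude that the deterministic function $J_k$ is \emph{globally bounded}, arguing from $\limsup_t|J_k(e^t)|\sum_{v\in\mathcal{T}_t}L(v)<\infty$ a.s.\ together with ``$\sum_{v\in\mathcal{T}_t}L(v)\to W$ and $\Prob(W>0)>0$''. But $\sum_{v\in\mathcal{T}_t}L(v)\to W$ is \emph{not} what \eqref{eq:twolimits} gives; it gives $D(e^{-t})\sum_{v\in\mathcal{T}_t}L(v)\to W$ a.s. The former holds only if $D(t)\to1$ as $t\to0$, i.e.\ under \eqref{eq:A4a}. Your attempted derivation of \eqref{eq:A4a} --- ``$\E[S_1^2]<\infty$ whence (using $W\not\equiv0$) \eqref{eq:A4a} holds, $D(t)\to1$'' --- does not work: the second-moment hypothesis plus \eqref{eq:A6} gives only the \emph{first} clause of \eqref{eq:A4a} ($m'(\alpha)\in(-\infty,0)$), and the fact that the endogenous fixed point $W$ from \eqref{eq:W_definition} is non-null is automatic under \eqref{eq:A1}--\eqref{eq:A4} (Propositions~\ref{Prop:W} and~\ref{Prop:endogeny}) whether or not $\E[W_1\log^+W_1]<\infty$ holds, so non-nullity tells you nothing about the second clause of \eqref{eq:A4a}. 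When \eqref{eq:A4b} holds but \eqref{eq:A4a} fails, $D(t)$ diverges as $t\to0$ and $\sum_{v\in\mathcal{T}_t}L(v)\to0$ a.s., and then your inference that $J_k$ is bounded collapses (indeed $J_k$ need not be bounded). The paper avoids this entirely: it never claims $I_j$ is bounded, only that $\limsup_t|I_j(e^t)|/D(e^{-t})<\infty$ a.s., which is precisely what the contradiction argument delivers, and which combines directly with \eqref{eq:twolimits} (in the form $D(e^{-t})\sum_{v\in\mathcal{T}_t}L(v)\to W$) to yield \eqref{eq:suffices part II} with no assumption on the limiting behaviour of $D$. You should replace ``$J_k$ is bounded'' with the paper's comparison against $D(e^{-t})$, at which point the extra reduction to \eqref{eq:A4a} becomes unnecessary.
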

\begin{proof}
First of all, the existence of the limit that defines $\W(1)$ follows
from Lemma \ref{Lem:explicit_representation_along_ladder_lines}.
Fix $j \in \{1,\ldots,d\}$. Then, with $F_j$ denoting the distribution of $X_j$,
\begin{align}   \label{eq:W(1)_j}
W(1)_j
~&=~
\lim_{t \to \infty} \sum_{v \in \mathcal{T}_t} L(v) \int_{\{|x| \leq L(v)^{-1}\}} x \, F_j(\dx) \notag  \\
& =~
\lim_{t \to \infty} \sum_{v \in \mathcal{T}_t} L(v) \bigg(\int_0^{L(v)^{-1}} \!\!\!\!\!\!\! (\Prob(X_j>x)-\Prob(X_j<-x)) \, \dx   \notag  \\
&\hphantom{=~ \lim_{t \to \infty} \sum_{v \in \mathcal{T}_t} L(v) \bigg(}
- L(v)^{-1}(\Prob(X_j > L(v)^{-1}) - \Prob(X<-L(v)^{-1}))\bigg) \quad   \text{a.s.}
\end{align}
By Lemma \ref{Lem:tail bounds}, there is a finite constant $K_1 > 0$ such that
\begin{equation}    \label{eq:Prob(X_j>t)}
\Prob(|X_j| > t)    ~\leq~  K_1 (1-\varphi(t^{-1})  ~=~ K_1 t^{-1} D(t^{-1}) \quad   \text{for all sufficiently large } t.
\end{equation}
Therefore, by \eqref{eq:twolimits},
\begin{equation*}
\limsup_{t \to \infty} \bigg|\sum_{v \in \mathcal{T}_t} (\Prob(X_j > L(v)^{-1}) - \Prob(X<-L(v)^{-1}))\bigg| ~\leq~
K_1 W \quad   \text{a.s.}
\end{equation*}
It thus suffices to show that, for $I_j(t) := \int_0^t \Prob(X_j>x)-\Prob(X_j<-x) \, \dx$, $t \geq 0$,
\begin{align}   \label{eq:suffices}
\limsup_{t \to \infty} \bigg|\sum_{v \in \mathcal{T}_t} L(v) I_j\big(L(v)^{-1}\big) \bigg|  ~\leq~  K_2 W   \quad   \text{a.s.}
\end{align}
for some finite constant $K_2 > 0$.
We write $I_j\big(L(v)^{-1}\big) = I_j\big(L(v)^{-1}\big) - I_j(e^t) + I_j(e^t)$ and observe that by
\eqref{eq:Prob(X_j>t)} and \eqref{eq:crucial limit},
\begin{align}   \label{eq:suffices part I}
\limsup_{t \to \infty} \bigg|\sum_{v \in \mathcal{T}_t} L(v)
\big(I_j\big(L(v)^{-1}\big)-I_j(e^t)\big) \bigg| ~\leq~ K_3
W \quad   \text{a.s.}
\end{align}
for some finite constant $K_3 > 0$.
It thus suffices to show that
\begin{equation}    \label{eq:suffices part II}
\limsup_{t \to \infty} |I_j(e^t)| \sum_{v \in \mathcal{T}_t} L(v)   ~\leq~  K_4 W   \quad   \text{a.s.}
\end{equation}
If $\limsup_{t \to \infty} |I_j(e^{t})|/D(e^{-t}) =
\infty$, then using \eqref{eq:twolimits} we infer $\lim_{t
\to \infty} |I_j(e^t)| \sum_{v \in \mathcal{T}_t} L(v) = \infty$
a.s.~on the survival set $S$. This implies $|W(1)_j| = \infty$
a.s.~on $S$, thereby leading to a contradiction, for the absolute
value of any other term that contributes to $W(1)_j$ is bounded by
a constant times $W$ a.s. Therefore, $\limsup_{t \to \infty} |I_j(e^{t})|/D(e^{-t}) < \infty$ a.s.~which together with
\eqref{eq:twolimits} proves \eqref{eq:suffices part II}.
\end{proof}

For the next theorem, recall that $Z := \lim_{n \to \infty} Z_n = \lim_{n \to \infty} \sum_{|v|=n} L(v)$
whenever the limit exists in the a.s.\ sense, and $Z=0$, otherwise.

\begin{Thm} \label{Thm:Phi evaluated}
Assume that \eqref{eq:A1}-\eqref{eq:A4} hold. Let $\phi$ be
the Fourier transform of a probability distribution on $\R^d$
solving \eqref{eq:FE generalized stable}, and let $\Phi=\exp(\Psi)$
be the limit of the multiplicative martingale corresponding to $\phi$.
\begin{itemize}
    \item[(a)]
        Let $0 < \alpha < 1$.
        Then there exists a finite measure $\sigma$ on $\Sd$ such that
        \begin{equation}    \label{eq:Psi alpha<1}
        \Psi(\bt) = -W \int |\langle \bt, \bs \rangle|^{\alpha} \, \sigma(\dbs)
        + \imag W \tan\big(\frac{\pi \alpha}{2}\big) \int \langle \bt, \bs \rangle |\langle \bt, \bs \rangle|^{\alpha-1} \, \sigma(\dbs)
        \end{equation}
        a.s.\ for all $\bt \in \R^d$.
        If $\mathbb{G}(T)=\R^*$, then $\sigma$ is symmetric and the second integral in \eqref{eq:Psi alpha<1} vanishes.
    \item[(b)]
        Let $\alpha = 1$.

        \noindent
        (b1) Assume that Case I prevails and that $\E [\sum_{j \geq 1} |T_j| (\log^-(|T_j|))^2] < \infty$.
        Then there exist an $\ba \in \R^d$ and a finite measure $\sigma$ on $\Sd$ with $\int s_j \, \sigma(\dbs) = 0$ for $j=1,\ldots,d$
        such that
        \begin{equation}    \label{eq:Psi alpha=12}
        \Psi(\bt) ~=~   \imag W \langle \ba, \bt \rangle - W \int |\langle \bt, \bs \rangle| \, \sigma(\dbs)-\imag W \frac{2}{\pi}
        \int \langle \bt, \bs \rangle \log (|\langle \bt, \bs \rangle|) \, \sigma(\dbs)    \quad   \text{a.s.}
        \end{equation}
        for all $\bt \in \R^d$.

        \noindent
        (b2)
        Assume that Case II or III prevails and that $\E [\sum_{j \geq 1} |T_j| (\log^-(|T_j|))^2] < \infty$ in Case
        II and that \eqref{eq:A5} holds in Case III. Then there exist a finite symmetric measure $\sigma$ on
        $\Sd$ such that
        \begin{equation}    \label{eq:Psi alpha=11}
        \Psi(\bt) ~=~  - W \int |\langle \bt, \bs \rangle| \, \sigma(\dbs)    \quad   \text{a.s.\ for all } \bt \in \R^d.
        \end{equation}

    \item[(c)]
        Let $1 < \alpha < 2$.
        Then there exist an $\ba \in \R^d$ and a finite measure $\sigma$ on $\Sd$ such that
        \begin{equation}    \label{eq:Psi 1<alpha<2}
        \Psi(\bt)   =
        \imag Z \langle \ba, \bt \rangle
        - W \! \int \!\! |\! \langle \bt, \bs \rangle \! |^{\alpha} \, \sigma(\dbs)
        + \imag W \tan \big(\frac{\pi \alpha}{2}\big) \! \int \!\! \langle \bt, \bs \rangle |\!\langle \bt, \bs \rangle \! |^{\alpha-1} \, \sigma(\dbs)
        \end{equation}
        a.s.\ for all $\bt \in \R^d$.
        If $\mathbb{G}(T)=\R^*$, then $\sigma$ is symmetric and the second integral in \eqref{eq:Psi 1<alpha<2} vanishes.
    \item[(d)]
        Let $\alpha = 2$. If $\E[Z_1] = 1$, then additionally assume that \eqref{eq:A4a} holds.
        Then there exist an $\ba \in \R^d$ and a symmetric positive semi-definite (possibly zero) $d \times d$ matrix $\Sigma$ such that
        \begin{equation}    \label{eq:Psi alpha=2}
        \Psi(\bt) ~=~   \imag \langle \mathbf{a}, \bt \rangle - W \frac{\bt \Sigma \bt^{\!\textsf{T}}}{2}   \quad   \text{a.s.\ for all } \bt \in \R^d.
        \end{equation}
        If $\Prob(Z_1=1) < 1$, then $\ba = \bnull$.
    \item[(e)]
        Let $\alpha > 2$. Then there is an $\ba \in \R^d$ such that
        \begin{equation}    \label{eq:Psi alpha>2}
        \Psi(\bt) ~=~   \imag \langle \ba, \bt \rangle  \quad   \text{a.s.\ for all } \bt \in \R^d.
        \end{equation}
        If $\Prob(Z_1=1) < 1$, then $\ba = \bnull$.
\end{itemize}
\end{Thm}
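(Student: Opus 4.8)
The plan is to determine $\Psi$ directly from its canonical triplet \eqref{eq:char_exponent}, in which $\mathbf{\Sigma}$ and $\nu$ have already been computed in Lemma~\ref{Lem:nu_evaluated}, so that only the drift $\W$ remains; that drift is then pinned down by an endogeny argument relying on Theorems~\ref{Thm:endogeny} and~\ref{Thm:Z}.

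First I would insert into \eqref{eq:char_exponent} that $\mathbf{\Sigma}=W\Sigma$ when $\alpha=2$ and $\mathbf{\Sigma}=\bnull$ otherwise, and that $\nu$ is of the form \eqref{eq:Levy_measure}; then, passing to polar coordinates $\bx=r\bs$, I would evaluate the classical radial integral $\int_0^\infty\big(e^{\imag ru}-1-\imag ru/(1+r^2)\big)r^{-(1+\alpha)}\,\dr$ with $u=\langle\bt,\bs\rangle$. By the Gnedenko--Kolmogorov computation this equals, for $\alpha\in(0,1)\cup(1,2)$, a constant multiple of $|u|^{\alpha}\big(1-\imag\sign(u)\tan(\pi\alpha/2)\big)$ plus a purely linear term $\imag c_\alpha u$; for $\alpha=1$ it additionally produces $-\imag(2/\pi)u\log|u|$ once the scale of $\sigma$ is normalised; for $\alpha\geq2$, $\nu=\bnull$ and the term is absent. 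Integrating against $\sigma$ and absorbing the linear contribution $\imag c_\alpha\langle\bt,W\!\int\bs\,\sigma(\dbs)\rangle$ into the drift, I obtain $\Psi(\bt)=\imag\langle\W',\bt\rangle+g(\bt)$, where $g$ is the explicit ``stable''/Gaussian/$x\log x$ part displayed in \eqref{eq:Psi alpha<1}--\eqref{eq:Psi alpha>2} and $\W'$ is an $\bL$-measurable $\R^d$-valued random vector; when $\alpha=1$ and $\mathbb{G}(T)=\Rp$, Lemma~\ref{Lem:W(1)} applied componentwise (via \eqref{eq:W(h)}--\eqref{eq:bW ladder}) shows $\max_k|W'_k|\leq KW$ a.s.

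Next I would exploit the self-similarity $\Psi(\bt)=\sum_{|v|=n}[\Psi]_v(L(v)\bt)$ a.s., valid for all $\bt$ by \eqref{eq:M's equation} after taking logarithms (as in the proof of Lemma~\ref{Lem:nu_evaluated}). Using $W=\sum_{|v|=n}|L(v)|^{\alpha}[W]_v$ one verifies that $g$ is additive, $g(\bt)=\sum_{|v|=n}[g]_v(L(v)\bt)$, in every case \emph{except} that when $\alpha=1$ and $\mathbb{G}(T)=\Rp$ the $x\log x$ term produces a discrepancy proportional to $\langle\bt,\int\bs\,\sigma(\dbs)\rangle\,R_n$ with $R_n:=\sum_{|v|=n}L(v)\log|L(v)|\,[W]_v$. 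Since $\W'-\sum_{|v|=n}L(v)[\W']_v$ is bounded by a multiple of $W$ a.s.\ (previous paragraph) while $R_n$ is not (for instance $\E[R_n]=n\,m'(\alpha)\to-\infty$ under \eqref{eq:A4a}, and $\mathbb{G}(T)=\Rp$ excludes $\{0,1\}$-valued weights, so $R_n\not\equiv\bnull$), the only possibility is $\int s_k\,\sigma(\dbs)=0$ for $k=1,\dots,d$, which is the constraint in \eqref{eq:Psi alpha=12}. When $\mathbb{G}(T)=\R^*$, the symmetry of $\sigma$ (Lemma~\ref{Lem:nu_evaluated}) makes the imaginary stable part, the $x\log x$ term and the linear correction vanish outright. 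In every case it follows that $\imag\langle\W',\bt\rangle=\imag\langle\sum_{|v|=n}L(v)[\W']_v,\bt\rangle$ for all $\bt$ a.s., so each component of $\W'$ is an endogenous fixed point w.r.t.\ $T$.

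Finally I would apply Theorems~\ref{Thm:endogeny} and~\ref{Thm:Z}. For $\alpha<1$ there is no non-null endogenous fixed point, so $\W'=\bnull$ and \eqref{eq:Psi alpha<1} follows. For $\alpha=1$: in Case~I the endogenous fixed points are exactly $cW$, so $\W'=W\ba$ and \eqref{eq:Psi alpha=12} follows, while in Cases~II and~III there is no non-null endogenous fixed point, so $\W'=\bnull$ and, $\sigma$ being symmetric, \eqref{eq:Psi alpha=11} follows. For $1<\alpha<2$ the endogenous fixed points are $cZ$, giving $\W'=Z\ba$ and \eqref{eq:Psi 1<alpha<2}. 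For $\alpha=2$, $\nu=\bnull$, $\mathbf{\Sigma}=W\Sigma$ and there is no linear correction, so $\Psi(\bt)=\imag\langle\W,\bt\rangle-W\bt\Sigma\bt^{\!\transp}/2$ with $\W$ endogenous; Theorem~\ref{Thm:Z}(c) (using \eqref{eq:A4a} when $\E[Z_1]=1$) shows $Z_n$ converges a.s.\ only if $Z_1=1$ a.s., so either $Z=1$ and $\W=\ba$ is deterministic or $\W=\bnull$, and $\ba=\bnull$ when $\Prob(Z_1=1)<1$. The case $\alpha>2$ is identical, with the further simplification $\mathbf{\Sigma}=\bnull$ and Theorem~\ref{Thm:Z}(c) available without \eqref{eq:A4a}. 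The main obstacle is the case $\alpha=1$: isolating the $x\log x$ term in the L\'evy integral, keeping the residual drift a.s.\ finite through Lemma~\ref{Lem:W(1)}, and deriving $\int s_k\,\sigma(\dbs)=0$ from the self-similarity relation; the reductions of Cases~II and~III to the nonnegative case (Lemma~\ref{Lem:WBP_even} and the embedding of Section~\ref{subsec:embedded}) are absorbed into the cited endogeny theorems.
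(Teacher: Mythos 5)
Your proposal follows the same route as the paper: insert the results of Lemma~\ref{Lem:nu_evaluated} into \eqref{eq:char_exponent}, evaluate the radial L\'evy integral via the Gnedenko--Kolmogorov computation, absorb the resulting linear correction into a modified drift $\tilde\W$, use the self-similarity $\Psi(\bt)=\sum_{|v|=n}[\Psi]_v(L(v)\bt)$ together with the recursive identities for $W$ to reduce to the endogeny of $\tilde\W$, and finally invoke Theorems~\ref{Thm:endogeny} and~\ref{Thm:Z}. In the delicate case $\alpha=1$, $\mathbb{G}(T)=\Rp$ you also correctly identify Lemma~\ref{Lem:W(1)} as the bound that forces $\int s_k\,\sigma(\dbs)=0$ once the $x\log x$ discrepancy $R_n=\sum_{|v|=n}L(v)\log(L(v))[W]_v$ is seen to be unbounded.

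One point is slightly under-justified: you support the unboundedness of $R_n$ only via $\E[R_n]=n\,m'(\alpha)\to-\infty$. This argument presumes $W$ is integrable (so that $|R_n|\leq cW$ would force $\E|R_n|\leq c$), which is not available under~\eqref{eq:A4b}. The paper instead shows $R_n\to-\infty$ a.s.\ on $S$ directly, using $\sup_{|v|=n}L(v)\to 0$ a.s.\ (Lemma~\ref{Lem:sup->0}) together with $\sum_{|v|=n}L(v)[W]_v=W>0$ a.s.\ on $S$; this avoids any moment hypothesis and is what you should use. The remaining steps, including the separation of the linear drift from the degree-$\alpha$ terms via the cancellation of $\sum_{|v|=n}|L(v)|^{\alpha}[W]_v=W$ (and $\sum_{|v|=n}L(v)|L(v)|^{\alpha-1}[W]_v=W$ in Case~I), match the paper.
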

\begin{proof}
We start by recalling that $\Psi$ satisfies \eqref{eq:char_exponent} by Proposition \ref{Prop:Disintegration}.

\noindent (e) If $\alpha > 2$, then, according to Lemma
\ref{Lem:nu_evaluated}, $\mathbf{\Sigma} = 0$ a.s.~and $\nu=0$
a.s. \eqref{eq:char_exponent} then simplifies to $\Psi(\bt) =
\imag \langle \W, \bt \rangle$ and \eqref{eq:M's equation} implies
that $\W = \sum_{|v|=n} L(v) [\W]_v$ a.s.~for all $n \in \N_0$.
Hence, each component of $\W$ is an endogenous fixed point
w.r.t.~$T$. From Theorem \ref{Thm:endogeny}(c) we know that
non-null endogenous fixed points w.r.t.~$T$ exist only if $\E
[Z_1] = 1$ and $Z_n$ converges a.s.~and in mean to $Z$ and then each endogenous fixed point is a multiple of $Z$.
Now if $Z_1=1$ a.s., then $Z=1$ a.s.~and we arrive at $\W = \ba$ for some
$\ba \in \R^d$ which is equivalent to \eqref{eq:Psi alpha>2}. If
$\Prob(Z_1=1) \not = 1$, we conclude from Theorem \ref{Thm:Z}(c)
that $Z=0$ a.s. Hence, \eqref{eq:Psi alpha>2} holds with $\ba = \bnull$.

\noindent (d) Let $\alpha = 2$.
By Lemma \ref{Lem:nu_evaluated}, $\Psi$ is of the form
\begin{equation*}
\Psi(\bt)   ~=~ \imag \langle \W, \bt \rangle - W \frac{\bt \Sigma \bt^{\!\textsf{T}}}{2}   \quad   \text{a.s.}
\end{equation*}
for a deterministic symmetric positive semi-definite matrix
$\Sigma$. Since $\imag$ and $1$ are linearly independent, one
again concludes from \eqref{eq:M's equation} that $\W$ is an
endogenous fixed point w.r.t.~$T$. The proof of the remaining part
of assertion (d) proceeds as the corresponding part of the proof
of part (e) with the exception that if $\E[Z_1]=1$, \eqref{eq:A4a}
has to be assumed in order to apply Theorem \ref{Thm:Z}(c).

We now turn to the case $0 < \alpha < 2$. %Then,
By Lemma \ref{Lem:nu_evaluated}, $\mathbf{\Sigma} = 0$ a.s.~and there exists a finite measure $\tilde{\sigma}$ on
the Borel subsets of $\Sd$ such that
\begin{equation}    \tag{\ref{eq:Levy_measure}}
\nu(A)  ~=~ W  \iint_{\Sd \times (0,\infty)} \1_{A}(rs)
r^{-(1+\alpha)} \, \dr \, \tilde{\sigma}(\ds)
\end{equation}
for all Borel sets $A \subseteq \R^d$ a.s.
Plugging this into \eqref{eq:char_exponent}, we conclude that $\Psi$ is of the form
\begin{eqnarray}
\Psi(\bt)
& = &
\imag \langle \W, \bt \rangle + W \iint \left(e^{\imag r \langle \bt,\bs \rangle} - 1 - \frac{\imag r \langle \bt, \bs \rangle}{1+r^2} \right)
\, \frac{\dr}{r^{1+\alpha}} \, \tilde{\sigma}(\dbs) \notag  \\
& = & \imag \langle \W, \bt \rangle + W \int I(\langle \bt, \bs
\rangle) \, \tilde{\sigma}(\ds), \quad	\bt\in \R^d,
\label{eq:Psi almost known}
\end{eqnarray}
where
\begin{equation*}
I(t)    ~:=~    \int_0^{\infty} \! \Big(e^{\imag tr} \!-\! 1 \!-\!
\frac{\imag tr}{1\!+\!r^2} \Big) \frac{\dr}{r^{1+\alpha}}, \quad	t \in \R.
\end{equation*}
The value of $I(t)$ is known (see \textit{e.g.}\ \cite[pp.\,168]{Gnedenko+Kolmogorov:1968})
\begin{eqnarray*}
I(t)
~=~
\begin{cases}
\imag c t - t^{\alpha} e^{-\frac{\pi \imag \alpha}{2}}\frac{\Gamma(1-\alpha)}{\alpha},                  &   0 < \alpha < 1, \\
\imag c t - (\pi/2) t - \imag t \log (t),                                                                   &   \alpha = 1, \\
\imag c t + t^{\alpha} e^{-\frac{\pi \imag}{2} \alpha}\frac{\Gamma(2-\alpha)}{\alpha(\alpha-1)},        &   1 < \alpha < 2,
\end{cases}
\end{eqnarray*}
for $t > 0$, where $\Gamma$ denotes Euler's Gamma function
and $c \in \R$ is a constant that depends only on $\alpha$.
Further, $I(-t) = \overline{I(t)}$, the complex conjugate of
$I(t)$.
Finally, observe that $\bs_0 := \int \bs \, \tilde{\sigma}(\dbs)\in \R^d$ since it is the integral of a
function which is bounded on $\Sd$ w.r.t.~to a finite measure.

\noindent (a) Let $0 < \alpha < 1$. Plugging in the corresponding
value of $I(t)$ in \eqref{eq:Psi almost known} gives
\begin{eqnarray*}
\Psi(\bt)
& = &
\imag \langle \W + Wc\bs_0, \bt \rangle \\
& &
- W \frac{\Gamma(1\!-\!\alpha)}{\alpha}
\bigg( e^{-\frac{\pi \imag}{2}\alpha} \int_{\{\langle \bt,\bs \rangle > 0\}} \langle \bt, \bs \rangle^{\alpha} \, \tilde{\sigma}(\dbs)
+e^{\frac{\pi \imag}{2}\alpha} \int_{\{\langle \bt,\bs \rangle < 0\}} |\langle \bt, \bs \rangle|^{\alpha} \, \tilde{\sigma}(\dbs)\bigg) \\
& = &
\imag \langle \W + Wc\bs_0, \bt \rangle - \frac{W}{\alpha} \Gamma(1\!-\!\alpha)
\cos \Big(\frac{\pi \alpha}{2}\Big) \int |\langle \bt, \bs \rangle|^{\alpha} \, \tilde{\sigma}(\dbs)    \\
& &
+ \imag \frac{W}{\alpha} \Gamma(1\!-\!\alpha) \sin \Big(\frac{\pi \alpha}{2}\Big) \int \langle \bt, \bs \rangle |\langle \bt, \bs \rangle|^{\alpha-1} \, \tilde{\sigma}(\dbs).
\end{eqnarray*}
Now define $\sigma := \frac{\Gamma(1\!-\!\alpha)}{\alpha} \cos
\big(\frac{\pi \alpha}{2}\big) \tilde{\sigma}$ and notice that
$\cos (\frac{\pi \alpha}{2}) > 0$ since $0<\alpha<1$. Then we get
\begin{eqnarray*}
\Psi(\bt) & = & \imag \langle \tilde\W, \bt \rangle - W \int
|\langle \bt, \bs \rangle|^{\alpha} \, \sigma(\dbs) + \imag W
\tan\Big(\frac{\pi \alpha}{2}\Big) \int \langle \bt, \bs \rangle
|\langle \bt, \bs \rangle|^{\alpha-1} \, \sigma(\dbs),
\end{eqnarray*}
where $\tilde{\W} := \W + Wc\bs_0$. Now, using linear
independence of $1$ and $\imag$ and \eqref{eq:M's equation}, we
conclude that
\begin{align*}
\langle \tilde{\W}&, \bt \rangle + W \tan\Big(\frac{\pi \alpha}{2}\Big) \int \langle \bt, \bs \rangle |\langle \bt, \bs \rangle|^{\alpha-1} \, \sigma(\dbs) \\
& =~ \sum_{|v|=n} L(v) \langle [\tilde{\W}]_v, \bt \rangle + \sum_{|v|=n} L(v) |L(v)|^{\alpha-1} [W]_v \tan\Big(\frac{\pi \alpha}{2}\Big) \int \langle \bt, \bs \rangle |\langle \bt, \bs \rangle|^{\alpha-1} \, \sigma(\dbs)
\end{align*}
for all $\bt \in \R^d$ and all $n \in \N_0$ a.s.
For each $j = 1,\ldots,d$, evaluate the above equation at $\bt = t
\mathbf{e}_j$ for some arbitrary $t > 0$ and with $\mathbf{e}_j$
denoting the $j$th unit vector. Then divide by $t$ and let $t \to \infty$.
This gives that each coordinate of $\tilde{\W}$ is an endogenous fixed point w.r.t.~$T$
which, therefore, must vanish by Theorem \ref{Thm:endogeny}(a).
If $\mathbb{G}(T) = \R^*$, then $\sigma$ is symmetric by Lemma
\ref{Lem:nu_evaluated} and the integral $\int \langle \bt, \bs
\rangle |\langle \bt, \bs \rangle|^{\alpha-1} \,
\sigma(\dbs)$ is equal to zero.
The proof of (a) is thus complete.

\noindent
(b) Let $\alpha=1$.
Again, we plug the corresponding value of $I(t)$ in \eqref{eq:Psi almost known}.
With $\tilde{\W} := \W + W c \bs_0$ and $\sigma := \frac{\pi}{2} \tilde{\sigma}$, this yields
\begin{equation}    \label{eq:Psi when alpha=1}
\Psi(\bt)
~=~ \imag \langle \tilde{\W}, \bt \rangle
- W \int |\langle \bt,\bs \rangle| \, \sigma(\ds) - \imag W \frac{2}{\pi} \int \langle \bt, \bs \rangle \log (|\langle \bt, \bs \rangle|) \, \sigma(\dbs)
\end{equation}
for all $\bt \in \R^d$ a.s.

\noindent
(b2)
The measure $\sigma$ is symmetric by Lemma \ref{Lem:nu_evaluated} and the last
integral in \eqref{eq:Psi when alpha=1} vanishes because the
integrand is odd. Now combine \eqref{eq:M's equation} and
\eqref{eq:Psi when alpha=1} and use the linear independence of $1$
and $\imag$ to conclude
\begin{equation}    \label{eq:Psi when alpha=1 and G(T)=R^*}
\langle \tilde{\W}, \bt \rangle
~=~ \sum_{|v|=n} L(v) \langle [\tilde{\W}]_v, \bt \rangle
\end{equation}
for all $\bt \in \R^d$ a.s. Choosing $\bt = \mathbf{e}_j$ for
$j=1,\ldots,d$, we see that each coordinate of $\tilde{\W}$ is an
endogenous fixed point w.r.t.~$T$ which must vanish a.s.~by Theorem \ref{Thm:endogeny}(b).

\noindent
(b1)
%Now assume that $\mathbb{G}(T) = \Rp$ and
%that $\E[\sum_{j \geq 1} T_j (\log^-(T_j))^2] < \infty$.
We show that $\int s_j \, \sigma(\dbs) = 0$ for $j=1,\ldots,d$ or,
equivalently, $\mathbf{s}_0 = \bnull$. To this end, use
\eqref{eq:M's equation} and the linear independence of $1$ and
$\imag$ to obtain that
\begin{align}
\langle \tilde{\W}, \bt \rangle - & W \frac{2}{\pi} \int \langle \bt, \bs \rangle \log (|\langle \bt, \bs \rangle|) \, \sigma(\dbs) \notag  \\
%& =~ \sum_{|v|=n} L(v) \langle [\tilde{\W}]_v, \bt \rangle
%- \sum_{|v|=n} L(v) [W]_v \frac{2}{\pi} \int \langle \bt, \bs \rangle \log (|\langle L(v) \bt, \bs \rangle|) \, \sigma(\dbs)   \notag  \\
& =~ \sum_{|v|=n} L(v) \langle [\tilde{\W}]_v, \bt \rangle
- \sum_{|v|=n} L(v) \log (L(v)) [W]_v \frac{2}{\pi} \int \langle \bt, \bs \rangle \, \sigma(\dbs)   \notag  \\
& \hphantom{=~ \sum_{|v|=n} L(v) \langle [\tilde{\W}]_v, \bt \rangle\ }
- \sum_{|v|=n} L(v) [W]_v \frac{2}{\pi} \int \langle \bt, \bs \rangle \log (|\langle \bt, \bs \rangle|) \,\sigma(\dbs)  \label{eq:evaluate at J(t)=0}
\end{align}
a.s.\ for all $n \in \N_0$.
Assume for a contradiction that for some $j \in \{1,\ldots,d\}$, we have $\int s_j \sigma(\dbs) \not = 0$.
Then put $J(\bt) := \int \langle \bt, \bs \rangle \log (|\langle \bt, \bs \rangle|) \, \sigma(\dbs)$, $\bt \in \R^d$.
For $u \not = 0$, one has
\begin{eqnarray*}
J(u \mathbf{e}_j)
& = &
\int u s_j \log (|u s_j|) \, \sigma(\dbs)
~=~ u  \log (|u|) \int s_j \, \sigma(\dbs) + u \int s_j \log (|s_j|) \, \sigma(\dbs).
\end{eqnarray*}
Thus, $J(u \mathbf{e}_j) = 0$ iff
\begin{equation*}
\log (|u|)  ~=~ - \frac{\int s_j \log (|s_j|) \, \sigma(\dbs)}{\int s_j \, \sigma(\dbs)}.
\end{equation*}
Since we assume $\int s_j \sigma(\dbs) \not = 0$, one can
choose $u\not=0$ such that $J(u \mathbf{e}_j)=0$. Evaluating
\eqref{eq:evaluate at J(t)=0} at $u \mathbf{e}_j$ and then
dividing by $u\not=0$ gives
\begin{equation*}   \label{eq:at J(ue_j)}
\tilde{W}_j ~=~ \sum_{|v|=n} L(v) [\tilde{W}_j]_v   - \sum_{|v|=n}
L(v) \log(L(v)) [W]_v \frac{2}{\pi} \int s_j \, \sigma(\dbs),
\end{equation*}
where $\tilde{W}_j$ is the $j$th coordinate of $\tilde{\W}$.
Using \eqref{eq:bW ladder} we infer
\begin{eqnarray*}
\tilde{\W} & = & \W + Wc \bs_0 ~=~ \W(1) + \!\!\!
\underset{\{|\bx| > 1\}}{\int} \frac{\bx}{1+|\bx|^2} \nu(\dbx) -
\!\!\! \underset{\{|\bx| \leq 1\}}{\int} \frac{\bx
|\bx|^2}{1+|\bx|^2} \nu(\dbx) + W c \bs_0 \quad \text{a.s.,}
\end{eqnarray*}
where $\W(1) = \lim_{t \to \infty} \sum_{v \in \mathcal{T}_t} L(v)
\int_{\{|\bx| \leq |L(v)|^{-1}\}}  \bx \, F(\dbx)$. Since we know
from Lemma \ref{Lem:nu_evaluated} that all randomness in $\nu$
comes from a scalar factor $W$, we conclude that $\tilde{\W} =
\W(1)+\tilde{\mathbf{c}} W$ a.s.~for some
$\tilde{\mathbf{c}} \in \R^d$. From Lemma \ref{Lem:W(1)}, we know
that $|\tilde{W}_j| \leq K W$ a.s.\ for some $K \geq 0$.
In the case $\int s_j \, \sigma(\dbs) < 0$ we use these observations to conclude
\begin{align*}
-KW~\leq~   \tilde{W}_j &~=~    \sum_{|v|=n} L(v) [\tilde{W}_j]_v   -  \sum_{|v|=n} L(v) \log (L(v)) [W]_v \frac{2}{\pi} \int s_j \, \sigma(\dbs)   \\
&~\leq~ KW - \sum_{|v|=n} L(v) \log (L(v)) [W]_v \frac{2}{\pi} \int s_j \,
\sigma(\dbs) ~\to~   -\infty
\end{align*}
a.s.~on $S$, the set of survival since $\sum_{|v|=n} L(v) [W]_v = W > 0$ a.s.~on $S$ and $\sup_{|v|=n} L(v) \to 0$ a.s.~by Lemma \ref{Lem:sup->0}.
This is a contradiction.
Analogously, one can produce a contradiction when $\int s_j \, \sigma(\dbs) > 0$.
Consequently, $\int s_j \sigma(\dbs) = 0$ for $j=1,\ldots,d$.
Using this and the equation \eqref{eq:purely_tree-based} for $W$ in \eqref{eq:evaluate at J(t)=0}, we conclude that
\begin{align*}
\langle \tilde{\W}, \bt \rangle  ~=~ \sum_{|v|=n} L(v) \langle [\tilde{\W}]_v, \bt \rangle
\end{align*}
a.s.\ for all $n \in \N_0$. Evaluating this equation at $\bt =
\mathbf{e}_j$ shows that $\tilde{W}_j$ is an endogenous fixed
point w.r.t.~$T$, hence $\tilde{W}_j = W a_j$ a.s.~by Theorem
\ref{Thm:endogeny}(b), $j=1,\ldots,d$. Hence, $\tilde{\W} = W \ba$
for $\ba = (a_1,\ldots,a_d)$. The proof of (b) is complete.

\noindent (c) 
Let $1 < \alpha < 2$. Plugging the corresponding value of $I(t)$ in \eqref{eq:Psi almost known}
and arguing as in the case $0<\alpha<1$ we infer
\begin{eqnarray*}
\Psi(\bt)
& = &
\imag \langle \tilde{\W}, \bt \rangle
- W
\bigg(\int |\langle \bt, \bs \rangle|^{\alpha} \, \sigma(\dbs)
- \imag \tan \big(\frac{\pi \alpha}{2}\big) \int \langle \bt, \bs \rangle^{\alpha} |\langle \bt, \bs \rangle|^{\alpha-1} \, \sigma(\dbs)\bigg)
\end{eqnarray*}
where $\sigma := - \alpha^{-1}(\alpha-1)^{-1} \Gamma(2-\alpha) \cos(\frac{\pi \alpha}{2}) \tilde{\sigma}$
(notice that $\cos(\frac{\pi \alpha}{2}) < 0$) and, as before, $\tilde{\W} := \W + c\bs_0$. The equality
$\tilde{\W} = \ba Z$ for some $\ba \in \R^d$ can be checked
as in the proof of the corresponding assertion in the case $\alpha=2$.
If $\mathbb{G}(T) = \R^*$, then $\sigma$ is symmetric by Lemma \ref{Lem:nu_evaluated}
and the integral $\int \langle \bt, \bs \rangle |\langle \bt, \bs \rangle|^{\alpha-1} \, \sigma(\dbs)$ vanishes.
\end{proof}

\subsection{Proof of the converse part of Theorem \ref{Thm:SF}} \label{subsec:solving inhomogeneous}

From what we have already derived in the preceding sections, there
is only a small step to go in order to prove the converse part of
Theorem \ref{Thm:SF}. The techniques needed to do this final step
have been developed in \cite{Alsmeyer+Meiners:2012}. Thus we
shall only give a sketch of the proof.

\begin{proof}[Proof of Theorem \ref{Thm:SF} (converse part)]
Fix any $\phi \in \SF$.
Then define the corresponding multiplicative martingale by setting
\begin{equation}    \label{eq:M_n inhomogeneous}
M_n(\bt) :=~ \exp\bigg(\imag \sum_{|v|<n} L(v) \langle \bC(v), \bt  \rangle\bigg) \prod_{|v|=n} \phi(L(v) \bt)
~=:~    \exp(\imag \langle \W_n^*, \bt \rangle) \Phi_n(\bt),
\end{equation}
$\bt \in \R^d$. From \eqref{eq:FE generalized stable inhom} one
can deduce just as in the homogeneous case that, for fixed $\bt
\in \R^d$, $(M_n(\bt))_{n \in \N_0}$ is a bounded martingale
w.r.t.~$(\A_n)_{n \in \N_0}$ and, thus, converges a.s.~and in mean
to a limit $M(\bt)$ with $\E[M(\bt)] = \phi(\bt)$. On the other
hand, $\W_n^* \to \W^*$ in probability implies
$\Phi_n(\bt) \to \Phi(\bt):= M(\bt)/\exp(\imag \langle \W^*, \bt
\rangle)$ in probability as $n \to \infty$. Mimicking the proof of Theorem 4.2 in
\cite{Alsmeyer+Meiners:2012}, one can show that $\psi(\bt) = \E[\Phi(\bt)]$ is a solution to \eqref{eq:FE generalized stable}
and that the $\Phi(\bt)$, $\bt \in \R^d$ are the limits of the
multiplicative martingales associated with $\psi$. Hence
$\Phi(\bt) = \exp(\Psi(\bt))$ for some $\Psi$ as in Theorem \ref{Thm:Phi evaluated}.
Finally, $\phi(\bt) = \E[M(\bt)] = \E[\exp(\imag \langle \W^*, \bt \rangle + \Psi(\bt))]$,
$\bt \in \R^d$. The proof is complete.
\end{proof}

\subsubsection*{Acknowledgement}
\footnotesize
The authors thank Gerold Alsmeyer for helpful discussions and Svante Janson for pointing out an important reference.
The research of M.\;Meiners was supported by DFG SFB 878 ``Geometry, Groups and Actions''.

\bibliographystyle{abbrv}
\bibliography{Fixed_points}

\vspace{0.1cm}
\noindent
\textsc{Alexander Iksanov   \\
Faculty of Cybernetics  \\
National T.\ Shevchenko University of Kyiv,\\
01601 Kyiv, Ukraine,    \\
Email: iksan@univ.kiev.ua}   \\
\vspace{0.05cm}

\noindent
\textsc{Matthias Meiners    \\
Fachbereich Mathematik  \\
Technische Universit\"at Darmstadt  \\
64289 Darmstadt, Germany    \\
Email: meiners@mathematik.tu-darmstadt.de}
\end{document}